\newcommand{\Z}{\mathbb{Z}}
\newcommand{\R}{\mathbb{R}}
\newcommand{\RP}{\mathbb{RP}}
\newcommand{\CP}{\mathbb{CP}}
\newcommand{\HP}{\mathbb{HP}}
\newcommand{\OP}{\mathbb{OP}}
\newcommand{\F}{\mathbb{F}}
\renewcommand{\P}{\mathbb{P}}
\newcommand{\CH}{\mathcal{H}}
\newcommand{\rC}{\mathrm{C}}
\newcommand{\rS}{\mathrm{S}}
\newcommand{\rA}{\mathrm{A}}
\newcommand{\rD}{\mathrm{D}}
\newcommand{\rQ}{\mathrm{Q}}
\newcommand{\He}{\mathrm{He}}
\newcommand{\PSU}{\mathrm{PSU}}
\newcommand{\PSL}{\mathrm{PSL}}
\newcommand{\Sz}{\mathrm{Sz}}
\newcommand{\Sym}{\mathop{\mathrm{Sym}}\nolimits}
\newcommand{\Isom}{\mathop{\mathrm{Isom}}\nolimits}
\newcommand{\link}{\mathop{\mathrm{link}}\nolimits}
\newcommand{\pt}{\mathrm{pt}}
\newtheorem{theorem}{Theorem} [section]
\newtheorem{propos}[theorem] {Proposition}
\newtheorem{cor}[theorem] {Corollary}
\newtheorem{lem}[theorem]{Lemma}
\theoremstyle{definition}
\newtheorem{remark}[theorem]{Remark}
\newtheorem{defin}[theorem]{Definition}
\numberwithin{equation}{section}
\author{Alexander A. Gaifullin}
\address{Steklov Mathematical Institute of Russian Academy of Sciences, Moscow, Russia}
\address{Skolkovo Institute of Science and Technology, Moscow, Russia}
\address{Lomonosov Moscow State University, Moscow, Russia}
\address{Institute for the Information Transmission Problems of the Russian Academy of Sciences (Kharkevich Institute), Moscow, Russia}
\email{agaif@mi-ras.ru}
\thanks{}
\title{On possible symmetry groups of 27-vertex triangulations of manifolds like the octonionic projective plane}
\date{}
\keywords{Minimal triangulation, octonionic projective plane, manifold like a projective plane, K\"uhnel triangulation, vertex-transitive triangulation, combinatorial manifold, transformation group, Smith theory, fixed point set, symmetry group}
\subjclass[2020]{57Q15, 57Q70, 05E45, 55M35}
\begin{document}
\begin{abstract}
In 1987 Brehm and K\"uhnel showed that any triangulation of a $d$-manifold (without boundary) that is not homeomorphic to the sphere has at least $3d/2+3$ vertices. Moreover, triangulations with exactly $3d/2+3$ vertices may exist only for `manifolds like projective planes', which can have dimensions~$2$, $4$, $8$, and~$16$ only. There is  a $6$-vertex triangulation of the real projective plane~$\RP^2$, a $9$-vertex triangulation of the complex projective plane~$\CP^2$, and $15$-vertex triangulations of the quaternionic projective plane~$\HP^2$. Recently, the author has constructed first examples of $27$-vertex triangulations of manifolds like the octonionic projective plane~$\OP^2$. The four most symmetrical of them have symmetry group $\rC_3^3\rtimes \rC_{13}$ of order~$351$. These triangulations were constructed using a computer program after the symmetry group was guessed. However, it remained unclear why exactly this group is realized as the symmetry group and whether $27$-vertex triangulations of manifolds like~$\OP^2$ exist with other (possibly larger) symmetry groups.  In this paper we find strong restrictions on symmetry groups of such $27$-vertex triangulations. Namely, we present a list of $26$ subgroups of~$\rS_{27}$ containing all possible symmetry groups of $27$-vertex triangulations of manifolds like the octonionic projective plane. (We do not know whether all these subgroups can be realized as symmetry groups.) The group $\rC_3^3\rtimes \rC_{13}$ is the largest group in this list, and the orders of all other groups do not exceed~$52$. A key role in our approach is played by the use of Smith and Bredon's results on the topology of fixed point sets of finite transformation groups.
\end{abstract}

\maketitle

\section{Introduction}\label{section_intro}

Throughout the paper, we denote by~$\rC_n$, $\rD_n$, $\rS_n$, and~$\rA_n$ the cyclic group of order~$n$, the dihedral group of order~$2n$, the symmetric group of degree~$n$, and the alternating group of degree~$n$, respectively.

In 1987 Brehm and K\"uhnel~\cite{BrKu87} obtained the following result.

\begin{theorem}[Brehm--K\"uhnel]\label{thm_BK}
 Suppose that $K$ is a combinatorial $d$-manifold with $n$ vertices.
 \begin{enumerate}
  \item If $n<3d/2+3$, then $K$ is PL homeomorphic to the standard sphere~$S^d$.
  \item If $n=3d/2+3$, then either $K$  is PL homeomorphic to~$S^d$ or $d\in\{2,4,8,16\}$ and $K$ is a `manifold like a projective plane', that is, $K$ admits a PL Morse function with exactly $3$ critical points.
 \end{enumerate}
\end{theorem}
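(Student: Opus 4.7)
The plan is to combine PL Morse theory (in the sense of Banchoff and K\"uhnel) with the Eells--Kuiper classification of manifolds admitting a perfect Morse function with three critical points. The key idea is that any generic vertex ordering of $K$ induces a PL Morse function whose critical set lies in the vertex set, so both parts of the theorem reduce to finding, when $n$ is small relative to $d$, a vertex ordering with very few critical vertices.

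To set things up, realise $|K|$ in general position in $\R^{n-1}$ --- for instance with vertices at those of the standard simplex $\Delta^{n-1}$ --- and choose a generic linear functional~$h$, extended linearly on each simplex to a PL function on $|K|$ whose critical points lie among the vertices. At each vertex $v$, the link $L(v) \cong S^{d-1}$ splits, according to the sign of $h - h(v)$, into the \emph{lower link} $L^-(v)$ and the \emph{upper link} $L^+(v)$; the vertex $v$ is \emph{regular} if $L^-(v)$ is a PL $(d{-}1)$-ball, and otherwise \emph{critical}, with Morse index read off from the reduced homology of $L^-(v)$. Processing the vertices in increasing order of $h$-value builds $|K|$ by attaching one handle per critical vertex. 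Consequently: if some generic ordering produces only two critical vertices, then $|K|$ is PL $S^d$ by the PL analogue of Reeb's theorem; and if some ordering produces exactly three, then $|K|$ admits a PL Morse function with three critical points, i.e.\ $K$ is a manifold like a projective plane.

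The combinatorial core is therefore an upper bound, in terms of $n$ and $d$, for the minimum number of critical vertices over all generic orderings. Every link $L(v)$ is a triangulated $(d{-}1)$-sphere and hence has at least $d+1$ vertices, so $f_1 \geq n(d+1)/2$; combined with the Dehn--Sommerville equations for the simplicial manifold~$K$, this feeds into the standard $f$-vector machinery for triangulated manifolds. A careful (non-greedy) choice of vertex ordering adapted to these constraints --- or an averaging argument over a suitable family of orderings --- then produces an ordering with $\leq 2$ critical vertices whenever $n < 3d/2+3$, and an ordering with $\leq 3$ critical vertices whenever $n = 3d/2+3$. Together with the Morse-theoretic consequences above, this yields part~(1) and the dichotomy of part~(2). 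The dimension restriction $d \in \{2,4,8,16\}$ in part~(2) is then inherited from the Eells--Kuiper classification, which states that any closed manifold admitting a Morse function with exactly three critical points is homotopy-equivalent to $\RP^2$, $\CP^2$, $\HP^2$, or $\OP^2$.

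The main obstacle is the sharp constant $3d/2+3$. The crude estimate $f_1 \geq n(d+1)/2$ on its own yields only a weaker linear lower bound on $n$; extracting the exact slope $3d/2$ requires the full force of the higher Dehn--Sommerville relations for simplicial $d$-manifolds together with a vertex ordering chosen to exploit the manifold structure globally rather than vertex-by-vertex, and it is precisely in this tight regime that the manifolds like projective planes emerge as the only non-spherical possibility.
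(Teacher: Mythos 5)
First, on context: the paper does not prove Theorem~\ref{thm_BK} at all --- it is quoted from Brehm and K\"uhnel~\cite{BrKu87} --- so your proposal can only be measured against the published argument. Your high-level framework is the correct one and matches theirs in spirit: regular simplexwise linear (PL Morse) functions induced by generic vertex orderings, criticality detected by the reduced homology of lower links, the PL Reeb theorem for two critical points, and Eells--Kuiper to force $d\in\{2,4,8,16\}$ once three critical points are in hand.

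The gap is that the entire quantitative content of the theorem is asserted rather than proved. The sentence claiming that ``a careful (non-greedy) choice of vertex ordering \dots\ or an averaging argument'' yields an ordering with at most $2$ critical vertices when $n<3d/2+3$ (and at most $3$ when $n=3d/2+3$) is the theorem restated in Morse-theoretic language; no mechanism is given, and the ingredients you cite ($f_1\ge n(d+1)/2$ plus Dehn--Sommerville) do not produce an ordering. Note also that for part~(1) your intermediate claim is strictly stronger than the theorem: it asserts that every combinatorial $d$-manifold with fewer than $3d/2+3$ vertices --- in particular every such combinatorial sphere --- admits a generic rsl function with exactly two critical points, which itself would need a nontrivial proof. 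The published argument does not hunt for a single good ordering; it bounds from below the number of vertices that any critical vertex of intermediate index forces to exist, by observing that the lower and upper links of a vertex are complementary induced subcomplexes of a combinatorial $(d-1)$-sphere and hence are tied together by an Alexander-duality-type relation, and it is this count that produces the constant $3\lceil d/2\rceil+3$. In the extremal case the same analysis forces strong neighborliness and the complementarity property (the mechanism behind Theorem~\ref{thm_ArMa} in this paper), leaving a single middle-index critical point and hence the projective-plane-like structure. That lemma about induced subcomplexes of vertex links is the missing piece; without it your write-up is a plan rather than a proof, as your own closing paragraph effectively concedes.
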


We are not going to give a definition of a PL Morse function here. In the present paper we will need only the following properties of manifolds~$M$ like projective planes, see~\cite{EeKu62}:
\begin{itemize}
 \item If $d=2$, then $M$ is homeomorphic (and hence PL homeomorphic) to the real projective plane~$\RP^2$.
 \item If $d=4$, then $M$ is homeomorphic to the complex projective plane~$\CP^2$. (Nevertheless, it is not known whether it is true that $M$ is always PL homeomorphic to~$\CP^2$.)
 \item If $d\in\{4,8,16\}$, then
 $$
 H^*(M;\Z)\cong \Z[a]/(a^3),\qquad \deg a =d/2.
 $$
 \item In each of the dimensions $8$ and $16$, there is an infinite series of pairwise non-homeomorphic PL manifolds like projective planes. A complete classification of such manifolds was obtained by Kramer~\cite{Kra03}.
\end{itemize}

In view of Theorem~\ref{thm_BK}, for $d=2,4,8,16$, it is interesting to study combinatorial $d$-manifolds with exactly $3d/2+3$ vertices that are not homeomorphic to the sphere, and their symmetry groups.  Known examples of such combinatorial manifolds are as follows.
\begin{description}
 \item[$\boldsymbol{d=2}$] A $6$-vertex triangulation~$\RP^2_6$ is the quotient of the boundary of regular icosahedron by the antipodal involution. It is easy to show that up to isomorphism $\RP^2_6$ is the only $6$-vertex combinatorial $2$-manifold that is not homeomorphic to~$S^2$.
 The symmetry group of~$\RP^2_6$ is isomorphic to~$\rA_5$.

 \item[$\boldsymbol{d=4}$] A $9$-vertex triangulation~$\CP^2_9$ was constructed by K\"uhnel, see~\cite{KuBa83}. From the results of the papers~\cite{KuLa83}, \cite{BrKu87}, and~\cite{ArMa91} it follows that, up to isomorphism, $\CP^2_9$ is the only $9$-vertex combinatorial $4$-manifold that is not homeomorphic to~$S^4$. The symmetry group of~$\CP^2_9$ has order~$54$ and is isomorphic to a semi-direct product $\He_3\rtimes\rC_2$, where $\He_3$ is the Heisenberg group of order~$27$, see Subsection~\ref{subsection_K} for more detail.

 \item[$\boldsymbol{d=8}$] Brehm and K\"uhnel~\cite{BrKu92} constructed three $15$-vertex combinatorial $8$-mani\-folds like the quaternionic projective plane with symmetry groups~$\rA_5$, $\rA_4$, and~$\rS_3$, respectively. It was not known for a long time whether these combinatorial manifolds are actually homeomorphic to~$\HP^2$, until Gorodkov~\cite{Gor16,Gor19} proved that they are.

 \item[$\boldsymbol{d=16}$] Until recently, no example of a $27$-vertex combinatorial $16$-manifold like the octonionic projective plane was known. Recently, the  author~\cite{Gai22} has built a huge number (more than $10^{103}$) of such combinatorial manifolds. Four of them have symmetry group~$G_{351}=\rC_3^3\rtimes\rC_{13}$, and the symmetry groups of the others are subgroups of~$G_{351}$.
\end{description}

For interesting connections of $(3d/2+3)$-vertex triangulations of $d$-manifolds like projective planes with Freudenthal magic square and Severi varieties, see~\cite{ChMa13}.

The four $27$-vertex combinatorial $16$-manifolds like the octonionic projective plane with the symmetry group~$G_{351}$ were found by a special computer program, see~\cite{Gai-prog}. This program lists all $27$-vertex combinatorial $16$-manifold like the octonionic projective plane with a given symmetry group~$G$. It works rather well for groups~$G$ with $|G|\gtrsim 250$. However, its running time grows uncontrollably for groups of smaller orders. The fact that the program found examples of combinatorial manifolds for $G=G_{351}$ and did not find them for a number of other groups~$G$ of close orders had no reasonable explanation. In the present work, we clarify this issue. Our main result is as follows.

\begin{theorem}\label{thm_main}
 Suppose that $K$ is a $27$-vertex combinatorial $16$-manifold that is not homeomorphic to the sphere~$S^{16}$. Then the symmetry group $\Sym(K)$ is conjugate to one of the $26$ subgroups of~$\rS_{27}$ listed in Table~\ref{table_main}.
\end{theorem}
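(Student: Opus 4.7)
The plan is to exploit the fact that any $g\in\Sym(K)$ of prime order $p$ acts simplicially on $K$, so the fixed point subcomplex $K^g$ is a combinatorial submanifold of $K$ whose underlying space coincides with the topological fixed point set $|K|^g$. Since $|K|$ is a $\Z$-cohomology octonionic projective plane, Smith theory together with Bredon's theorem on $\Z/p$-actions on $\F_p$-cohomology projective planes severely restricts $K^g$: its mod-$p$ cohomology is that of a disjoint union of at most two components, each being a mod-$p$ cohomology projective plane of type $\RP^2$, $\CP^2$, $\HP^2$, or $\OP^2$, a cohomology sphere, or a point, and in particular $\chi(K^g)\equiv 3\pmod p$.

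The first step is to enumerate, for every prime $p\leq 27$ and every cycle type of an element $g\in\rS_{27}$ of order $p$, whether the number $f(g)=|\Fix g|$ of $g$-fixed vertices, which equals $27$ minus $p$ times the number of $p$-cycles, can be realised as the vertex count of a subcomplex $K^g$ whose topology belongs to the short Bredon list (with dimension bounded by $16$ and Euler characteristic congruent to $3$ mod $p$). Matching the combinatorial count with the admissible topological types rules out most cycle structures and, in particular, eliminates all primes $p\geq 17$ immediately because the forced size of $K^g$ cannot sit inside a $16$-manifold of the right type. This step produces a short catalogue of admissible cycle types for each prime-order symmetry of $K$.

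The second step is to package these element-wise constraints into constraints on the full group. For any $p$-subgroup $H\subseteq\Sym(K)$, the fixed subcomplex $K^H=\bigcap_{g\in H}K^g$ is again subject to Smith theory and satisfies $\chi(K^H)\equiv 3\pmod p$; in particular $K^H$ is nonempty whenever $p\neq 3$. Combining this with the compatibility conditions among the individual subcomplexes $K^h$, $h\in H$ (each having one of the restricted Bredon types), determines which elementary abelian $p$-subgroups, and more generally which Sylow $p$-subgroups, of $\rS_{27}$ can occur in $\Sym(K)$. Sylow arguments and an analysis of normalisers in $\rS_{27}$ then assemble these local constraints into a finite list of candidate overgroups.

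The hardest step will be the analysis for $p=3$, since the Sylow $3$-subgroup may be as large as order $27$ and $K^H$ may even be empty, so that the Euler characteristic congruence $\chi(K^H)\equiv 0\pmod 3$ conveys no positivity information. Bounding this subgroup sharply requires a detailed study of the arrangement of fixed subcomplexes $K^g$ for $g$ of order $3$ (how many vertices each contains, which topological type it realises, and how these subcomplexes intersect inside the larger $3$-group) together with the action of the normaliser of a Sylow $3$-subgroup of $\rS_{27}$. Once the admissible Sylow $3$-subgroups are pinned down, the remaining case analysis for the primes $2,5,7,13$ is comparatively short, and a computer-assisted enumeration through the subgroups of $\rS_{27}$ satisfying all of the above constraints yields precisely the stated list of $26$ conjugacy classes.
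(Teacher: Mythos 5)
There is a genuine gap --- in fact several. Your first step relies on Smith--Bredon theory alone to ``match the combinatorial count with the admissible topological types'' of $K^{\langle g\rangle}$, but Smith--Bredon gives only \emph{cohomological} information about the fixed-point set; it says nothing about how many vertices the fixed-point complex has, which is what you need to constrain cycle types. The missing ingredient is \emph{complementarity}: by Novik's theorem plus the Arnoux--Marin theorem, $K$ satisfies complementarity (hence is $9$-neighborly), the fixed-point complex $K^H$ inherits complementarity (or is a simplex), and then the Datta and Bagchi--Datta classification of pseudomanifolds with complementarity forces $K^H$ to be $\RP^2_6$, $\CP^2_9$, or a $d$-pseudomanifold with between $d+7$ and $3d/2+3$ vertices. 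Only this combinatorial input converts the Bredon list into exact vertex counts (e.g.\ that an involution fixes exactly $3$ vertices, or that an order-$3$ element acts freely). Two smaller inaccuracies in this step: $K^{\langle g\rangle}$ is not a subcomplex of $K$ (its vertices are barycentres of orbits) and need not be a combinatorial manifold, and Bredon's theorem allows up to \emph{three} components (the case $\pt\sqcup\pt\sqcup\pt$ actually occurs for $p=13$), not two.

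The assembly step also would not go through as described. A brute-force ``computer-assisted enumeration through the subgroups of $\rS_{27}$'' is infeasible; the paper instead observes that every element of $\Sym(K)$ has prime power order (no elements of order $6$, $22$, $26$, $33$, $39$, \dots) and then invokes the Higman and Suzuki theorems on such groups to pin down the possible orders of $\Sym(K)$ by hand. Moreover, your local fixed-point constraints cannot yield exactly $26$ groups: the group $\PSU(3,2)$ of order $72$ acting with three orbits of length $9$ satisfies every one of them, and excluding it requires a separate global argument analysing the links $\link(\alpha_t,K)$ of the three vertex orbits and a parity count over the $\rC_4$-subgroups of the vertex stabilizers $\rQ_8$. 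Finally, contrary to your expectation that $p=3$ is the hard case (it is in fact the cleanest: order-$3$ elements act freely with $K^{\langle g\rangle}\cong\CP^2_9$), the delicate prime is $p=2$, where ruling out certain order-$4$ actions reduces to showing that no $15$-vertex $\F_2$-homology $8$-manifold with complementarity admits an order-$4$ symmetry --- the one genuinely computer-assisted step in the paper.
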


\begin{table}
\caption{Possible symmetry groups of $27$-vertex triangulations of $16$-manifolds like the octonionic projective plane}\label{table_main}
\begin{tabular}{|r|c|c|c|c|}
\hline
\textbf{No.} & \textbf{Group} & \textbf{Order} & \textbf{Action on vertices} & \textbf{Known}\\
& & & \textbf{(orbit lengths)} & \textbf{examples}\\
\hline
1 & $\rC_3^3\rtimes \rC_{13}$ & 351 & transitive & 4\\
2 & $\rC_3^3$ & 27 & regular transitive & 630\\
3 & $\He_3=3_+^{1+2}$ & 27 & regular transitive & no \\
4 & $3_-^{1+2}$ & 27 & regular transitive & no \\
\hline
5 & $\rC_{13}\rtimes\rC_4{}^{*}$ & 52 & 13, 13, 1 & no\\
6 & $\rC_3^2\rtimes\rC_4{}^{\dagger}$ & 36 & 9, 9, 9 & no \\
7 & $\rD_{13}$ & 26 & 13, 13, 1 & no \\
8 & $\rD_{11}$ & 22 & 11, 11, 2, 2, 1 & no \\
9 & $\rC_3\rtimes\rS_3$ & 18 & 9, 9, 9 & no \\
10 & $\rD_9$ & 18 & 9, 9, 9 & no \\
11 & $\rC_{13}$ & 13 & 13, 13, 1 & $>1.3\cdot 10^8$ \\
12 & $\rA_4$ & 12 & 12, 12, 3 & no\\
13 & $\rC_{11}$ & 11 & 11, 11, 1, 1, 1, 1, 1 & no \\
14 & $\rC_3^2$  & 9 & 9, 9, 9 & $> 1.8\cdot 10^{11}$\\
15 & $\rC_9$  & 9 & 9, 9, 9 & no\\
16 & $\rC_2^3$ & 8 & 8, 8, 8, 1, 1, 1 & no \\
17 & $\rD_4$ & 8 & 8, 8, 8, 1, 1, 1 & no \\
18 & $\rD_4$ & 8 & 8, 8, 4, 4, 2, 1$\lefteqn{^{\ddagger}}$ & no \\
19 & $\rQ_8$ & 8 & 8, 8, 8, 1, 1, 1 & no \\
20 & $\rS_3$ & 6 & 6, 6, 6, 3, 3, 3 & no \\
21 & $\rC_2^2$ & 4 & 4, 4, 4, 4, 4, 4, 1, 1, 1\phantom{$^{\S}$} & no \\
22 & $\rC_2^2$ & 4 & 4, 4, 4, 4, 4, 2, 2, 2, 1$^{\S}$ & no \\
23 & $\rC_4$ & 4 & 4, 4, 4, 4, 4, 4, 1, 1, 1\phantom{$^{\S}$} & no \\
24 & $\rC_3$ & 3 & free & $> 1.8\cdot 10^{34}$ \\
25 & $\rC_2$ & 2 & with 3 fixed points & no \\
26 & 1 & 1 & & $> 1.3\cdot 10^{103}$ \\
\hline
\multicolumn{5}{l}{\footnotesize{$^*$ with $\rC_4$ acting faithfully on~$\rC_{13}$}}\\
\multicolumn{5}{l}{\footnotesize{$^{\dagger}$ with $\rC_4$ acting faithfully on~$\rC_{3}^2$}}\\
\multicolumn{5}{l}{\footnotesize{$^{\ddagger}$ the stabilizers of points in the two different orbits of length $4$ are not   }} \\[-3pt]
\multicolumn{5}{l}{\footnotesize{\phantom{$^{\ddagger}$} conjugate to each other and  none of them is the centre of~$\rD_4$; }}\\[-3pt]
\multicolumn{5}{l}{\footnotesize{\phantom{$^{\ddagger}$} the stabilizer  of a point in  the orbit of length~$2$ is the subgroup $\rC_4\subset\rD_4$}}\\
\multicolumn{5}{l}{\footnotesize{$^{\S}$ the stabilizers of points in the three orbits of length $2$ are the three}} \\[-3pt]
\multicolumn{5}{l}{\footnotesize{\phantom{$^{\S}$} pairwise different subgroups of~$\rC_2^2$ isomorphic to~$\rC_2$}}
\end{tabular}
\end{table}

\begin{remark}
 Let us make several comments concerning Table~\ref{table_main}:
 \begin{itemize}
  \item Writing $N\rtimes H$, we always mean a semidirect product that is not direct. The semidirect products $\rC_3^3\rtimes \rC_{13}$ and $\rC_3\rtimes\rS_3$ are unique up to isomorphism. The semidirect products $\rC_{13}\rtimes \rC_4$ and $\rC_3^2\rtimes\rC_4$ in rows~5 and~6, respectively, are taken with respect to faithful actions of~$\rC_4$, which are again unique up to isomorphism.
  \item It is easy to check that, for each of the listed groups, except for~$\rD_4$ and~$\rC_2^2$ in rows~18 and~22, respectively, there is a unique up to isomorphism action on a $27$-element set with the prescribed orbit lengths. In rows~18 and~22, the table contains special comments on which of the actions is taken. So each row in the table determines a subgroup of~$\rS_{27}$ up to conjugation.
  \item $\He_3=3^{1+2}_+$ and $3^{1+2}_-$ are the two \textit{extraspecial groups} of order~$27$, i.\,e.,  the groups~$G$ with centre~$Z$ isomorphic to~$\rC_3$ and the quotient $G/Z$ isomorphic to~$\rC_3^2$, where the first one, the Heisenberg group~$\He_3$, has exponent~$3$, while the second one contains an element of order~$9$.
  \item $\rQ_8$ is the quaternion group.
 \end{itemize}
\end{remark}

\begin{remark}
 Up to now, examples of $27$-vertex combinatorial $16$-manifold like the octonionic projective plane are known only for $6$ of the $26$ potentially possible symmetry groups from Table~\ref{table_main}, namely, for the symmetry groups $G_{351}=\rC_3^3\rtimes \rC_{13}$, $\rC_3^3$, $\rC_{13}$, $\rC_3^2$, $\rC_3$, and~$1$. All these examples were constructed by the author in~\cite{Gai22}; the numbers of such examples are indicated in the last column of Table~\ref{table_main}. For the other $20$~groups, the question on the existence of a $27$-vertex combinatorial $16$-manifold like the octonionic projective plane with such symmetry group is open. The group~$G_{351}$ is the only symmetry group for which we know a complete list of all (the four) $27$-vertex combinatorial $16$-manifold like the octonionic projective plane. For the other $5$ symmetry groups, it is unknown whether the examples obtained in~\cite{Gai22} exhaust all possible combinatorial manifolds. Note also that it is still unknown whether the combinatorial manifolds constructed in~\cite{Gai22} are homeomorphic to~$\OP^2$ or they are other manifolds like the octonionic projective plane.
\end{remark}

\begin{remark}
A partial explanation for why the group~$G_{351}$ arises as the symmetry group of a $27$-vertex combinatorial $16$-manifold like the octonionic projective plane is that $G_{351}$ can be nicely realized as a subgroup of the isometry group~$\Isom(\OP^2)$, where $\OP^2$ is endowed with the Fubini--Study metric, see~\cite[Remark~1.7]{Gai22} for more detail. Namely, the group~$\Isom(\OP^2)$ is isomorphic to the exceptional simply connected compact Lie group~$F_4$ and Alekseevskii~\cite{Ale74} constructed an important subgroup of~$F_4$ isomorphic to~$\rC_3^3\rtimes\mathrm{SL}(3,\F_3)$, which contains~$G_{351}$, since $\mathrm{SL}(3,\F_3)$ contains an element of order~$13$. On the other hand, the group~$F_4$ has many different finite subgroups (see~\cite{CoWa97}) and, as Theorem~\ref{thm_main} shows, not all of them are realized as symmetry groups of $27$-vertex triangulations of manifolds like the octonionic projective plane. So the connection between the symmetry groups of such triangulations and finite subgroups of~$F_4$ is yet to be clarified.
\end{remark}

All of the above can be extended to the case of $\Z$-homology manifolds, see Definition~\ref{defin_hm} below. We start with the following analog of the Brehm--K\"uhnel theorem.

\begin{theorem}[Novik~\cite{Nov98}]\label{thm_Novik}
 Suppose that $K$ is a $\Z$-homology $d$-manifold with $n$ vertices.
 \begin{enumerate}
  \item If $n<3d/2+3$, then $H_*(K;\Z)\cong H_*(S^d;\Z)$.
  \item If $n=3d/2+3$, then either $H_*(K;\Z)\cong H_*(S^d;\Z)$ or  the following assertions hold:
 \begin{itemize}
  \item $d\in\{2,4,8,16\}$,
  \item if $d=2$, then $K\cong\RP^2_6$,
  \item if $d\in\{4,8,16\}$, then
  \begin{equation}\label{eq_iso_tp}
  H^*(K;\Z)\cong\Z[a]/(a^3),\qquad \deg a =d/2.
  \end{equation}
 \end{itemize}
 \end{enumerate}
\end{theorem}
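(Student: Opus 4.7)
This theorem extends Brehm--K\"uhnel (Theorem~\ref{thm_BK}) from PL manifolds to $\Z$-homology manifolds, and my plan is to follow the same overall architecture while replacing the PL link arguments of~\cite{BrKu87} with commutative-algebra arguments for the Stanley--Reisner face ring, which are what make the result go through without a local PL structure.

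First I would fix a field~$\F$ and pass to the Artinian reduction $A(K)=\F[K]/(\theta_1,\dots,\theta_{d+1})$ of the face ring of~$K$ by a generic linear system of parameters. Because $K$ is a $\Z$-homology manifold it is a Buchsbaum complex, so Schenzel's formula relates $\dim_{\F}A(K)_i$ to the $h$-vector of~$K$ corrected by the reduced Betti numbers $\tilde\beta_j(K;\F)$. The generalized Dehn--Sommerville relations for homology manifolds,
\begin{equation*}
h_{d+1-i}(K)-h_i(K)=(-1)^{i}\binom{d+1}{i}\bigl(\tilde\chi(K)-(-1)^d\bigr),
\end{equation*}
combined with Novik's generalized lower bound theorem for the action of a generic linear form on~$A(K)$, would then yield a single enumerative inequality relating $n-d-1=h_1(K)$ to the reduced Betti numbers $\tilde\beta_j(K;\F)$ for $1\le j\le d-1$. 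This inequality should be sharp at $n=3d/2+3$ and should force $K$ to be a $\Z$-homology sphere whenever $n<3d/2+3$.

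For $n<3d/2+3$ the inequality gives vanishing of all middle reduced Betti numbers, and Poincar\'e duality for $\Z$-homology manifolds promotes this to $H_*(K;\Z)\cong H_*(S^d;\Z)$. For $n=3d/2+3$ and $K$ not a $\Z$-homology sphere the inequality is attained; analysing the equality case pins down $A(K)$ so tightly that $d$ must be even, only the middle Betti number $\tilde\beta_{d/2}(K;\Z)=1$ is non-zero, and, via Poincar\'e duality and universal coefficients, $H^*(K;\Z)\cong\Z[a]/(a^3)$ with $\deg a=d/2$. From this cohomology ring the minimal CW model of~$K$ has cells in degrees $0$, $d/2$, $d$ and its top cell is attached by a map $S^{d-1}\to S^{d/2}$ of Hopf invariant~$\pm 1$; Adams' theorem then forces $d/2\in\{1,2,4,8\}$. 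In the case $d=2$ a $\Z$-homology surface is a genuine topological surface, so $H_1(K;\Z)\cong\Z/2$ gives $K\cong\RP^2$, and a short combinatorial argument as in~\cite{BrKu87} identifies the unique $6$-vertex triangulation of~$\RP^2$ with~$\RP^2_6$.

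The main obstacle is the first step, producing the precise inequality. In the PL proof of~\cite{BrKu87} the authors exploit that every vertex link is a PL sphere to control $h_2-h_1$ combinatorially; no such link argument is available for $\Z$-homology manifolds, and the required inequality has to be extracted through commutative-algebra analysis of how a generic linear form acts on the Buchsbaum-corrected Artinian reduction of the face ring, which is the technical heart of~\cite{Nov98}. A secondary subtle point is extracting the cohomology ring structure (not merely the additive cohomology) from the equality case, which requires identifying a suitable nonzero element of the socle of $A(K)$ with a non-trivial cup product in $H^*(K;\Z)$.
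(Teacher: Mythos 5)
Your overall architecture coincides with the paper's: both routes obtain the key Betti-number constraints from Novik's upper-bound machinery for homology manifolds (the paper simply cites Lemma~5.5 and Theorem~5.6 of~\cite{Nov98}, whereas you sketch their interior via Schenzel's formula, the generalized Dehn--Sommerville relations, and the Buchsbaum property of the face ring) and then run the same topological endgame: orientability, Poincar\'e duality, Adams' theorem, and a separate treatment of $d=2$. Two steps of your endgame, however, would fail as written.

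First, your derivation of $d/2\in\{1,2,4,8\}$ via a minimal CW model whose top cell is attached by a map of Hopf invariant $\pm 1$ presupposes that $K$ is homotopy equivalent to a simply connected three-cell complex; you only know $H_1(K;\Z)=0$, so $\pi_1(K)$ could a priori be a nontrivial perfect group and no such model is available. The correct input is the cohomological form of Adams' theorem (exactly as in Remark~\ref{remark_Adams}): for any space with $H^*(X;\F_2)\cong\F_2[a]/(a^3)$ and $\deg a=m$ one has $\mathrm{Sq}^m a=a^2\ne 0$, which forces $m\in\{1,2,4,8\}$ with no cell structure needed. Second, your equality-case conclusion ``only $\tilde\beta_{d/2}(K;\Z)=1$ is non-zero, and Poincar\'e duality gives $H^*(K;\Z)\cong\Z[a]/(a^3)$'' is false for $d=2$, where the answer is $\RP^2_6$: it is non-orientable, $H_1(K;\Z)\cong\Z/2$ has free rank $0$, and the integral duality argument breaks down. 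The case split must therefore occur \emph{before} the orientability step, which is how the paper organizes it: dispose of $d=2$ first (every $\Z$-homology $2$-manifold is a combinatorial manifold, and the $6$-vertex non-sphere surface triangulation is unique), and only for $d\ge 4$ deduce orientability from $\beta_1(K;\F_2)=0$. Your separate assertion $H_1(K;\Z)\cong\Z/2$ in the $6$-vertex case also needs its own justification rather than following from the equality analysis. Finally, the ring structure in~\eqref{eq_iso_tp} comes for free from the unimodularity of the Poincar\'e pairing on $H^{d/2}(K;\Z)\cong\Z$; no socle computation in the Artinian reduction is required for that point.
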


\begin{remark}
 This theorem is not stated explicitly in~\cite{Nov98}. Let us explain how to extract it from there. First, Lemma~5.5 in~\cite{Nov98} impies assertion~(1) of Theorem~\ref{thm_Novik}, and besides implies that in the case of even~$d=2m$ and $n=3m+3$ one has
\begin{equation}\label{eq_beta}
 \beta_1=\cdots=\beta_{m-1}=0,
\end{equation}
 where $\beta_i$ are the Betti numbers of~$K$ with coefficients in any field of characteristic two or in any other field, provided that, in addition, $K$ is orientable. Since any $\Z$-homology $2$-manifold is a combinatorial manifold, in the case $d=2$ the required assertion follows immediately from the uniqueness of a $6$-vertex triangulation of a $2$-manifold not homeomorphic to~$S^2$. So we may assume that $d\ge 4$. Then by~\eqref{eq_beta} we have that $\beta_1=0$ with coefficients in the field~$\F_2$. Hence, the fundamental group $\pi_1(K)$ admits no nontrivial homomorphisms to~$\F_2$. It follows that $K$ is orientable and so~\eqref{eq_beta} holds with coefficients in any field. Then the Poincar\'e duality implies that $H_0(K;\Z)\cong H_{2m}(K;\Z)\cong\Z$, the group $H_m(K;\Z)$ is free abelian, and $H_i(K;\Z)=0$ for $i\notin\{0,m,2m\}$. Now, Theorem~5.6 in~\cite{Nov98} implies that $\beta_m\le 1$, so the group $H_m(K;\Z)$ is either trivial or isomorphic to~$\Z$. In the former case we have that $H_*(K;\Z)\cong H_*(S^d;\Z)$, and in the latter case, using the Poincar\'e duality, we arrive at isomorphism~\eqref{eq_iso_tp}. Finally, it is a well-known corollary of Adams' Hopf invariant one theorem (see~\cite{Ada60}) that such isomorphism may exist only for $d\in \{4,8,16\}$.
 \end{remark}

 A generalization of Theorem~\ref{thm_main} for homology manifolds is as follows.

 {\sloppy
 \begin{theorem}\label{thm_main_hom}
 Suppose that $K$ is a $27$-vertex $\Z$-homology $16$-manifold such that the graded group~$H_*(K;\Z)$ is not isomorphic to~$H_*(S^{16};\Z)$. Then the symmetry group $\Sym(K)$ is conjugate to one of the $26$ subgroups of~$\rS_{27}$ listed in Table~\ref{table_main}.
\end{theorem}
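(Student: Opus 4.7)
My plan is to prove Theorem~\ref{thm_main_hom} by the same mechanism used for Theorem~\ref{thm_main}, observing that the whole argument is cohomological and combinatorial, and therefore transfers from the category of combinatorial manifolds to the category of $\Z$-homology manifolds. First I would invoke Novik's Theorem~\ref{thm_Novik}: the assumption $H_*(K;\Z)\not\cong H_*(S^{16};\Z)$ forces $H^*(K;\Z)\cong\Z[a]/(a^3)$ with $\deg a = 8$, i.e.\ the cohomology ring of~$\OP^2$. From this point on, every restriction on $G=\Sym(K)$ will come from transformation group theory applied to the simplicial action of $G$ on the set of 27 vertices.

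The main engine is the combination of Smith theory and Bredon's theorem on $p$-group actions on $\F_p$-cohomology projective planes. For any prime~$p$ and any subgroup $H\subseteq G$ of order~$p$, the fixed set $K^H$ is an $\F_p$-cohomology manifold (Smith), and Bredon's classification forces its $\F_p$-cohomology to be very restricted---roughly, that of an $\F_p$-cohomology sphere or of an $\F_p$-cohomology projective plane of dimension $2$, $4$, or~$8$. Combinatorially, $K^H$ is the full subcomplex of $K$ spanned by the $H$-fixed vertices, so its vertex count---read off from the orbit structure of $G$ on the 27 vertices---must be compatible with this cohomological type via a Brehm--K\"uhnel/Novik-style lower bound applied to the fixed subcomplex. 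Iterating this over all primes $p$ and all $p$-subgroups of~$G$, and tracking how fixed sets of one subgroup sit inside fixed sets of another, produces a tight system of combinatorial--cohomological constraints on~$G$.

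With this apparatus in place, the remaining work is a case analysis over conjugacy classes of subgroups of~$\rS_{27}$, organized by group order and by orbit partition of the 27-element vertex set. For each candidate~$G$, one either exhibits a prime $p\mid |G|$ and a $\rC_p\subseteq G$ whose fixed subcomplex violates a Smith--Bredon constraint (too few fixed vertices to realize the forced cohomology, wrong mod-$p$ Euler characteristic or Lefschetz number, an incompatible inclusion into the fixed set of a larger subgroup, and so on), or $G$ survives and joins the list in Table~\ref{table_main}. Crucially, none of these elimination steps uses that $K$ is PL: they use only that $K$ is a $\Z$-homology manifold with the cohomology ring of~$\OP^2$ and that links of simplices are $\Z$-homology spheres, both of which are built into the hypothesis.

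The main obstacle is precisely this case-by-case bookkeeping: $\rS_{27}$ has an enormous number of conjugacy classes of subgroups, and even after pruning by orbit partition one must dispose of each candidate individually, sometimes via quite delicate interactions between Smith constraints for different primes dividing~$|G|$. However, the key conceptual point---that the whole argument is cohomological and hence insensitive to the distinction between a combinatorial manifold and a $\Z$-homology manifold---means that, once the proof of Theorem~\ref{thm_main} has been carried out, the proof of Theorem~\ref{thm_main_hom} follows by the very same case analysis, yielding the identical list of 26 surviving groups.
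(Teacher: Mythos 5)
Your high-level strategy (Novik to get the cohomology ring of $\OP^2$, then Smith--Bredon constraints on fixed point sets of $p$-subgroups, then a case analysis) matches the spirit of the paper, but the proposal has several genuine gaps that would prevent it from being carried out as written.

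First, your identification of $K^H$ with ``the full subcomplex of $K$ spanned by the $H$-fixed vertices'' is incorrect. The fixed point set $|K|^H$ is the geometric realization of the complex whose vertices are the barycentres of \emph{all} $H$-orbits that are simplices of~$K$ (Proposition~\ref{propos_KG}), not just the singleton orbits. For example, an element of order~$13$ acts with two orbits of length~$13$ and one fixed vertex, and its fixed point complex is a disjoint union of three points, not one. Almost every constraint in the paper's Proposition~\ref{propos_prop} depends on counting orbits, not fixed vertices, so this error propagates.

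Second, you omit the complementarity property entirely. Theorem~\ref{thm_ArMa} (Arnoux--Marin) shows $K$ satisfies complementarity, hence is $9$-neighborly; this is what makes the fixed point complexes connected, forces every orbit of an odd-order cyclic group to be a simplex, lets complementarity descend to $K^H$ (Proposition~\ref{propos_complement}), and brings in Datta's classification of pseudomanifolds with complementarity (Theorem~\ref{thm_Datta}) to pin down $K^H$ as $\RP^2_6$, $\CP^2_9$, or a small pseudomanifold. Smith--Bredon alone gives only cohomological information and cannot bound vertex numbers of $K^H$ without this combinatorial input.

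Third, ``a case analysis over conjugacy classes of subgroups of $\rS_{27}$'' is not feasible and is not what the paper does. The paper first proves that every element of $G$ has prime power order, then invokes the Suzuki and Higman theorems on such groups to deduce solvability and an explicit short list of possible orders of~$G$, and only then classifies the groups and their actions. Without this (or some substitute), the enumeration does not terminate in practice. Moreover, two candidates cannot be disposed of by your stated criteria at all: the exclusion of order-$4$ elements in $\Sym$ of a $15$-vertex homology $8$-manifold (needed to rule out $\rC_8$ and $\rC_2\times\rC_4$) is computer-assisted in the paper (Proposition~\ref{propos_no_4}), and the group $\PSU(3,2)$ satisfies every Smith--Bredon and orbit-structure constraint yet must be eliminated by a separate, delicate combinatorial argument (Section~\ref{section_no_PSU}) involving the links $\link(\alpha_t,K)$ and a parity computation over the three $\rC_4$-subgroups of a vertex stabilizer $\rQ_8$. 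Your proposal gives no mechanism for either step.
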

}

Certainly, Theorem~\ref{thm_main} follows from Theorem~\ref{thm_main_hom}. A natural conjecture (cf.~\cite[\S\,20, Conjecture~0]{ArMa91}) is that any $(3d/2+3)$-vertex $\Z$-homology $d$-manifold is a combinatorial manifold. This conjecture is proved for $d=2$ and $d=4$, see~\cite[\S\S\,19,\,21]{ArMa91}, but is completely open for $d=8$ and $d=16$. So it is not clear whether Theorem~\ref{thm_main_hom} is really stronger than Theorem~\ref{thm_main}.

Our approach to studying the symmetry groups~$\Sym(K)$ (and thus to proving Theorem~\ref{thm_main_hom}) is to consider elements $g\in\Sym(K)$ of different orders and study the topology and combinatorics of the corresponding fixed point sets~$K^{\langle g\rangle}$. (Hereafter we denote by~$\langle g\rangle$ the cyclic subgroup generated by~$g$.) We obtain results about the fixed point sets using two sets of ideas:
\begin{itemize}
 \item the Smith--Bredon cohomological theory of finite transformation groups,
 \item combinatorial results due to Bagchi and Datta on pseudomanifolds with complementarity.
\end{itemize}
Combining these two approaches, we obtain a series of conditions on orders of elements and subgroups of~$\Sym(K)$, see Proposition~\ref{propos_prop}. The next part of our proof is purely group theoretic: we prove that any group satisfying the obtained conditions is in Table~\ref{table_main}, see Proposition~\ref{propos_group}. More precisely, on  this stage we obtain one extra possibility for~$\Sym(G)$, namely, the group~$\PSU(3,2)$ of order~$72$ acting with three orbits of length~$9$. Then this extra possibility is excluded by a more detailed study of the fixed point complexes for subgroups isomorphic to~$\rC_4$.

All our proofs in this paper are not computer assisted, except for the proof of the fact that the symmetry group of a $15$-vertex $\F_2$-homology $8$-manifold that satisfies complimentarity cannot contain an element of order~$4$ (see Proposition~\ref{propos_no_4}). The proof of this fact uses a modification of the program, which was created by the author for finding $27$-vertex triangulations of $16$-manifolds like the octonionic projective plane (see~\cite{Gai-prog}).

\begin{remark}
It is natural to ask to what extent the results and methods of this work transfer to the case of $15$-vertex triangulations of~$\HP^2$ or manifolds like the quaternion projective plane.  One would expect that the conditions on the symmetry group in this case would be even more restrictive. Nevertheless, due to the difference in some number-theoretic properties of the numbers~$15$ and~$27$, it turned out that this is not the case. An attempt to apply the methods of this work in the $8$-dimensional case led to the discovery of a lot of new $15$-vertex triangulations of~$\HP^2$ with various symmetry groups and a partial classification of them, see~\cite{Gai23}.
\end{remark}

The present paper is organized as follows. Sections~\ref{section_simpcomp}, \ref{section_transform_groups}, and~\ref{section_finite_groups} provide preliminary information on simplicial complexes, transformation groups, and finite groups, respectively. Section~\ref{section_scheme} contains the scheme of the proof of Theorem~\ref{thm_main_hom}. In this section we formulate Proposition~\ref{propos_prop} (properties of $\Sym(K)$), Proposition~\ref{propos_group} (list of groups satisfying these properties), and Proposition~\ref{propos_no_PSU} (excluding the group~$\PSU(3,2)$), which together imply Theorem~\ref{thm_main_hom}. In Section~\ref{section_no_4}, we obtain an auxiliary result on the absence of elements of order~$4$ in the symmetry group of a $15$-vertex $\F_2$-homology $8$-manifold that satisfies complimentarity. Further, we prove Propositions~\ref{propos_prop}, \ref{propos_group}, and~\ref{propos_no_PSU} in Sections~\ref{section_prop}, \ref{section_group}, and~\ref{section_no_PSU}, respectively.

\smallskip

The author is grateful to Denis Gorodkov, Vasilii Rozhdestvenskii, and Constantin Shramov for useful discussions. The author would like to especially thank Andrey Vasil'ev, who pointed out a way to radically simplify the proof of Proposition~\ref{propos_group} by using the results of Higman and Suzuki on groups in which every element has prime power order.

\section{Preliminaries on simplicial complexes}\label{section_simpcomp}

\subsection{Simplicial complexes and group actions}

\begin{defin}
An (\textit{abstract}) \textit{simplicial complex} on vertex set~$V$ is a set~$K$ of finite subsets of~$V$ such that
\begin{itemize}
 \item $\varnothing\in K$,
 \item if $\sigma\in K$ and $\rho\subset\sigma$, then $\rho\in K$.
\end{itemize}
The \textit{dimension} of a simplex~$\sigma$ is the cardinality of~$\sigma$ minus one.
\end{defin}

We always assume that a simplicial complex has no \textit{ghost vertices}, that is, all one-element subsets of~$V$ are simplices of~$K$. In this paper we will work with finite simplicial complexes only. Throughout the paper, we denote by $\chi(K)$ the Euler characteristic of~$K$.
We denote by~$\Delta^k$ the standard $k$-dimensional simplex and by~$\partial\Delta^k$ the boundary of it.

The \textit{geometric realization}~$|K|$ is the set of all linear combinations
$$
x=\sum_{v\in V}x_{v}v
$$
such that
\begin{itemize}
 \item $x_v\in\R_{\ge 0}$,
 \item $\sum_{v\in V}x_v=1$,
 \item the set of all~$v$ with $x_v\ne 0$ is a simplex of~$K$.
\end{itemize}
The set $|K|$ is always endowed with the topology of the CW complex. For a simplex~$\sigma\in K$, the point
$$
b(\sigma)=\frac{1}{|\sigma|}\sum_{v\in \sigma}v
$$
is called the \textit{barycentre} of~$\sigma$.

We say that a finite group~$G$ \textit{acts simplicially} on~$K$ if $G$ acts on the vertex set~$V$ so that every element of~$G$ takes simplices of~$K$ to simplices of~$K$ and non-simplices of~$K$ to non-simplices of~$K$. A simplicial action of~$G$ on~$K$ induces a piecewise linear action of~$G$ on the geometric realization~$|K|$. We denote by~$|K|^G$ the set of all $G$-fixed points in~$|K|$. The following proposition is standard.

\begin{propos}\label{propos_KG}
 Suppose that $K$ is a finite simplicial complex on vertex set~$V$ with a simplicial action of a finite group~$G$. Let $\sigma_1,\ldots,\sigma_s$ be all $G$-orbits in~$V$ that are simplices of~$K$. Then the fixed point set~$|K|^G$ coincides with the geometric relization of the simplicial complex~$K^G$ with vertices $b(\sigma_1),\ldots,b(\sigma_s)$ such that
 $\bigl\{b(\sigma_{i_1}),\ldots,b(\sigma_{i_t})\bigr\}\in K^G$ if and only if $\sigma_{i_1}\cup\cdots\cup\sigma_{i_t}\in K$.
\end{propos}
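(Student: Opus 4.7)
The plan is to unwind the definition of the $G$-fixed point set coordinate-wise and then recognize the resulting subspace as the realization of the described complex $K^G$.

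First I would analyze the condition $gx = x$ pointwise. For a point $x = \sum_{v \in V} x_v v$ in $|K|$ and $g \in G$, the action gives $gx = \sum_v x_v (gv) = \sum_v x_{g^{-1}v} v$. Hence $x \in |K|^G$ if and only if the coefficient function $v \mapsto x_v$ is constant on every $G$-orbit in $V$. Next, let $\mathrm{supp}(x) = \{v : x_v > 0\}$; by the definition of $|K|$ this is always a simplex of $K$, and if $x$ is $G$-fixed then $\mathrm{supp}(x)$ is $G$-invariant, hence a union of $G$-orbits. Since $\mathrm{supp}(x)$ is itself a simplex, each of these orbits is a subset of a simplex and therefore a simplex of $K$. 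Thus $\mathrm{supp}(x) = \sigma_{i_1}\cup\cdots\cup\sigma_{i_t}$ for some of the orbits listed in the statement.

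Then I would set up the explicit bijection. Given $x \in |K|^G$ with support as above, write $\lambda_j = |\sigma_{i_j}|\,x_v$ for any $v \in \sigma_{i_j}$ (well defined by the orbit-constancy). A direct calculation gives
\[
\sum_{j=1}^t \lambda_j\, b(\sigma_{i_j}) = \sum_{j=1}^t \sum_{v \in \sigma_{i_j}} \frac{\lambda_j}{|\sigma_{i_j}|}\, v = \sum_{v \in \mathrm{supp}(x)} x_v\, v = x,
\]
with $\lambda_j > 0$ and $\sum_j \lambda_j = \sum_v x_v = 1$. Conversely, any convex combination $\sum_j \lambda_j b(\sigma_{i_j})$ with $\lambda_j \geq 0$, $\sum \lambda_j = 1$, is $G$-fixed (since each $b(\sigma_{i_j})$ is) and lies in $|K|$ precisely when $\sigma_{i_1}\cup\cdots\cup\sigma_{i_t}$ is a simplex of $K$. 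This matches the face relation stipulated for $K^G$, and the map is evidently a homeomorphism in each closed cell, so altogether it is a PL homeomorphism $|K^G| \to |K|^G$.

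Finally I would make sure the combinatorial object $K^G$ is indeed a simplicial complex: if $\sigma_{i_1}\cup\cdots\cup\sigma_{i_t}$ is a simplex of $K$, then so is any sub-union, because the faces of a simplex are all its subsets; and the empty union corresponds to $\varnothing \in K$. There is no serious obstacle here; the only point that requires a moment's thought is the observation that any $G$-orbit contained in a simplex $\tau \in K$ is automatically a face of $\tau$ and therefore a simplex of $K$, which is what allows us to decompose $\mathrm{supp}(x)$ into orbit-simplices $\sigma_{i_j}$ in the first place.
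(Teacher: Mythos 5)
Your argument is correct: the observation that a $G$-fixed point is exactly one whose barycentric coordinates are constant on orbits, followed by the explicit affine bijection $(\lambda_j)\mapsto\sum_j\lambda_j b(\sigma_{i_j})$ with $\lambda_j=|\sigma_{i_j}|\,x_v$, is the standard proof of this fact. The paper itself gives no proof (it labels the proposition as standard), so there is nothing to contrast with; your write-up supplies the details correctly, including the small but necessary point that any $G$-orbit contained in the support of a fixed point is a face of a simplex and hence itself a simplex.
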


The simplicial complex~$K^G$ will be called the \textit{fixed point complex} of the action.

\subsection{Pseudomanifods, combinatorial manifolds, and homology manifolds}

\begin{defin}
 A finite simplicial complex~$K$ is called a \textit{weak $d$-pseudomanifold} if it satisfies the following two conditions:
 \begin{enumerate}
  \item every simplex of~$K$ is contained in a $d$-simplex of~$K$,
  \item every $(d-1)$-simplex of~$K$ is contained in exactly two $d$-simplices of~$K$.
  \end{enumerate}
  A weak $d$-pseudomanifold $K$ is called a \textit{$d$-pseudomanifold} if, in addition, it satisfies the condition
  \begin{itemize}
  \item[(3)] $K$ is \textit{strongly connected}, i.\,e., for any two $d$-simplices $\sigma,\tau\in K$, there exists a sequence of $d$-simplices $\sigma=\rho_1,\rho_2,\ldots,\rho_n=\tau$ such that $\dim(\rho_i\cap\rho_{i+1})=d-1$ for all~$i$.
  \end{itemize}
 A $d$-pseudomanifold is said to be \textit{orientable} if the $d$-simplices of it can be endowed with compatible orientations, which means that the orientations of~$\sigma$ and~$\tau$ should induce the opposite to each other orientations of~$\sigma\cap\tau$ whenever $\dim(\sigma\cap\tau)=d-1$.
\end{defin}

We will need the following easy proposition.

\begin{propos}\label{propos_<d}
 Suppose that a nontrivial finite group~$G$ acts faithfully and simplicially on a $d$-pseudomanifold~$K$. Then $\dim K^G$ is strictly smaller than~$d$.
\end{propos}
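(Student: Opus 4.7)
The plan is to reach a contradiction by assuming $\dim K^G\ge d$ and showing that $G$ would then fix every vertex of $K$, which contradicts faithfulness of the nontrivial group.

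First, suppose for contradiction that $\dim K^G\ge d$. By Proposition~\ref{propos_KG}, a simplex of dimension $\ge d$ in~$K^G$ corresponds to $G$-orbits $\sigma_0,\sigma_1,\ldots,\sigma_d$ of vertices of~$K$ (each $\sigma_i$ a simplex of~$K$) whose union $\tau=\sigma_0\cup\cdots\cup\sigma_d$ is itself a simplex of~$K$. Since $K$ has dimension~$d$, we have $|\tau|\le d+1$, while $|\tau|=\sum|\sigma_i|\ge d+1$. Hence equality holds everywhere: each orbit~$\sigma_i$ is a single $G$-fixed vertex~$v_i$, and $\tau=\{v_0,\ldots,v_d\}$ is a $d$-simplex all of whose vertices are fixed by~$G$.

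Next I would propagate the fixing property along the strong connectedness graph. Suppose $\rho$ is a $d$-simplex all of whose vertices are $G$-fixed, and $\rho'$ is a $d$-simplex with $\dim(\rho\cap\rho')=d-1$. Writing $\rho'=(\rho\cap\rho')\cup\{w\}$, the fact that $G$ fixes the facet $\rho\cap\rho'$ pointwise implies $g\rho'$ contains this facet for every $g\in G$, so $g\rho'\in\{\rho,\rho'\}$. But the unique vertex of $g\rho'$ outside the facet is $gw$, and $gw$ cannot equal the vertex of~$\rho$ outside the facet because that vertex is already fixed by~$g$ and $w$ is distinct from it. Hence $g\rho'=\rho'$, and since $g$ fixes all other vertices of~$\rho'$, it fixes $w$ too. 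Iterating along a chain of $d$-simplices joining~$\tau$ to an arbitrary $d$-simplex of~$K$ (which exists by strong connectedness) shows that every vertex of every $d$-simplex of~$K$ is $G$-fixed. By condition~(1) in the definition of a pseudomanifold, every vertex of~$K$ lies in some $d$-simplex, so $G$ fixes all vertices of~$K$. Since the action is simplicial and determined by its effect on vertices, $G$ acts trivially on~$K$, contradicting faithfulness of the nontrivial group~$G$.

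The argument is almost entirely formal; the only non-trivial step is the propagation lemma, where one must carefully use both strong connectedness (condition~(3)) and the exact pairing of $(d-1)$-simplices with pairs of $d$-simplices (condition~(2)) to rule out the possibility $g\rho'=\rho$. I expect that to be the only place where a reader might pause, and it is really just the observation that a permutation of a finite set fixing all but one element must fix that element as well.
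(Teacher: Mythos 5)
Your proof is correct and follows essentially the same route as the paper: extract a $d$-simplex all of whose vertices are $G$-fixed from a top-dimensional simplex of $K^G$, propagate the pointwise-fixing property across $(d-1)$-dimensional intersections using condition (2), and conclude by strong connectedness. You simply spell out the propagation step that the paper dismisses as ``easy to see.''
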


\begin{proof}
 Obviously, $\dim K^G\le d$. Assume that $\dim K^G=d$. Then there exists a $d$-simplex $\sigma\in K$ such that every element of~$G$ fixes every vertex of~$\sigma$. It is easy to see that if a $d$-simplex~$\sigma$ satisfies this property, then any $d$-simplex $\tau\in K$ such that $\dim(\sigma\cap\tau)=d-1$ also satisfies the same property. Since $K$ is strongly connected, it follows that every element of~$G$ fixes every vertex of~$K$ and hence the action of $G$ on~$K$ is not faithful.
\end{proof}

\begin{defin}
 The \textit{link} of a simplex~$\sigma$ of a simplicial complex~$K$ is the simplicial complex
 $$
 \link(\sigma,K)=\{\tau\in K\colon \sigma\cap\tau=\varnothing,\, \sigma\cup\tau\in K\}.
 $$
\end{defin}

\begin{defin}
 A finite simplicial complex~$K$ is called a \textit{combinatorial $d$-manifold} if
 \begin{enumerate}
 \item every simplex of~$K$ is contained in a $d$-simplex,
 \item for each simplex $\sigma\in K$ such that $\sigma\ne\varnothing$ and $\dim\sigma<d$, the simplicial complex $\link(\sigma,K)$ is PL homeomorphic to the standard $(d-\dim\sigma-1)$-sphere.
 \end{enumerate}
\end{defin}

\begin{defin}\label{defin_hm}
Suppose that $R$ is either the ring~$\Z$ or a finite field.
A finite simplicial complex~$K$ is called an $R$-\textit{homology} $d$-\textit{manifold} if
\begin{enumerate}
 \item every simplex of~$K$ is contained in a $d$-simplex,
 \item for each simplex $\sigma\in K$ such that $\sigma\ne\varnothing$ and $\dim\sigma<d$, there is an isomorphism of graded groups
 $$
 H_*\bigl(\link(\sigma,K); R\bigr)\cong H_*\bigl(S^{d-\dim\sigma-1};R\bigr).
 $$
\end{enumerate}
\end{defin}

The following assertions are straightforward:
\begin{itemize}
 \item Any combinatorial $d$-manifold is a $\Z$-homology $d$-manifold.
 \item Any $\Z$-homology $d$-manifold is a $\F_q$-homology $d$-manifold for any finite field~$\F_q$.
 \item If $R$ is either~$\Z$ or a finite field, then any connected $R$-homology $d$-manifold is a $d$-pseudomanifold.
\end{itemize}

\subsection{Complementarity}

The following property will play a key role in our considerations.

\begin{defin}
 We say that a finite simplicial complex~$K$ on vertex set~$V$ \textit{satisfies complementarity} if, for each subset $\sigma\subseteq V$, exactly one of the two subsets~$\sigma$ and~$V\setminus \sigma$ is a simplex of~$K$.
\end{defin}

The following theorem by Arnoux and Marin~\cite[\S 20]{ArMa91}, being combined with Theorem~\ref{thm_Novik},  implies  that all $(3d/2+3)$-vertex $\Z$-homology $d$-manifolds with $H_*(K;\F_2)\not\cong H_*(S^d;\F_2)$ satisfy complementarity.

\begin{theorem}[Arnoux--Marin]\label{thm_ArMa}
 Suppose that $K$ is a simplicial complex such that the cohomology ring~$H^*(K;\F_2)$ contains a subring isomorphic to $\F_2[a]/(a^3)$, where $m=\deg a$ is even. Then $K$ has at least $3m+3$ vertices. Moreover, if $K$ has exactly $3m+3$ vertices, then $K$ satisfies complimentarity.
\end{theorem}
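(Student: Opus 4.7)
The plan is to use combinatorial Alexander duality together with the cohomology ring structure. Let $V = V(K)$, $n = |V|$, and assume (as we may, for otherwise $\tilde H^*(K;\F_2) = 0$) that $K$ is a proper subcomplex of the full simplex on~$V$. Then $|K|$ sits inside $\partial\Delta^{n-1}\cong S^{n-2}$, and the combinatorial Alexander dual
$$
K^\vee = \{\sigma\subseteq V : V\setminus\sigma\notin K\}
$$
satisfies $\tilde H_i(K;\F_2)\cong\tilde H^{n-i-3}(K^\vee;\F_2)$. A key observation is that complementarity of~$K$ is equivalent to the equality $K=K^\vee$ of simplicial complexes on~$V$.

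First I would extract crude dimension bounds. Since $a$ and $a^2$ are nonzero, one has $H^m(K;\F_2)\neq 0$ and $H^{2m}(K;\F_2)\neq 0$, so $\dim K\ge 2m$. Alexander duality then gives $H^{n-m-3}(K^\vee;\F_2)\neq 0$ and $H^{n-2m-3}(K^\vee;\F_2)\neq 0$; in particular $\dim K^\vee\ge n-m-3$, i.e.\ the minimal non-face of~$K$ has at most $m+2$ vertices.

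The crux is to use the relation $a^3=0$ together with $a^2\neq 0$ to upgrade this to $n\ge 3m+3$. I would fix a total order on~$V$, represent $a$ by a simplicial cocycle $\alpha$, and represent $a^2$ by the Alexander--Whitney cup product $\alpha\smile\alpha$; since $a^3=0$, the cocycle $\alpha\smile\alpha\smile\alpha$ equals $\delta\gamma$ for some $(3m-1)$-cochain~$\gamma$. Pairing this identity with cycles of dimensions $m$, $2m$, and $3m$ and rewriting via the combinatorial correspondence $\sigma\leftrightarrow V\setminus\sigma$ underlying Alexander duality should produce a linear relation on the face numbers of $K$ and $K^\vee$ that is incompatible with $n<3m+3$. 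The parity hypothesis that $m$ is even enters here to control graded-commutativity signs, so that the cup product descends correctly to $K^\vee$; equivalently, it ensures $a\smile a^2 = a^2\smile a$ can be identified with an intersection-theoretic pairing via duality.

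For the equality case $n=3m+3$, each of the inequalities from the previous steps becomes sharp, leaving no slack in the face counts: were some subset $\sigma\subseteq V$ to satisfy $\sigma\in K$ and $V\setminus\sigma\in K$ simultaneously (or both to be non-simplices), one of the sharp face-number relations extracted in step three would be violated. Hence $K=K^\vee$, which is precisely complementarity. The main obstacle is step three: translating the purely cohomological relation $a^3=0$ into an effective combinatorial identity that can be combined with Alexander duality to produce the bound. I expect to work either directly at the cochain level with the Alexander--Whitney formula, or, more elegantly, to pass through the Bier-type sphere $K*_\Delta K^\vee$, a PL $(n-2)$-sphere on which the cup product structures coming from $K$ and from $K^\vee$ can be compared on equal footing.
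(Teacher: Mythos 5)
First, a point of comparison: the paper does not prove Theorem~\ref{thm_ArMa} at all --- it is imported verbatim from Arnoux and Marin~\cite{ArMa91}, so there is no in-paper argument to measure your proposal against, and I can only judge the sketch on its own terms.

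Your framing is sensible as far as it goes: combinatorial Alexander duality is a natural language here, your observation that complementarity is precisely the identity $K=K^\vee$ is correct, and the easy consequences you draw ($\dim K\ge 2m$ from $a^2\ne 0$, and the existence of a non-face on at most $m+2$ vertices) are fine. But the proposal has a genuine gap exactly where you yourself place ``the main obstacle'': the passage from the ring relations $a^2\ne 0$, $a^3=0$ to the inequality $n\ge 3m+3$ is never carried out. Writing $\alpha\smile\alpha\smile\alpha=\delta\gamma$ and saying that pairing this ``with cycles of dimensions $m$, $2m$, and $3m$'' should ``produce a linear relation on the face numbers incompatible with $n<3m+3$'' names a hoped-for conclusion rather than an argument: pairing a coboundary with a cycle gives $0$ identically, no candidate identity is exhibited, and it is not clear that any relation among face \emph{numbers} (as opposed to a finer statement about which faces and non-faces occur and how the supports of $\alpha$ and of dual cycles interact) could yield the bound. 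Since the equality case --- complementarity when $n=3m+3$ --- is derived from the ``sharp face-number relations'' of that same missing step, it inherits the gap. Two further concerns. The crude bounds you do establish give only $n\ge 2m+1$ plus the existence of one small non-face, which is far from $3m+3$; the entire content of the theorem lies in the interaction between $K$ and $K^\vee$ forced by the nontrivial square $a\smile a=a^2$, and that interaction is exactly what is not supplied. And your explanation of where the hypothesis ``$m$ even'' enters --- ``to control graded-commutativity signs'' --- cannot be the real reason, since all coefficients are in $\F_2$ and there are no signs; the parity hypothesis must be doing genuine work elsewhere in any correct proof, and your sketch gives no indication of where.
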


The next theorem is due to Datta~\cite{Dat98}, except for the case of a $6$-pseudomanifold with $12$~vertices, which was excluded by Bagchi and Datta~\cite{BaDa04}.

\begin{theorem}[Datta, Bagchi--Datta]\label{thm_Datta}
Suppose that $K$ is a $d$-pseudomanifold that satisfies complementarity, where $d>0$. Then one of the following assertions hold:
\begin{itemize}
 \item $K\cong\RP^2_6$,
 \item $K\cong\CP^2_9$,
 \item $d\ge 7$ and the number of vertices of~$K$ is at least~$d+7$.
\end{itemize}
\end{theorem}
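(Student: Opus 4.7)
Let $n = f_0(K)$ and $m = n - d$. Writing $s_k$ for the number of $k$-element subsets of $V$ that belong to $K$, complementarity gives the symmetric identity $s_k + s_{n-k} = \binom{n}{k}$; since $s_k = 0$ for $k > d+1$, every subset of size $\le n - d - 2$ is a simplex of~$K$, so $K$ is $(n-d-3)$-neighborly and $f_i = \binom{n}{i+1}$ for $i \le n - d - 3$. My plan is to case-split on~$m$, combining this neighborliness with the pseudomanifold incidence identity $(d+1)f_d = 2f_{d-1}$ and, when needed, with the corresponding analysis applied to vertex links $\link(v,K)$, which are $(d-1)$-pseudomanifolds on $n-1$ vertices.

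First, $m \le 3$ should force $f_d = 0$: for $m = 2$ every singleton is a simplex, so its complement of size $d+1$ is not, hence there are no $d$-simplices; a similar contradiction can be extracted for $m = 3$ from the parity relation $(d+1)f_d = 2f_{d-1}$ combined with $f_d + f_1 = \binom{d+3}{2}$ and the requirement that every edge lies in a $d$-simplex. The cases $m = 4$ and $m = 5$ are the heart of Datta's original argument~\cite{Dat98}: combining complementarity in the middle dimensions with the pseudomanifold relation and with the Euler characteristic computed from the already-determined face numbers produces a finite system of Diophantine constraints that forces $d = 2$ in the former case and $d = 4$ in the latter. For these values one recovers $\RP^2_6$ and~$\CP^2_9$ from the classification of $6$-vertex surfaces and of $9$-vertex combinatorial $4$-pseudomanifolds with complementarity respectively, identifying the cohomology ring via Arnoux--Marin's Theorem~\ref{thm_ArMa}.

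The main obstacle is $m = 6$, which the arithmetic alone does not rule out: it permits $d = 6$, $n = 12$, and this is precisely the case treated in~\cite{BaDa04}. My plan here is to pass to vertex links. In a hypothetical $12$-vertex $6$-pseudomanifold $K$ with complementarity, each link $L_v = \link(v, K)$ is an $11$-vertex $5$-pseudomanifold, and the global complementarity of $K$ translates into an explicit condition on which subsets of $V \setminus \{v\}$ belong to~$L_v$, allowing one to read off $s_k(L_v)$ from $s_k(K)$ and from which $d$-simplices contain~$v$. Combining these link-level constraints across all $12$ vertices, together with double counts of $d$-, $(d-1)$-, and $(d-2)$-faces, I expect to obtain enough rigidity to enumerate the possible link structures and check that none of them extends consistently to a global $K$. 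This enumeration is long but purely combinatorial, and it is where I anticipate the real work of the proof to lie; everything up to this point is essentially forced by the face-number identities. Once $m = 6$ is eliminated, $m \ge 7$ (equivalently $n \ge d + 7$) is immediate, and then the Brehm--K\"uhnel-type lower bound $3d/2 + 3 \le n$ combined with $n \ge d+7$ shows $d \ge 8$ for pseudomanifolds on the ``projective plane'' boundary; a direct check excludes the remaining low dimensions $d \in \{1,3,5\}$ (where $m \ge 7$ is already incompatible with $(d+1)f_d = 2f_{d-1}$ and the neighborliness-forced $f_{d-1}$), yielding the stated bound $d \ge 7$.
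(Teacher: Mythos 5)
First, a point of comparison: the paper does not prove this theorem at all. It is imported from the literature, with everything except the $12$-vertex $6$-dimensional case attributed to Datta~\cite{Dat98} and that remaining case to Bagchi and Datta~\cite{BaDa04}. So the question is whether your sketch constitutes a proof, and it does not. Your framework is the right one and does match the spirit of Datta's argument: case analysis on $m=n-d$, the forced neighborliness $f_i=\binom{n}{i+1}$ for $i\le n-d-3$, and the ridge--facet count $(d+1)f_d=2f_{d-1}$; your treatment of $m\le 2$ is fine. But the three steps carrying all the content are left as declarations of intent. For $m=4$ and $m=5$ you must not only pin down $(d,n)=(2,6)$ and $(4,9)$ but also prove that $K$ is isomorphic to~$\RP^2_6$, respectively~$\CP^2_9$; the latter uniqueness is a genuinely hard statement (compare the paper's own discussion of the uniqueness of~$\CP^2_9$, resting on~\cite{KuLa83}, \cite{BrKu87}, \cite{ArMa91}), and your appeal to Theorem~\ref{thm_ArMa} is backwards---that theorem takes a cohomological hypothesis and outputs complementarity, not the reverse. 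For $m=6$ you write that you ``expect'' a link-by-link enumeration to close the case; that case is the entire content of~\cite{BaDa04} and cannot be waved through as an anticipated computation.

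Second, your concluding derivation of $d\ge 7$ is broken as written. The inequality $n\ge 3d/2+3$ is not available for a bare pseudomanifold with complementarity (Brehm--K\"uhnel requires a combinatorial manifold; Arnoux--Marin requires the subring $\F_2[a]/(a^3)$ in $\F_2$-cohomology), and in any case two lower bounds on~$n$ cannot yield a lower bound on~$d$. What is available is the upper bound $n\le 2d+3$ (a $(d+2)$-set is a non-face, so its complement, of size $n-d-2$, is a face, forcing $n-d-2\le d+1$), which with $n\ge d+7$ gives only $d\ge 4$. Your proposed mechanism---when $n\ge 2d+2$ all $d$-sets are faces, so $f_{d-1}=\binom{n}{d}$ and the incidence count fails---does in fact dispose of every $d\le 5$ with $m\ge 7$, since then $n\ge d+7\ge 2d+2$; but your residual list $d\in\{1,3,5\}$ is the wrong one, and it omits $d=6$, where $n=13=2d+1$ survives this mechanism (neighborliness only forces $5$-sets, not $6$-sets, to be faces) and must be killed separately, e.g.\ by the integrality failure in $9f_6=2\binom{13}{6}=3432$ coming from pairing $6$-sets with $7$-sets. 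Until the identifications in the cases $m=4,5$, the exclusion of $m=6$, and the case $d=6$, $m\ge7$ are actually carried out, the proposal is an outline of two published papers rather than a proof.
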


\begin{remark}
 Throughout the paper, we will also encounter another type of simplicial complexes that satisfy complementarity, namely, the disjoint unions~$\pt\sqcup\partial\Delta^k$, where $\pt$ is a point and~$\partial\Delta^k$ is the boundary of a $k$-dimensional simplex. It is not hard to show that they are the only disconnected simplicial complexes that satisfy complementarity. In particular, for $k=1$, we arrive at the disjoint union of three points.
\end{remark}

We will also need the following easy observation by Bagchi and Datta, see~\cite{BaDa04}.

\begin{propos}[Bagchi--Datta]\label{propos_BD}
Suppose that $K$ is a finite simplicial complex that satisfies complementarity. Then the Euler characteristic $\chi(K)$ is odd. Moreover, if the number of vertices of~$K$ is even, then $\chi(K)=1$.
\end{propos}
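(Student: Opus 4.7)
The plan is to exploit the complementarity involution $\sigma\leftrightarrow V\setminus\sigma$ acting on the power set of~$V$. Setting $n=|V|$, this involution partitions the $2^n$ subsets of~$V$ into $2^{n-1}$ orbits of size~$2$, and the hypothesis is precisely that exactly one member of each orbit lies in~$K$. Consequently $K$ has $2^{n-1}$ simplices in total, hence $2^{n-1}-1$ non-empty simplices. A quick check shows that the no-ghost-vertices condition is incompatible with complementarity for $n\leq 2$ (a singleton $\{v\}$ and its complement would both have to lie in~$K$), so we may assume $n\geq 3$.

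The first assertion then follows by reducing $\chi(K)=\sum_{i\geq 0}(-1)^i f_i$ modulo~$2$:
\[
\chi(K)\equiv\sum_{i\geq 0}f_i = 2^{n-1}-1\equiv 1\pmod 2,
\]
since $n-1\geq 2$. Thus $\chi(K)$ is odd.

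For the moreover part, assume $n$ is even. I would start from the elementary identity
\[
\sum_{\varnothing\neq\sigma\subseteq V}(-1)^{|\sigma|-1}=1
\]
and split the left-hand side according to whether $\sigma\in K$ or $\sigma\notin K$. The first contribution is $\chi(K)$ by definition. For the second, complementarity provides a bijection between non-simplices $\sigma$ and simplices $\tau=V\setminus\sigma\in K$, with sign rule $(-1)^{|\sigma|-1}=(-1)^{n-|\tau|-1}$. Pulling off the distinguished term $\tau=\varnothing$ (corresponding to $\sigma=V$, contributing $(-1)^{n-1}$) and rewriting the rest as $(-1)^n\chi(K)$, the identity becomes
\[
(1+(-1)^n)\chi(K)=1-(-1)^{n-1}=1+(-1)^n,
\]
so $\chi(K)=1$ whenever $n$ is even.

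No substantive obstacle is anticipated: the whole argument reduces to a short double-counting computation combined with the sign-reversing involution. The only minor care needed is to separate the distinguished pair $\{\varnothing,V\}$ from the remaining pairs when invoking the complementarity bijection, and to notice that for odd~$n$ the last identity degenerates to the vacuous $0=0$---but in that range the parity computation of the first step alone already furnishes the full conclusion.
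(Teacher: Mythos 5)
Your proof is correct. The paper does not actually prove this proposition --- it is quoted from Bagchi--Datta~\cite{BaDa04} without proof --- and your argument (the free complementation involution giving $2^{n-1}-1$ nonempty faces for the parity claim, plus the sign-reversing bijection between non-faces and faces for the case of even $n$) is the standard one and is complete; all the small details (excluding $n\le 2$ via the no-ghost-vertices convention, separating the pair $\{\varnothing,V\}$, and the degeneration to $0=0$ for odd $n$) check out.
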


From the Poincar\'e duality it follows that the Euler characteristic of any odd-dimensio\-nal $\F_p$-homology manifold is equal to zero.

\begin{cor}\label{cor_no_odd_hom_mfld}
 No odd-dimensional $\F_p$-homology manifold satisfies complementarity.
\end{cor}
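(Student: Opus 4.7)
The plan is to combine directly the two facts that the paper has just put in place: Proposition~\ref{propos_BD}, which forces $\chi(K)$ to be odd whenever $K$ satisfies complementarity, and the Poincaré duality statement in the paragraph immediately preceding the corollary, which forces $\chi(K)=0$ whenever $K$ is an odd-dimensional $\F_p$-homology manifold. These two conclusions are incompatible, so the corollary will follow by a one-line proof by contradiction.

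Concretely, I would assume that $K$ is an $\F_p$-homology $d$-manifold with $d$ odd that satisfies complementarity, invoke the quoted Poincaré duality consequence to get $\chi(K)=0$, invoke Proposition~\ref{propos_BD} to get that $\chi(K)$ is odd, and observe that $0$ is not odd. There is essentially no obstacle: the statement is a direct consequence of already-stated results, requires no new ideas, and needs no case analysis. The only subtlety worth flagging is the justification of the Poincaré duality step itself for odd primes~$p$; since $\chi$ does not depend on the coefficient field for a finite complex, one may reduce to the $\F_2$-case, where every $\F_2$-homology manifold is automatically $\F_2$-orientable and so $H_i(K;\F_2)\cong H_{d-i}(K;\F_2)$, making the alternating sum $\chi(K)=\sum_i (-1)^i\dim_{\F_2} H_i(K;\F_2)$ collapse to zero when $d$ is odd.
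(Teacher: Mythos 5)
Your main argument is exactly the paper's: the corollary is stated with no separate proof precisely because it follows by combining the sentence immediately preceding it (Poincar\'e duality gives $\chi(K)=0$ for an odd-dimensional $\F_p$-homology manifold) with Proposition~\ref{propos_BD} ($\chi(K)$ is odd under complementarity), and $0$ is not odd. So on the level the paper works at, your proposal is correct and identical in approach.

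One caveat about the side remark you flag as ``the only subtlety'': your proposed justification of the Poincar\'e duality step does not work as written. Reducing to $\F_2$ coefficients requires $K$ to be an $\F_2$-homology manifold, but Definition~\ref{defin_hm} only guarantees that the links have the $\F_p$-homology of spheres; for odd $p$ this does not imply they have the $\F_2$-homology of spheres, so $\F_2$-Poincar\'e duality is not available. The coefficient-independence of $\chi$ for a finite complex is fine, but the duality itself must be run with $\F_p$ coefficients: in the $\F_p$-orientable case one gets $H_i(K;\F_p)\cong H_{d-i}(K;\F_p)$ directly, and in the non-orientable case one passes to the orientation double cover (again an $\F_p$-homology manifold), where $\chi$ doubles and vanishes by duality. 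Since the paper simply asserts the Poincar\'e duality consequence without proof, this does not put you at odds with the paper's argument, but the $\F_2$-reduction as you stated it is a genuine misstep.
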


Complementarity is closely related with neighborliness.

\begin{defin}
 A simplicial complex~$K$ on vertex set~$V$ is said to be $k$-\textit{neighborly} if every $k$-element subset of~$V$ is a simplex of~$K$.
\end{defin}

The next proposition follows immediately from the definition.

\begin{propos}\label{propos_comp_neigh}
 Suppose that $K$ is an $n$-vertex $d$-dimensional simplicial complex that satisfies complementarity. Then $K$ is $(n-d-2)$-neighborly.
\end{propos}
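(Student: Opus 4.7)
The plan is to exploit the dimension constraint to force all small subsets to be simplices via complementarity. First I would fix a subset $\sigma \subseteq V$ of cardinality $n-d-2$ and consider its complement $V \setminus \sigma$, which has cardinality $n - (n-d-2) = d+2$. Since $K$ is $d$-dimensional, every simplex of $K$ has at most $d+1$ vertices, so $V \setminus \sigma$, having $d+2$ vertices, cannot be a simplex of $K$.

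Next, I would invoke the complementarity property: for the subset $\sigma$, exactly one of $\sigma$ and $V \setminus \sigma$ lies in $K$. Since we have just ruled out $V \setminus \sigma$, it must be $\sigma$ that is a simplex of $K$. As this argument applies to an arbitrary $(n-d-2)$-element subset of $V$, the complex $K$ is $(n-d-2)$-neighborly by definition.

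There is no real obstacle here; the statement is essentially an unwinding of the two definitions combined with the elementary observation that a $d$-dimensional simplicial complex admits no simplex of cardinality exceeding $d+1$. No input from earlier results (Smith theory, Arnoux--Marin, Datta, etc.) is needed.
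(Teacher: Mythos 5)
Your argument is correct and is exactly the intended one: the paper states that the proposition ``follows immediately from the definition'' and omits the details, which are precisely your observation that the complement of an $(n-d-2)$-set has $d+2>d+1$ elements and hence cannot be a simplex, so complementarity forces the set itself to be one. Nothing further is needed.
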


Finally, we will need the following result on how the complementarity property is inherited when passing to the fixed point complex.

\begin{propos}\label{propos_complement}
 Suppose that $K$ is a finite simplicial complex that satisfies complementarity, $V$ is the vertex set of~$K$, and $G$ is a finite group acting simplicially on~$K$. Let $r$ be the number of $G$-orbits in~$V$.
 \begin{enumerate}
  \item If $r=1$, then $K^G$ is empty.
  \item If $r=2$, then $K^G$ is a point.
  \item If $r\ge 3$, then  either $K^G$ is an $(r-2)$-simplex or $K^G$ has $r$ vertices and satisfies complementarity.
 \end{enumerate}
 Moreover, $K^G$ is either empty or a simplex if and only if at least one of the $H$-orbits in~$V$ is not a simplex of~$K$.
\end{propos}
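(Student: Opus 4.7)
The plan is to apply Proposition~\ref{propos_KG} to translate the structure of~$K^G$ into a combinatorial question about which unions of $G$-orbits lie in~$K$. Let $\sigma_1,\ldots,\sigma_r$ be the $G$-orbits in~$V$. Since the $G$-invariant subsets of~$V$ are precisely the unions $\bigcup_{i\in I}\sigma_i$ for $I\subseteq\{1,\ldots,r\}$, the complementarity property of~$K$ yields the following reformulation: for every $I\subseteq\{1,\ldots,r\}$, exactly one of $\bigcup_{i\in I}\sigma_i$ and $\bigcup_{i\notin I}\sigma_i$ is a simplex of~$K$. In particular, taking $I=\varnothing$ and using $\varnothing\in K$, we obtain $V\notin K$.

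The central combinatorial observation is that \emph{at most one} of the orbits~$\sigma_i$ can fail to be a simplex of~$K$. Indeed, if $\sigma_{j_1},\sigma_{j_2}\notin K$ with $j_1\neq j_2$, then by complementarity $V\setminus\sigma_{j_1}\in K$; but $\sigma_{j_2}\subseteq V\setminus\sigma_{j_1}$, so downward closure of~$K$ would force $\sigma_{j_2}\in K$, a contradiction. This, together with $V\notin K$, immediately disposes of the cases $r=1$ (the unique orbit $\sigma_1=V$ is not in~$K$, so $K^G$ has no vertices and is empty) and $r=2$ (exactly one of $\sigma_1,\sigma_2$ is in~$K$, so $K^G$ is a single point).

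For $r\geq 3$ the observation leaves two possibilities. If all $r$ orbits lie in~$K$, then by Proposition~\ref{propos_KG} the complex~$K^G$ has $r$ vertices, and the set $\{b(\sigma_i):i\in I\}$ is a simplex of~$K^G$ if and only if $\bigcup_{i\in I}\sigma_i\in K$; the reformulated complementarity above then says precisely that $K^G$ satisfies complementarity, and its top face is missing because it would correspond to $V\notin K$, so $K^G$ is not itself a simplex. If instead exactly one orbit~$\sigma_{j_0}$ is not in~$K$, then $V\setminus\sigma_{j_0}\in K$, and by downward closure every union $\bigcup_{i\in I}\sigma_i$ with $j_0\notin I$ is in~$K$; hence $K^G$ contains the top simplex on its $r-1$ vertices and is the full $(r-2)$-simplex.

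The \emph{moreover} clause is then read off from the case analysis: the only situation in which $K^G$ is neither empty nor a simplex is the sub-case of $r\geq 3$ in which all orbits are simplices of~$K$; in every other case at least one orbit fails to be a simplex and $K^G$ is empty or a simplex, and conversely. The only step of genuine content is the central observation restricting the number of non-simplex orbits; once it is in hand, everything else is a routine unpacking of Proposition~\ref{propos_KG}, the definition of complementarity, and the downward closure axiom for simplicial complexes.
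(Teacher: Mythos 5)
Your proof is correct and follows essentially the same route as the paper: reduce everything to Proposition~\ref{propos_KG}, note $V\notin K$, and split on whether all orbits are simplices. The only cosmetic difference is that you isolate the ``at most one orbit fails to be a simplex'' observation as an explicit lemma, whereas the paper leaves it implicit in its Case~1 (where $\sigma_1\notin K$ forces $\sigma_2\cup\cdots\cup\sigma_r\in K$ and hence, by downward closure, every other orbit into~$K$); the content is identical.
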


\begin{proof}
 Let $\sigma_1,\ldots,\sigma_r$ be all $G$-orbits in~$V$. First, assume that $r=1$. Since $\varnothing\in K$, by complementarity we have $\sigma_1=V\notin K$. So $K^G$ is empty  by Proposition~\ref{propos_KG}.

 Second, assume that $r=2$. By complementarity, exactly one of the two $G$-orbits~$\sigma_1$ and~$\sigma_2$ is a simplex of~$K$. So $K^G$ is a point.

Third, assume that $r\ge 3$. We have two cases.
\smallskip

\textsl{Case 1: One of $G$-orbits in~$V$, say~$\sigma_1$, is not a simplex of~$K$.} Then $K^G$ is a simplicial complex with $r-1$ vertices $b(\sigma_2),\ldots,b(\sigma_r)$. Since $\sigma_1\notin K$, by complementarity we have that $\sigma_2\cup\cdots\cup\sigma_r\in K$. So $K^G$ is the $(r-2)$-simplex with the vertices $b(\sigma_2),\ldots,b(\sigma_r)$.
\smallskip

\textsl{Case 2: All $G$-orbits in~$V$ are simplices of~$K$.} Then $K^G$ is a simplicial complex with $r$ vertices $b(\sigma_1),\ldots,b(\sigma_r)$. The complementarity property for~$K^G$ follows immediately from the complementarity property for~$K$.
\end{proof}

\subsection{K\"uhnel's $\CP^2_9$}\label{subsection_K}

In this subsection we recall an explicit description of K\"uhnel's triangulation~$\CP^2_9$ in terms of an affine plane over a field of three elements. This construction is due to Bagchi and Datta, see~\cite{BaDa94}.

Let $\mathcal{P}$ be an affine plane over the field~$\F_3$. Then $|\mathcal{P}|=9$. Fix a decomposition
$$
\mathcal{P}=\ell_0\cup\ell_1\cup\ell_2,
$$
where $\ell_0$, $\ell_1$, and~$\ell_2$ are mutually parallel lines, and fix a cyclic order of these three lines. We conveniently consider the indices~$0$, $1$, and~$2$ as elements of~$\F_3$. The three lines $\ell_0$, $\ell_1$, and~$\ell_2$ will be called \textit{special}, and all other lines in~$\mathcal{P}$ \textit{non-special}.

The affine plane~$\mathcal{P}$ will serve as the set of vertices of~$\CP^2_9$. The $4$-dimensional simplices of~$\CP^2_9$ are exactly the following $36$ five-element subsets of~$\mathcal{P}$:
\begin{enumerate}
 \item The $27$ subsets of the form $m_1\cup m_2$, where $\{m_1,m_2\}$ is a pair of intersecting non-special lines in~$\mathcal{P}$.
 \item The $9$ subsets of the form $\ell_t\cup(\ell_{t+1}\setminus\{v\})$, where $t\in\F_3$ and $v\in\ell_{t+1}$.
\end{enumerate}

The simplicial complex~$\CP^2_9$ consisting of these $36$ four-simplices and all their faces is a combinatorial PL manifold PL homeomorphic to~$\CP^2_9$. Besides, $\CP^2_9$ satisfies complementarity.

\begin{remark}
 In~\cite{BaDa94} the $9$ simplices of the second type are $(\ell_t\setminus\{u\})\cup\ell_{t+1}$, where $t\in\F_3$ and $u\in\ell_t$. So our convention on the cyclic order of the three special lines is opposite to the one used in~\cite{BaDa94}.
\end{remark}

It follows immediately from the construction that the simplicial complex~$\CP^2_9$ is invariant under all affine transformations of~$\mathcal{P}$ that take special lines to special lines and preserve the cyclic order of them. In fact, such affine transformations form the whole group~$\Sym(\CP^2_9)$, which has order~$54$. If we introduce affine coordinates~$(x,y)$ in~$\mathcal{P}$ so that the special lines are the lines~$\{y=0\}$, $\{y=1\}$, and~$\{y=2\}$ in this cyclic order, then the group~$\Sym(\CP^2_9)$ consists of all transformations of the form
\begin{equation}
 \label{eq_affine_transform}
\begin{pmatrix}
 x\\ y
\end{pmatrix}
\mapsto
\begin{pmatrix*}[r]
 \pm 1 & c\\
 0 & 1
\end{pmatrix*}
\begin{pmatrix}
 x\\ y
\end{pmatrix}+
\begin{pmatrix}
 a\\ b
\end{pmatrix},\qquad a,b,c\in\F_3,
\end{equation}
which can be rewritten as
\begin{equation*}
\begin{pmatrix}
 x\\ y\\1
\end{pmatrix}
\mapsto
\begin{pmatrix*}[r]
 \pm 1 & c & a\\
 0 & 1 & b\\
 0&0&1
\end{pmatrix*}
\begin{pmatrix}
 x\\ y \\ 1
\end{pmatrix}.
\end{equation*}
It follows that $\Sym(\CP^2_9)\cong\He_3\rtimes\rC_2$.

By Sylow theorems all elements of order~$2$ in~$\Sym(\CP^2_9)$ are conjugate to each other and hence are conjugate to the transformation
$$
(x,y)\mapsto(-x,y).
$$
The following result is obtained in~\cite[Section~7.2]{BrKu92}.

\begin{propos}[Brehm--K\"uhnel]\label{propos_BK}
 For any element $g\in\Sym(\CP^2_9)$ of order~$2$, the fixed point complex~$(\CP^2_9)^{\langle g\rangle}$ is isomorphic to~$\RP^2_6$.
\end{propos}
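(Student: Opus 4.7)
My plan is to identify $K^{\langle g\rangle}:=(\CP^2_9)^{\langle g\rangle}$ as a $6$-vertex $2$-pseudomanifold satisfying complementarity, and then invoke Theorem~\ref{thm_Datta} to conclude that it is isomorphic to~$\RP^2_6$. Since all involutions in $\Sym(\CP^2_9)$ are conjugate to each other, it suffices to analyze the specific element $g\colon(x,y)\mapsto(-x,y)$ singled out in the text. Its orbits on~$\mathcal{P}$ are the three fixed points $(0,t)$, $t\in\F_3$, and the three pairs $\{(1,t),(2,t)\}$, $t\in\F_3$, so there are $r=6$ orbits in total. Because $\CP^2_9$ has $9$ vertices, dimension~$4$, and satisfies complementarity, Proposition~\ref{propos_comp_neigh} shows that it is $3$-neighborly; in particular, every subset of~$\mathcal{P}$ of size $\le 2$ is a simplex, so every $\langle g\rangle$-orbit is a simplex of~$\CP^2_9$. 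Proposition~\ref{propos_complement} then yields that $K^{\langle g\rangle}$ is a $6$-vertex simplicial complex that satisfies complementarity.

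Next I would pin down the dimension of~$K^{\langle g\rangle}$. By Smith's theorem the topological fixed set $|\CP^2_9|^{\langle g\rangle}$ is an $\F_2$-homology manifold; Proposition~\ref{propos_<d} forces $\dim K^{\langle g\rangle}<4$, and Corollary~\ref{cor_no_odd_hom_mfld} rules out odd top dimension, since every connected component of the fixed complex is an $\F_p$-homology manifold inheriting complementarity from~$K^{\langle g\rangle}$. Dimension~$0$ is immediately impossible on $6$ vertices, because any $2$-element subset and its $4$-element complement would both be non-simplices, violating complementarity. The remark following Theorem~\ref{thm_Datta} classifies disconnected complementary simplicial complexes as $\pt\sqcup\partial\Delta^k$; on $6$ vertices this would force $k=4$ and top dimension~$3$, contradicting the parity constraint just established. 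Hence $K^{\langle g\rangle}$ is connected of pure dimension~$2$, and therefore a $2$-pseudomanifold.

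Finally, Theorem~\ref{thm_Datta} applied to the $2$-pseudomanifold $K^{\langle g\rangle}$ leaves only $\RP^2_6$ as a possibility: $\CP^2_9$ has the wrong dimension and vertex count, and the third option there requires dimension at least~$7$. The most delicate step, I expect, is the dimensional analysis—specifically, ruling out a fixed complex with mixed-dimensional components or one that is not strongly connected; this is handled cleanly by combining the parity restriction from Corollary~\ref{cor_no_odd_hom_mfld} with the classification of disconnected complementary complexes noted after Theorem~\ref{thm_Datta}, both of which eliminate all configurations other than a genuine connected $2$-pseudomanifold on~$6$ vertices.
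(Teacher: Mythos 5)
The paper does not actually prove this proposition itself; it quotes it from Brehm--K\"uhnel \cite[Section~7.2]{BrKu92}. Judged on its own terms, your strategy (reduce to the standard involution $(x,y)\mapsto(-x,y)$, show the fixed complex is a $6$-vertex complex satisfying complementarity, pin down the dimension, finish with Theorem~\ref{thm_Datta}) is exactly the pattern the paper uses for the analogous Propositions~\ref{propos_2} and~\ref{propos_HP_2}, and most steps are sound.

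There is, however, one genuine gap: the claim that ``every connected component of the fixed complex is an $\F_p$-homology manifold inheriting complementarity from~$K^{\langle g\rangle}$,'' which you use to forbid odd top dimension and, in particular, to exclude the disconnected candidate $\pt\sqcup\partial\Delta^4$. Complementarity is a property of a complex relative to its \emph{entire} vertex set and is not inherited by connected components, so Corollary~\ref{cor_no_odd_hom_mfld} applies only when $K^{\langle g\rangle}$ is itself globally a homology manifold, i.e.\ when it is connected. The paper's own computations show your principle is false in general: $(\RP^2_6)^{\rC_2}\cong\pt\sqcup\partial\Delta^2$ (used in the proof of Proposition~\ref{propos_no_16}) is the fixed complex of an involution on a complementarity complex, itself satisfies complementarity, and has odd top dimension~$1$. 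Hence your parity constraint does not rule out $K^{\langle g\rangle}\cong\pt\sqcup\partial\Delta^4$, whose top component is a perfectly good $\F_2$-homology $3$-manifold. The gap is easy to close: since $\CP^2_9$ is $3$-neighborly, the union of a fixed vertex $(0,t)$ with any other $\langle g\rangle$-orbit has at most three elements and is therefore a simplex, so every vertex of~$K^{\langle g\rangle}$ is joined by an edge to each of the three fixed vertices and $K^{\langle g\rangle}$ is connected. (Alternatively, Bredon's Theorem~\ref{thm_Bredon} with the constraint $m_i\le m=2$ forbids a component with the mod~$2$ cohomology of $S^3=\P^1(3)$.) With connectivity in hand, $K^{\langle g\rangle}$ is a connected $\F_2$-homology $d$-manifold satisfying complementarity with $d<4$, Corollary~\ref{cor_no_odd_hom_mfld} legitimately forces $d$ to be even, your argument excludes $d=0$, and Theorem~\ref{thm_Datta} finishes the proof as you say.
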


Also, we will need the following proposition.

\begin{propos}\label{propos_3subset}
 Suppose that $m$ is a $3$-element subset of~$\mathcal{P}$. Then the following two assertions are equivalent to each other:
 \begin{enumerate}
  \item all $4$-element subsets of~$\mathcal{P}$ that contain~$m$ are simplices of~$\CP^2_9$,
  \item $m$ is a non-special line.
 \end{enumerate}
\end{propos}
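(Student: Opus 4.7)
The plan is to rephrase condition~(1) via the complementarity of~$\CP^2_9$. A $4$-element set $\sigma\supset m$ is a simplex of~$\CP^2_9$ precisely when its $5$-element complement $\mathcal{P}\setminus\sigma\subset\mathcal{P}\setminus m$ is not a simplex. Since $\dim\CP^2_9=4$, every $5$-element simplex of~$\CP^2_9$ is one of the $36$ facets listed before the proposition. Hence~(1) is equivalent to the statement that no $5$-element subset of $\mathcal{P}\setminus m$ belongs to either of the two families of facets (intersecting non-special lines, or $\ell_t\cup(\ell_{t+1}\setminus\{v\})$).

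For $(2)\Rightarrow(1)$, with $m$ a non-special line I would rule out both families inside $\mathcal{P}\setminus m$. A type~(2) facet contains the whole special line $\ell_t$, which meets the non-special line $m$ in exactly one point, so it cannot lie in $\mathcal{P}\setminus m$. A type~(1) facet $m_1\cup m_2$ would require both $m_1$ and $m_2$ to be non-special lines disjoint from~$m$; but a non-special line different from~$m$ that is disjoint from~$m$ must be parallel to~$m$, so $m_1\parallel m\parallel m_2$, contradicting $m_1\cap m_2\ne\varnothing$.

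For $(1)\Rightarrow(2)$ I would argue the contrapositive, exhibiting a $5$-element subset of $\mathcal{P}\setminus m$ that is a facet whenever $m$ is not a non-special line. If $m=\ell_t$ is special, then $\ell_{t+1}\cup(\ell_{t+2}\setminus\{v\})\subset\mathcal{P}\setminus m$ is a type~(2) facet for any $v\in\ell_{t+2}$. If $m$ is not a line at all, I would set $a_t=|m\cap\ell_t|$; the triple $(a_0,a_1,a_2)$ is then either $(1,1,1)$ with $m$ non-collinear, or a permutation of $(2,1,0)$. In each case I would find a point $q\in\mathcal{P}\setminus m$ lying on a special line through which at least two of the three non-special lines avoid~$m$; their union is then a type~(1) facet of size $3+3-1=5$ contained in $\mathcal{P}\setminus m$. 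Concretely, in the $(2,1,0)$-case, $q$ is the unique point of $\ell_{t_*}\setminus m$ with $a_{t_*}=2$: at most one of the three non-special lines through~$q$ meets~$m$ because $m$ has at most one point on each of the remaining two special lines. In the $(1,1,1)$-case, $q$ is taken to be the third point on~$\ell_0$ of the non-special line through the two points of~$m$ not on~$\ell_0$; non-collinearity of~$m$ ensures $q\ne p_0$, and through~$q$ the two lines to the remaining points of~$m$ coincide, so again only one non-special line through~$q$ meets~$m$.

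I expect the main obstacle to be keeping the converse direction tidy: type~(2) facets do not generically fit inside $\mathcal{P}\setminus m$ when $m$ is not a line, so one must produce an explicit type~(1) facet, and this forces the split into the $(1,1,1)$ and $(2,1,0)$ sub-cases. Both sub-cases rest on the same elementary fact about the affine plane over~$\F_3$---that through any point pass exactly three non-special lines, one in each non-special parallel class---but verifying in the $(1,1,1)$-case that the chosen~$q$ really lies outside~$m$ requires the non-collinearity hypothesis on~$m$, which is the only place where that hypothesis enters.
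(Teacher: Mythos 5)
Your proof is correct, but it takes a genuinely different route from the paper's. The paper reduces the statement to five orbit representatives of $3$-element subsets under the group of transformations~\eqref{eq_affine_transform} and verifies assertion~(1) for each by direct enumeration of the $4$-element supersets. You instead invoke the complementarity of~$\CP^2_9$ to convert condition~(1) into the statement that no $5$-element subset of the $6$-point set $\mathcal{P}\setminus m$ is one of the $36$ facets, and then settle this by synthetic affine geometry over~$\F_3$: a type~(2) facet contains a whole special line and hence meets any non-special line~$m$, while two intersecting non-special lines both disjoint from~$m$ would both have to be parallel to~$m$; conversely, when $m$ is not a non-special line you exhibit an offending facet (a type~(2) facet when $m$ is special, and the union of two non-special lines through a well-chosen point $q\notin m$ in the $(2,1,0)$ and non-collinear $(1,1,1)$ sub-cases). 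Your argument is longer to write out but explains \emph{why} the dichotomy holds and uses only facts already available in the paper (the facet list and the complementarity of~$\CP^2_9$); the paper's check is shorter but opaque. One minor imprecision: in the $(2,1,0)$ sub-case the decisive count is that exactly \emph{one} point of~$m$ lies off the special line~$\ell_{t_*}$ through~$q$ (because $a_{t_*}=2$ and $|m|=3$), which is what bounds by one the number of non-special lines through~$q$ meeting~$m$; your stated justification ``at most one point on each of the remaining two special lines'' would by itself only bound that number by two, leaving possibly only one line available instead of the two you need. The fix is immediate, so this is a matter of phrasing rather than a gap.
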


\begin{proof}
It can be easily checked that any $3$-element subset $m\subset\mathcal{P}$ can be taken by a transformation of the form~\eqref{eq_affine_transform} to one of the following $5$ subsets:
\begin{itemize}
 \item the non-special line $\{x=0\}$,
 \item the special line $\{y=0\}$,
 \item $\{(0,0),(0,1),(1,2)\}$,
 \item $\{(0,0),(1,0),(0,1)\}$,
 \item $\{(0,0),(1,0),(0,2)\}$,
\end{itemize}
so it is enough to check the statement of the proposition for each of these $5$ subsets, which can be done by a direct enumeration.
\end{proof}

\section{Preliminaries on transformation groups}\label{section_transform_groups}

Throughout this section,  $K$ is a finite simplicial complex and $p$ is a prime.

We need two classical results from theory of transformation groups. In the following theorem, assertions~(1) and~(2) are due to Smith~\cite[Theorems~1, 5 and~6]{Smi39} and assertion~(3) is due to Bredon~\cite[Theorem~7.7]{Bre60}, see also~\cite[Chapter~V]{Bor60} for more detail.

\begin{theorem}[Smith, Bredon]\label{thm_Smith}
 Suppose that $K$ is an $\F_p$-homology manifold, and $G$ is a finite $p$-group that acts faithfully and simplicially on~$K$. Then
 \begin{enumerate}
  \item  every connected component of~$K^{G}$ is an $\F_p$-homology manifold,
  \item if $K$ is orientable and $p$ is odd, then all connected component of~$K^{G}$ are orientable,
  \item if $p$ is odd, then $\dim K-\dim F$ is even for all connected components~$F$ of~$K^{G}$.
 \end{enumerate}
\end{theorem}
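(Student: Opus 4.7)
The plan is to use Smith's equivariant cohomology theory, reducing to the case of a cyclic group $\langle g\rangle$ of prime order $p$ by a centre induction, and then checking the $\F_p$-homology manifold condition locally, link by link.

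\emph{Reduction to $G=\mathbb{Z}/p$.} Since $G$ is a nontrivial finite $p$-group, its centre is nontrivial, so $G$ has a central subgroup $H$ of order $p$. The quotient $G/H$ then acts simplicially on $K^H$, and $K^G=(K^H)^{G/H}$. Hence, once the three assertions are established for $H\cong\mathbb{Z}/p$ acting on $K$, one obtains the general case by induction on $|G|$: $K^H$ is itself an $\F_p$-homology manifold (of even codimension in $K$, orientable whenever $K$ is, in the odd $p$ case), so the induction hypothesis applies to the $G/H$-action on $K^H$.

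\emph{The prime order case.} Assume $G=\langle g\rangle$ has order $p$. The central tool is Smith's identification of $C_*(X^G;\F_p)$ with the image of $\sigma=1+g+\cdots+g^{p-1}$ acting on $C_*(X;\F_p)$, for any simplicial $G$-complex $X$. Combined with $\tau=1-g$ and the relation $\sigma\tau=0$ in $\F_p[G]$, this gives the Smith exact sequences interlinking $H_*(X;\F_p)$, $H_*(X^G;\F_p)$, and the Smith special homology groups. The classical output relevant here is that if $X$ is an $\F_p$-homology $n$-sphere, then $X^G$ is an $\F_p$-homology $m$-sphere (with the convention $m=-1$ meaning $X^G=\varnothing$), and when $p$ is odd the codimension $n-m$ is even and $X^G$ is $\F_p$-orientable whenever $X$ is; parity comes from the observation that $\F_p^{\times}$ contains no nontrivial $p$-th root of unity for odd $p$, so $g$ acts trivially on local normal representations.

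\emph{From the sphere case to the manifold case.} Assertion~(1) is verified link by link. By Proposition~\ref{propos_KG}, every simplex of $K^G$ is the barycentre $b(\tau)$ of a $G$-invariant simplex $\tau$ of $K$ (a $G$-orbit or union of orbits), and the link of $b(\tau)$ in $K^G$ is naturally identified with the fixed point complex of the induced simplicial $G$-action on $\link(\tau,K)$. Since $\link(\tau,K)$ is an $\F_p$-homology sphere of dimension $\dim K-|\tau|$, the prime-order sphere statement forces its $G$-fixed complex to be an $\F_p$-homology sphere of the precise dimension needed to make $K^G$ an $\F_p$-homology manifold at $b(\tau)$. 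Assertion~(3) follows by globalizing the even-codimension property at links, and assertion~(2) follows analogously from the orientability conclusion of the sphere case, applied locally to produce consistent orientations on each connected component of $K^G$.

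The most delicate step is the link identification underlying the last paragraph: one must verify carefully that the simplicial $G$-action on $\link(\tau,K)$ is exactly the one implicit in Proposition~\ref{propos_KG}, and that the fixed point complex of that action on $\link(\tau,K)$ agrees combinatorially with the link of $b(\tau)$ inside $K^G$ rather than being some larger or smaller object. Once this bookkeeping is done, the theorem reduces purely mechanically, via Smith's exact sequences and the centre induction, to the classical sphere statement of Smith and Bredon.
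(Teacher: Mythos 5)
First, note that the paper does not prove this theorem at all: it is quoted as a classical result, with assertions (1) and (2) attributed to Smith \cite[Theorems~1, 5 and~6]{Smi39} and assertion (3) to Bredon \cite[Theorem~7.7]{Bre60}, so there is no in-paper argument to compare against. Your overall strategy --- centre induction to reduce to $\rC_p$, then a link-by-link verification reducing everything to the Smith--Bredon theorem for $\F_p$-homology spheres --- is the standard way to pass from the sphere case to the manifold case, and the combinatorial identification you single out as the delicate point is indeed correct: if $\sigma=\{b(\tau_1),\ldots,b(\tau_k)\}\in K^G$ and $\tau=\tau_1\cup\cdots\cup\tau_k$, then $\link(\sigma,K^G)\cong\bigl(\link(\tau,K)\bigr)^G$, directly from Proposition~\ref{propos_KG}. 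Two bookkeeping points you should still supply: the inductive hypothesis must be stated for disjoint unions of $\F_p$-homology manifolds of possibly different dimensions (since $K^H$ need not be equidimensional, Definition~\ref{defin_hm} does not literally apply to it), and you must check that the dimension of the homology sphere $\bigl(\link(\tau,K)\bigr)^G$ plus $\dim\sigma+1$ is constant on each connected component of $K^G$, which is what makes each component a homology manifold of a well-defined dimension.

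The genuine gaps are in the prime-order case itself. First, $C_*(X^G;\F_p)$ is \emph{not} the image of $\sigma=1+g+\cdots+g^{p-1}$ on $C_*(X;\F_p)$; in Smith's setup the chains of the fixed subcomplex form a direct summand complementary to $\sigma C_*$ (note $\sigma$ kills any fixed simplex mod $p$), and the exact sequences are $0\to\sigma C\oplus C(X^G)\to C\to\tau C\to 0$ with $\tau=1-g$. Second, your justification of the parity statement via ``local normal representations'' has no meaning in the purely homological category --- there is no tangent representation at a fixed point of a simplicial action on a homology manifold. A correct elementary argument for the sphere case is the Smith congruence $\chi(X^{\rC_p})\equiv\chi(X)\pmod p$: for homology spheres this reads $1+(-1)^m\equiv 1+(-1)^n\pmod p$, and since $p$ is odd this forces $m\equiv n\pmod 2$. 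Third, assertion (2) is a global statement, and ``applied locally to produce consistent orientations'' is not an argument; orientability of a connected component $F$ of dimension $e$ is detected by $H_e(F;\F_p)\ne 0$ for odd $p$, and establishing this requires the Smith special homology groups (this is precisely the content of Smith's theorem being cited), not a local patching. So the proposal is a correct skeleton, but the three places where the actual Smith--Bredon input enters are either misquoted or replaced by heuristics that do not survive in this generality.
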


To formulate the second result we conveniently use the following standard  notation. We write $K\sim_p\P^h(m)$ if
\begin{equation}
H^*(K;\F_p)\cong\F_p[a]/(a^{h+1}),\qquad \deg a = m.
\end{equation}
 The following theorem is due to Bredon~\cite{Bre64}, see also~\cite[Theorem~4.1]{Bre68} and~\cite[Theorem~VII.3.1]{Bre72}.

\begin{theorem}[Bredon]\label{thm_Bredon}
 Suppose that $K\sim_p\P^h(m)$ and the group~$\rC_p$ acts simplicially on~$K$. Then either $K^{\rC_p}=\varnothing$ or
 $$
 K^{\rC_p}=F_1\sqcup\cdots\sqcup F_s
 $$
 for certain $F_i\sim_p\P^{h_i}(m_i)$, where
$m_i\le m$, $h_i\ge 0$, and $\sum_{i=1}^s(h_i+1)=h+1$.
\end{theorem}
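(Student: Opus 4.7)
The plan is to deduce the theorem from the standard machinery of equivariant cohomology: the Borel construction together with the Borel--Quillen localization theorem. Set $G=\rC_p$, let $EG\to BG$ be the universal principal $G$-bundle, and consider the Borel fibration
$$
K\longrightarrow K_G:=K\times_G EG\longrightarrow BG
$$
with fiber~$K$. Write $R=H^*(BG;\F_p)$; recall that $R=\F_p[t]$ with $\deg t=1$ when $p=2$, and $R=\F_p[t]\otimes\Lambda(s)$ with $\deg t=2$, $\deg s=1$ when $p$ is odd.

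First I would verify that $G$ acts trivially on $H^*(K;\F_p)=\F_p[a]/(a^{h+1})$. Any degree-preserving ring automorphism sends $a$ to $\lambda a$ with $\lambda\in\F_p^\times$, so $G$ acts on $H^m(K;\F_p)$ through a character valued in a group of order $p-1$, which must be trivial since $|G|=p$. Consequently the Serre spectral sequence of the Borel fibration has $E_2$-page equal, as an $R$-algebra, to $R\otimes\F_p[a]/(a^{h+1})$, and in particular $H^*_G(K;\F_p)$ has rank $h+1$ as an $R$-module.

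Next I would apply the Borel--Quillen localization theorem: inverting $t$, the inclusion $K^G\hookrightarrow K$ induces an isomorphism
$$
t^{-1}H^*_G(K;\F_p)\;\xrightarrow{\;\cong\;}\;t^{-1}H^*_G(K^G;\F_p).
$$
Since $G$ acts trivially on $K^G$, the right-hand side is $t^{-1}R\otimes H^*(K^G;\F_p)$, free over $t^{-1}R$ of rank $\sum_i\dim_{\F_p}H^*(F_i;\F_p)$; comparing with the $R$-rank on the left yields the identity $\sum_i\dim_{\F_p}H^*(F_i;\F_p)=h+1$, together with the alternative $K^G=\varnothing$. By Theorem~\ref{thm_Smith}(1) each component $F_i$ is a connected $\F_p$-homology manifold and hence satisfies Poincar\'e duality. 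To pin down the ring structure, I would lift $a$ to an equivariant class $\tilde a\in H^m_G(K;\F_p)$ via the edge homomorphism, restrict to each~$F_i$, and use the relation $\tilde a^{h+1}=0$ together with Poincar\'e duality on~$F_i$ to force $H^*(F_i;\F_p)$ to be a truncated polynomial algebra on a single generator $a_i$ of degree $m_i\le m$; the degree bound reflects the fact that the spectral sequence differentials can only raise the $t$-filtration, so they cannot lower the generating degree upon restriction.

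The main obstacle I expect is this last step: passing from the rank consequences of localization to the precise polynomial ring structure on each component. Localization only detects the $t$-periodic part of equivariant cohomology, and translating that back into information about the ordinary cohomology ring of each~$F_i$ requires careful multiplicative bookkeeping, exploiting that $F_i$ is an $\F_p$-homology manifold so that its nontrivial classes are detected by the Poincar\'e duality pairing. This is the technical heart of Bredon's original argument and the point where any attempted proof would have to replicate his analysis of the multiplicative structure of the Serre spectral sequence.
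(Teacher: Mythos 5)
The paper does not prove this statement at all: it is quoted as a classical theorem of Bredon, with references to \cite{Bre64}, \cite[Theorem~4.1]{Bre68}, and \cite[Theorem~VII.3.1]{Bre72}, so there is no in-paper argument to compare yours against. Judged on its own terms, your proposal is a reasonable outline of the modern localization-theoretic route to Bredon's theorem, but it has two genuine gaps. First, the rank identity $\sum_i\dim_{\F_p}H^*(F_i;\F_p)=h+1$ does not follow from the localization theorem alone. Localization gives that the $t^{-1}R$-rank of $t^{-1}H^*_{G}(K;\F_p)$ equals $\dim_{\F_p}H^*(K^{G};\F_p)$, but that rank equals the rank of the \emph{free part} of $H^*_{G}(K;\F_p)$, which is the rank of $E_\infty$, not of $E_2$. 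You computed $h+1$ from the $E_2$-page; if the Serre spectral sequence of the Borel fibration has nontrivial differentials, the rank drops and you only obtain the inequality $\dim_{\F_p}H^*(K^{G};\F_p)\le h+1$. The assertion that equality holds whenever $K^{G}\ne\varnothing$ (i.e., that $K$ is totally non-homologous to zero in $K_{G}$, equivalently that the spectral sequence collapses at $E_2$) is itself a substantive part of Bredon's theorem and requires an argument --- typically one shows that the single generator $a$ is a permanent cycle by analyzing what a nonzero differential on $a$ would force, using the multiplicative structure and the nonvanishing of the restriction $H^*_{G}(K)\to H^*_{G}(x_0)=R$ at a fixed point.

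Second, and as you yourself concede, the conclusion that each component $F_i$ satisfies $F_i\sim_p\P^{h_i}(m_i)$ with $m_i\le m$ --- a truncated polynomial algebra on a \emph{single} generator --- is not established by your sketch. The rank count is consistent a priori with, say, a component whose cohomology needs two ring generators. Ruling this out is precisely the multiplicative analysis of the restriction of $t^{-1}\tilde a$ to $t^{-1}R\otimes H^*(K^{G};\F_p)$ (writing it as $\sum_i(t^{k_i}\otimes a_i+\text{lower order})$ and exploiting $\tilde a^{\,h+1}=0$), which constitutes the technical core of Bredon's proof. An outline that defers both the collapse of the spectral sequence and the ring-structure analysis has not yet proved the theorem; if you want a complete argument you should follow \cite[Theorem~VII.3.1]{Bre72}, where both steps are carried out in detail.
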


\begin{remark}\label{remark_Adams}
 We will be especially interested in complexes $K\sim_p\P^2(m)$. Recall that Adams' Hopf invariant one theorem (see~\cite{Ada60}) implies that if $p=2$, then $m\in\{1,2,4,8\}$.
 We will conveniently write~$\RP^2$, $\CP^2$, $\HP^2$, and~$\OP^2$ instead of~$\P^2(1)$, $\P^2(2)$, $\P^2(4)$, and~$\P^2(8)$, respectively. On the contrary, if $p$ is odd, then a space $K\sim_p\P^2(m)$ exists for any even~$m$, see~\cite[Section~VII.4]{Bre72}.
\end{remark}

\begin{remark}
 We have formulated Theorems~\ref{thm_Smith} and~\ref{thm_Bredon} for simplicial actions on finite simplicial complexes, since this is the only case of interest for us. However, in all the papers and books cited above, these theorems are formulated in much greater generality, namely, for continuous actions on locally compact and paracompact Hausdorff topological spaces of finite cohomology dimension. In general, care must be taken with regard to the choice of the definition of a homology (or cohomology) manifold and the cohomology theory used. Namely, Alexander--Spanier, \v{C}ech, or sheaf theoretic cohomology are used in various versions of Theorems~\ref{thm_Smith} and~\ref{thm_Bredon}. Nevertheless, for finite simplicial complexes (and their open subsets) all these cohomology theories coincide with singular cohomology. A general definition of $\F_p$-cohomology manifold suitable for Theorem~\ref{thm_Smith} can be found in~\cite[Chapter~I]{Bor60}. It is based on the concept of local Betti numbers around a point, which goes back to Alexandroff~\cite{Ale35} and \v{C}ech~\cite{Cec34}. The fact that for finite simplicial complexes the general definition of $\F_p$-cohomology manifold is equivalent to Definition~\ref{defin_hm} follows from two simple observations. First, for a finite simplicial complex~$K$, the `clever' definition of local Betti numbers around a point~$x$ used in~\cite{Bor60} is equivalent to the `naive' definition
 $$
 \beta_i^{\F_p}(x)=\dim_{\F_p}H_i(|K|,|K|\setminus x;\F_p).
 $$
 Second, if $x$ lies in the relative interior of a $k$-simplex $\sigma\in K$, then
 $$
 H_i(|K|,|K|\setminus x;\F_p)\cong
 \left\{
 \begin{aligned}
  &\widetilde{H}_{i-k-1}\bigl(\link(\sigma,K);\F_p\bigr)&&\text{if }\link(\sigma,K)\ne\varnothing,\\
  &H_i(\sigma,\partial\sigma;\F_p)&&\text{if }\link(\sigma,K)=\varnothing,
 \end{aligned}
 \right.
 $$
 see~\cite[Lemma~63.1]{Mun84}.
\end{remark}

\section{Preliminaries on finite groups}\label{section_finite_groups}

\subsection{Sylow theorems}

Suppose that~$G$ is a finite group and $p$ is a prime.
Recall that if $p$ divides~$|G|$, then a \textit{Sylow $p$-subgroup} is a maximal $p$-subgroup of~$G$. We will need the following classical Sylow theorems:
\begin{itemize}
\item if $|G|=p^nm$, where $n>0$ and $p$ does not divide~$m$, then every Sylow $p$-subgroup of~$G$ has order~$p^n$,
\item all Sylow $p$-subgroups of~$G$ are conjugate to each other.
\end{itemize}

\subsection{Small groups}

In this paper, we will need a number of classification results on finite groups. We start with the following standard assertion.

\begin{propos}\label{propos_2p}
 Any group of order~$2p$, where $p$ is an odd prime, is isomorphic either to~$\rC_{2p}$ or to~$\rD_p$.
\end{propos}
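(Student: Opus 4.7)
The plan is to produce two generators of prescribed orders, observe that the cyclic subgroup of order~$p$ must be normal because it has index~$2$, and then use the relation $b^2=1$ to pin down how conjugation by $b$ can act on this subgroup.

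First I would apply Cauchy's theorem to obtain elements $a,b\in G$ with $|a|=p$ and $|b|=2$. The subgroup $\langle a\rangle$ has order~$p$, hence index~$2$, so it is automatically normal in~$G$. Therefore $bab^{-1}=a^k$ for some integer $k$ with $1\le k\le p-1$, and moreover $G=\langle a,b\rangle$ since $\langle a\rangle$ together with any element outside it generates a subgroup strictly larger than~$\langle a\rangle$, and the only such subgroup is~$G$ itself.

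Next I would extract the key numerical constraint from $b^2=1$. Conjugating $a$ twice by $b$ gives
\[
a = b^2 a b^{-2} = b(bab^{-1})b^{-1} = ba^k b^{-1} = a^{k^2},
\]
so $k^2\equiv 1\pmod{p}$. Since $\F_p$ is a field, this forces $k\equiv\pm 1\pmod{p}$.

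Finally I would split into the two cases. If $k\equiv 1\pmod p$, then $a$ and $b$ commute, so $G$ is abelian and the element $ab$ has order $2p$, giving $G\cong\rC_{2p}$. If $k\equiv-1\pmod p$, then $G$ is defined by the relations $a^p=b^2=1$ and $bab^{-1}=a^{-1}$, which is exactly the standard presentation of $\rD_p$. There is no real obstacle: the entire argument is a routine application of Cauchy's theorem and the index-$2$ normality trick, and the only thing that could go wrong — the existence of some exotic extension of $\rC_p$ by~$\rC_2$ — is ruled out by the simple fact that $k^2\equiv 1\pmod p$ has only two solutions.
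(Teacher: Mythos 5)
Your proof is correct. The paper states this proposition as a standard fact and gives no proof of its own, so there is nothing to compare against; your argument (Cauchy's theorem, normality of the index-$2$ subgroup $\langle a\rangle$, and the constraint $k^2\equiv 1\pmod p$ forcing $k\equiv\pm 1$) is the standard one, and the only implicit step --- that a group of order $2p$ generated by elements satisfying the dihedral relations is actually isomorphic to $\rD_p$ rather than a proper quotient --- is immediate from the order count.
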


Further, we need some specific classification results on groups of small order. For the convenience of the reader, we have collected all these results in  Proposition~\ref{propos_small_groups} below. Note that we are only interested in groups of order up to~$351$, so we definitely do not encounter any hard results on the classification of finite simple groups. The classification of finite groups of order up to~$2000$ was obtained by Besche, Eick, and O'Brien~\cite{BEOB01}. The databases of small groups are accessible in \textsf{GAP} (see~\cite{BEOB22}) or \textsc{Magma} (see~\cite[Chapter~62]{CBFS06}), see also a database~\cite{Dok}. All assertions of the following proposition can be immediately extracted from any of these databases. On the other hand, each of these assertions can be easily proved by hand using standard group-theoretic methods.

\begin{propos}\label{propos_small_groups}
Up to isomorphism,
 \begin{enumerate}
  \item  there are exactly $5$ groups of order~$8$, namely, $\rC_8$, $\rC_2\times\rC_4$, $\rC_2^3$, $\rD_4$, and the quaternion group~$\rQ_8$,
  \item  $\rC_2^4$ is the only group of order~$16$ that does not contain a subgroup isomorphic to either~$\rC_8$ or~$\rC_2\times\rC_4$,
  \item  there are exactly $5$ groups of order~$27$, namely, $\rC_{27}$, $\rC_3\times\rC_9$, $\rC_3^3$, and the two extraspecial groups~$\He_3=3^{1+2}_+$ and~$3^{1+2}_-$,
 \item  $\rS_3$, $\rA_4$, and~$\rS_4$ are the only groups of orders~$6$, $12$, and~$24$, respectively, that contain no elements of order~$6$,
 \item  $\rD_9$ and $\rC_3\rtimes \rS_3$ are the only two groups of orders~$18$ that contain no elements of order~$6$,
 \item  $\rC_3^2\rtimes \rC_4$ (with $\rC_4$ acting faithfully on~$\rC_3^2$) is the only group of order~$36$ that contains no elements of order~$6$,
 \item  $\PSU(3,2)\cong\rC_3^2\rtimes \rQ_8$ (with $\rQ_8$ acting faithfully on~$\rC_3^2$) is the only group of order~$72$ that contains no elements of orders~$6$ or~$8$,
 \item  $\rC_{13}\rtimes \rC_4$ (with $\rC_4$ acting faithfully on~$\rC_{13}$) is the only group of order~$52$ that contains no elements of order~$26$,
 \item  $\rC_3^3\rtimes\rC_{13}$ is the only group of order~$351$ that contains no elements of order~$39$.
\end{enumerate}
\end{propos}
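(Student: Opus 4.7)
The plan is to establish each of the nine assertions by routine Sylow analysis together with case-by-case inspection of semidirect product structures. The unifying principle throughout is that the hypothesis ``$G$ contains no element of a specified composite order $pq$'' forces the centraliser of a Sylow $p$-subgroup in a Sylow $q$-subgroup to be trivial, which severely restricts how the two Sylow subgroups can be glued together.

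Assertions (1) and (3) are classical $p$-group classifications. For order~$8$, argue by exponent, noting that a cyclic subgroup of maximal order is normal of index~$2$ whenever it is proper, and classify the possible coset-representative actions; the nonabelian cases split into $\rD_4$ and $\rQ_8$ according to whether the non-identity coset of $\rC_4$ contains an involution. The analogous analysis for groups of order~$p^3$ with $p$ odd gives three abelian types and two nonabelian extraspecial groups distinguished by exponent. For~(2), any element $g$ of order~$4$ in a group $G$ of order~$16$ generates, together with a non-identity central element lying outside $\langle g^2\rangle$ (which exists since $Z(G)\ne 1$ for a $p$-group), a subgroup isomorphic to $\rC_4\times\rC_2$, so forbidding both $\rC_8$ and $\rC_4\times\rC_2$ subgroups forces exponent~$2$.

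For (4)--(7), Sylow's theorems locate the Sylow $3$-subgroup (normal for orders $18$ and $36$ by a direct count) and the ``no element of order~$6$'' hypothesis says that no nontrivial $2$-element centralises a nontrivial $3$-element. For order~$6$ this kills $\rC_6$; for order~$12$ it forces $n_3=4$ and the conjugation action on the Sylow $3$-subgroups yields $G\cong\rA_4$; for order~$24$ the same dichotomy together with an embedding $G\hookrightarrow\rS_4$ coming from the action on Sylow $3$-subgroups gives $G\cong\rS_4$; for order~$18$ a nontrivial $\rC_2$-action on $\rC_9$ or $\rC_3^2$ without fixed nontrivial element must be inversion, producing $\rD_9$ or $\rC_3^2\rtimes\rC_2\cong\rC_3\rtimes\rS_3$; for order~$36$ the Sylow $3$-subgroup must be $\rC_3^2$ (since $\mathrm{Aut}(\rC_9)\cong\rC_6$ would create an element of order~$6$), and among actions of a Sylow $2$-subgroup on $\rC_3^2$ with no nonzero fixed vector only $\rC_4$ acting faithfully survives, as $\rC_2^2$ would always stabilise a line. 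For~(7), with $|G|=72$, forbidding elements of order~$8$ pins down the Sylow $2$-subgroup among the five groups of order~$8$ listed in~(1), and a brief enumeration combined with the absence of elements of order~$6$ leaves only $\rQ_8$ acting faithfully on $\rC_3^2$ via its $2$-dimensional irreducible representation over $\F_3$, giving $G\cong\rC_3^2\rtimes\rQ_8\cong\PSU(3,2)$; one also needs to rule out $n_3=4$ by showing that the resulting conjugation action on four Sylow $3$-subgroups cannot avoid elements of forbidden orders.

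For (8) and~(9) the Sylow $13$-subgroup is always normal: for order~$52$ this is immediate from $n_{13}\mid 4$, $n_{13}\equiv 1\pmod{13}$, and for order~$351$ the alternative $n_{13}=27$ would account for $324$ elements of order~$13$, leaving only~$27$ elements of $3$-power order, which form a unique, hence normal, Sylow $3$-subgroup~$P$ on which $\rC_{13}$ must then act faithfully (else a nontrivial element of $P$ would commute with one of order~$13$, producing an element of order~$39$). For~(8) the corresponding faithful action of the complement of $\rC_{13}$ yields exactly $\rC_{13}\rtimes\rC_4$ with the faithful action. For~(9) faithfulness requires $13\mid|\mathrm{Aut}(P)|$; running through the five groups of order~$27$ classified in~(3), whose automorphism groups have orders $18$, $54$, $|\mathrm{GL}(3,\F_3)|=11232$, $432$, $432$ respectively, only $P=\rC_3^3$ qualifies, forcing $G\cong\rC_3^3\rtimes\rC_{13}$. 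I expect the only mildly delicate step to be~(7), where both Sylow numbers $n_3=1$ and $n_3=4$ need careful analysis and the resulting group must be correctly identified with $\PSU(3,2)$ via its action on the four $\F_3$-lines of $\rC_3^2$; every other assertion follows directly from the standard Sylow-plus-counting toolkit.
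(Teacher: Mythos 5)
The paper itself offers no proof of Proposition~\ref{propos_small_groups}: it simply points to the small-group databases and remarks that each assertion ``can be easily proved by hand using standard group-theoretic methods''. Your proposal is exactly such a by-hand treatment, so it is a genuinely different (and more self-contained) route than the paper's. The core of your argument for items (4)--(9) --- locate a normal Sylow subgroup, or rule out the alternative by a counting or permutation-representation argument; observe that the forbidden composite order forces the complement to act faithfully and fixed-point-freely on the normal Sylow subgroup; then classify the faithful fixed-point-free actions inside the relevant automorphism group --- is sound, and you correctly identify the genuinely delicate spots (ruling out $n_3=4$ in items (4), (6), (7) and $n_{13}=27$ in item (9)). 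One small imprecision in item (6): the reason $\rC_2^2$ is excluded is not that it ``stabilises a line'' (the element $-I$ stabilises every line yet is fixed-point-free), but that a fixed-point-free involution on $\F_3^2$ must equal $-I$, and the three involutions of $\rC_2^2$ cannot all map to $-I$ under a faithful action; the conclusion is the same.

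There is, however, a genuine gap in your item (2). You assert that any element $g$ of order $4$ in a group $G$ of order $16$ lies, together with ``a non-identity central element outside $\langle g^2\rangle$ (which exists since $Z(G)\ne 1$)'', in a subgroup isomorphic to $\rC_4\times\rC_2$. The parenthetical justification is false: $Z(G)\ne 1$ does not produce a central element outside $\langle g^2\rangle$, since one may have $Z(G)=\langle g^2\rangle$ of order $2$. This actually happens --- in the generalized quaternion group $\rQ_{16}$ every element $g$ of order $4$ lying outside the cyclic subgroup of order $8$ satisfies $Z(\rQ_{16})=\langle g^2\rangle$, and $\rQ_{16}$ contains no subgroup isomorphic to $\rC_4\times\rC_2$ at all, so the intermediate claim is not merely unjustified but wrong as stated. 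The assertion you are proving is still correct, but the argument must first invoke the hypothesis that $G$ contains no $\rC_8$ to dispose of the case $|Z(G)|=2$ (the order-$16$ groups with centre of order $2$ are exactly the dihedral, generalized quaternion and semidihedral groups, all of which contain $\rC_8$), or alternatively one can pass to a maximal subgroup of order $8$ containing $g$ and apply item (1). Finally, two of the automorphism-group orders you quote in item (9) are incorrect: $|\mathrm{Aut}(\rC_9\times\rC_3)|=108$, not $54$, and $|\mathrm{Aut}(3^{1+2}_-)|=54$, not $432$. This is harmless, since the only thing your argument uses is that $13$ divides none of these orders except $|\mathrm{GL}(3,\F_3)|$, which remains true.
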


\begin{remark}\label{remark_PSU}
Recall the definition of the group~$\PSU(n,p)$, where $p$ is a prime. (One can similarly define the groups~$\mathrm{PSU}(n,p^r)$ with $r>1$, but we do not need them.)  Let $\F_{p^2}$ be a field of $p^2$ elements and $F\colon x\mapsto x^p$ be the Frobenius automorphism  of it. Then the \textit{unitary group} $\mathrm{U}(n,p)$ is the subgroup of~$\mathrm{GL}(n,\F_{p^2})$ consisting of all matrices~$U$ that satisfy $F(U)^t\,U=I$, where $I$ is the identity matrix. The \textit{special unitary group} is the group
 $$
 \mathrm{SU}(n,p)=\mathrm{U}(n,p)\cap\mathrm{SL}(n,\F_{p^2}).
 $$
 Finally, the \textit{projective special unitary group}~$\PSU(n,p)$ is the quotient of the group~$\mathrm{SU}(n,p)$ by its centre, which consists of diagonal matrices.
In this paper we will deal only with the group~$\PSU(3,2)$. It can be checked directly that it has order~$72$ and is isomorphic to the semi-direct product $\rC_3^2\rtimes \rQ_8$ with $\rQ_8$ acting faithfully on~$\rC_3^2$. So a Sylow $2$-subgroup of~$\PSU(3,2)$ is isomorphic to~$\rQ_8$. Moreover, it is easy to see that $\PSU(3,2)$ has $9$ Sylow $2$-subgroups.
\end{remark}

\subsection{Finite groups in which every element has prime power order}

We will need two classical theorems on finite groups in which every element $\ne 1$ has prime power order. The first one is due to Suzuki (see~\cite[Theorem~16]{Suz62}); it classifies simple groups with this property. The second one is due to Higman (see~\cite[Theorem~1]{Hig57}); it describes the structure of solvable groups with this property.

\begin{theorem}[Suzuki]\label{thm_Suzuki}
 Up to isomorphism, there are exactly $8$ finite non-cyclic simple groups all of whose elements $\ne 1$ have prime power order, namely, the groups $\PSL(2,\F_q)$ for $q=5$, $7$, $8$, $9$, and~$17$, $\PSL(3,\F_4)$, and the Suzuki groups of Lie type~$\Sz(8)$ and~$\Sz(32)$.
\end{theorem}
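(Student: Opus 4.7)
The plan is to leverage the rigid centralizer structure forced by the CP-hypothesis (every nonidentity element has prime power order) and then reduce~$G$ to a short list of simple groups via the classification of finite simple groups by Sylow 2-structure. The first step is the fundamental structural lemma: \emph{if $g\in G$ has order~$p^k$, then $C_G(g)$ is a $p$-group}. Indeed, any $h\in C_G(g)$ of order coprime to~$p$ would commute with~$g$ and yield $gh$ of order $|g||h|$, not a prime power, contradicting the hypothesis. In particular, for every Sylow $p$-subgroup~$P$ and every nontrivial $z\in Z(P)$ one has $C_G(z)=P$, whence $Z(P)$ is a \emph{trivial-intersection} subgroup: if $z\in Z(P)\cap Z(P)^g$ is nontrivial, then both $P$ and~$P^g$ sit in the $p$-group~$C_G(z)$, forcing $P=P^g$.

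Second, I would exploit this TI-structure of Sylow centers to control the set~$\pi(G)$ of primes dividing~$|G|$. Counting central $p$-elements gives $[G:N_G(Z(P))]\cdot(|Z(P)|-1)$ for each~$p$; combined with analogous identities for non-central $p$-elements and the total count $\sum_{p\in\pi(G)} |E_p| = |G|-1$, one obtains a rigid diophantine system. Burnside's $p^aq^b$-theorem excludes $|\pi(G)|\le 2$ for a nonabelian simple~$G$, and a finer analysis bounds $|\pi(G)|\le 4$. Then I would split cases by the isomorphism type of a Sylow 2-subgroup~$P_2$. The classical results of Brauer--Suzuki--Wall (generalized quaternion $P_2$), Gorenstein--Walter (dihedral $P_2$), Alperin--Brauer--Gorenstein (semidihedral $P_2$), Walter (abelian $P_2$), and Suzuki's own CIT-group theorem (Suzuki-type $P_2$) together identify~$G$ as one of $\PSL(2,q)$, $\PSL(3,4)$, or $\Sz(q)$, together with small sporadic exceptions that are easily ruled out by the CP-condition.

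Finally, I would apply the CP-sieve to each family. For $\PSL(2,q)$ with $q>3$, the maximal tori have orders $(q-1)/d$, $q$, and $(q+1)/d$ with $d=\gcd(2,q-1)$, and requiring each to be a prime power forces $q\in\{5,7,8,9,17\}$. The group~$\PSL(3,4)$ has element-order spectrum $\{1,2,3,4,5,7\}$, all prime powers. For $\Sz(q)$ with $q=2^{2m+1}$ the three tori have orders $q-1$, $q+\sqrt{2q}+1$, and $q-\sqrt{2q}+1$, and their simultaneous prime-power constraint yields exactly $q\in\{8,32\}$. The main obstacle is invoking the Sylow-2 classification results above, which are deep theorems of the 1960s--70s whose self-contained proofs require extensive local analysis. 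Once these are granted, the arithmetic sieve at the end is short, and a direct inspection of element-order spectra verifies that each of the eight listed groups really does have the CP-property.
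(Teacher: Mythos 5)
The first thing to say is that the paper does not prove this statement at all: it is quoted from Suzuki~\cite[Theorem~16]{Suz62} and used as a black box (the only surrounding commentary is the remark that Suzuki's proof does not rely on the classification of finite simple groups). So there is no internal argument to compare yours against; what can be judged is whether your sketch would actually close. Your opening reductions are correct and standard: in a group all of whose nontrivial elements have prime power order, the centralizer of a $p$-element is a $p$-group, hence $Z(P)$ is a TI-subgroup for each Sylow subgroup~$P$, and Burnside's $p^aq^b$-theorem excludes $|\pi(G)|\le 2$ for nonabelian simple~$G$.

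After that there are genuine gaps. (i) Your whole strategy runs through the Sylow $2$-subgroup, but you never establish that $|G|$ is even; since $|\pi(G)|\ge 3$, this needs the Feit--Thompson theorem or a substitute, which you do not invoke. (ii) The case division into generalized quaternion, dihedral, semidihedral, abelian and ``Suzuki-type'' Sylow $2$-subgroups is unjustified: nothing in your argument shows these are the only possible $2$-local structures, and without that the theorems of Brauer--Suzuki--Wall, Gorenstein--Walter, Alperin--Brauer--Gorenstein and Walter have nothing to bite on. The observation you are missing is that the prime-power-order hypothesis immediately makes $G$ a CIT-group (the centralizer of every involution is a $2$-group), so Suzuki's classification of simple CIT-groups --- which is exactly the content of~\cite{Suz62} --- applies in one stroke and renders the other four citations unnecessary; but then you must also sieve out the groups on that list which are \emph{not} CP, for instance $\PSU(3,4)$, which has elements of order~$15$ and which your write-up never mentions. (iii) The final arithmetic sieve is asserted rather than proved: showing that $(q-1)/d$ and $(q+1)/d$ (with $d=\gcd(2,q-1)$) are simultaneously prime powers only for $q\in\{4,5,7,8,9,17\}$, and that for $q=2^{2m+1}$ the numbers $q-1$ and $q\pm 2^{m+1}+1$ are all prime powers only for $q\in\{8,32\}$, each requires a genuine exponential-Diophantine argument (for the Suzuki case one can use that $5\mid q^2+1$ forces one of the two odd tori to be a power of~$5$ and then compare growth rates). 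As it stands the proposal is a plausible roadmap, not a proof.
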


\begin{remark}
 The proof of this theorem does not rely on the classification of finite simple groups.
\end{remark}

The groups $\PSL(2,\F_q)$ for $q=5$, $7$, $8$, $9$, and~$17$ have orders~$60$, $168$, $504$, $360$, and $2448$, respectively. The order of the group~$\PSL(3,\F_4)$ is $20160$. The Suzuki groups~$\Sz(q)$ have orders $q^2(q^2+1)(q-1)$. So the order of each group in the list from Theorem~\ref{thm_Suzuki} is divisible by at least one of the numbers~$5$, $7$, and~$17$.

\begin{cor}\label{cor_solvable}
 Let $G$ be a finite group all of whose elements $\ne 1$ have prime power order. Suppose that $|G|$ is not divisible by any of the numbers~$5$, $7$, and~$17$. Then $G$ is solvable.
\end{cor}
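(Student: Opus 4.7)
The plan is to argue by contradiction using Suzuki's classification. Suppose $G$ is a minimal counterexample, i.e.\ a non-solvable group of minimal order all of whose non-identity elements have prime power order and whose order is coprime to $5\cdot 7\cdot 17$.

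The first step is to observe that the hypothesis ``every element $\ne 1$ has prime power order'' passes to subquotients: if $H\le G$ then elements of $H$ are elements of $G$, and if $N\trianglelefteq H$ then the order of any coset $hN\in H/N$ divides the order of $h$, hence is again a prime power. Likewise the divisibility hypothesis on $|G|$ passes to subquotients, since subquotient orders divide $|G|$. Thus every subquotient of $G$ satisfies the same assumptions as $G$.

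The second step is to exhibit a non-abelian simple subquotient. Since $G$ is not solvable, take any composition series of $G$; at least one composition factor $S$ must be a non-abelian simple group. By the first step, $S$ inherits both hypotheses: every element $\ne 1$ of $S$ has prime power order, and $|S|$ is not divisible by $5$, $7$, or $17$ (since $|S|$ divides $|G|$).

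The final step is to invoke Theorem~\ref{thm_Suzuki}: the only non-abelian finite simple groups in which every non-identity element has prime power order are $\PSL(2,\F_q)$ for $q\in\{5,7,8,9,17\}$, $\PSL(3,\F_4)$, and the Suzuki groups $\Sz(8)$ and $\Sz(32)$. As noted in the paragraph preceding the corollary, the orders of these eight groups are each divisible by at least one of $5$, $7$, and $17$. This contradicts the conclusion of the previous step and proves solvability of $G$. There is no real obstacle here beyond citing the correct theorems; the only slightly delicate point is the remark that the prime-power-order condition is inherited by quotients, which is immediate from the divisibility of the order of $gN$ by the order of $g$.
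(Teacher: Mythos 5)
Your proof is correct and follows essentially the same route as the paper: pass the prime-power-order and divisibility hypotheses to the composition factors, then use Theorem~\ref{thm_Suzuki} together with the observation that each of the eight listed simple groups has order divisible by $5$, $7$, or $17$ to conclude that every composition factor is cyclic. The minimal-counterexample framing is superfluous (you never use minimality), but the argument is sound.
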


\begin{proof}
 All composition factors of~$G$ are simple finite groups in which every element $\ne 1$ has prime power order. It follows from Theorem~\ref{thm_Suzuki} that all of them are cyclic.
\end{proof}

%
%

\begin{theorem}[Higman]\label{thm_Higman}
Let $G$ be a finite solvable group all of whose elements $\ne 1$ have prime power order. Then   either $G$ is a $p$-group or $G$ satisfies the following:
\begin{enumerate}
 \item $|G|=p^mq^n$, where $p$ and~$q$ are distinct primes and $m,n\ge 1$,
 \item $G$ contains a non-trivial normal $p$-subgroup but does not contain a non-trivial normal $q$-subgroup,
 \item let $P$ be the  greatest normal $p$-subgroup of~$G$ and $p^k$ the order of~$P$; then $q^n$ divides $p^k-1$ and $p^{m-k}$ divides $q-1$.
\end{enumerate}
\end{theorem}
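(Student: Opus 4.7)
The plan is to deduce the structure of $G$ via its Fitting subgroup and the theory of Frobenius complements. Let $F=F(G)$; by solvability, $F\ne 1$ and is nilpotent, so $F=\prod_i O_{p_i}(G)$ decomposes as the direct product over primes dividing $|F|$. If two such primes occurred, one could pick commuting elements of coprime prime orders in different direct factors, and their product would have composite order, contradicting the hypothesis. Hence $F=O_p(G)$ is a $p$-group for a single prime $p$; write $|F|=p^k$. The prime-power-order hypothesis also forces every $p'$-element $y\in G$ to act on $F$ without nontrivial fixed points: a centralized element $x\ne 1$ in $F$ would give $xy$ of composite order $|x|\cdot|y|$. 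Picking a Hall $p'$-subgroup $H\le G$ via Hall's theorem for solvable groups, the subgroup $FH\le G$ is therefore a Frobenius group with kernel $F$ and complement $H$.

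Next I would establish~(1) by showing $|H|$ is a prime power. As a Frobenius complement, $H$ inherits the classical Burnside--Zassenhaus restriction that every subgroup of order $qr$ (for distinct primes $q,r$) is cyclic. If $|H|$ had two distinct prime divisors, Hall's theorem applied inside the solvable group $H$ would produce such a subgroup, which would then contain an element of composite order $qr$, a contradiction. So $|H|=q^n$ for a single prime $q\ne p$, and $|G|=p^m q^n$. Claim~(2) is immediate: $O_p(G)=F\ne 1$ by construction, whereas any nontrivial normal $q$-subgroup of $G$ would lie inside $F(G)=F$, a $p$-group, forcing $O_q(G)=1$. The first divisibility in~(3), $q^n\mid p^k-1$, is the classical Frobenius count: $H$ acts semi-regularly on $F\setminus\{1\}$, so $|H|\mid |F|-1$.

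The delicate remaining step is $p^{m-k}\mid q-1$. Fix a Sylow $q$-subgroup $Q$; the mirror prime-power-order argument gives $C_G(Q)\subseteq Q$ and shows that every $p'$-element normalizing $Q$ acts on it fixed-point-freely. Consequently the Sylow $p$-subgroup $P_0$ of $N_G(Q)$ is itself a Frobenius complement for $Q$ and embeds faithfully into $\mathrm{Aut}(Q)$. A counting/Frattini-style argument pins the $p$-part of $|N_G(Q)|$ to exactly $p^{m-k}$: one uses that the $F$-orbit of $Q$ on the set of Sylow $q$-subgroups has length $p^k$ (since $N_F(Q)=1$ by the Frobenius property of $FQ$), combined with Sylow's congruence $\equiv 1\pmod{q}$ and an inductive application of the theorem to $G/F$. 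Finally, Frobenius-complement theory forces $Q$ itself to be cyclic (for $q$ odd) or cyclic/generalized quaternion (for $q=2$); in the cyclic case the $p$-part of $|\mathrm{Aut}(Q)|=q^{n-1}(q-1)$ coincides with the $p$-part of $q-1$, yielding the divisibility, while the generalized quaternion case is dispatched by a short inspection of $\mathrm{Aut}(\rQ_{2^n})$ that forces $m-k$ to be trivially small. The main technical obstacle is precisely this last sharpening: the naive embedding $P_0\hookrightarrow\mathrm{Aut}(Q)$ only yields $p^{m-k}\mid q^n-1$, and reducing $q^n-1$ down to $q-1$ genuinely exploits that $Q$ is a Frobenius complement rather than an arbitrary $q$-group.
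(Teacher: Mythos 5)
The paper does not actually prove this theorem: it cites Higman's article for every assertion except the divisibility $q^n\mid p^k-1$, for which it gives precisely the argument you call the ``classical Frobenius count'' (a Sylow $q$-subgroup acts by conjugation on $P$, freely on $P\setminus\{1\}$, since a fixed point would yield an element of composite order). On that one clause your proposal and the paper agree. For the remaining clauses you attempt a from-scratch proof, and two of its steps have genuine gaps.

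First, the reduction of the Hall $p'$-subgroup $H$ to prime power order. Hall's theorem does not ``produce a subgroup of order $qr$'': it produces a Hall $\{q,r\}$-subgroup, of order $q^ar^b$, and a solvable group of order divisible by $qr$ need not contain any subgroup of order $qr$ (the group $\rA_4$ has order divisible by $6$ and no subgroup of order $6$). So the subgroup whose cyclicity you want to exploit is not yet known to exist. The step can be repaired --- take a minimal normal subgroup $M$ of the Hall $\{q,r\}$-subgroup; since $M$ acts fixed-point-freely on $F$ it must be cyclic of prime order, say $q$, and an element $z$ of order $r$ either centralizes $M$ (giving an element of order $qr$ in $G$, a contradiction) or generates with $M$ a nonabelian group $U$ of order $qr$, which is a Frobenius group and cannot act fixed-point-freely on an abelian minimal normal subgroup $N$ of $G$ contained in $F$ (the norm-map identity $\prod_{u\in U}x^u=x^{-q}$ forces $x^q=1$ for all $x\in N$, hence $N=1$) --- but none of this is in your write-up.

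Second, and more seriously, the entire proof of $p^{m-k}\mid q-1$ hinges on the Sylow $p$-subgroup $P_0$ of $N_G(Q)$ having order exactly $p^{m-k}$, equivalently on $F$ being transitive on the Sylow $q$-subgroups of $G$, equivalently on $G/O_p(G)$ having a \emph{normal} Sylow $q$-subgroup. You assert this via ``a counting/Frattini-style argument'' combining Sylow's congruence with ``an inductive application of the theorem to $G/F$'', but that induction does not deliver it: in $G/F$ the roles of $p$ and $q$ are interchanged (since $O_p(G/F)=1$ forces $O_q(G/F)\ne 1$), and the inductive conclusion only says that $O_q(G/F)$ has some order $q^j$ with $p^{m-k}\mid q^j-1$ and $q^{n-j}\mid p-1$; nothing in this output rules out $j<n$, i.e.\ a longer alternating normal $\{p,q\}$-series, which is exactly what must be excluded. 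This is the hard core of Higman's theorem, and your final step (embedding $P_0$ into $\mathrm{Aut}(Q)$ with $Q$ cyclic or generalized quaternion, which correctly explains why $q^n-1$ improves to $q-1$) is sound but vacuous until $|P_0|\ge p^{m-k}$ is established. The honest options are to cite Higman for this clause, as the paper does, or to supply the missing transitivity argument in full.
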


\begin{remark}
 All assertions of Theorem~\ref{thm_Higman}, except for the divisibility $ q^n\mid p^k-1$, are contained in Theorem~1 in~\cite{Hig57}, while this divisibility is implicitly contained in the proof of that theorem. For the convenience of the reader, let us prove that $ q^n\mid p^k-1$. Let $Q$ be a Sylow $q$-subgroup of~$G$. Then $Q$ acts by conjugation on~$P$. The restriction of this action to the set~$P\setminus\{1\}$ is free. Indeed, if $yxy^{-1}=x$ for some $x\in P\setminus\{1\}$ and $y\in Q\setminus\{1\}$, then the order of~$xy$ would be the product of the orders of~$x$ and~$y$ and hence not a prime power. Thus, $|Q|=q^n$ divides $|P\setminus\{1\}|=p^k-1$.
\end{remark}

\begin{remark}
Theorem~1 in~\cite{Hig57} also contains further results on the structure of~$G/P$, which will not be important for us.
\end{remark}

\section{Scheme of proof of Theorem~\ref{thm_main_hom}}\label{section_scheme}

The proof of Theorem~\ref{thm_main_hom} will consist of three steps. First, using the results by Smith, Bredon, Datta, and Bagchi (Theorems~\ref{thm_Smith},~\ref{thm_Bredon}, and~\ref{thm_Datta}), we will prove the following.

\begin{propos}\label{propos_prop}
 Suppose that $K$ is a $27$-vertex $\Z$-homology $16$-manifold such that the graded group~$H_*(K;\Z)$ is not isomorphic to~$H_*(S^{16};\Z)$. Let $V$ be the vertex set of~$K$. Then the symmetry group~$G=\Sym(K)$ has the following properties:
 \renewcommand{\labelenumi}{(\alph{enumi})}
 \begin{enumerate}
  \item The order of every element of~$G$ belongs to $\{1,2,3,4,9,11,13\}$.
  \item $G$ does not contain a subgroup isomorphic to either $\rC_2\times\rC_4$ or~$\rC_3\times\rC_9$.
  \item If $H\subset G$ is a $2$-subgroup, then $|H|\le 8$, and $V$ consists of exactly $15$, $9$, or~$6$ \ \mbox{$H$-orbits} in the cases $|H|=2$, $4$, or~$8$, respectively.
  \item Every $3$-subgroup $H\subset G$ acts freely on~$V$.
  \item Every element $g\in G$ of order~$11$ acts on~$V$ with two orbits of length~$11$ and five fixed points.
  \item If $G$ contains a subgroup~$H\cong\rD_{11}$, then $H$ acts on~$V$ with two orbits of length~$11$, two orbits of length~$2$, and one fixed point.
  \item Every element $g\in G$ of order~$13$ acts on~$V$ with two orbits of length~$13$ and one fixed point.
 \end{enumerate}
\end{propos}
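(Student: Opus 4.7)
The plan is to extract, for every cyclic subgroup $H=\langle g\rangle\subset G$, two independent kinds of information about the fixed complex $K^H$ and then cross-check them. On the combinatorial side, Theorems~\ref{thm_Novik} and~\ref{thm_ArMa} force $K$ to satisfy complementarity and have $H^*(K;\Z)\cong\Z[a]/(a^3)$ with $\deg a=8$; in particular $K\sim_p\P^2(8)$ for every prime~$p$. Proposition~\ref{propos_complement} then says that $K^H$ is either empty, a point, an $(r-2)$-simplex, or an $r$-vertex complementarity complex, where $r$ is the number of $H$-orbits on~$V$; if the latter is a pseudomanifold, Theorem~\ref{thm_Datta} classifies it and Proposition~\ref{propos_BD} constrains its Euler characteristic. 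On the cohomological side, for a cyclic $p$-subgroup Theorems~\ref{thm_Smith} and~\ref{thm_Bredon} imply that $|K|^H$ is either empty or a disjoint union $F_1\sqcup\cdots\sqcup F_s$ with each $F_i\sim_p\P^{h_i}(m_i)$, $\sum_i(h_i+1)=3$, $m_i\le 8$, and (for odd~$p$) of even codimension in~$K$.

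The first observation is that the $(r-2)$-simplex option of Proposition~\ref{propos_complement} is cohomologically incompatible with Bredon's decomposition (contractible has total $\F_p$-Betti number $1$ versus the required~$3$), so whenever a nontrivial $\rC_p$-subgroup has a fixed vertex, $K^H$ must be an $r$-vertex complementarity pseudomanifold and simultaneously a $\F_p$-projective-like space. For each prime $p\le 27$ I would enumerate the admissible orbit types $(f,s)$ with $f+ps=27$, $r=f+s$, and cross-check Datta's classification against the Bredon decomposition. This case analysis yields: the primes $5,7,17,19,23$ cannot divide~$|G|$; for $p=11$ only $(f,s)=(5,2)$ survives with $|K|^H\cong\pt\sqcup S^4$ realized combinatorially as $\pt\sqcup\partial\Delta^5$, giving~(e); for $p=13$ only $(f,s)=(1,2)$ survives with $|K|^H$ three isolated points, giving~(g); and for $p=3$ a fixed vertex forces $K^{\rC_3}$ into a short list of candidates (namely $\pt\sqcup\partial\Delta^9$, $\CP^2_9$, or a $15$-vertex $\sim_3\P^2(4)$ complex), which is then ruled out by combining the combinatorics of $\CP^2_9$ recorded in Propositions~\ref{propos_BK} and~\ref{propos_3subset} with the parity assertion of Proposition~\ref{propos_BD}, giving~(d) for~$\rC_3$ and by iteration for all $3$-subgroups.

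For the $2$-group statement~(c) and the exclusion of $\rC_2\times\rC_4$ and $\rC_3\times\rC_9$ in~(b), I would iterate the Bredon--Smith mechanism along a subnormal chain $1=H_0\subset H_1\subset\cdots\subset H_k=H$ with cyclic factors of order~$p$, applying Theorem~\ref{thm_Bredon} at each step to the $\F_p$-cohomology of $K^{H_i}$ and reading the orbit count from the resulting $\sim_p\P^h(m)$ structure together with the arithmetic $27=f+2s+\cdots$. Combined with Datta's classification and complementarity parity, this bounds $|H|\le 8$ for $2$-subgroups and rules out $\rC_2\times\rC_4$ and $\rC_3\times\rC_9$. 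With~(c), (d), (e), (g) in place, assertion~(a) follows: elements of forbidden prime orders are impossible, and elements of composite order such as $6,8,10,12,14,15,18,21,22,24,26,27,33,39$ are excluded by passing to suitable powers and invoking either the orbit counts in~(c) or the freeness in~(d). Finally for~(f), using~(e) one has $|K|^{\rC_{11}}\cong\pt\sqcup S^4$ with orbit-vertex set consisting of~$5$ singletons and~$2$ length-$11$ orbits; the order-$2$ element of $\rD_{11}$ acts on this fixed set, necessarily swapping the two length-$11$ orbits, and an odd-Euler-characteristic argument on its restriction to the $5$ singletons (via Proposition~\ref{propos_BD}) yields exactly one fixed vertex and two length-$2$ orbits.

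The main obstacle is expected to be the exclusion of a $\rC_3$-fixed vertex required for~(d): the Bredon--Datta machinery leaves a genuine numerical candidate (in particular $K^{\rC_3}\cong\pt\sqcup\partial\Delta^9$ at $f=3$, and potentially a $15$-vertex $\HP^2$-like complex at $f=9$) that must be eliminated by a hands-on analysis of how a $\rC_3$-fixed vertex sits inside~$K$ together with the $\CP^2_9$-model of Subsection~\ref{subsection_K}. A secondary difficulty is the $p=2$ step of~(c), where Smith's parity restriction is unavailable and one must rely on Bredon's iterated $\sim_2\P^h(m)$ decomposition together with the parity assertion of Proposition~\ref{propos_BD} to narrow down the shape of $K^H$ at each stage of the subnormal chain.
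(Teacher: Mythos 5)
Your overall architecture (Smith--Bredon on the cohomological side cross-checked against complementarity, Datta's classification and the parity of $\chi$ on the combinatorial side, then composite orders by passing to powers) is indeed the paper's strategy, and your treatment of the odd primes $\ge 5$, of (e), (f), (g), and of the basic orbit counts in (c) matches the paper's in substance. But there is one concrete gap that your proposed machinery cannot close: the exclusion of elements of order $8$ (needed for (a)) and of subgroups isomorphic to $\rC_2\times\rC_4$ (needed for (b)). You claim these follow from ``the orbit counts in (c)'' via suitable powers, but they do not. If $g$ has order $8$, then $\langle g\rangle$, $\langle g^2\rangle$, $\langle g^4\rangle$ act with $6$, $9$, $15$ orbits respectively, and the orbit type $8+8+8+1+1+1$ for $\langle g\rangle$ is perfectly consistent with all of this; moreover $g$ induces a genuine involution on $K^{\langle g^2\rangle}\cong\CP^2_9$ whose fixed complex is $\RP^2_6$, exactly as Proposition~\ref{propos_BK} permits. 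The same consistency holds for $\rC_2\times\rC_4$. Nothing in the Smith--Bredon--Datta--parity toolkit detects a contradiction here. The paper closes this gap only via Proposition~\ref{propos_no_4} --- the statement that no $15$-vertex $\F_2$-homology $8$-manifold satisfying complementarity admits a symmetry of order $4$ --- applied to $K^N$ for the index-$4$ subgroup $N\cong\rC_2$; and that proposition is the unique computer-assisted step of the whole paper (it rests on the facet count $f_8=490$ from Proposition~\ref{propos_490}, the normal form of Proposition~\ref{propos_12simp}, and a substantial enumeration). Your plan contains no substitute for this input, so properties (a) and (b) are not established as written.

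Two smaller points. For (d), you correctly flag the $\rC_3$-fixed-vertex case as an obstacle but leave it unresolved; in fact it is resolved elementarily, with no appeal to the $\CP^2_9$ model: since $K$ is $9$-neighborly, the union of any two $\rC_3$-orbits is a simplex, so $K^{\rC_3}$ is connected (killing your candidate $\pt\sqcup\partial\Delta^9$), and a fixed vertex would force $K^{\rC_3}$ to be a $5$-neighborly pseudomanifold of dimension $8$ or $12$ with $15$, $19$, or $21$ vertices, whose top-dimensional simplices would pull back to simplices of $K$ on at least $19>17$ vertices. For the $\rC_3\times\rC_9$ half of (b), the iteration you describe needs one specific fact you do not mention: freeness forces $H$ to act transitively on $V$, so $\rC_9\cong H/N$ would act vertex-transitively on $K^N\cong\CP^2_9$, which is impossible because the Sylow $3$-subgroup of $\Sym(\CP^2_9)$ is $\He_3$ and has exponent $3$.
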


The second part of the proof of Theorem~\ref{thm_main_hom} is purely group theoretic. Namely, we will classify all subgroups of~$\rS_{27}$ that satisfy the properties from Proposition~\ref{propos_prop}.

\begin{propos}\label{propos_group}
 Up to conjugation, there are exactly $27$ subgroups~$G\subset \rS_{27}$ that satisfy properties~(a)--(g) from Proposition~\ref{propos_prop}; they are the $26$ subgroups from Table~\ref{table_main} and besides the group~$\PSU(3,2)$ that acts on $V=\{1,\ldots,27\}$ with three orbits of length~$9$.
\end{propos}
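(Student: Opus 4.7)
The plan is to first determine the isomorphism type of $G$ using the Higman--Suzuki machinery, then match each type against the orbit conditions (c)--(g). The starting point is that property (a) restricts element orders of $G$ to prime powers for primes in $\{2,3,11,13\}$. Since none of these is $5$, $7$, or $17$, Corollary~\ref{cor_solvable} forces $G$ to be solvable. Higman's Theorem~\ref{thm_Higman} then gives the dichotomy: either $G$ is a $p$-group, or $|G|=p^mq^n$ for distinct primes $p,q$ with the largest normal $p$-subgroup $P$ of order $p^k$ satisfying $q^n\mid p^k-1$ and $p^{m-k}\mid q-1$.

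I will combine this with sharp Sylow bounds from the remaining properties: (c) caps the $2$-Sylow at order $8$, (d) caps the $3$-Sylow at order $27$, (e) forces the $11$-Sylow to have order exactly $11$ (since $\rC_{11}\times\rC_{11}$ would contain an element of orbit type $11,1^{16}$, contradicting~(e)), and (g) forces the $13$-Sylow to have order $13$. Iterating over ordered pairs $(p,q)\subset\{2,3,11,13\}$ and computing the multiplicative order of $p$ modulo small powers of $q$, only the orders $|G|\in\{1,2,3,4,6,8,9,11,12,13,18,22,24,26,27,36,39,52,54,72,351\}$ survive. For each surviving order, Proposition~\ref{propos_small_groups} together with (a), (b), and (f) pins down the isomorphism type essentially uniquely, with repeated types occurring only in the cases deliberately doubled in Table~\ref{table_main} (orders $4$, $8$, $9$, $18$, $27$), except that two rival candidates remain at order $72$, namely $\PSU(3,2)\cong\rC_3^2\rtimes\rQ_8$ and $\rC_3^2\rtimes\rD_4$.

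The heart of the proof is excluding the four candidates that do not appear in Table~\ref{table_main}: orders $24$, $39$, $54$, and the $\rD_4$-candidate at order $72$. For $|G|=39$, the only non-cyclic option $\rC_{13}\rtimes\rC_3$ must act with $\rC_{13}$-orbits $13,13,1$ by (g), but the singleton fixed point is then also fixed by the Sylow $\rC_3$, contradicting (d). For $|G|=24=|\rS_4|$, property (c) demands that both a transposition and a double transposition fix exactly three points of $V$; running the orbit-stabilizer count on the two feasible $\rS_4$-orbit patterns ($24+3$ and $12+12+3$) shows that a transposition fixes $1$ or $5$ points respectively, a contradiction. For $|G|=54$, $G=P\rtimes\rC_2$ with $|P|=27$ acting regularly, so the involution acts on $V=P$ via an automorphism $\alpha$ of $P$ which by (c) has exactly three fixed points. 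Picking any $x\in\Fix(\alpha)$ with $x\ne 1$, the element $(x,\sigma)^2=(x^2,1)$ is a nontrivial power of $x$ of order $3$ or $9$ (since $P\in\{\rC_3^3,\He_3,3^{1+2}_-\}$), forcing $(x,\sigma)$ to have order $6$ or $18$, contradicting (a). The same argument applied to a reflection $s\in\rD_4$ acting on $\rC_3^2$ (whose $+1$-eigenspace is one-dimensional, by the classification of order-$2$ elements in $GL_2(\F_3)$) excludes $\rC_3^2\rtimes\rD_4$ and leaves $\PSU(3,2)$ as the unique order-$72$ survivor.

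Finally, for each surviving isomorphism type the orbit constraints in Proposition~\ref{propos_prop} determine the embedding into $\rS_{27}$ up to conjugation essentially uniquely. The only genuine ambiguities are the two conjugacy classes of $\rD_4$ (rows~17 and~18) and the two of $\rC_2^2$ (rows~21 and~22): a short fixed-point count using (c) shows that in row~22 the three stabilizers of the length-$2$ orbits must be three pairwise distinct $\rC_2$-subgroups (otherwise some $\rC_2$ would have more than $15$ orbits), matching the footnote; analogous comments pin down row~18. Summing yields exactly the $26$ rows of Table~\ref{table_main} together with $\PSU(3,2)$, for a total of $27$ conjugacy classes. The main obstacle is the order-$54$ exclusion, which requires running the fixed-point/order-$6$ argument uniformly across all three isomorphism types of $P$, and carefully exploiting that $P$ has exponent at most $9$ so that $x^2\ne 1$ for every nonidentity $x$.
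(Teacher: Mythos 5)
Your overall strategy is the same as the paper's: use property~(a) plus Suzuki's theorem to get solvability, Higman's theorem plus the Sylow bounds from (c)--(g) to get the finite list of admissible orders, then Proposition~\ref{propos_small_groups} and fixed-point counting to pin down the isomorphism types and orbit structures. Your list of surviving orders agrees with the paper's (Proposition~\ref{propos_order_list}). Where you diverge is in the tactical arguments for the three problematic orders, and your versions are legitimate alternatives. For $|G|=39$ the paper argues more directly that property~(d) forces every orbit length to be~$3$, so the element of order~$13$ would act trivially; your argument via the $G$-invariance of the one-point fixed set of the normal $\rC_{13}$ also works. For $|G|=54$ the paper counts incidences $(g,v)$ with $g$ an involution fixing $v$ and gets $27\ne 81$; your construction of an element of order $6$ or $18$ from a nontrivial fixed point of the conjugation action of the involution on the regular normal subgroup $P$ is arguably cleaner, and correctly uses that $P$ has exponent dividing~$9$. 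For order~$72$ both routes reduce to ``no element of order~$6$'', which is exactly how Proposition~\ref{propos_small_groups}(7) isolates $\PSU(3,2)$.

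The one step that does not go through as written is the exclusion of $\rS_4$ at order~$24$. You assert that only the orbit patterns $24+3$ and $12+12+3$ are feasible, but property~(d) only forces orbit lengths in $\{3,6,12,24\}$ summing to~$27$, so patterns such as $12+6+6+3$, $6+6+6+6+3$, or ones with several length-$3$ orbits must also be addressed; moreover, for $12+12+3$ the number of points fixed by a transposition depends on whether the order-$2$ stabilizers are generated by transpositions or by double transpositions, so ``$1$ or~$5$'' is not the full case list. The count can be completed (the requirement that a double transposition fix exactly $3$ points forces exactly one orbit of length~$3$ and kills all length-$6$ orbits and all length-$12$ orbits with double-transposition stabilizer, after which $27=3+12+24r$ has no solution), but the paper's argument is cleaner and more uniform: an element $g$ of order~$4$ must transpose two points of a length-$3$ orbit, so $g^2$ fixes strictly more points than $g$, contradicting Proposition~\ref{propos_2_4}, which requires both to fix exactly three. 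The same ``compare $\Fix(g)$ with $\Fix(g^2)$'' device is what the paper uses to eliminate the $18+9$ orbit pattern at orders $18$, $36$, $72$ and the $26+1$ pattern at order $52$ --- eliminations that your sketch subsumes under ``essentially uniquely'' but that do require the explicit checks carried out in Cases 4b and 6b of the paper's proof.
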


Finally, we need to exclude the group~$\PSU(3,2)$.

\begin{propos}\label{propos_no_PSU}
 The group~$\PSU(3,2)$ cannot serve as the symmetry group of a $27$-vertex $\Z$-homology $16$-manifold~$K$ with the graded homology group~$H_*(K;\Z)$ not isomorphic to that of the $16$-sphere.
\end{propos}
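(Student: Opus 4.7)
The plan is to argue as follows. Assume for contradiction that the symmetry group of $K$ is $G = \PSU(3,2) \cong \rC_3^2 \rtimes \rQ_8$ acting with three orbits of length $9$. By Theorem~\ref{thm_ArMa} and Proposition~\ref{propos_comp_neigh}, $K$ satisfies complementarity and is $9$-neighborly. The vertex stabilizers are Sylow $2$-subgroups; fixing one $\rQ_8 \subset G$ with center $Z = Z(\rQ_8)$, the action of $\rQ_8$ on each $G$-orbit is the natural one on $\rC_3^2$ (fixing the identity and acting regularly on $\rC_3^2 \setminus \{0\}$). A short computation then shows that every cyclic $\rC_4 \subset \rQ_8$ has exactly three fixed vertices $v_0, v_1, v_2$ (the $\rQ_8$-fixed points, one per $G$-orbit) and six orbits of size $4$, while $Z$ has the same three fixed vertices and twelve orbits of size $2$. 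By $9$-neighborliness every orbit and every pair of orbits is a simplex of $K$, so Proposition~\ref{propos_complement} yields that $K^{\rC_4}$ and $K^Z$ have $9$ and $15$ vertices respectively, both inheriting complementarity and complete $1$-skeletons.

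Next I would identify $K^{\rC_4}$ with $\CP^2_9$. Applying Bredon's Theorem~\ref{thm_Bredon} iteratively for $p = 2$ (first to $Z$ acting on $K \sim_2 \P^2(8)$, then to $\rC_4/Z$ acting on $K^Z$), each fixed-point complex is either $\sim_2 \P^2(m)$ (connected) or $\pt \sqcup \partial\Delta^k$ (disconnected). The complete $1$-skeleton rules out the disconnected option, and matching the vertex counts against Datta's Theorem~\ref{thm_Datta} forces $K^Z \sim_2 \HP^2$ (a $15$-vertex $\F_2$-homology $8$-manifold with complementarity) and $K^{\rC_4} \cong \CP^2_9$. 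Moreover, $\rQ_8/\rC_4 \cong \rC_2$ acts nontrivially on $\CP^2_9 = K^{\rC_4}$ (otherwise $K^{\rQ_8}$ would equal $K^{\rC_4}$ and have $9$ vertices rather than the $6$ obtained from the orbit count for $\rQ_8$ on $V$), so by Proposition~\ref{propos_BK}, $K^{\rQ_8} \cong \RP^2_6$.

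The final and hardest step is to exhibit a combinatorial contradiction by varying over the three cyclic subgroups $\rC_4^{(1)}, \rC_4^{(2)}, \rC_4^{(3)} \subset \rQ_8$. Each yields a $\CP^2_9$-structure on its $9$-orbit set, all three sharing the vertex set $\{v_0, v_1, v_2\}$. I would identify each $K^{\rC_4^{(i)}}$ with $\CP^2_9$ so that the $\rQ_8/\rC_4^{(i)} \cong \rC_2$-fixed vertices $\{v_0, v_1, v_2\}$ become the non-special line $\{x = 0\}$ (using Proposition~\ref{propos_3subset} together with the classification of order-$2$ fixed sets in $\Sym(\CP^2_9)$). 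Then each $G$-orbit $O_y$ becomes a special line $\ell_y$, and the two $\rC_4^{(i)}$-orbits in $O_y$ correspond to the pair in one of the three partitions of the four projective classes in $\P^1(\F_3) \cong (\rC_3^2 \setminus \{0\})/Z$ into pairs; the three $\rC_4^{(i)}$'s enumerate exactly these three pairings. Translating the $36$ four-simplices of $\CP^2_9$ (per Bagchi--Datta, Subsection~\ref{subsection_K}) into explicit simplices of $K$ of sizes $11$, $14$, and $17$, and using complementarity to dualize, produces for each $i$ a catalog of required simplices and required non-simplices of $K$. The hard part of the argument is to pinpoint a specific subset $S \subseteq V$ that two different $\rC_4^{(i)}$'s force to be, respectively, a simplex and a non-simplex of $K$, directly contradicting the well-definedness of $K$; the main obstacle is the bookkeeping needed to exploit the three-fold overlap of $\CP^2_9$-structures sharing the same distinguished triple $\{v_0, v_1, v_2\}$.
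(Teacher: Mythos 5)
The setup in your first two paragraphs is correct and matches what the paper has already established (Propositions~\ref{propos_2}, \ref{propos_8}, \ref{propos_2_4} give the orbit structure, $K^{\rC_4}\cong\CP^2_9$, $K^Z\sim_2\HP^2$, and $K^{\rQ_8}\cong\RP^2_6$ for any such subgroups). But the proof is not complete: you explicitly defer the actual contradiction (``the hard part of the argument is to pinpoint a specific subset $S$\dots''), and that deferred step is the entire content of the proposition. Worse, the mechanism you propose for producing it is unlikely to work as stated. A subset of $V$ that is a union of orbits for two \emph{different} subgroups $\rC_4^{(i)},\rC_4^{(j)}\subset\rQ_8$ is automatically a union of orbits for $\langle \rC_4^{(i)},\rC_4^{(j)}\rangle=\rQ_8$, i.e.\ a union of the sets $\{u_t\}$ and $\alpha_t\setminus\{u_t\}$; on such sets the two $\CP^2_9$-catalogs both just record whether the set is a simplex of $K$, so they agree tautologically (both restrict to the same $K^{\rQ_8}\cong\RP^2_6$). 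Passing to faces and complements enlarges each catalog, but you give no reason to expect a conflict, and the paper's proof indicates that none arises from the $\CP^2_9$-structures alone.

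The paper's actual argument needs several further ingredients that are absent from your outline: (i) the computation $\link(\alpha_t,K)=\partial\alpha_{t+1}$ and the elimination of the ``misaligned'' bijections $\nu$ (Proposition~\ref{propos_PSU_aux}, Cases 1--3), which you skip when you assert that each $G$-orbit sits over the expected special line; (ii) a globally defined permutation $\theta$ of $V$ matching stabilizers across the three $G$-orbits; (iii) the maps $f_t\colon\alpha_{t+1}\times\alpha_{t+2}\to\alpha_t$ extracted from the weak pseudomanifold property (each $15$-simplex lies in exactly two $16$-simplices), which is the source of the ``forced simplex'' side of the contradiction --- your proposal never invokes the pseudomanifold condition at all beyond neighborliness; and (iv) a parity argument (Lemma~\ref{lem_eps} combined with $\eta_{H,0}\eta_{H,1}\eta_{H,2}=1$) showing that among the nine pairs $(H_i,t)$ at least one produces a $16$-simplex containing one of the forbidden sets $\{u_0,u_1,u_2\}\cup\beta_{t+1}\cup\beta_{t+2}$ or $\{u_0,u_1,u_2\}\cup\gamma_{t+1}\cup\gamma_{t+2}$. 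Without something playing the role of (iii) and (iv), your plan has no identified source of a ``required simplex'' to set against the ``required non-simplices'' coming from the $\CP^2_9$-structures, so the gap is essential rather than a matter of bookkeeping.
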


Theorem~\ref{thm_main_hom} follows immediately from Propositions~\ref{propos_prop},~\ref{propos_group}, and~\ref{propos_no_PSU}. We will prove
these three propositions in Sections~\ref{section_prop}, \ref{section_group}, and~\ref{section_no_PSU}, respectively.

\section{Absence of elements of order~4 in the symmetry groups of 15-vertex homology 8-manifolds}\label{section_no_4}

The aim of this section is to obtain the following result, which will be used in the proofs of properties~(a) and~(b) from Proposition~\ref{propos_prop}.

\begin{propos}\label{propos_no_4}
 Suppose that $K$ is a $15$-vertex $\F_2$-homology $8$-manifold that satisfies complimentarity. Then the symmetry group~$\Sym(K)$ contains no elements of order~$4$.
\end{propos}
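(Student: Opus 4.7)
I would argue by contradiction: suppose $g\in\Sym(K)$ has order~$4$, and set $G=\langle g\rangle\cong\rC_4$ and $H=\langle g^2\rangle\cong\rC_2$. The plan is to combine Smith--Bredon theory with the complementarity property to pin down the fixed-point complexes $K^H$ and $K^G$ up to isomorphism, use this to determine the $G$-action on the vertex set $V$ completely, and then exclude the remaining possibility by a finite computer enumeration. As a preliminary, I would first show that $K\sim_2\P^2(4)$: since $n=15$ is odd and $\chi(K)$ is odd by Proposition~\ref{propos_BD}, $K$ is not an $\F_2$-homology $8$-sphere; Lemma~5.5 of~\cite{Nov98} (valid over characteristic~$2$ with no orientability hypothesis) gives $\beta_1=\beta_2=\beta_3=0$, Poincar\'e duality kills $\beta_5,\beta_6,\beta_7$, Theorem~5.6 of~\cite{Nov98} bounds $\beta_4\le 1$, oddness of $\chi$ forces $\beta_4=1$, and non-degeneracy of the middle cup product identifies the ring structure.

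Next I would apply Bredon's Theorem~\ref{thm_Bredon} to $H$: $K^H=F_1\sqcup\cdots\sqcup F_s$ with $F_i\sim_2\P^{h_i}(m_i)$, $\sum(h_i+1)=3$, and $m_i\le 4$; in particular $\chi(K^H)=3$ and, by Proposition~\ref{propos_<d}, $\dim K^H<8$. Proposition~\ref{propos_complement} excludes the simplex option (it has $\chi=1\ne 3$) and forces $K^H$ to satisfy complementarity on $r$ vertices, where $r$ is the number of $H$-orbits in~$V$. Combining Datta's Theorem~\ref{thm_Datta} with Corollary~\ref{cor_no_odd_hom_mfld} (which rules out the $d=7$ case, the only one consistent with $\dim<8$) and the description of disconnected complementary complexes as $\pt\sqcup\partial\Delta^k$ leaves $\RP^2_6$, $\CP^2_9$, and a short list of $\pt\sqcup\partial\Delta^k$. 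The Euler characteristic condition $\chi(K^H)=3$ eliminates $\RP^2_6$, and the orbit equations $f+t=r$, $f+2t=15$ (with $f,t\ge 0$) eliminate every $\pt\sqcup\partial\Delta^k$ because they all have $r\le 7<8$. Only $K^H\cong\CP^2_9$ survives, with $f=3$ and $t=6$.

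Now consider $\bar g\in G/H$ acting on $K^H\cong\CP^2_9$. It cannot be trivial: $g$ permutes the $H$-orbits, and for any $2$-element $H$-orbit $\{v,g^2v\}$ both possibilities $gv=v$ and $gv=g^2v$ contradict the action of $g^2$ on that pair. Hence $\bar g$ has order~$2$ on $\CP^2_9$, and Proposition~\ref{propos_BK} gives $K^G=(K^H)^{\bar g}\cong\RP^2_6$. Combining the $6$ vertices of $K^G$ with $|V|=15$ and the identity $f_G+2t_G=3$ (the $H$-fixed set is the union of $G$-orbits of sizes $1$ and $2$) yields $f_G=3$, $t_G=0$, $q_G=3$: three $G$-fixed vertices, which coincide with the $H$-fixed set, and three $G$-orbits of size~$4$, each splitting into two $H$-orbits of size~$2$. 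This determines the $G$-action on $V$ up to relabelling and identifies the $9$ vertices of $K^H$ with those of a concrete copy of $\CP^2_9$.

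It remains to verify that no $15$-vertex $\F_2$-homology $8$-manifold $K$ satisfying complementarity carries such a $\rC_4$-action. This is now a finite problem, which I would attack by a variant of the author's program~\cite{Gai-prog}, enumerating $G$-orbits of candidate $8$-simplices compatible with the prescribed $K^H$ and $K^G$ and testing the link conditions at every vertex. I expect the principal obstacle to be the combinatorial size of this search; efficient pruning --- for instance, using that the link of each $G$-fixed vertex must itself be a $14$-vertex $\F_2$-homology $7$-sphere carrying a compatible $\rC_4$-action, or imposing complementarity between $G$-invariant subsets at each stage --- will be essential for the enumeration to finish in practical time.
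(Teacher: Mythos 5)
Your overall strategy coincides with the paper's: show that $K^{\langle g^2\rangle}\cong\CP^2_9$, deduce that $\langle g\rangle$ acts on the $15$ vertices with three fixed points and three orbits of length~$4$, and finish with a computer search seeded by this structure. One small slip in the first half: Bredon's theorem gives $\sum(h_i+1)=3$, i.e.\ total $\F_2$-Betti number~$3$, not $\chi(K^H)=3$ (for $\RP^2_6$ the total Betti number is $3$ while $\chi=1$), so your Euler-characteristic elimination of $\RP^2_6$ is circular; but $\RP^2_6$ has only $6$ vertices while $K^H$ has $r=15-t\ge 8$ vertices, so the vertex count you already use against $\pt\sqcup\partial\Delta^k$ disposes of it as well. (The paper reaches $K^H\cong\CP^2_9$ without Bredon and without first computing $H^*(K;\F_2)$: $5$-neighborliness makes $K^H$ connected, Smith theory makes it an $\F_2$-homology manifold, and Datta's theorem together with Corollary~\ref{cor_no_odd_hom_mfld} leaves only $\CP^2_9$.)

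The substantive gap is exactly the feasibility issue you flag at the end, which the paper has to resolve and your proposal does not. Knowing the orbit structure and the isomorphism types of $K^H$ and $K^G$ is not enough: the paper reports that the search run with only the group $\rC_4$ as input does not terminate in reasonable time. What makes it terminate is Proposition~\ref{propos_12simp}: using Proposition~\ref{propos_3subset} and $5$-neighborliness one shows that the three $H$-fixed vertices correspond to a \emph{non-special line} of $\CP^2_9$, and that the involution induced by $G/H$ on $\CP^2_9$ is then forced to be $(x,y)\mapsto(-x,y)$ in suitable coordinates; this pins down $12$ explicit $8$-simplices that must lie in $K$, and seeding the search with them (and their $G$-translates) is what brings the running time down to a few hours. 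Your constraint ``compatible with the prescribed $K^H$ and $K^G$'' would, if worked out, amount to the same determination, but as written it is missing, and without it the concluding computation is not known to be executable.
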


We start with the following proposition.

\begin{propos}\label{propos_490}
 Suppose that $K$ is a $15$-vertex $\F_2$-homology $8$-manifold that satisfies complimentarity. Then $K$ has exactly $490$ top-dimensional simplices.
\end{propos}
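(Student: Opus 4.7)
The plan is to determine the face vector of $K$ completely by writing down and solving four independent linear relations in the four remaining unknowns $f_5, f_6, f_7, f_8$. Since $n - d - 2 = 15 - 8 - 2 = 5$, Proposition~\ref{propos_comp_neigh} makes $K$ a $5$-neighborly complex, so $f_i = \binom{15}{i+1}$ for $-1 \le i \le 4$, and $\dim K = 8$ forces $f_i = 0$ for $i \ge 9$. Complementarity pairs each $(i+1)$-subset of $V$ with its $(14-i)$-sized complement, giving
\begin{equation*}
f_5 + f_8 \,=\, \binom{15}{6} = 5005, \qquad f_6 + f_7 \,=\, \binom{15}{7} = 6435.
\end{equation*}
Since $K$ is an $\F_2$-homology $8$-manifold, the link of every $7$-simplex is a two-point set, so a standard incidence count between $7$-faces and $8$-faces yields $9 f_8 = 2 f_7$.

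For the fourth relation I will use an Euler-characteristic identity at links of $5$-simplices. By Definition~\ref{defin_hm}, each such link is an $\F_2$-homology $2$-sphere, hence has Euler characteristic $2$. Since a $k$-simplex of $\link(\sigma, K)$ is the same data as a $(k+6)$-simplex $\mu \in K$ containing $\sigma$, and each such $\mu$ has $\binom{k+7}{6}$ faces of dimension~$5$, summing $\chi(\link(\sigma, K))$ over all $5$-simplices $\sigma$ and double-counting produces
\begin{equation*}
2 f_5 \,=\, \sum_{\dim \sigma = 5} \chi\bigl(\link(\sigma, K)\bigr) \,=\, 7 f_6 - 28 f_7 + 84 f_8.
\end{equation*}

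Substituting $f_5 = 5005 - f_8$, $f_6 = 6435 - 9 f_8/2$, and $f_7 = 9 f_8/2$ into this last equation reduces it to $143 f_8 = 70070$, whence $f_8 = 490$. I do not anticipate a real obstacle: the only observation that requires any thought is that the link Euler-characteristic identity at $5$-simplices supplies exactly the fourth independent linear relation needed to close a system whose first three relations are elementary. One could equally well apply the same Dehn-Sommerville trick at vertex links or at $4$-simplex links, and any such choice would pin down $f_8$ uniquely.
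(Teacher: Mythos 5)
Your proof is correct, and it takes a genuinely different route from the paper's. The paper only extracts $5$-neighborliness from complementarity and then invokes Klee's theorem, which states that the Dehn--Sommerville equations~\eqref{eq_DS} hold for homology manifolds, deferring to Brehm--K\"uhnel's verification that the resulting system~\eqref{eq_neigh}--\eqref{eq_Eul} has the unique solution $f_8=490$. You instead use complementarity a second time, as a direct counting device: pairing each $6$-set with its $9$-set complement and each $7$-set with its $8$-set complement gives $f_5+f_8=\binom{15}{6}$ and $f_6+f_7=\binom{15}{7}$, and then only two special instances of the Dehn--Sommerville philosophy are needed --- the $S^0$ links of $7$-simplices giving $9f_8=2f_7$, and the Euler characteristic $2$ of the $\F_2$-homology $2$-sphere links of $5$-simplices giving $2f_5=7f_6-28f_7+84f_8$ by double counting. (The latter is legitimate: over a field the alternating sum of the $f$-vector of the link equals the alternating sum of Betti numbers, so $H_*(\link(\sigma,K);\F_2)\cong H_*(S^2;\F_2)$ does force $\chi=2$; and condition~(1) of Definition~\ref{defin_hm} guarantees links of $7$-simplices are nonempty and $0$-dimensional, hence exactly two points.) Your $4\times 4$ linear system is nondegenerate and yields $f_8=490$ (and $f_7=2205$, $f_6=4230$, $f_5=4515$, matching the known $f$-vector). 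What your argument buys is self-containedness --- no appeal to Klee's general theorem or to an external computation --- at the cost of being specific to the complementarity setting, whereas the paper's route makes clear that the whole $f$-vector is already forced by $5$-neighborliness plus the homology-manifold condition alone.
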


\begin{proof}
 Brehm and K\"uhnel~\cite[Section~1]{BrKu92} proved this proposition in the case of a combinatorial $8$-manifold. Let us show that their proof can be extended literally to the case of a homology manifold.

 Let $f_i$ denote the number of $i$-simplices of~$K$. We conveniently put $f_{-1}=1$. By Proposition~\ref{propos_comp_neigh} the simplicial complex~$K$ is $5$-neighborly. Hence
 \begin{equation}\label{eq_neigh}
  f_i=\binom{15}{i+1},\qquad -1\le i\le 4.
 \end{equation}
 By a theorem of Klee~\cite{Kle64} the numbers~$f_i$ satisfy the \textit{Dehn--Sommerville equations}
 \begin{equation}\label{eq_DS}
  h_{9-i}-h_i=(-1)^i\binom{9}{i}\bigl(\chi(K)-2\bigr),\qquad 0\le i\le 9,
 \end{equation}
 where $(h_0,\ldots,h_9)$ is the $h$-vector of~$K$ defined by
 \begin{equation}\label{eq_h}
  \sum_{i=0}^9h_it^{9-i}=\sum_{i=0}^9f_{i-1}(t-1)^{9-i}
 \end{equation}
 and $\chi(K)$ is the Euler characteristic of~$K$, that is,
 \begin{equation}\label{eq_Eul}
  \chi(K)=\sum_{i=0}^8(-1)^if_i.
 \end{equation}
 Brehm and K\"uhnel~\cite[Section~1]{BrKu92} showed that the system of equations~\eqref{eq_neigh}--\eqref{eq_Eul} has a unique solution and, for this solution, $f_8=490$.
\end{proof}

Proposition~\ref{propos_no_4} is the only assertion in the present paper whose proof is computer assisted. In~\cite{Gai22} the author suggested an algorithm that, being given numbers $d$, $n$, and~$N$ and a subgroup $G\subset S_n$, produces the list of all $G$-invariant weak $d$-pseudomanifolds~$K$ with $n$ vertices and at least $N$ maximal simplices such that $K$ satisfies `half of the complementarity condition': for each subset $\sigma\subset V$ at most one of the two subsets $\sigma$ and~$V\setminus\sigma$ is a simplex of~$K$. (Here $V$ is the set of vertices of~$K$.) This algorithm was implemented as a C++ program, see~\cite{Gai-prog}. It completes in a reasonable amount of time, provided that the symmetry group~$G$ is large enough. We could try to obtain Proposition~\ref{propos_no_4} by running this program for
$$
d=8,\qquad n=15,\qquad N=490,
$$
and subgroups~$G$ of~$S_{15}$ that are isomorphic to~$\rC_4$. (We need to try to take for~$G$ a representative of every conjugacy class of such subgroups.) If the result were the empty list, Proposition~\ref{propos_no_4} would follow. However, when run on such data, the program did not finish in a reasonable amount of time. Perhaps such calculations can still be performed using a high-performance computer, but we prefer to simplify the computational task by proving the following proposition.

\begin{propos}\label{propos_12simp}
 Suppose that $K$ is a $15$-vertex $\F_2$-homology $8$-manifold that satisfies complimentarity. Assume that the symmetry group~$\Sym(K)$ contains a subgroup~$H\cong\rC_4$. Then, renumbering the vertices of~$K$, we may achieve that the group~$H$ is generated by the permutation
 $$
 A=(1\ 3\ 2\ 4)(5\ 7\ 6\ 8)(9\ 11\ 10\ 12)
 $$
 and $K$ contains the following $12$ eight-dimensional simplices:
 \begin{align*}
  &\{1,2,3,4,5,6,7,8,15\},&
  &\{1,2,3,4,5,6,7,8,13\},\\
  &\{5,6,7,8,9,10,11,12,13\},&
  &\{5,6,7,8,9,10,11,12,14\},\\
  &\{1,2,3,4,9,10,11,12,14\},&
  &\{1,2,3,4,9,10,11,12,15\},\\
  &\{1,2,3,4,5,6,9,10,14\},&
  &\{1,2,3,4,5,6,9,10,15\},\\
  &\{1,2,5,6,7,8,9,10,15\},&
  &\{1,2,5,6,7,8,9,10,13\},\\
  &\{1,2,5,6,9,10,11,12,13\},&
  &\{1,2,5,6,9,10,11,12,14\}.
 \end{align*}
\end{propos}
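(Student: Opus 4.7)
My plan is to use Smith--Bredon theory together with the Bagchi--Datta classification of small pseudomanifolds with complementarity to pin down the combinatorics of $K^{\langle g^2\rangle}$ and $K^H$, deduce from these the orbit structure of $H$ on $V$, then renumber the vertices so that $g=A$ while simultaneously identifying $K^{\langle g^2\rangle}$ with K\"uhnel's $\CP^2_9$; the twelve simplices will then follow from a direct inspection of the corresponding $5$-subsets of $\CP^2_9$.

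By Proposition~\ref{propos_comp_neigh}, $K$ is $5$-neighborly, and by Proposition~\ref{propos_BD} its Euler characteristic is odd, so the argument of the remark after Theorem~\ref{thm_Novik} (applied with $\F_2$-coefficients) gives $H^*(K;\F_2)\cong\F_2[a]/(a^3)$ with $\deg a=4$; in particular $K\sim_2\HP^2$. I then apply Theorem~\ref{thm_Bredon} to $\langle g^2\rangle\cong\rC_2$, where $g$ is a generator of $H$. Since every $\langle g^2\rangle$-orbit has at most two elements and is therefore a simplex of $K$, Proposition~\ref{propos_complement} implies that $K^{\langle g^2\rangle}$ is non-empty, satisfies complementarity, and has exactly $(15+a_2)/2$ vertices, where $a_2$ denotes the number of $g^2$-fixed vertices. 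Combining Theorem~\ref{thm_Bredon} with Proposition~\ref{propos_<d} (which excludes an $\HP^2$-like fixed-point complex by dimension), Theorem~\ref{thm_Datta} (which forces any $2$- or $4$-dimensional component with complementarity to be $\RP^2_6$ or $\CP^2_9$), the remark on $\mathrm{pt}\sqcup\partial\Delta^k$, and the parity constraint $a_2\in\{1,3,\dots,13\}$, the only consistent case is $K^{\langle g^2\rangle}\cong\CP^2_9$ with $a_2=3$. The induced action $\bar g$ on $\CP^2_9$ must have order $2$: if it were trivial, $g$ would preserve every $2$-element $g^2$-orbit $\{u,v\}$ setwise, but then $g|_{\{u,v\}}$ would be the identity or the swap, in both cases contradicting $g^2|_{\{u,v\}}=(uv)$. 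Hence Proposition~\ref{propos_BK} gives $K^H\cong\RP^2_6$, a $6$-vertex complex. Writing $(a_4,b_2,c_4)$ for the numbers of $g$-orbits of lengths $1$, $2$, and $4$, the equations $a_4+b_2+c_4=6$, $a_4+2b_2+4c_4=15$, and $a_4+2b_2=a_2=3$ have the unique non-negative integer solution $(a_4,b_2,c_4)=(3,0,3)$.

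Next I relabel the vertices so that the three length-$4$ orbits of $g$ are $\{1,2,3,4\}$, $\{5,6,7,8\}$, $\{9,10,11,12\}$ and $g=A$. All order-$2$ elements of $\Sym(\CP^2_9)\cong\He_3\rtimes\rC_2$ are Sylow-conjugate, so one can choose the isomorphism $K^{\langle g^2\rangle}\to\CP^2_9=\mathcal{P}$ to carry $\bar g$ to the standard involution $(x,y)\mapsto(-x,y)$ on $\F_3^2$. Using the residual freedom (permuting the three $g$-fixed points, the three length-$4$ orbits, and the two $g^2$-orbits inside each length-$4$ orbit), this identification can be arranged so that $13,14,15\mapsto(0,0),(0,1),(0,2)$, $b(\{1,2\}),b(\{3,4\})\mapsto(1,0),(2,0)$, $b(\{5,6\}),b(\{7,8\})\mapsto(1,1),(2,1)$, and $b(\{9,10\}),b(\{11,12\})\mapsto(1,2),(2,2)$.

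Each of the twelve listed $9$-subsets of $V$ is $g^2$-invariant, consisting of four $g^2$-pairs plus one $g^2$-fixed vertex, so by Proposition~\ref{propos_KG} it is an $8$-simplex of $K$ if and only if the corresponding $5$-subset of $\mathcal{P}$ is a $4$-simplex of $\CP^2_9$. Each check then reduces to recognizing the $5$-subset as type~(I) (a union of two intersecting non-special lines) or type~(II) ($\ell_t\cup(\ell_{t+1}\setminus\{v\})$) from Subsection~\ref{subsection_K}. For instance, $\{1,\dots,8,15\}$ translates to $\{(1,0),(2,0),(1,1),(2,1),(0,2)\}=\{y=x+2\}\cup\{y=2x+2\}$, meeting at $(0,2)$ (type~I); $\{1,\dots,8,13\}$ is $\ell_0\cup(\ell_1\setminus\{(0,1)\})$ (type~II); $\{1,2,3,4,5,6,9,10,14\}$ is $\{x=1\}\cup\{y=x+1\}$, meeting at $(1,2)$ (type~I); and the other nine are handled analogously. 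The main obstacle is the bookkeeping in the previous paragraph: one must verify that the residual freedom in labeling $V$ suffices to make \emph{all} twelve $5$-subsets simultaneously $4$-simplices of $\CP^2_9$. Once that assignment is in place, each of the twelve verifications is a direct one-line check against the explicit list of the $36$ four-simplices of $\CP^2_9$.
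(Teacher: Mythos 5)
Your proof is correct and follows essentially the same route as the paper's: identify $K^{\langle g^2\rangle}$ with $\CP^2_9$, normalize the induced involution to $(x,y)\mapsto(-x,y)$ (your observation that the three $g$-fixed vertices must then land on the non-special line $\{x=0\}$, since they are the only fixed vertices of the involution, neatly replaces the paper's use of Proposition~\ref{propos_3subset}), and read off the twelve simplices from the $4$-simplices of~$\CP^2_9$; the bookkeeping you worry about does go through, because the labels on~$V$ can simply be read off from the normalized identification. The one soft spot is the detour through $K\sim_2\HP^2$ and Theorem~\ref{thm_Bredon}: the claim $H^*(K;\F_2)\cong\F_2[a]/(a^3)$ rests on a rank bound $\beta_4\le 1$ for $\F_2$-homology manifolds that you only gesture at, and it is unnecessary --- since the union of any two $\langle g^2\rangle$-orbits has at most four vertices, $5$-neighborliness makes $K^{\langle g^2\rangle}$ connected, and Theorem~\ref{thm_Smith}, Theorem~\ref{thm_Datta}, and Corollary~\ref{cor_no_odd_hom_mfld} then force $K^{\langle g^2\rangle}\cong\CP^2_9$ directly, which is how the paper argues in Proposition~\ref{propos_HP_2}.
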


\begin{remark}
 Certainly, once we achieve that $K$ contains the $12$ simplices listed in Proposition~\ref{propos_12simp}, we automatically obtain that $K$ also contains all simplices obtained from those $12$ simplices by the action of the group~$H$.
\end{remark}

Once we prove Proposition~\ref{propos_12simp}, we will be able to simplify our computational task considerably. First, we will need to check only one subgroup of~$\rS_{15}$ isomorphic to~$\rC_4$, namely, the subgroup generated by the permutation~$A$. Second, which is more important, we will be able to modify the algorithm by including the $12$ eight-dimensional simplices from Proposition~\ref{propos_12simp} (and all simplices in their $H$-orbits) in the simplicial complex~$K$ from the very beginning. This significantly reduces the running time, since including a simplex in~$K$ immediately prohibits including a lot of other simplices in~$K$, see~\cite[Remark~4.3]{Gai22}. Being implemented in this way, the program completes in a reasonable amount of time (about $3$ hours on one processor core of clock frequency 3.3~GHz), and yields the following result.

\begin{propos}\label{propos_no_pseudo}
 There are no $15$-vertex weak $8$-pseudomanifolds~$K$ that satisfy the following conditions:
 \begin{itemize}
 \item $K$ is invariant under the permutation~$A$ from Proposition~\ref{propos_12simp},
 \item for each subset $\sigma\subset V$, at least one of the subsets~$\sigma$ and~$V\setminus\sigma$ is not a simplex of~$K$ (where $V$ is the set of vertices of~$K$),
 \item $K$ has at least $490$ top-dimensional simplices,
 \item $K$ contains the $12$ top-dimensional simplices listed in Proposition~\ref{propos_12simp}.
 \end{itemize}
\end{propos}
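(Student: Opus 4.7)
The plan is to prove Proposition~\ref{propos_no_pseudo} by an exhaustive backtracking search over $\langle A\rangle$-invariant families $\Sigma$ of $9$-subsets of $V=\{1,\ldots,15\}$, running the algorithm of~\cite{Gai22,Gai-prog} with the initial configuration dictated by Proposition~\ref{propos_12simp}. A direct check shows that each of the twelve listed $9$-subsets is fixed by the permutation $A$, so they contribute $12$ singleton $\langle A\rangle$-orbits that every admissible $\Sigma$ must contain from the outset.

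The four requirements of Proposition~\ref{propos_no_pseudo} translate into local constraints on $\Sigma$ that can be enforced incrementally. Half-complementarity is the strongest: each chosen $9$-subset $\sigma$ forbids every $9$-subset containing $V\setminus\sigma$, i.e., $\binom{9}{3}=84$ further candidates. The weak pseudomanifold axiom is monitored through a running count $c(\rho)$, for every $8$-subset $\rho\subset V$, of elements of $\Sigma$ that contain $\rho$; a branch is killed as soon as some $c(\rho)$ reaches $3$, and at a leaf one additionally requires $c(\rho)\in\{0,2\}$ for every $\rho$. The cardinality bound $|\Sigma|\ge 490$ is an explicit hypothesis and is used to prune branches in which too few orbits remain permitted for this total to be attainable. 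The search tree is built by enumerating the $\langle A\rangle$-orbits of $9$-subsets in a canonical order and, at each node, branching on \emph{include this orbit} or \emph{exclude this orbit}; because everything is $\langle A\rangle$-equivariant, each constraint check needs to be performed only once per orbit decision rather than once per simplex.

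The main obstacle is the size of the search space. Of the $\binom{15}{9}=5005$ candidate $9$-subsets, $12$ are fixed by $A$ and the remaining $4993$ split into $\langle A\rangle$-orbits of length $2$ or $4$, giving on the order of $1250$ binary decisions. Without the seed provided by Proposition~\ref{propos_12simp} the tree is too large to exhaust on a single workstation, as noted in the excerpt; with the seed, the twelve forced singletons immediately eliminate many orbits via complementarity and drive the face-count $c(\rho)$ of many $8$-subsets close to saturation, which triggers heavy pruning early. Running the resulting implementation, a minor modification of~\cite{Gai-prog}, for about three hours on one $3.3$~GHz core is expected to return an empty list of admissible $\Sigma$; that empty output is precisely the content of Proposition~\ref{propos_no_pseudo}.
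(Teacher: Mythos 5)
Your proposal follows the same route as the paper: Proposition~\ref{propos_no_pseudo} is a purely computational fact, established by running the (suitably seeded) backtracking search of~\cite{Gai22,Gai-prog} over $\langle A\rangle$-orbits of $9$-subsets with half-complementarity and the face-count condition $c(\rho)\le 2$ used for pruning, and observing that the output is empty. Your description of the constraints (each chosen top simplex $\sigma$ forbids the $\binom{9}{3}=84$ nine-subsets containing $V\setminus\sigma$; a leaf must satisfy $c(\rho)\in\{0,2\}$; the bound $|\Sigma|\ge 490$ prunes branches) matches the algorithm the paper uses.

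There are, however, two genuine problems. First, your ``direct check'' that all twelve listed $9$-subsets are fixed by $A$ is wrong. Since $A=(1\,3\,2\,4)(5\,7\,6\,8)(9\,11\,10\,12)$, an $A$-invariant subset of $V$ must be a union of the orbits $\{1,2,3,4\}$, $\{5,6,7,8\}$, $\{9,10,11,12\}$, $\{13\}$, $\{14\}$, $\{15\}$; there are therefore only $9$ $A$-fixed $9$-subsets in total, and only the first six of the twelve listed simplices are among them. The remaining six (e.g.\ $\{1,2,3,4,5,6,9,10,14\}$, whose image under $A$ is $\{1,2,3,4,7,8,11,12,14\}$) are fixed by $A^2$ but not by $A$, so they lie in $\langle A\rangle$-orbits of length~$2$ and the correct seed is their orbit closure of $18$ top simplices --- exactly what the paper's remark after Proposition~\ref{propos_12simp} alludes to. A correct $\langle A\rangle$-equivariant implementation would recover this automatically, but the claim as stated is false and the stated orbit bookkeeping (``$12$ singleton orbits'') is not the configuration actually searched. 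Second, and more fundamentally, you only assert that the search ``is expected to return an empty list.'' Since the empty output \emph{is} the entire content of the proposition, describing the algorithm without executing it (or citing a verified execution) proves nothing; the paper's proof consists precisely of the reported three-hour run of the modified program. As written, your argument stops one step short of being a proof.
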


Proposition~\ref{propos_no_4} follows immediately from Propositions~\ref{propos_12simp} and~\ref{propos_no_pseudo}. In the rest of this section we will prove Proposition~\ref{propos_12simp}. We start with the following auxiliary assertion.

\begin{propos}\label{propos_HP_2}
 Suppose that $K$ is a $15$-vertex $\F_2$-homology $8$-manifold that satisfies complimentarity and $g\in\Sym(K)$ is an element of order~$2$. Then $K^{\langle g\rangle}\cong\CP^2_9$.
\end{propos}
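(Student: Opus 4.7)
My plan is to combine Bredon's cohomological constraints on the fixed point set with the combinatorial rigidity coming from complementarity and Datta's classification.

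First I would verify that $K\sim_2\HP^2$. Arguing as in the remark following Theorem~\ref{thm_Novik} but with $\F_2$-coefficients, the fact that $K$ has $3d/2+3=15$ vertices gives $\beta_i=0$ for $0<i<4$, and the analogue of Novik's Theorem~5.6 combined with Poincar\'e duality yields $\beta_4\le 1$; since $\chi(K)$ is odd by Proposition~\ref{propos_BD} and $\beta_0=\beta_8=1$, one must have $\beta_4=1$. The non-degenerate Poincar\'e cup-product pairing on an $\F_2$-homology manifold then forces $H^*(K;\F_2)\cong\F_2[a]/(a^3)$ with $\deg a=4$. Applying Theorem~\ref{thm_Bredon} with $p=2$, $h=2$, $m=4$, the fixed point set $K^{\langle g\rangle}$ either is empty or decomposes as $F_1\sqcup\cdots\sqcup F_s$ with $F_i\sim_2\P^{h_i}(m_i)$, $m_i\le 4$, and $\sum(h_i+1)=3$. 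Invoking Adams' theorem (Remark~\ref{remark_Adams}), the cohomological alternatives for $K^{\langle g\rangle}$ are: $\P^2(m)$ for $m\in\{1,2,4\}$; a disjoint union of a point with $S^m$, $m\in\{1,2,4\}$; three disjoint points; or empty.

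Next I would extract the combinatorial consequences of complementarity. Writing $f$ for the number of fixed points of $g$ on $V$ and $p$ for the number of $2$-orbits, one has $f+2p=15$, so $f\ge 1$ is odd and the total orbit count equals $r=15-p\ge 8$. Since $K$ is $8$-dimensional, every subset of $V$ of size $\ge 10$ is a non-simplex, and by complementarity every subset of size $\le 5$ is a simplex; in particular every $\langle g\rangle$-orbit on $V$ is a simplex of $K$. Proposition~\ref{propos_complement} then forces $K^{\langle g\rangle}$ to be non-empty, to have exactly $r\ge 8$ vertices, and itself to satisfy complementarity.

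Finally I would cross-tabulate the Bredon list against $r\ge 8$ via Datta's Theorem~\ref{thm_Datta}, noting that each connected component of $K^{\langle g\rangle}$ is an $\F_2$-homology manifold by Theorem~\ref{thm_Smith}(1), and hence, when of positive dimension, a pseudomanifold of the expected dimension. The case $K^{\langle g\rangle}\sim_2\RP^2$ would give $K^{\langle g\rangle}\cong\RP^2_6$ with only $6<8$ vertices, while the case $K^{\langle g\rangle}\sim_2\HP^2$ would, by Datta in dimension~$8$, force $r\ge 15$, making $g$ fix every vertex and contradicting Proposition~\ref{propos_<d} applied to the faithful action of $\langle g\rangle$ on the $8$-pseudomanifold $K$. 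The disconnected alternatives $\mathrm{pt}\sqcup\partial\Delta^{m+1}$ with $m\in\{1,2,4\}$ have only $m+3\le 7$ vertices, and the three-point alternative has $r=3$, both incompatible with $r\ge 8$. The only surviving possibility is $K^{\langle g\rangle}\sim_2\CP^2$, in which case $r=9$ and Datta uniquely identifies $K^{\langle g\rangle}$ with $\CP^2_9$. The main obstacle I anticipate is the verification $K\sim_2\HP^2$ under the bare $\F_2$-homology-manifold hypothesis; everything downstream is then a direct matching of Bredon's cohomological list with Datta's combinatorial classification through the orbit-count bound $r\ge 8$.
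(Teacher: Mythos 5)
Your proposal is correct in outline, but it takes a genuinely different and heavier route than the paper. The paper's proof never computes $H^*(K;\F_2)$ and never invokes Bredon's Theorem~\ref{thm_Bredon}: it observes that $5$-neighborliness makes the union of any two $\langle g\rangle$-orbits a simplex, so $K^{\langle g\rangle}$ is non-empty and connected; Smith theory (Theorem~\ref{thm_Smith}) then makes it a connected $\F_2$-homology manifold, hence a pseudomanifold and not a simplex; Proposition~\ref{propos_complement} gives complementarity with between $8$ and $14$ vertices; and Theorem~\ref{thm_Datta} leaves only $\CP^2_9$ or a $14$-vertex $7$-pseudomanifold, the latter being killed by Corollary~\ref{cor_no_odd_hom_mfld} (an odd-dimensional homology manifold has $\chi=0$, while complementarity forces $\chi$ odd). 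Your route instead first establishes $K\sim_2\HP^2$ and then filters Bredon's list of possible fixed-point cohomologies through Datta; this is essentially the argument the paper uses for the $27$-vertex case in Proposition~\ref{propos_2}, where $K\sim_p\OP^2$ comes for free from Theorem~\ref{thm_Novik}. The cost of your approach is exactly the step you flag: Theorem~\ref{thm_Novik} is stated only for $\Z$-homology manifolds, so you must re-run Novik's Lemma~5.5 and Theorem~5.6 for $\F_2$-homology manifolds with $\F_2$-coefficients (this does work, and the paper's remark after Theorem~\ref{thm_Novik} confirms the characteristic-two case of Lemma~5.5, but it is an extra verification the paper's argument makes unnecessary). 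Two smaller remarks: the connectedness of $K^{\langle g\rangle}$ follows immediately from $5$-neighborliness since every $\langle g\rangle$-orbit has at most two elements, which would let you discard the disconnected branches of Bredon's list without appealing to the classification $\pt\sqcup\partial\Delta^k$ of disconnected complementarity complexes; and in the $\HP^2$ branch the conclusion $r\ge 15$ already forces $g=1$ on vertices, so the detour through Proposition~\ref{propos_<d} is not needed. What your approach buys is the explicit identification $K\sim_2\HP^2$ and the full cohomological control of the fixed-point set; what the paper's buys is brevity and independence from Bredon's theorem.
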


\begin{proof}
 Since $K$ satisfies complimentarity, Proposition~\ref{propos_comp_neigh} implies that $K$ is $5$-neighborly. Hence the union of any two $\langle g\rangle$-orbits is a simplex of~$K$. It follows that $K^{\langle g\rangle}$ is non-empty and connected. Then by Theorem~\ref{thm_Smith} the complex~$K^{\langle g\rangle}$ is an $\F_2$-homology manifold and hence a pseudomanifold. Therefore, $K^{\langle g\rangle}$ is not a simplex of positive dimension. Since $g$ acts on the vertices of~$K$ with at least $8$ orbits, Proposition~\ref{propos_complement} implies that $K^{\langle g\rangle}$ satisfies complementarity and has at least $8$ vertices. On the other hand, since $g\ne 1$, the complex~$K^{\langle g\rangle}$ has at most $14$ vertices. By Theorem~\ref{thm_Datta} we see that either $K^{\langle g\rangle}\cong\CP^2_9$ or $K^{\langle g\rangle}$ is a $14$-vertex $7$-dimensional pseudo-manifold that satisfies complementarity. The latter is impossible by Corollary~\ref{cor_no_odd_hom_mfld}, since $K^{\langle g\rangle}$ is an $\F_2$-homology manifold.
\end{proof}

\begin{proof}[Proof of Proposition~\ref{propos_12simp}]
Let $V=\{1,\ldots,15\}$ be the vertex set of~$K$. By Proposition~\ref{propos_comp_neigh} the simplicial complex~$K$ is $5$-neighborly.

Let $N\subset H$ be the subgroup isomorphic to~$\rC_2$. By Proposition~\ref{propos_HP_2} we have that $K^N\cong\CP^2_9$. Hence, $N$ acts on~$V$ with $9$ orbits, that is, with $6$ orbits of length~$2$ and $3$ fixed points.

We identify~$K^N$ with~$\CP^2_9$ along some isomorphism and identify the vertex set of~$\CP^2_9$ with the affine plane~$\mathcal{P}$ over~$\F_3$ as in Subsection~\ref{subsection_K}. These identifications provide a bijection $\varphi\colon V/N\to \mathcal{P}$. Then some $6$ points in~$\mathcal{P}$ correspond to $N$-orbits of length~$2$, and the other $3$ points in~$\mathcal{P}$ correspond to the $N$-fixed vertices of~$K$. We denote by $m\subset  \mathcal{P}$ the $3$-element subset consisting of the points that correspond to the $N$-fixed vertices of~$K$. Every $4$-element subset~$\sigma\subset \mathcal{P}$ that contains~$m$ corresponds to a $5$-element subset of~$V$, which is a simplex of~$K$, since $K$ is $5$-neighborly. Hence $\sigma$ is a simplex of~$\CP^2_9$. From Proposition~\ref{propos_3subset} it follows that $m$ is a non-special line.

Now, we can choose affine coordinates~$(x,y)$ in~$\mathcal{P}$ so that:
\begin{itemize}
 \item $m$ coincides with the line $\{x=0\}$,
 \item the special lines in~$\mathcal{P}$ are $\{y=0\}$, $\{y=1\}$, and $\{y=2\}$ in this cyclic order.
\end{itemize}

The action of~$H$ on~$K$ induces an action of the quotient group~$H/N\cong\rC_2$ on~$K^N=\CP^2_9$. We denote by~$s$ the automorphism of~$\CP^2_9$ given by the generator of~$H/N$; then $s^2=\mathrm{id}$. The group $\Sym(\CP^2_9)$ consists of affine transformations of~$\mathcal{P}$ that take special lines to special lines and preserve the cyclic order of them, see Subsection~\ref{subsection_K}. In particular, $s$ is such an affine transformation. Moreover, since the set of $N$-fixed points in~$V$ is $H$-invariant, we see that the line~$m$ is invariant under~$s$. Finally, since the action of~$H$ on~$V$ is faithfull, the set~$V$ contains an $H$-orbit of length~$4$; hence $s\ne\mathrm{id}$.
It follows easily that
$$
s(x,y)=(-x,y).
$$
Renumbering the vertices of~$K$, we can achieve that the bijection $\varphi\colon V/N\to\mathcal{P}$ is given by Table~\ref{table_bijection}. Then the group~$N$ is generated by the permutation
$$
B=(1\ 2)(3\ 4)(5\ 6)(7\ 8)(9\ 10)(11\ 12).
$$
Since a generator~$A$ of~$H$ satisfies $A^2=B$ and induces the involution~$s$ on~$\mathcal{P}$, we see that
$$
A=(1\ 3\ 2\ 4)^{\pm 1}(5\ 7\ 6\ 8)^{\pm 1}(9\ 11\ 10\ 12)^{\pm 1}
$$
for some signs~$\pm$. Swapping the vertices in the pairs~$\{3,4\}$, $\{7,8\}$, and~$\{11,12\}$, we can achieve that all signs are pluses.

\begin{table}
 \caption{The bijection between points $(x,y)\in\mathcal{P}$ and $N$-orbits in~$V$}\label{table_bijection}
 \begin{tabular}{|c|ccc|}
 \hline
 \diaghead(1,-1){abc}{$y$}{$x$} & $0$ & $1$ & $2$\\
 \hline
 $0$ & $\{13\}\vphantom{\Bigl(}$ & $\{1,2\}$  & $\{3,4\}$\\
 $1$ & $\{14\}\vphantom{\Bigl(}$ & $\{5,6\}$  & $\{7,8\}$\\
 $2$ & $\{15\}\vphantom{\Bigl(}$ & $\{9,10\}$ & $\{11,12\}$\\
 \hline
 \end{tabular}
\end{table}

By the construction of the simplicial complex~$\CP^2_9$, the following $12$ five-element subsets of~$\mathcal{P}$ are $4$-simplices of~$\CP^2_9$:
\begin{align*}
 &\bigl\{(0,2),(1,0),(2,0),(1,1),(2,1)\bigr\},&
 &\bigl\{(0,0),(1,0),(2,0),(1,1),(2,1)\bigr\},\\
 &\bigl\{(0,0),(1,1),(2,1),(1,2),(2,2)\bigr\},&
 &\bigl\{(0,1),(1,1),(2,1),(1,2),(2,2)\bigr\},\\
 &\bigl\{(0,1),(1,0),(2,0),(1,2),(2,2)\bigr\},&
 &\bigl\{(0,2),(1,0),(2,0),(1,2),(2,2)\bigr\},\\
 &\bigl\{(0,1),(1,0),(1,1),(1,2),(2,0)\bigr\},&
 &\bigl\{(0,2),(1,0),(1,1),(1,2),(2,0)\bigr\},\\
 &\bigl\{(0,2),(1,0),(1,1),(1,2),(2,1)\bigr\},&
 &\bigl\{(0,0),(1,0),(1,1),(1,2),(2,1)\bigr\},\\
 &\bigl\{(0,0),(1,0),(1,1),(1,2),(2,2)\bigr\},&
 &\bigl\{(0,1),(1,0),(1,1),(1,2),(2,2)\bigr\}.
\end{align*}
So the corresponding $12$ subsets of~$V/N$ are simplices of~$K^N$. Hence, the pre-images of these $12$ subsets under the projection $V\to V/N$ are simplices of~$K$. It is easy to check that they are exactly the $12$ simplices listed in Proposition~\ref{propos_12simp}.
\end{proof}

\section{Proof of Proposition~\ref{propos_prop}}\label{section_prop}

Throughout this section, $K$ is a $27$-vertex $\Z$-homology $16$-manifold such that the graded group~$H_*(K;\Z)$ is not isomorphic to~$H_*(S^{16};\Z)$, $V$ is the vertex set of~$K$, and $G=\Sym(K)$ is the symmetry group of~$K$. We identify~$V$ with the set~$\{1,\ldots,27\}$. Then $G$ is a subgroup of~$\rS_{27}$.

In this section we will study the group~$G$ and prove all properties from Proposition~\ref{propos_prop}. Namely, property~(a) is Proposition~\ref{propos_orders}, property~(b) follows from Propositions~\ref{propos_no_8_42} and~\ref{propos_no_27_93}, property~(c) is Proposition~\ref{propos_2} and Proposition~\ref{propos_no_16}, property~(d) is Corollary~\ref{cor_3^k}, property~(e) is Proposition~\ref{propos_11}, property~(f) is Corollary~\ref{cor_D11}, and property~(g) is Proposition~\ref{propos_13}.

From Theorems~\ref{thm_Novik} and~\ref{thm_ArMa} it follows that
\begin{itemize}
 \item $K\sim_p\OP^2$ for every prime~$p$,
 \item $K$ satisfies complementarity and hence is $9$-neighborly (by Proposition~\ref{propos_comp_neigh}).
\end{itemize}
We will make constant use of these two properties.

\subsection{Subgroups of order~$2^k$}\label{subsection_2k}

We start with the following standard lemma.

\begin{lem}\label{lem_split}
 Suppose that $H$ is a finite $p$-group that acts non-trivially on a finite set~$W$. Then there exists an index~$p$ normal subgroup $N\vartriangleleft H$ such that at least one $H$-orbit in~$W$ splits into $p$ different $N$-orbits.
\end{lem}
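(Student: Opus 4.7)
The plan is to reduce the lemma to the standard fact that every proper subgroup of a finite $p$-group is contained in a normal subgroup of index~$p$, and then to identify such a subgroup with the desired~$N$ by a direct orbit-stabilizer computation.

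First I would use non-triviality of the action to pick an $H$-orbit $O\subseteq W$ with $|O|\ge 2$. Since $H$ is a $p$-group, $|O|=p^k$ for some $k\ge 1$, so for any $x\in O$ the stabilizer $H_x$ is a proper subgroup of~$H$. At this point I invoke the basic $p$-group fact: in a finite $p$-group, every maximal subgroup has index~$p$ and is normal (the quickest way is to note that a $p$-group has non-trivial centre, and then argue by induction on $|H|$, or equivalently to work in the Frattini quotient $H/\Phi(H)$, which is an elementary abelian $p$-group whose hyperplanes pull back to precisely the maximal subgroups of~$H$). Consequently, the proper subgroup $H_x$ is contained in some maximal subgroup $N\vartriangleleft H$ with $[H:N]=p$.

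With such an $N$ chosen, I would verify the orbit-splitting by a one-line calculation. Since $H_x\subseteq N$, the stabilizer of $x$ in~$N$ equals $H_x$, so the $N$-orbit of~$x$ has cardinality $|N|/|H_x|$. Comparing this with $|O|=|H|/|H_x|=p\cdot|N|/|H_x|$ shows that $O$ is a disjoint union of exactly $p$ different $N$-orbits, each of size $|O|/p$. This is precisely the conclusion of the lemma.

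The only step that requires care is the appeal to the structure theory of $p$-groups, namely that $H_x$ lies in a normal subgroup of index~$p$; everything else is a formal manipulation with orbit-stabilizer. I expect no genuine obstacle, since this appeal is entirely standard and self-contained.
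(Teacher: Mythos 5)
Your proposal is correct and follows essentially the same route as the paper: pick an orbit of length greater than one, note its point stabilizer is a proper subgroup, enclose it in a normal subgroup $N$ of index $p$, and count $N$-orbits via orbit–stabilizer. The paper's proof is just a terser version of the same argument.
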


\begin{proof}
 It is well known that any index~$p$ subgroup of~$H$ is normal and any proper subgroup of~$H$ is contained in an index~$p$ subgroup. Let $\alpha\subseteq W$ be an $H$-orbit of length greater than~$1$ and $S\subset H$ be the stabilizer of an element $w\in \alpha$. Then $|S|< |H|$. We take for~$N$ an index~$p$ subgroup of~$H$ that contains~$S$. Then the orbit~$\alpha$ splits into $p$ different $N$-orbits.
\end{proof}

\begin{propos}\label{propos_2}
  Suppose that $H\subset G$ is a $2$-subgroup.
  \begin{enumerate}
   \item If $|H|=2$, then $H$ acts on~$V$ with $15$ orbits, $K^H$ is a $15$-vertex connected $\F_2$-homology $8$-manifold satisfying complementarity, and $K^H\sim_2\HP^2$.
   \item If $|H|=4$, then $H$ acts on~$V$ with $9$ orbits and $K^H\cong\CP^2_9$.
   \item If $|H|=8$, then $H$ acts on~$V$ with $6$ orbits and $K^H\cong\RP^2_6$.
  \end{enumerate}
\end{propos}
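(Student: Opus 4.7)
The plan is to climb the subgroup lattice of $H$ and apply Theorem~\ref{thm_Bredon} at each step, using two standing features of $K$: complementarity makes $K$ be $9$-neighborly by Proposition~\ref{propos_comp_neigh}, and $K\sim_2\P^2(8)$, so Bredon's theorem caps the total projective dimension of any fixed-point complex at~$2$. For part~(1), let $g$ be the non-trivial element of $H\cong\rC_2$. Since each $H$-orbit has length $\le 2$, the union of any four $H$-orbits has at most $8\le 9$ elements and hence is a simplex of~$K$; therefore $K^H$ is $4$-neighborly and in particular connected. Theorem~\ref{thm_Bredon} then gives $K^H\sim_2\P^2(m_1)$ for some $m_1\le 8$; Theorem~\ref{thm_Smith}(1) makes $K^H$ an $\F_2$-homology manifold of dimension $2m_1$; Proposition~\ref{propos_<d} forces $m_1<8$; and Adams' Hopf invariant one theorem (Remark~\ref{remark_Adams}) restricts $m_1$ to $\{1,2,4\}$. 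Since every $H$-orbit is a simplex of~$K$, Proposition~\ref{propos_complement} gives complementarity of $K^H$, so Theorem~\ref{thm_Datta} identifies $K^H$ as $\RP^2_6$, $\CP^2_9$, or an $8$-dimensional pseudomanifold with at least $15$ vertices. Since $H$ has at least $14$ orbits in~$V$, only the third option survives, so $m_1=4$ and $K^H\sim_2\HP^2$ with at least $15$ vertices.

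Pinning the vertex count to exactly $15$ is the main technical obstacle. I would argue by $f$-vector elimination. Proposition~\ref{propos_BD} forces the vertex count $n$ to be odd (since $\chi(K^H)=3\neq 1$), so $n\in\{15,17,19,21,23,25\}$. Three constraints restrict the $f$-vector of $K^H$: complementarity (in particular, $(n-10)$-neighborliness together with $f_7+f_8=\binom{n}{8}$ in the middle range), the pseudomanifold identity $2f_7=9f_8$, and $\chi(K^H)=3$. For $n\ge 21$ the $(n-10)$-neighborliness would force $f_9\neq 0$ in an $8$-dimensional complex, which is impossible. For $n=19$, complementarity alone gives $f_7=\binom{19}{8}$ and $f_8=\binom{19}{9}$, which fail $2f_7=9f_8$. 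For $n=17$, solving the Euler-characteristic equation together with $f_7+f_8=\binom{17}{8}=24310$ yields $f_7=17874$ and $f_8=6436$, again violating $2f_7=9f_8$. Only $n=15$ survives (and it does, by the Brehm--K\"uhnel calculation behind Proposition~\ref{propos_490}), so $(27+f)/2=15$ gives exactly three fixed vertices.

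For part~(2), pick $H'\subset H$ of order~$2$; by part~(1), $K^{H'}$ is the $15$-vertex $\sim_2\HP^2$ complex. The action of $H/H'\cong\rC_2$ on $K^{H'}$ must be non-trivial, because otherwise $K^H=K^{H'}$ would force every $H$-orbit to coincide with the corresponding $H'$-orbit and hence to have length $\le 2$; this contradicts faithfulness when $H\cong\rC_4$ (a generator of order~$4$ would then satisfy $g^2=1$), and when $H\cong\rC_2^2$ one can always pick a different order-$2$ subgroup, since the three candidate complexes $K^{H'_i}$ cannot all coincide (equality of $H'_1$-orbits with $H'_2$-orbits in $V$ would force two non-trivial elements of $\rC_2^2$ to act identically on $V$). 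Proposition~\ref{propos_<d} then forces $\dim K^H<8$, and a second application of Theorem~\ref{thm_Bredon}, combined with the same connectedness and complementarity arguments as above and Theorem~\ref{thm_Datta}, leaves $K^H$ as $\RP^2_6$ or $\CP^2_9$; the orbit equation $27=a+2b+4c$ with $a+b+c=6$ has no non-negative solution (it forces $c\ge 8$ while $c\le 6$), so $K^H\cong\CP^2_9$ with nine orbits. Part~(3) follows the same pattern: in a $2$-group of order~$8$, which by properties~(a) and~(b) of Proposition~\ref{propos_prop} must be $\rC_2^3$, $\rD_4$, or $\rQ_8$, one chooses a normal subgroup $H'\subset H$ of order~$4$, uses part~(2) to get $K^{H'}\cong\CP^2_9$, excludes the trivial action of $H/H'$ on $K^{H'}$ by an analogous orbit-refinement argument, and applies Proposition~\ref{propos_BK} to conclude $K^H\cong\RP^2_6$ with six orbits.
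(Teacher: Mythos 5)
Your overall strategy is the same as the paper's: establish connectivity from $9$-neighborliness, run Smith--Bredon--Adams to get $K^H\sim_2\P^2(m)$, transfer complementarity via Proposition~\ref{propos_complement}, invoke Theorem~\ref{thm_Datta}, and then climb from order $2$ to $4$ to $8$ by splitting off an index-two subgroup and using Proposition~\ref{propos_<d} and Proposition~\ref{propos_BK}. The one place you genuinely diverge is in pinning the vertex count of $K^H$ to $15$ in part~(1): your route goes through Proposition~\ref{propos_BD} to force $n$ odd and then eliminates $n=17,19$ by an $f$-vector computation (complementarity pairing $f_7+f_8=\binom{n}{8}$, the pseudomanifold identity $2f_7=9f_8$, and $\chi=3$) and $n\ge 21$ by dimension. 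The numbers check out ($f_7=17874$, $f_8=6436$ for $n=17$, and the identity fails in both remaining cases). The paper's argument is shorter and conceptually cleaner: if $n\ge 16$, then $(n-10)$-neighborliness makes $K^H$ at least $6$-neighborly, so $H^4(K^H;\F_2)=0$, contradicting $K^H\sim_2\HP^2$ directly. Your version works but trades one line of cohomology for an Euler-characteristic computation; it also quietly relies on $\dim K^H=8$ equalling the homology dimension, which deserves the explicit remark the paper makes about the fundamental cycle.

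There is one genuine (though easily repaired) defect in part~(3): you invoke properties~(a) and~(b) of Proposition~\ref{propos_prop} to restrict $H$ to $\rC_2^3$, $\rD_4$, or $\rQ_8$. Those properties are themselves proved using Proposition~\ref{propos_2} (property~(b) rests on Proposition~\ref{propos_no_8_42}, whose proof applies parts~(1) and~(2) of the present proposition), so this is circular. Fortunately the classification of $H$ is not needed at all: every group of order $8$ has a normal subgroup of order $4$, and the non-triviality of the induced $\rC_2$-action on $K^{H'}$ for a suitably chosen $H'$ follows as in Lemma~\ref{lem_split} (a point stabilizer of a non-fixed vertex is a proper subgroup, hence lies in some maximal, index-two subgroup, which then splits that orbit). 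Your ``analogous orbit-refinement argument'' should be phrased this way --- for a fixed arbitrary $H'$ the quotient action can in fact be trivial, so the choice of $H'$ must be adapted to the orbit structure. With that adjustment, and deleting the appeal to Proposition~\ref{propos_prop}, the proof is complete.
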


\begin{proof}
First, assume that $|H|\le 4$. Since $K$ is $9$-neighborly, the union of any two $H$-orbits is a simplex of~$K$. Therefore, $K^H$ is non-empty and connected, and besides, the number of vertices of~$K^H$ is equal to the number of $H$-orbits in~$V$. From Theorem~\ref{thm_Smith} it follows that $K^H$ is a connected $\F_2$-homology $d$-manifold for some $d$ and hence a $d$-pseudomanifold. By Proposition~\ref{propos_<d} we have $d<16$. By Theorem~\ref{thm_Bredon} and Remark~\ref{remark_Adams} we have $K^H\sim_2\mathbb{P}^2(m)$ for some $m\in\{1,2,4,8\}$. Since the fundamental cycle of~$K^H$ represents a non-trivial homology class in~$H_d(K^H;\F_2)$, we obtain that the homology dimension of~$K^H$ coincides with the geometric dimension, that is, $d=2m$. So $d\in\{2,4,8\}$. Since all $H$-orbits in~$V$ are simplices of~$K$, Proposition~\ref{propos_complement} implies that $K^H$ satisfies complementarity. By Theorem~\ref{thm_Datta} we obtain that $K^H$ is isomorphic to~$\RP^2_6$, or~$\CP^2_9$, or is a $8$-pseudomanifold with at least~$15$ vertices.

Assume that $|H|=2$. Then $H$ acts on~$V$ with at least $14$ orbits, so $K^H$ has at least $14$ vertices. Hence, $K^H$ is neither~$\RP^2_6$ nor~$\CP^2_9$. Therefore, $K^H$ is an $\F_2$-homology $8$-manifold with $n\ge 15$ vertices and $K^H\sim_2\HP^2$. Since $K^H$ satisfies complementarity, by Proposition~\ref{propos_comp_neigh} we obtain that $K^H$ is $(n-10)$-neighborly. If $n$ were greater than~$15$, we would obtain that $K^H$ is $6$-neighborly, so $H^4(K^H;\F_2)=0$, which would contradict the equivalence $K^H\sim_2\HP^2$. Thus, $n=15$.

Assume that $|H|=4$. Then $K^H$ has at least $7$ vertices, so $K^H$ is not isomorphic to~$\RP^2_6$. Hence, $K^H$ is either isomorphic to~$\CP^2_9$ or is a $8$-pseudomanifold with at least~$15$ vertices. By Lemma~\ref{lem_split} there is a subgroup~$N\vartriangleleft H$ such that $N\cong\rC_2$ and at least one $H$-orbit in~$V$ splits into two $N$-orbits. Then $K^H=(K^{N})^{H/N}$ and the group $H/N\cong\rC_2$ acts non-trivially on~$K^{N}$. By the above $K^{N}$ is a $15$-vertex connected  $\F_2$-homology $8$-manifold. Then, by Proposition~\ref{propos_<d}, $\dim K^H<\dim K^{N}=8$. Hence $K^H\cong\CP^2_9$. Therefore, $H$ acts on~$V$ with $9$ orbits.

Now, assume that $|H|=8$.  By Lemma~\ref{lem_split} there exists a subgroup $N\vartriangleleft H$ of order~$4$ such that at least one $H$-orbit in~$V$ splits into two $N$-orbits. Then the quotient group~$H/N$ acts non-trivially on~$K^H$. By the above we have $K^{N}\cong\CP^2_9$. So by Proposition~\ref{propos_BK} we have that
$$
K^H=(K^{N})^{H/N}\cong(\CP^2_9)^{\rC_2}\cong\RP^2_6.
$$
Since $K$ is $9$-neighborly, every $H$-orbit in~$V$ is a simplex of~$K$. Therefore, the number of $H$-orbits is equal to the number of vertices of~$K^H$, so~$H$ acts on~$V$ with $6$ orbits.
\end{proof}

\begin{cor}\label{cor_2}
 Every element $g\in G$ of order~$2$ fixes exactly three vertices of~$K$.
\end{cor}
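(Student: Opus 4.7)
The plan is to derive this as an immediate numerical consequence of Proposition~\ref{propos_2}(1) applied to the cyclic subgroup~$\langle g\rangle\subset G$. Since $g$ has order~$2$, we have $|\langle g\rangle|=2$, so by Proposition~\ref{propos_2}(1) the action of~$\langle g\rangle$ on the $27$-element vertex set~$V$ has exactly $15$ orbits. Every orbit of a group of order~$2$ has length~$1$ or~$2$, so letting $f$ denote the number of fixed points (orbits of length~$1$) and $t$ the number of orbits of length~$2$, we get the two linear equations
\begin{equation*}
f+t=15,\qquad f+2t=27.
\end{equation*}
Subtracting yields $t=12$ and hence $f=3$, which is precisely the claim.

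The only ``step'' is to recognize that this is essentially a restatement of Proposition~\ref{propos_2}(1): all the real work has already been done there using Theorems~\ref{thm_Smith}, \ref{thm_Bredon}, \ref{thm_Datta}, Adams' Hopf invariant one theorem, and the complementarity property to pin down that $K^{\langle g\rangle}$ has exactly $15$ vertices. No obstacles are anticipated; the corollary is a one-line orbit-counting consequence and should be written as such.
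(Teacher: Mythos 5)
Your proposal is correct and matches the paper's intent exactly: the corollary is stated immediately after Proposition~\ref{propos_2} with no separate proof, precisely because it is the one-line orbit count you give ($f+t=15$, $f+2t=27$, hence $f=3$) applied to Proposition~\ref{propos_2}(1).
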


\begin{propos}\label{propos_no_8_42}
 The group~$G$ does not contain a subgroup isomorphic to either~$\rC_8$ or~$\rC_2\times\rC_4$.
\end{propos}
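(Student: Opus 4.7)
The plan is to derive a contradiction with Proposition~\ref{propos_no_4} by passing to the fixed-point complex of an appropriate order-$2$ subgroup. Suppose $H\subset G$ is isomorphic to $\rC_8$ or $\rC_2\times\rC_4$. In both groups there is a subgroup $N\subset H$ of order~$2$ with $H/N\cong\rC_4$: for $\rC_8$ take the unique order-$2$ subgroup, and for $\rC_2\times\rC_4$ take the $\rC_2$ direct factor. Since $H$ is abelian, $N$ is normal in~$H$, so $H/N\cong\rC_4$ acts simplicially on $K^N$. By Proposition~\ref{propos_2}(1), $K^N$ is a $15$-vertex $\F_2$-homology $8$-manifold satisfying complementarity.

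Next I would split on whether the induced homomorphism $H/N\to\Sym(K^N)$ is faithful. If it is, then $\Sym(K^N)$ contains an element of order~$4$, directly contradicting Proposition~\ref{propos_no_4}. If it is not, then the kernel is a nontrivial subgroup of the cyclic group $\rC_4$, so it contains the unique order-$2$ subgroup; let $M\subset H$ be the preimage of that subgroup under $H\to H/N$. Then $N\subsetneq M$ and $|M|=4$ (concretely, $M\cong\rC_4$ in the $\rC_8$ case and $M\cong\rC_2\times\rC_2$ in the $\rC_2\times\rC_4$ case). Since $M/N$ acts trivially on $K^N$, every $m\in M$ preserves each $N$-orbit in $V$ setwise, and together with $N\subset M$ this forces $Mv=Nv$ for every vertex $v\in V$. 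Hence the $M$-orbits and $N$-orbits on $V$ coincide, so $K^M=K^N$ has $15$ vertices. But Proposition~\ref{propos_2}(2) asserts $K^M\cong\CP^2_9$, which has only $9$ vertices, a contradiction.

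The main conceptual obstacle is the non-faithful branch. The heart of it is the observation that trivial action of $M/N$ on $K^N$ collapses the orbit structure: it forces the $M$-orbits to be literally the same sets as the $N$-orbits, and hence forces $K^M$ and $K^N$ to have the same vertex set. Once this bookkeeping is in place, Proposition~\ref{propos_2} supplies incompatible vertex counts ($9$ vs.\ $15$), closing the argument uniformly for both $\rC_8$ and $\rC_2\times\rC_4$.
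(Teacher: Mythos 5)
Your proposal is correct and follows essentially the same route as the paper: choose $N\cong\rC_2$ with $H/N\cong\rC_4$, apply Proposition~\ref{propos_2} to identify $K^N$ as a $15$-vertex $\F_2$-homology $8$-manifold with complementarity, and then either contradict Proposition~\ref{propos_no_4} (faithful case) or produce an order-$4$ subgroup $M\supset N$ with $K^M=K^N$, contradicting Proposition~\ref{propos_2}(2) (non-faithful case). Your write-up merely spells out in more detail the orbit-collapsing argument that the paper leaves implicit.
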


\begin{proof}
 Assume that $H\subset G$ is a subgroup isomorphic to either~$\rC_8$ or~$\rC_2\times\rC_4$. Then $H$ contains a subgroup $N\cong\rC_2$ such that $H/N\cong\rC_4$. By Proposition~\ref{propos_2} we have that $K^N$ is a $15$-vertex $\F_2$-homology $8$-manifold satisfying complementarity.
 If the action of~$H/N$ on~$K^N$ were not faithful, then we would obtain a subgroup $Q\subset H$ of order~$4$ such that $Q\supset N$ and $K^Q=K^N$, which would yield a contradiction with Proposition~\ref{propos_2}. So the action of~$H/N$ on~$K^N$ is faithful, that is, the symmetry group $\Sym(K^N)$ contains the subgroup~$H/N\cong\rC_4$.  We get a contradiction with Proposition~\ref{propos_no_4}.
\end{proof}

Suppose that $H\subset G$ is a subgroup of order~$8$. From  Proposition~\ref{propos_no_8_42} and assertion~(1) of Proposition~\ref{propos_small_groups} it follows that $H$ is isomorphic to~$\rC_2^3$, or~$\rD_4$, or~$\rQ_8$. By Proposition~\ref{propos_2} the group~$H$ acts on~$V$ with $6$ orbits. The length of every orbit is a power of two and does not exceed~$8$. It is easy to see that there are exactly two ways to decompose~$27$ into the sum of $6$ powers of two that do not exceed~$8$, namely,
\begin{align*}
 27 &= 8 + 8 + 8 + 1+1+1\\
 {}&= 8+8+4+4+2+1.
\end{align*}
The following proposition says that the latter partition never occurs if $H$ is isomorphic to either~$\rC_2^3$ or~$\rQ_8$.

\begin{propos}\label{propos_8}
Suppose that $H$ is a subgroup of~$G$ that is isomorphic to either~$\rC_2^3$ or~$\rQ_8$. Then $H$ acts on~$V$ with $3$ orbits of length~$8$ and $3$ fixed points.
\end{propos}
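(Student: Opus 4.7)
The plan is to show that, in both cases, the $H$-orbit partition of $V$ must be $8+8+8+1+1+1$ by ruling out the remaining admissible partition $8+8+4+4+2+1$. The only extra ingredient we need is Corollary~\ref{cor_2}, which tells us that every involution in~$G$ fixes exactly three vertices of~$K$.

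Suppose, for contradiction, that the $H$-orbits on $V$ have lengths $8,8,4,4,2,1$. Pick one of the orbits $\beta$ of length~$4$ and a vertex $v\in\beta$; then $\Stab_H(v)$ has order $|H|/|\beta|=2$, so $\Stab_H(v)=\langle g\rangle$ for some involution $g\in H$. If $H\cong\rC_2^3$, then $H$ is abelian, so the stabilizer of every vertex in~$\beta$ coincides with $\langle g\rangle$. Hence $g$ fixes all four vertices of~$\beta$ and also the vertex forming the length-$1$ orbit (which is fixed by all of~$H$), giving at least $5$ fixed vertices of~$V$. This directly contradicts Corollary~\ref{cor_2}.

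If instead $H\cong\rQ_8$, then the structural input I want to invoke is that $\rQ_8$ has a unique subgroup of order~$2$, namely its centre~$\langle z\rangle$. Hence $\langle g\rangle=\langle z\rangle$, and the same identification forces the stabilizer of every vertex of every length-$4$ orbit to equal $\langle z\rangle$. Therefore $z$ fixes all four vertices of any length-$4$ orbit; already this gives $|V^{\langle z\rangle}|\ge 4$, in contradiction with Corollary~\ref{cor_2}. Ruling out the partition $8+8+4+4+2+1$ in both cases leaves only $8+8+8+1+1+1$, which is the assertion of the proposition.

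The argument needs no machinery beyond Corollary~\ref{cor_2}; the only point to watch is which structural feature of the group to quote. For $\rC_2^3$ one uses commutativity (stabilizers within an orbit coincide); for $\rQ_8$ one uses the uniqueness of the minimal subgroup. Both features fail in~$\rD_4$, which is consistent with the fact that $\rD_4$ is allowed to realize the partition $8+8+4+4+2+1$ (see row~$18$ of Table~\ref{table_main}); so the real work of the proof is simply recognizing that this is the dichotomy distinguishing $\rC_2^3$ and $\rQ_8$ from~$\rD_4$.
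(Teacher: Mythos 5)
Your proof is correct and follows essentially the same route as the paper: both arguments rule out the partition $8+8+4+4+2+1$ by observing that the order-$2$ stabilizer of a point in a length-$4$ orbit is normal in $H$ (for you, via commutativity in $\rC_2^3$ and uniqueness of the order-$2$ subgroup in $\rQ_8$, which are exactly the two ways this normality manifests), hence fixes the whole orbit and violates Corollary~\ref{cor_2}. The paper simply phrases the two cases uniformly as ``$H_v$ is a normal subgroup of~$H$''; your closing remark about why $\rD_4$ escapes the argument is a nice sanity check but not needed.
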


\begin{proof}
 Assume the converse, i.\,e., $H$ acts on~$V$ with two orbits of length~$8$, two orbits of length~$4$, one orbit of length~$2$, and one fixed point. Let $\alpha$ be one of the two $H$-orbits of length~$4$. Consider the stabilizer $H_v$ of a vertex $v\in\alpha$. We have $H_v\cong\rC_2$. Since~$H$ is isomorphic to either~$\rC_2^3$ or~$\rQ_8$, it follows that  $H_v$ is a normal subgroup of~$H$. Hence $H_v$ fixes every vertex in~$\alpha$, and we obtain a contradiction with Corollary~\ref{cor_2}.
\end{proof}

\begin{propos}\label{propos_no_16}
 The group~$G$ does not contain a subgroup of order~$16$.
\end{propos}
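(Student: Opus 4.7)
The plan is to first identify the isomorphism type of a hypothetical order-16 subgroup $H \subset G$. By Proposition~\ref{propos_no_8_42}, $H$ contains no subgroup isomorphic to $\rC_8$ or $\rC_2\times\rC_4$, and by assertion~(2) of Proposition~\ref{propos_small_groups}, the only group of order $16$ with this property is $\rC_2^4$. So the problem reduces to ruling out an elementary abelian subgroup of rank~$4$.

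The central idea is to pin down the fixed-point sets of involutions in $H$. Every subgroup of order~$8$ inside $H \cong \rC_2^4$ is isomorphic to~$\rC_2^3$, so by Proposition~\ref{propos_8} each such subgroup $H_0$ has exactly three fixed vertices. On the other hand, Corollary~\ref{cor_2} gives $|\Fix(h)| = 3$ for every involution $h \in G$. Since $\Fix(H_0) \subseteq \Fix(h)$ for every non-identity $h \in H_0$, the two $3$-element sets must coincide. Hence all seven involutions inside a single $\rC_2^3 \subset H$ share the same $3$-point fixed set.

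Next I would propagate this equality across all of $H$. Any two involutions $g_1, g_2 \in H$ generate a subgroup of order at most~$4$ inside $\rC_2^4$, and since $H / \langle g_1, g_2 \rangle$ has order at least~$4$, one can extend $\langle g_1, g_2 \rangle$ to a $\rC_2^3$-subgroup of $H$. By the previous step, $\Fix(g_1) = \Fix(g_2)$. Thus all~$15$ involutions of $H$ share a common $3$-element fixed set $F \subset V$. Consequently $H$ acts freely on $V \setminus F$, so $|H| = 16$ must divide $|V \setminus F| = 24$, which is absurd.

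The only non-routine step is the observation that propagates the common-fixed-set property from a single $\rC_2^3$-subgroup to the entire group $\rC_2^4$; once this is noted, the divisibility obstruction finishes the argument. The proof is essentially a Smith-type fixed point count combined with elementary subgroup combinatorics of $\rC_2^4$, and requires no further input beyond Corollary~\ref{cor_2} and Proposition~\ref{propos_8}.
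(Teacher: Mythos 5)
Your proof is correct, and it takes a genuinely different route from the paper's. Both arguments begin the same way, reducing to $H\cong\rC_2^4$ via Proposition~\ref{propos_no_8_42} and Proposition~\ref{propos_small_groups}(2). From there the paper computes the fixed point complex: it picks, via Lemma~\ref{lem_split}, an index-$2$ subgroup $N\cong\rC_2^3$ on which some $H$-orbit splits, identifies $K^H\cong(\RP^2_6)^{\rC_2}\cong\pt\sqcup\partial\Delta^2$, deduces from Proposition~\ref{propos_complement} that $H$ has exactly $4$ orbits, uses the unique decomposition $27=16+8+2+1$ to locate an orbit of length~$2$, and then exhibits a $\rC_2^3$-subgroup with an orbit of length~$2$, contradicting Proposition~\ref{propos_8}. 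Your argument sidesteps the fixed-point-complex machinery entirely: the observation that $\Fix(H_0)\subseteq\Fix(h)$ forces equality of two $3$-element sets (Proposition~\ref{propos_8} versus Corollary~\ref{cor_2}), the propagation across $\rC_2^4$ by embedding any $\langle g_1,g_2\rangle$ into a hyperplane, and the final divisibility obstruction $16\nmid 24$ (valid because every nontrivial element of $\rC_2^4$ is an involution, so any nontrivial stabilizer of a point outside $F$ would contain an involution fixing that point) are all elementary. What the paper's route buys is uniformity with the surrounding arguments, which repeatedly exploit the structure of $K^H$; what yours buys is brevity and independence from Lemma~\ref{lem_split}, Proposition~\ref{propos_complement}, and the explicit description of $(\RP^2_6)^{\rC_2}$.
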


\begin{proof}
Assume the converse, i.\,e., $G$ contains a subgroup~$H$ with $|H|=16$. From  Proposition~\ref{propos_no_8_42} and assertion~(2) of Proposition~\ref{propos_small_groups} it follows that $H\cong\rC_2^4$. By Lemma~\ref{lem_split} there exists a subgroup $N\subset H$ of order~$8$ such that at least one $H$-orbit in~$V$ splits into two $N$-orbits. Then
$$
K^H=(K^{N})^{H/N}\cong(\RP^2_6)^{\rC_2},
$$
where the action of~$\rC_2$ on~$\RP^2_6$ is nontrivial.
All subgroups isomorphic to~$\rC_2$ of the group $\Sym(\RP^2_6)\cong\rA_5$ are conjugate to each other. It is easy to check that $(\RP^2_6)^{\rC_2}\cong\pt\sqcup\partial\Delta^2$, cf.~\cite[Table~9.1]{Gai22}. Hence $K^H\cong\pt\sqcup\partial\Delta^2$. So from Proposition~\ref{propos_complement} it follows that all $H$-orbits are simplices of~$K$. Thus, the number of $H$-orbits is equal to the number of vertices of~$K^H$, that is, $H$ acts on~$V$ with $4$ orbits.
The number of ones in the binary representation of~$27$ is four, so $27$ can be written as the sum of four powers of two in a unique way, namely,
$$
27=16+8+2+1.
$$
It follows that $V$ decomposes into four $H$-orbits $\alpha_1$, $\alpha_2$, $\alpha_3$, and~$\alpha_4$ of lengths~$16$, $8$, $2$, and~$1$, respectively. Let $S\subset H$ be the stabilizer of a vertex $v\in\alpha_3$. Then $S\cong\rC_2^3$. Take any subgroup $Q\subset H$ such that $Q\cong\rC_2^3$ and $Q\ne S$.
Then $\alpha_3$ is a $Q$-orbit of length~$2$, and we obtain a contradiction with Proposition~\ref{propos_8}.
\end{proof}

\subsection{Elements of odd prime orders}

\begin{propos}\label{propos_odd_fix}
 Suppose that $H\subset G$ is a subgroup that is isomorphic to~$\rC_p$, where $p$ is an odd prime. Then every $H$-orbit $\sigma\subset V$ is a simplex of~$K$ and one of the following assertions holds for the fixed point simplicial complex~$K^H$:
 \begin{enumerate}
  \item $K^H\cong\pt\sqcup\partial\Delta^k$, where $k\ge 1$,
  \item $K^H\cong\CP^2_9$,
  \item $K^H$ is an $n$-vertex connected orientable $\F_p$-homology $d$-manifold such that
  \begin{itemize}
  \item $(d,n)=(8,15)$, or~$(12,19)$, or~$(12,21)$,
  \item $K^H$ satisfies complementarity,
  \item $H^*\bigl(K^H,\F_p\bigr)\cong\F_p[a]/(a^3)$, where $\deg a=d/2$.
  \end{itemize}
 \end{enumerate}
\end{propos}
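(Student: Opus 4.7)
The plan is to combine the Smith--Bredon cohomological theory with the complementarity property of $K^H$ and Datta's classification (Theorem~\ref{thm_Datta}).

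Since $G\subset\rS_{27}$ acts faithfully and $|H|=p$ is prime, $H$ acts faithfully on $K$. Theorem~\ref{thm_Smith} then shows that every connected component of $K^H$ is an orientable $\F_p$-homology manifold of even dimension. Applying Theorem~\ref{thm_Bredon} to $K\sim_p\OP^2=\P^2(8)$, I would get either $K^H=\varnothing$ or $K^H=F_1\sqcup\cdots\sqcup F_s$ with $F_i\sim_p\P^{h_i}(m_i)$ and $\sum_i(h_i+1)=3$. Graded commutativity of the cup product in odd characteristic forces $m_i$ to be even whenever $h_i\geq 2$ (cf.\ Remark~\ref{remark_Adams}); combined with the Smith parity constraint and connectedness of each $F_i$, this gives $m_i\in\{2,4,6,8\}$ for $h_i\geq 1$. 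Proposition~\ref{propos_<d} gives $\dim K^H<16$, so $m_1\leq 6$ when $h_1=2$. The three possibilities reduce to:
\begin{itemize}
\item[(A)] $K^H=\pt\sqcup\pt\sqcup\pt$;
\item[(B)] $K^H=\pt\sqcup F$ with $F\sim_p S^{m_2}$, $m_2\in\{2,4,6,8\}$;
\item[(C)] $K^H$ is connected with $K^H\sim_p\P^2(m_1)$, $m_1\in\{2,4,6\}$.
\end{itemize}

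Next I would exclude $K^H=\varnothing$: for $p\neq 3$, the fact $p\nmid 27$ gives a fixed vertex; for $p=3$, every orbit has size $\leq 3\leq 9$ and is a simplex by $9$-neighborliness of $K$, again giving a vertex of $K^H$. A direct computation of $\chi(F_i)=h_i+1$ (using that each $F_i$ has cohomology concentrated in even degrees) yields $\chi(K^H)=3$ in each of (A), (B), (C), so $K^H$ is not a simplex. Letting $r$ be the number of $H$-orbits in $V$, one checks $r\geq 3$ by enumerating the solutions of $a+pb=27$ with $b\geq 1$. Then Proposition~\ref{propos_complement}(3) together with its ``moreover'' clause yields simultaneously that every $H$-orbit is a simplex of $K$, that $K^H$ has exactly $r$ vertices, and that $K^H$ satisfies complementarity; this establishes the first claim of the proposition.

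In Case~(A), $K^H\cong\pt\sqcup\partial\Delta^1$, giving item~(1). In Case~(B), $K^H$ is disconnected and satisfies complementarity, so by the remark after Theorem~\ref{thm_Datta} one has $K^H\cong\pt\sqcup\partial\Delta^k$; matching $\partial\Delta^k\cong S^{k-1}$ with $F\sim_p S^{m_2}$ forces $k=m_2+1$, again item~(1). In Case~(C), $K^H$ is a connected $d$-pseudomanifold with $d=2m_1\in\{4,8,12\}$ satisfying complementarity. Theorem~\ref{thm_Datta} leaves either $K^H\cong\CP^2_9$, which pins $d=4$, $n=r=9$ (item~(2)), or $n\geq d+7$. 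In the latter sub-case, Proposition~\ref{propos_comp_neigh} yields $(n-d-2)$-neighborliness of $K^H$, hence $\widetilde{H}^i(K^H;\F_p)=0$ for $i\leq n-d-3$; since $H^{d/2}(K^H;\F_p)\neq 0$ this forces $n\leq 3d/2+3$. For $d=8$ this squeezes $n=15$, and for $d=12$ it leaves $n\in\{19,20,21\}$. All remaining properties in item~(3) (orientability, $\F_p$-homology manifold structure, cohomology ring) follow directly from Theorems~\ref{thm_Smith} and~\ref{thm_Bredon}.

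The main obstacle is excluding $n=20$ in the $d=12$ case. For this I would observe that every admissible $r$ has the form $r=27-(p-1)b$ for some odd prime $p$ and integer $b\geq 1$, so $27-r$ must factor as $(p-1)b$ with $p-1$ even; since $27-20=7$ admits no such factorization, $n=20$ is not realized. A secondary subtlety is the need to treat $p=3$ separately in the non-emptiness step, where the mod-$p$ Euler characteristic argument degenerates and must be replaced by the $9$-neighborliness argument above.
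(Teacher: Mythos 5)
Your proof is correct and follows the same overall strategy as the paper (Smith--Bredon theory plus complementarity plus Datta's classification), but two of your sub-arguments genuinely differ from the paper's. First, you extract the component structure of $K^H$ --- at most three components, of the shapes (A), (B), (C) --- directly from the constraint $\sum_i(h_i+1)=3$ in Theorem~\ref{thm_Bredon}, whereas the paper first establishes complementarity of~$K^H$, bounds the number of components by~$3$ purely combinatorially (if $s\ge 4$ then both $V_1\cup V_2$ and $V_3\cup\cdots\cup V_s$ would be non-simplices of~$K$, contradicting complementarity), and only invokes Bredon in the connected case; your route is slightly more streamlined, while the paper's uses less machinery in the disconnected cases. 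Second, to exclude $(d,n)=(12,20)$ you use the parity of the orbit count: $27-r=(p-1)b$ is even for odd~$p$, so $n=r$ is odd. The paper instead combines $\chi(K^H)=3$ with Proposition~\ref{propos_BD} (an even number of vertices forces $\chi=1$ under complementarity). Your argument is more elementary and equally valid. One slip should be corrected: from $(n-d-2)$-neighborliness you may conclude $\widetilde{H}^i(K^H;\F_p)=0$ only for $i\le n-d-4$, not for $i\le n-d-3$, since a $k$-neighborly complex contains the full $(k-1)$-skeleton of the simplex on its vertex set and is therefore only $(k-2)$-acyclic. Taken literally, your intermediate claim would yield $n\le 3d/2+2$ and wrongly exclude $(d,n)=(8,15)$. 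With the corrected index, $H^{d/2}(K^H;\F_p)\ne 0$ forces $d/2\ge n-d-3$, which is exactly the bound $n\le 3d/2+3$ that you state and that the paper obtains by exhibiting an $(m+2)$-element non-simplex and passing to its complement.
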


\begin{proof}
  The group~$H=\rC_p$ cannot act on a $27$-element set with either one or two orbits. So from Proposition~\ref{propos_complement} it follows that $K^H$ is non-empty and either is a simplex of positive dimension or satisfies complementarity. However, by Theorem~\ref{thm_Smith} any connected component of~$K^H$ is an $\F_p$-homology manifold. Hence, $K^H$ cannot be a simplex of positive dimension. Therefore, $K^H$ satisfies complementarity. Moreover, from Proposition~\ref{propos_complement} it follows that every $H$-orbit $\sigma\subset V$ is a simplex of~$K$.

  Let $F_1,\ldots,F_s$ be the connected components of~$K^H$. Each vertex~$w$ of~$K^H$ is the barycentre of certain simplex $\sigma_{w}\in K$ that is an $H$-orbit. For every~$i=1,\ldots,s$, let $V_i$ be the union of all $H$-orbits~$\sigma_w$, where $w$ runs over the vertices of~$F_i$. Certainly, the subsets $V_1,\ldots,V_s$ are pairwise disjoint. Since all $H$-orbits in~$V$ are simplices of~$K$, we obtain that $V_1\cup\cdots\cup V_s=V$.  Moreover, $V_i\cup V_j$ is not a simplex of~$K$ unless $i=j$. Note that $s\le 3$. Indeed, otherwise both~$V_1\cup V_2$ and~$V_3\cup \cdots\cup V_s$ would be non-simplices of~$K$, which would contradict complementarity. Consider the three cases.
  \smallskip

  \textsl{Case 1: $s=3$.} Since $V_1\cup V_2$, $V_1\cup V_3$, and~$V_2\cup V_3$ are non-simplices of~$K$, the complementarity implies that $V_i\in K$ for $i=1,2,3$. So the connected components~$F_i$ are simplices. On the other hand, by Theorem~\ref{thm_Smith} the connected components~$F_i$ are $\F_p$-homology manifolds. It follows that in fact $F_1$, $F_2$, and~$F_3$ are points. So $K^H\cong \pt\sqcup\pt\sqcup\pt=\pt\sqcup\partial\Delta^1$.
  \smallskip

  \textsl{Case 2: $s=2$.} By complementarity exactly one of the two sets~$V_1$ and~$V_2$ is a simplex of~$K$. We may assume that $V_1\in K$ and~$V_2\notin K$. Then $F_1$ is a simplex. On the other hand, by Theorem~\ref{thm_Smith} we have that~$F_1$ is an $\F_p$-homology manifold. So $F_1=\pt$. Since $K^H$ satisfies complementarity, it follows that $F_2\cong\partial \Delta^k$ for some $k\ge 2$.
  \smallskip

  \textsl{Case 3: $s=1$.} By Theorem~\ref{thm_Smith} the simplicial complex~$K^H$ is a connected orientable $\F_p$-homology manifold of even dimension~$d=2m$.  Hence $K^H$ is an orientable pseudo-manifold of dimension~$2m$. By Theorem~\ref{thm_Bredon}, we have that
  \begin{equation}\label{eq_coh_iso}
  H^*\bigl(K^H;\F_p\bigr)\cong\F_p[a]/(a^3),\qquad \deg a =m.
  \end{equation}
  Since the product of odd-dimensional cohomology classes is anti-commutative and $p$ is odd, the isomorphism~\eqref{eq_coh_iso} can hold only if $m$ is even. Also $m>0$, since $K^H$ is connected. Therefore  $d$ is positive and divisible by~$4$. Moreover, by Proposition~\ref{propos_<d} we have $d<16$. Thus, $d$ is $4$, $8$,  or~$12$.

  If $d=4$, then by Theorem~\ref{thm_Datta} we have that $K^H\cong\CP^2_9$.

  Suppose that $d$ is either~$8$ or~$12$. Let $n$ be the number of vertices of~$K^H$ and $W$  the vertex set of~$K^H$. By Theorem~\ref{thm_Datta} we have that
  \begin{equation}\label{eq_ineq1}
   n\ge d+7.
  \end{equation}
 On the other hand, from~\eqref{eq_coh_iso} it follows that $K^H$ is not $m$-connected. Hence there exists an $(m+2)$-element subset $\eta\subset W$ such that $\eta\notin K^H$. By complementarity $W\setminus\eta\in K^H$. Therefore, $|W\setminus\eta|\le d+1$. Thus,
  \begin{equation}\label{eq_ineq2}
   n\le \frac{3d}2+3.
  \end{equation}
Combining inequalities~\eqref{eq_ineq1} and~\eqref{eq_ineq2}, we obtain that $n=15$ if $d=8$ and $19\le n\le 21$ if $d=12$.

Finally, from~\eqref{eq_coh_iso} it follows that $\chi\bigl(K^H\bigr)=3$. If $n$ were $20$, then by Proposition~\ref{propos_BD} we would obtain that $\chi\bigl(K^H\bigr)=1$. So $(d,n)\ne (12,20)$.
\end{proof}

\subsection{Absence of elements of prime orders $p\ne 2,3,11,13$}

Since $G$ is a subgroup of~$\rS_{27}$, it follows that $G$ does not contain elements of prime orders $p>27$.

\begin{propos}\label{propos_19_23}
 The group~$G$ contains no elements of orders~$19$ or~$23$.
\end{propos}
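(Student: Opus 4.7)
The strategy is direct: an element of order $p \in \{19, 23\}$ would force the existence of an $H$-orbit of cardinality $p$, but Proposition~\ref{propos_odd_fix} demands that every such orbit be a simplex of $K$, which is impossible because $K$ is only $16$-dimensional.

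More precisely, I would argue by contradiction: suppose $g \in G$ has order $p \in \{19, 23\}$, and set $H = \langle g \rangle \cong \rC_p$. Since $G \subset \rS_{27}$ acts faithfully on the vertex set $V$, the element $g$ must move at least one vertex. That vertex lies in an $H$-orbit whose length divides $p$ and exceeds $1$, hence equals $p$; thus $V$ contains an $H$-orbit $\sigma$ with $|\sigma| = p \geq 19$.

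Next, I would invoke Proposition~\ref{propos_odd_fix} for the odd-prime cyclic subgroup $H \cong \rC_p$. Its very first conclusion is that every $H$-orbit in $V$ is a simplex of $K$. But $K$ is a $16$-dimensional simplicial complex, so each of its simplices contains at most $17$ vertices, contradicting $|\sigma| \geq 19$.

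There is essentially no obstacle here; the proposition is a one-line corollary of Proposition~\ref{propos_odd_fix} combined with the dimension bound $\dim K = 16$. Note in particular that the detailed case analysis for $K^H$ in Proposition~\ref{propos_odd_fix} plays no role---only the preliminary assertion that all $H$-orbits are simplices is used.
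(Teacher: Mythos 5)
Your proof is correct and follows exactly the paper's own argument: an element of order $p\in\{19,23\}$ yields an $H$-orbit of length $p$, which by Proposition~\ref{propos_odd_fix} must be a simplex of~$K$, impossible since $\dim K=16$ forces simplices to have at most $17$ vertices. The only (harmless) difference is that you justify the existence of a length-$p$ orbit via faithfulness, whereas the paper simply records the orbit structure directly.
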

\begin{proof}
 Suppose that $p$ is either~$19$ or~$23$. Assume that there is an element $g\in G$ of order~$p$. Then the group $H=\langle g\rangle\cong \rC_p$ acts on the vertex set~$V$ with one orbit $\alpha$ of length~$p$ and $27-p$ fixed points. Since $\dim K=16$ and $p>17$, we have $\alpha\notin K$, which contradicts Proposition~\ref{propos_odd_fix}.
\end{proof}

\begin{propos}\label{propos_17}
 The group~$G$ contains no elements of order~$17$.
\end{propos}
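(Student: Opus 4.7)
The plan is to assume for contradiction that $G$ contains an element $g$ of order~$17$ and set $H=\langle g\rangle\cong\rC_{17}$. Since $17$ is prime and $|V|=27$, the only possible orbit structure for $H$ on $V$ is one orbit $\alpha$ of length~$17$ and ten fixed vertices $v_1,\ldots,v_{10}$. By Proposition~\ref{propos_odd_fix}, every $H$-orbit is a simplex of~$K$, so $K^H$ has exactly $11$ vertices; among the possibilities listed in that proposition, $\CP^2_9$ has $9$ vertices and the connected cases have $n\in\{15,19,21\}$ vertices, so the only option left is $K^H\cong\pt\sqcup\partial\Delta^9$. Since $|\alpha\cup\{v_i\}|=18>17=\dim K+1$, no such set is a simplex of~$K$, so $b(\alpha)$ has no edges in $K^H$ and must be the isolated point, while the ten fixed vertices span $\partial\Delta^9$. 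In particular, $\alpha\cup\{v_i\}\notin K$ for every $i$.

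Next I would use the pseudomanifold property to pin down a specific fixed vertex~$v_{i_0}$. The $16$-simplex $\alpha$ has $17$ facets $\alpha_w=\alpha\setminus\{w\}$, and each of them is contained in exactly one $16$-simplex of~$K$ other than $\alpha$, necessarily of the form $\alpha_w\cup\{u\}$ with $u\notin\alpha_w$. As $\alpha\cup\{v\}$ is not a simplex for any fixed vertex~$v$, we must have $u\ne w$, so $u$ is one of the fixed vertices. Because $H$ permutes the facets $\alpha_w$ transitively and fixes each $v_j$, the same vertex $v_{i_0}$ works for every $w\in\alpha$; thus $\alpha_w\cup\{v_{i_0}\}\in K$ for all $w\in\alpha$.

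The main obstacle, and the heart of the proof, is then to derive a contradiction inside $L=\link(v_{i_0},K)$, which is a $\Z$-homology $15$-sphere and hence a $15$-pseudomanifold. Every proper subset $\tau\subsetneq\alpha$ is a face of some $\alpha_w\cup\{v_{i_0}\}\in K$, so $\tau\cup\{v_{i_0}\}\in K$, whereas $\alpha\cup\{v_{i_0}\}\notin K$; hence the full subcomplex of $L$ induced on the vertex set~$\alpha$ is precisely $\partial\Delta^{16}$. On the other hand, by $9$-neighborliness $\{v_{i_0},v_j\}\in K$ for every $j\ne i_0$, so $v_j$ is a vertex of~$L$ outside $\alpha$ and, since $L$ is a pseudomanifold, must lie in some $15$-simplex $\rho$ of $L$ with $\rho\not\subseteq\alpha$. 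To finish, I would argue that every $14$-face of $\partial\Delta^{16}$ lies in exactly two $15$-simplices of~$L$ (by the pseudomanifold property of $L$) and that both of them already lie in $\partial\Delta^{16}$ (by the combinatorics of $\partial\Delta^{16}$ itself); strong connectivity of~$L$ then forces every $15$-simplex of~$L$ to be a $15$-subset of~$\alpha$, contradicting the existence of~$\rho$.
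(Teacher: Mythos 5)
Your proof is correct and follows essentially the same route as the paper: Proposition~\ref{propos_odd_fix} gives $\alpha\in K$, and the pseudomanifold property combined with $H$-equivariance produces a single fixed vertex $v_{i_0}$ with $(\alpha\setminus\{w\})\cup\{v_{i_0}\}\in K$ for all $w\in\alpha$, so that $K$ contains the boundary of the $17$-simplex $\alpha\cup\{v_{i_0}\}$. The only (immaterial) differences are that the paper derives the final contradiction directly in $K$ (a $16$-pseudomanifold with $27$ vertices cannot contain $\partial\Delta^{17}$) whereas you run the equivalent dual-graph argument one dimension down inside $\link(v_{i_0},K)$, and that your preliminary identification of $K^{H}$ with $\pt\sqcup\partial\Delta^{9}$ is correct but not actually needed.
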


\begin{proof}
 Assume that there is an element $g\in G$ of order~$17$. Then the group $H=\langle g\rangle\cong \rC_{17}$ acts on~$V$ with one orbit $\alpha$ of length~$17$ and $10$ fixed points. By Proposition~\ref{propos_odd_fix} we have $\alpha\in K$. Let $u_0$ be a vertex of~$\alpha$. Then $\beta=\alpha\setminus\{u_0\}$ is a $15$-simplex of~$K$. Since $K$ is a $16$-pseudomanifold, it follows that $\beta$ is contained in exactly two $16$-simplices of~$K$. One of these two simplices is~$\alpha$. The other has the form
 $$\beta\cup\{v\}=(\alpha\setminus\{u_0\})\cup\{v\},$$
 where $v\notin\alpha$. The group $H$ acts transitively on~$\alpha$ and fixes~$v$. Since $K$ is $H$-invariant, we see that
 $$(\alpha\setminus\{u\})\cup\{v\}\in K$$
 for all $u\in\alpha$.
 Thus, $K$ contains the boundary of the $17$-simplex $\alpha\cup\{v\}$, which is impossible, since $K$ is a $16$-pseudomanifold with $27$ vertices.
\end{proof}

\begin{propos}\label{propos_5_7}
 The group~$G$ contains no elements of orders~$5$ or~$7$.
\end{propos}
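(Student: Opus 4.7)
I would assume for contradiction that $g \in G$ has prime order $p \in \{5, 7\}$ and set $H = \langle g \rangle$. The $H$-action on $V$ splits it into $k$ orbits of length $p$ and $f = 27 - pk$ fixed vertices with $k \ge 1$, so $K^H$ has $n = k + f$ vertices: for $p = 5$ the options are $(k,f) \in \{(5,2),(4,7),(3,12),(2,17),(1,22)\}$ giving $n \in \{7,11,15,19,23\}$, and for $p = 7$ they are $(k,f) \in \{(3,6),(2,13),(1,20)\}$ giving $n \in \{9,15,21\}$.

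The single translation principle underlying the proof is this: by Proposition~\ref{propos_odd_fix}, every $H$-orbit in $V$ is a simplex of $K$, hence any simplex $\tau \in K^H$ with $t$ vertices that are barycentres of length-$p$ orbits and $|\tau| - t$ vertices that are $H$-fixed pulls back to a simplex of $K$ on $pt + (|\tau| - t)$ vertices, a quantity that must be at most $17 = \dim K + 1$. The strategy is, for each orbit structure and each isomorphism type of $K^H$ allowed by Proposition~\ref{propos_odd_fix}, to exhibit a simplex of $K^H$ violating this bound.

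The case $(p,k,f) = (5,1,22)$ is immediate, since no item in Proposition~\ref{propos_odd_fix} has more than $21$ vertices. In the remaining cases I would split according to whether $K^H \cong \pt \sqcup \partial\Delta^j$ or $K^H$ is a connected pseudomanifold ($\CP^2_9$ or a case~(3) complex of dimension $d \in \{8, 12\}$). In the disconnected case, every $j$-element subset of the $j+1$ vertices of $\partial\Delta^j$ is a simplex, and a short split into two sub-cases (according to whether the $\pt$ component represents a fixed vertex or a length-$p$ orbit barycentre) produces a simplex of $K^H$ whose pull-back to $K$ has $27 - 2 = 25$ or $26 - p$ vertices respectively, both greater than $17$. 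In the pseudomanifold case, Proposition~\ref{propos_comp_neigh} supplies enough neighborliness for the $k$ barycentres to span a $(k-1)$-simplex $\sigma$, and Theorems~\ref{thm_Smith} and~\ref{thm_Bredon} identify $\link(\sigma, K^H)$ as an $\F_p$-homology $(d-k)$-sphere; its top-dimensional simplex on $d - k + 1$ vertices (necessarily $H$-fixed, since the only barycentres sit in $\sigma$) combines with $\sigma$ into a simplex of $K^H$ pulling back to $(p-1)k + d + 1$ vertices of $K$, which works out to $19$, $21$, or $23$ in every remaining case, again exceeding $17$.

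The main bookkeeping subtlety is the case $(p,k,f) = (5,3,12)$ with $K^H$ of case~(3) type $(d,n) = (8,15)$: the $5$-neighborliness of $K^H$ alone yields only a simplex on $3$ barycentres and $2$ fixed vertices, which pulls back to a borderline-allowed $17$-vertex simplex of $K$. It is essential here to invoke the full pseudomanifold/link structure in order to enlarge this face to a $9$-vertex top-dimensional simplex of $K^H$, producing the contradicting $21$-vertex simplex of $K$.
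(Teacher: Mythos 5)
Your proof is correct and follows essentially the same route as the paper: pull a simplex of~$K^{H}$ back to a union of $H$-orbits and show it would have more than $17$ vertices. Two small remarks. First, your dismissal of the case $(p,k,f)=(5,1,22)$ rests on a claim that is not literally in Proposition~\ref{propos_odd_fix}: item~(1) there allows $K^{H}\cong\pt\sqcup\partial\Delta^{j}$ with no stated bound on~$j$, so a $23$-vertex fixed-point complex is not excluded by that statement alone; however, your own disconnected-case argument (omit the $\pt$ and one fixed vertex from the full vertex set) disposes of this case as well, so nothing is actually lost. Second, the paper short-circuits the disconnected case entirely by observing that $9$-neighborliness makes every length-$p$ orbit barycentre adjacent to every fixed vertex in~$K^{H}$ and that there is at least one fixed vertex, so $K^{H}$ is connected and only cases~(2) and~(3) of Proposition~\ref{propos_odd_fix} can occur; your direct treatment is equally valid, just slightly longer.
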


\begin{proof}
 Assume that there is an element $g\in G$ of order~$p$, where $p$ is either $5$ or~$7$. Let $H=\langle g\rangle$ and
 $$
 V=\alpha_1\sqcup\cdots\sqcup\alpha_s\sqcup\{v_{1},\ldots,v_t\},
 $$
 where $\alpha_1,\ldots,\alpha_s$ are the $H$-orbits with $|\alpha_i|=p$ and $v_{1},\ldots,v_t$ are the $H$-fixed points. We have $s\ge 1$, since $g\ne 1$, and $t\ge 1$, since $p$ is not a divisor of~$27$. Then
 \begin{equation}\label{eq_pst}
  ps+t=27.
 \end{equation}
 Since $K$ is $9$-neighborly, we see that $\alpha_i\cup\{v_j\}\in K$ and hence $\{b(\alpha_i),v_j\}\in K^H$ for all~$i$ and~$j$. So $K^H$ is a connected simplicial complex with $s+t$ vertices $$b(\alpha_1),\ldots,b(\alpha_s),v_1,\ldots,v_t.$$ By Proposition~\ref{propos_odd_fix}, either $K^H\cong\CP^2_9$ or $K^H$ is an $\F_p$-homology manifold satisfying assertion~(3) in Proposition~\ref{propos_odd_fix}. Consider the two cases.
 \smallskip

 \textsl{Case 1: $K^H\cong\CP^2_9$.} Then $s+t=9$. So it follows from equation~\eqref{eq_pst} that
 $$
 (p-1)s=18.
 $$
 Hence, $p=7$, $s=3$, and $t=6$. The combinatorial manifold~$\CP^2_9$ is $3$-neighborly, so $$\{b(\alpha_1),b(\alpha_2),b(\alpha_3)\}\in K^H$$ and hence $$\alpha_1\cup\alpha_2\cup\alpha_3\in K.$$
 We arrive at a contradicition, since $$\dim(\alpha_1\cup\alpha_2\cup\alpha_3)=20>16.$$
 \smallskip

\textsl{Case 2: $K^H$ is an $n$-vertex $\F_p$-homology $d$-manifold satisfying assertion~(3) from Proposition~\ref{propos_odd_fix}.} Then
\begin{equation}\label{eq_dn}
(d,n)=(8,15)\text{ or }(d,n)=(12, 19)\text{ or }(d,n)=(12,21).
\end{equation}
Combining the equality $s+t=n$ with~\eqref{eq_pst}, we obtain that
\begin{equation}\label{eq_stn}
 (p-1)s=27-n.
\end{equation}
Since $n\ge 15$ and $p\ge 5$, we have  $s\le 3$. Since $K^H$ satisfies complimentarity and $n\ge d+7$, it follows from Proposition~\ref{propos_comp_neigh} that $K^H$ is $5$-neighborly. Hence
$$
\sigma=\{b(\alpha_1),\ldots,b(\alpha_s),v_1,\ldots,v_{5-s}\}\in K^H.
$$
Since $K^H$ is a $d$-pseudomanifold, we obtain that $\sigma$ is contained in a $d$-simplex $\tau\in K^H$. We have
$$
\tau =\bigl\{b(\alpha_1),\ldots,b(\alpha_s),v_{i_1},\ldots,v_{i_{d+1-s}}\bigr\}
$$
for some $i_1<\cdots<i_{d+1-s}$. Then
$$
\alpha_1\cup\cdots\cup\alpha_s\cup \bigl\{v_{i_1},\ldots,v_{i_{d+1-s}}\bigr\}\in K.
$$
Since every simplex of~$K$ has at most~$17$ vertices, we obtain that
$$
(p-1)s+d\le 16.
$$
Combining this inequality with~\eqref{eq_stn}, we see that
$$
n-d\ge 11,
$$
which contradicts~\eqref{eq_dn}.
\end{proof}

\subsection{Elements of order $3$}

\begin{propos}\label{propos_3}
  Suppose that an element $g\in G$ has order~$3$. Then $g$ acts on~$V$ without fixed points and $K^{\langle g\rangle}\cong\CP^2_9$.
\end{propos}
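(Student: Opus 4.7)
The plan is to apply Proposition~\ref{propos_odd_fix} to the cyclic subgroup $H = \langle g \rangle \cong \rC_3$, which yields three possibilities for $K^H$: (i) $K^H \cong \pt \sqcup \partial\Delta^k$ for some $k \geq 1$; (ii) $K^H \cong \CP^2_9$; or (iii) $K^H$ is a connected $\F_3$-homology $d$-manifold with $(d, n) \in \{(8, 15), (12, 19), (12, 21)\}$ satisfying complementarity. Let $r$ denote the number of $H$-orbits of length~$3$ in $V$; then the number of fixed vertices is $27 - 3r$ and the number of vertices of $K^H$ is $27 - 2r$. In case~(ii), $27 - 2r = 9$ forces $r = 9$, so the action is free and $K^H \cong \CP^2_9$; this is the desired conclusion. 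Hence it suffices to rule out cases~(i) and~(iii).

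For case~(i), $27 - 2r = k + 2$ together with $27 - 3r \geq 0$ gives $k \geq 7$. The parity condition in Theorem~\ref{thm_Smith}(3) applied to the component $\partial\Delta^k$ (of dimension $k - 1$) forces $k$ to be odd, and Bredon's bound from Theorem~\ref{thm_Bredon} (with $m = 8$) gives $k - 1 \leq 8$; hence $k \in \{7, 9\}$. For each such $k$, the presence of the $\partial\Delta^k$ component requires that, for every subset of $k$ of the $k + 1$ corresponding $H$-orbits of $V$, their union be a simplex of $K$; however, a direct size count shows that even the smallest such union has at least $21$ elements, exceeding the maximum simplex size $17$.

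The heart of the argument is case~(iii). Since $K^H$ is a $d$-pseudomanifold, every simplex of $K^H$ is contained in a top-dimensional simplex with $d + 1$ vertices. I would exhibit in each subcase a specific simplex $\tau \in K^H$ consisting of all (or almost all) orbit-barycentres; then any top-dimensional extension of $\tau$ in $K^H$ is forced to adjoin mostly fixed vertices, and its preimage in $V$ has strictly more than $17$ elements. Concretely: for $(d, n) = (12, 21)$ take $\tau = \{b(\alpha_1), b(\alpha_2), b(\alpha_3)\}$, which lies in $K^H$ by $9$-neighborliness of $K$, with any top-dimensional extension yielding a preimage of size $9 + 10 = 19$; for $(12, 19)$ take $\tau = \{b(\alpha_1), \ldots, b(\alpha_4)\}$ with extension size $12 + 9 = 21$; and for $(8, 15)$ take $\tau = \{b(\alpha_1), \ldots, b(\alpha_5)\}$, where both possible top-dimensional extensions (adjoining either $b(\alpha_6)$ plus three fixed vertices, or four fixed vertices only) give preimages of sizes $21$ and $19$ respectively.

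The main technical step, and the obstacle, is verifying that $\tau$ actually lies in $K^H$ in the subcases $(12, 19)$ and $(8, 15)$. For this I would apply complementarity of $K$ to the $H$-invariant subset $\alpha_1 \cup \cdots \cup \alpha_{r_0}$ (where $r_0$ is the appropriate value): were this not a simplex of $K$, its complement in $V$ would be, but then the image of the complement in $K^H$ would be a simplex with more than $d + 1$ vertices, contradicting the bound $\dim K^H = d$.
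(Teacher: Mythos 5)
Your proof is correct, and its core --- reducing to the trichotomy of Proposition~\ref{propos_odd_fix} and then, in the homology-manifold case, exhibiting a simplex of $K^{\langle g\rangle}$ whose forced extension to a top-dimensional simplex of the $d$-pseudomanifold $K^{\langle g\rangle}$ pulls back to a subset of~$V$ with more than $17$ elements --- is exactly the paper's argument, with the same final counts ($19$ or $21$ versus the maximum~$17$). You diverge in two auxiliary steps, both validly. First, the paper never has to confront the disconnected case $\pt\sqcup\partial\Delta^k$ at all: since $K$ is $9$-neighborly, the union of any two $\langle g\rangle$-orbits has at most $6$ elements and hence is a simplex of~$K$, so $K^{\langle g\rangle}$ has a complete $1$-skeleton and is connected. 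Your route through Smith's parity constraint and Bredon's degree bound to pin $k\in\{7,9\}$ works, but is more than you need --- your own size count (a facet of the $\partial\Delta^k$ component pulls back to at least $21$ vertices of~$V$) already eliminates every $k\ge 7$. Second, to see that $\{b(\alpha_1),\ldots,b(\alpha_{r_0})\}$ is a simplex of~$K^{\langle g\rangle}$, the paper uses that $K^{\langle g\rangle}$ itself satisfies complementarity and is therefore $(n-d-2)$-neighborly, hence at least $5$-neighborly in all three subcases; your alternative --- complementarity of~$K$ applied to the $\langle g\rangle$-invariant set $\alpha_1\cup\cdots\cup\alpha_{r_0}$, combined with the bound $\dim K^{\langle g\rangle}=d$ on the size of the complementary simplex --- is equally sound. (The paper also gets by with the $3$-element set $\{b(\alpha_1),b(\alpha_2),b(\alpha_3)\}$ in both $d=12$ subcases, which $9$-neighborliness of~$K$ handles directly, so your extra step is only genuinely needed for $(d,n)=(8,15)$.)
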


\begin{proof}
Put $H=\langle g\rangle$. Since $K$ is $9$-neighborly, we see that the union of any two $H$-orbits is a simplex of~$K$. Therefore any two vertices are joined by an edge in~$K^H$. So $K^H$ is connected. If we prove that $g$ acts on~$V$ without fixed points, then it will follow that $K^H$ has $9$ vertices; hence Proposition~\ref{propos_odd_fix} will imply that $K^H\cong\CP^2_9$.

Let us prove that $g$ acts on~$V$ without fixed points. Assume the converse, i.\,e., that $V$ contains an $H$-fixed point and then at least three $H$-fixed points. The number of vertices of~$K^H$ is equal to the number of $H$-orbits in~$V$ and hence is greater than~$9$. From Proposition~\ref{propos_odd_fix} it follows that $K^H$ is an $n$-vertex $d$-pseudomanifold satisfying complementarity, where $(d,n)$ is $(8,15)$ or~$(12,19)$ or~$(12,21)$. Consider two cases.
\smallskip

\textsl{Case 1: $d=8$.} Then $n=15$. Hence $H$ acts on~$V$ with exactly $15$ orbits, so with  $6$ orbits $\alpha_1,\ldots,\alpha_6$ of length~$3$ and $9$ fixed points. Since $K^H$ satisfies complementarity, from Proposition~\ref{propos_comp_neigh} it follows that $K^H$ is $5$-neighborly. Therefore, the set $\sigma=\{b(\alpha_1),\ldots,b(\alpha_5)\}$ is a simplex of~$K^H$. But $K^H$ is an $8$-pseudomanifold, so $\sigma$ must be contained in an $8$-simplex. This means that the union of the five orbits $\alpha_1,\ldots,\alpha_5$ together with some other four orbits is a simplex of~$K$. However, such union has cardinality at least~$19$, which gives a contradiction, since $\dim K=16$.
\smallskip

\textsl{Case 2: $d=12$.} Then $n$ is either $19$ or~$21$. Hence $H$ acts on~$V$ either with  $4$ orbits $\alpha_1,\alpha_2,\alpha_3,\alpha_4$ of length~$3$ and $15$ fixed points or with $3$ orbits $\alpha_1,\alpha_2,\alpha_3$ of length~$3$ and $18$ fixed points. Since $K^H$ satisfies complementarity, from Proposition~\ref{propos_comp_neigh} it follows that $K^H$ is $5$-neighborly. Therefore, the set $\sigma=\{b(\alpha_1),b(\alpha_2),b(\alpha_3)\}$ is a simplex of~$K^H$. But $K^H$ is a $12$-pseudomanifold, so $\sigma$ must be contained in a $12$-simplex. This means that the union of the three orbits $\alpha_1,\alpha_2,\alpha_3$ together with some other ten $H$-orbits is a simplex of~$K$. However, such union has cardinality at least~$19$, which again gives a contradiction, since $\dim K=16$.
\end{proof}

\begin{cor}\label{cor_3^k}
Any $3$-subgroup~$H\subset G$ acts freely on~$V$.
\end{cor}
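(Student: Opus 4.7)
The plan is to derive Corollary~\ref{cor_3^k} as a direct consequence of Proposition~\ref{propos_3} by a standard $p$-group argument: reduce every potential fixed point of an arbitrary element of $H$ to a fixed point of an element of order exactly~$3$, which is forbidden.

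More concretely, let $H\subset G$ be a $3$-subgroup, and suppose towards contradiction that the action of $H$ on~$V$ is not free. Then there exist a non-identity element $g\in H$ and a vertex $v\in V$ with $gv=v$. Since $H$ is a $3$-group, the order of $g$ is $3^k$ for some $k\ge 1$. Set $h=g^{3^{k-1}}$; then $h$ has order exactly~$3$, and $h$ fixes $v$ because $v$ is fixed by the whole cyclic subgroup~$\langle g\rangle$. But Proposition~\ref{propos_3} asserts that every element of order~$3$ in~$G$ acts on~$V$ without fixed points, contradicting $hv=v$.

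Hence no non-identity element of $H$ has a fixed vertex, i.e.\ the stabilizer $H_v$ is trivial for every $v\in V$, which is exactly the statement that $H$ acts freely on~$V$. There is no substantive obstacle here; the only thing being used beyond Proposition~\ref{propos_3} is the elementary fact that a non-trivial cyclic $3$-group contains an element of order exactly~$3$, namely the appropriate power of a generator.
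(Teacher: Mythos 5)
Your argument is correct and is exactly the (implicit) reasoning the paper intends: Corollary~\ref{cor_3^k} is stated without proof immediately after Proposition~\ref{propos_3}, the point being precisely that any non-identity element of a $3$-group fixing a vertex would yield, via a suitable power, an element of order exactly~$3$ with a fixed vertex, contradicting that proposition. Nothing is missing and nothing differs in substance from the paper's route.
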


\begin{cor}\label{cor_no_81}
 $|G|$ is not divisible by~$81$.
\end{cor}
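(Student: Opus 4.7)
The plan is to deduce Corollary~\ref{cor_no_81} directly from Corollary~\ref{cor_3^k} together with the first Sylow theorem. The key observation is elementary: a group acting freely on a finite set must have order dividing the cardinality of that set, because every orbit has size equal to the group order.

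Concretely, I would argue by contraposition. Suppose $81$ divides $|G|$. By Sylow's first theorem (quoted at the start of Section~\ref{section_finite_groups}), $G$ contains a Sylow $3$-subgroup~$P$, and writing $|G| = 3^a m$ with $\gcd(m,3) = 1$, we have $|P| = 3^a \geq 81$. In particular $P$ is a $3$-subgroup of~$G$ of order strictly greater than~$27$.

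On the other hand, Corollary~\ref{cor_3^k} asserts that every $3$-subgroup of~$G$ acts freely on the vertex set~$V$. Applied to~$P$, this means every $P$-orbit on~$V$ has length exactly~$|P|$, so $|P|$ divides $|V| = 27$. Combined with $|P| \geq 81$, this is a contradiction, completing the proof.

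There is no real obstacle here; the statement is a one-line consequence of the freeness of $3$-actions established in Corollary~\ref{cor_3^k}. All the substantive work lies upstream in Proposition~\ref{propos_3} (which forces elements of order~$3$ to act fixed-point-freely on~$V$), from which Corollary~\ref{cor_3^k} was already extracted.
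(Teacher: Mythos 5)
Your argument is correct and is exactly the (implicit) one-line deduction the paper intends: a $3$-subgroup acting freely on~$V$ has all orbits of length equal to its order, so that order divides $|V|=27$, which caps the $3$-part of~$|G|$ at~$27$. No difference in approach.
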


\begin{propos}\label{propos_no_27_93}
 The group~$G$ does not contain a subgroup isomorphic to either~$\rC_{27}$ or~$\rC_3\times\rC_9$.
\end{propos}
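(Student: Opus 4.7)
The plan is to exhibit, in both cases, a normal subgroup $A\cong\rC_3$ of $H$ with $H/A\cong\rC_9$, and then to show that $H/A$ acts faithfully on the fixed point complex $K^A$. Combined with Proposition~\ref{propos_3}, which gives $K^A\cong\CP^2_9$, this would embed $\rC_9$ into $\Sym(\CP^2_9)$, which is impossible since that group has no element of order~$9$.

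Concretely, I will take $A=\langle g^9\rangle$ when $H=\langle g\rangle\cong\rC_{27}$, and $A$ equal to the $\rC_3$-factor when $H\cong\rC_3\times\rC_9$. In each case $A$ is normal in $H$ (since $H$ is abelian), isomorphic to $\rC_3$, and $H/A\cong\rC_9$. By Corollary~\ref{cor_3^k}, the $3$-group $H$ acts freely on $V$; since $|H|=27=|V|$, this action is in fact regular. By Proposition~\ref{propos_3}, $K^A\cong\CP^2_9$, and the normality of $A$ yields an induced action of $H/A$ on $K^A$, hence a homomorphism $\bar\psi\colon H/A\to\Sym(K^A)\cong\He_3\rtimes\rC_2$.

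The main step is the injectivity of $\bar\psi$. By $9$-neighborliness, all nine $A$-orbits (each of length $3$) are simplices of $K$, so by Proposition~\ref{propos_KG} they are exactly the nine vertices of $K^A$. An element $h\in H$ therefore acts trivially on $K^A$ if and only if it preserves every $A$-orbit setwise; but then for any $v\in V$ one has $hv=a^k v$ for some $k\in\{0,1,2\}$, whence $h=a^k\in A$ by the freeness of the $H$-action on $V$. Consequently $\ker\bar\psi$ is trivial and $\rC_9$ embeds into $\He_3\rtimes\rC_2$. This contradicts the fact that $\He_3$ has exponent $3$ and any element outside $\He_3$ squares into $\He_3$, so every element of $\He_3\rtimes\rC_2$ has order dividing $6$. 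There is no serious obstacle; the only point requiring a little care is the injectivity of $\bar\psi$, which the freeness of the $H$-action makes immediate.
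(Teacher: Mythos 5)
Your proposal is correct and follows essentially the same route as the paper: pass to a subgroup $A\cong\rC_3$ with $H/A\cong\rC_9$, use Corollary~\ref{cor_3^k} to get a free (hence regular) action of~$H$ on~$V$, identify $K^A\cong\CP^2_9$ via Proposition~\ref{propos_3}, and derive a contradiction from the absence of order-$9$ elements in $\Sym(\CP^2_9)\cong\He_3\rtimes\rC_2$. The only cosmetic difference is that the paper deduces the contradiction from the induced $\rC_9$-action being vertex-transitive on the $9$ vertices of~$K^A$ (which forces faithfulness), whereas you verify injectivity of the induced homomorphism directly from freeness; both are valid.
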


\begin{proof}
 Assume that $H\subset G$ is a subgroup isomorphic to either~$\rC_{27}$ or~$\rC_3\times\rC_9$. Then $H$ contains a subgroup~$N\cong\rC_3$ such that $H/N\cong \rC_9$. By Proposition~\ref{propos_3}, we have $K^N\cong\CP^2_9$. By Corollary~\ref{cor_3^k}, the group~$H$ acts on~$V$ freely and hence transitively. Therefore, the group $H/N\cong\rC_9$ acts on~$K^N$ simplicially and transitively on vertices. We arrive at a contradiction, since the Sylow $3$-subgroup of the group $\Sym(\CP^2_9)$ is isomorphic to the Heisenberg group~$\mathrm{He}_3=3^{1+2}_+$ (see~\cite{KuBa83}), which contains no elements of order~$9$.
\end{proof}

\subsection{Absence of elements of order~$6$}

\begin{propos}\label{propos_6}
 The group~$G$ contains no elements of order~$6$.
\end{propos}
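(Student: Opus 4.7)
The plan is to suppose for contradiction that $g\in G$ has order~$6$, write $g_2=g^3$ and $g_3=g^2$ (commuting elements of orders $2$ and $3$), and analyze the fixed point complex $K^H$ of $H=\langle g\rangle$ in two different ways.

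First I would use $H = \langle g_2, g_3\rangle$ with $g_2 g_3 = g_3 g_2$ so that $K^H = (K^{\langle g_3\rangle})^{\langle g_2\rangle}$. By Proposition~\ref{propos_3}, $g_3$ acts freely on $V$, so the $9$ cosets partition $V$ into nine $3$-element orbits, and $K^{\langle g_3\rangle}\cong \CP^2_9$. The commuting involution $g_2$ descends to an involution $\bar g_2$ of $\CP^2_9$. I claim $\bar g_2\ne\mathrm{id}$: otherwise $g_2$ preserves every $g_3$-orbit as a set, and on each $3$-element orbit acts either trivially (contributing $3$ fixed vertices) or as a transposition (contributing $1$). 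If $t$ orbits are fixed pointwise and $s$ orbits are transposed, then $t+s=9$ while by Corollary~\ref{cor_2} we need $3t+s=3$, which has no nonnegative integer solution. Hence $\bar g_2$ is a nontrivial involution of $\CP^2_9$, and Proposition~\ref{propos_BK} gives $K^H\cong\RP^2_6$, a complex with exactly $6$ vertices.

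Now I would count the $H$-orbits on $V$ directly. The stabilizer $H_v$ of any vertex $v$ is a subgroup of $H\cong\rC_6$, so it is one of $\{1\}, \langle g_2\rangle, \langle g_3\rangle, H$. The latter two both contain $g_3$, which by Proposition~\ref{propos_3} has no fixed vertices, so they do not occur. Vertices with $H_v=\langle g_2\rangle$ are exactly the $g_2$-fixed vertices, of which there are $3$ by Corollary~\ref{cor_2}; these form a single $H$-orbit of length $3$. The remaining $24$ vertices split into $4$ free $H$-orbits of length $6$. Thus $H$ has exactly $5$ orbits on $V$, so $K^H$ has at most $5$ vertices.

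This is the contradiction: $K^H$ must simultaneously be isomorphic to $\RP^2_6$ (six vertices) and have at most five vertices. The only potentially delicate point, and the one I would write out carefully, is the numerical argument ruling out triviality of $\bar g_2$ on $\CP^2_9$; everything else follows directly from results already established in the paper.
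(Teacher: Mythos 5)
Your proposal is correct and follows essentially the same route as the paper: both compute $K^H=(K^{\langle g^2\rangle})^{\langle g^3\rangle}\cong(\CP^2_9)^{\rC_2}$ and then derive a contradiction from the fact that $H$ has only $5$ orbits on~$V$, so $K^H$ has at most $5$ vertices. The only difference is cosmetic: the paper does not bother to rule out the trivial action of~$\rC_2$ on~$\CP^2_9$ (it simply notes $K^H$ has $9$ or $6$ vertices, both exceeding $5$), whereas you prove nontriviality by a fixed-point count; your extra step is valid but unnecessary.
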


\begin{proof}
 Assume that $G$ contains an element of order~$6$. Then~$G$ contains a subgroup
 $$H\cong\rC_6\cong\rC_2\times\rC_3.$$
 Using Propositions~\ref{propos_3}, we obtain that
 $$
 K^H=(K^{\rC_3})^{\rC_2}\cong (\CP^2_9)^{\rC_2}.
 $$
 If the action of~$\rC_2$ on~$\CP^2_9$ is trivial, then $K^H\cong\CP^2_9$, and if the action of~$\rC_2$ on~$\CP^2_9$ is non-trivial, then $K^H\cong\RP^2_6$ (by Proposition~\ref{propos_BK}). So $K^H$ has either~$9$ or~$6$ vertices.
 On the other hand, by Proposition~\ref{propos_2} the factor~$\rC_2\subset H$ acts on~$V$ with three fixed points and $12$ orbits of length~$2$. Further, the action of the factor~$\rC_3$ on~$V$ is free and commutes with the action of the factor~$\rC_2$. It follows that $\rC_3$ acts freely on the $15$-element set $V/\rC_2$. Hence $|V/H|=5$. Therefore, $K^H$ has at most $5$ vertices. The obtained contradiction shows that $G$ contains no elements of order~$6$.
 \end{proof}

\subsection{Elements of order~$11$}

\begin{propos}\label{propos_11}
  Suppose that an element $g\in G$ has order~$11$. Then $g$ acts on~$V$ with $5$ fixed points and two orbits of length~$11$ and $K^{\langle g\rangle}\cong\pt\sqcup\partial\Delta^5$, where the separate point~$\pt$ is the barycentre of one of the two orbits of length~$11$.
\end{propos}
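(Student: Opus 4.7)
My plan is to use Proposition \ref{propos_odd_fix} to pin down the possible structures of $K^H$ for $H = \langle g \rangle \cong \rC_{11}$, combined with an orbit-counting argument and Theorem \ref{thm_Bredon} to rule out all but one case, and then finish with a size argument.

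First I would note that every $H$-orbit in $V$ has length $1$ or $11$, so if there are $s$ orbits of length~$11$ and $t$ fixed points, then $11s + t = 27$, giving $(s,t) = (1,16)$ or $(s,t) = (2,5)$. By Proposition~\ref{propos_odd_fix}, every $H$-orbit is a simplex of~$K$, so $K^H$ has exactly $s+t$ vertices, i.e., either $17$ or~$7$. Comparing with the three possibilities listed in Proposition~\ref{propos_odd_fix} (noting $\CP^2_9$ has $9$ vertices and the homology manifold cases have $15$, $19$, or $21$ vertices), the only matches are $K^H \cong \pt \sqcup \partial\Delta^{15}$ when $(s,t) = (1,16)$ and $K^H \cong \pt \sqcup \partial\Delta^{5}$ when $(s,t) = (2,5)$.

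Next I would rule out the case $(s,t) = (1,16)$ via Theorem~\ref{thm_Bredon}. Since $K \sim_{11} \OP^2 = \P^2(8)$, every connected component $F_i$ of $K^H$ satisfies $F_i \sim_{11} \P^{h_i}(m_i)$ with $m_i \leq 8$. But $\partial\Delta^{15} \cong S^{14} \sim_{11} \P^1(14)$ would force $m_i = 14 > 8$, a contradiction. Hence $(s,t) = (2,5)$ and $K^H \cong \pt \sqcup \partial\Delta^5$.

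It remains to show that the separate point is the barycentre of one of the two length-$11$ orbits. Let $V_1$ be the $H$-orbit corresponding to the isolated vertex of $K^H$, and let the $6$ vertices of~$\partial\Delta^5$ correspond to $H$-orbits whose union is $V_2 = V \setminus V_1$. Since every $5$-element subset of the vertex set of~$\partial\Delta^5$ spans a $4$-simplex, the union of any $5$ of those $6$ orbits must be a simplex of~$K$ and hence contain at most $17$ vertices. If $V_1$ were a fixed point, then $V_2$ would consist of four fixed points together with both length-$11$ orbits; choosing the five orbits consisting of the two length-$11$ orbits and three fixed points would yield a subset of size $11 + 11 + 3 = 25 > 17$, a contradiction. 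Therefore $V_1$ is a length-$11$ orbit, as claimed.

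The only non-routine step is ruling out $(s,t) = (1,16)$; this is where Theorem~\ref{thm_Bredon} does the heavy lifting, since the purely combinatorial information provided by Proposition~\ref{propos_odd_fix} alone formally allows~$\pt \sqcup \partial\Delta^{15}$ as a fixed point complex of the right size. Everything else is a direct matching of vertex counts and a single cardinality estimate.
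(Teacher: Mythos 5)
Your proof is correct. It follows the same skeleton as the paper's -- reduce to the two possible orbit structures $(s,t)=(1,16)$ or $(2,5)$ and match vertex counts against the list in Proposition~\ref{propos_odd_fix} -- but the crucial exclusion of $(s,t)=(1,16)$ is done by a different mechanism. The paper observes that in that case $K^{\langle g\rangle}$ would be \emph{connected} (the $16$ fixed vertices are pairwise joined by edges by $9$-neighborliness, and $b(\alpha)$ is joined to them because the $10$-simplex $\alpha$ extends to a $16$-simplex of~$K$), and a connected $17$-vertex fixed point complex is incompatible with Proposition~\ref{propos_odd_fix}, which allows only $9$, $15$, $19$, or $21$ vertices in the connected case. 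You instead accept the disconnected option $\pt\sqcup\partial\Delta^{15}$ permitted by the vertex count and kill it with Theorem~\ref{thm_Bredon}: a component $\partial\Delta^{15}\cong S^{14}\sim_{11}\P^1(14)$ would violate the bound $m_i\le 8$ coming from $K\sim_{11}\P^2(8)$. Both arguments are sound; yours extracts more from Bredon's theorem (in effect an a priori bound $k\le 9$ on any $\pt\sqcup\partial\Delta^k$ fixed point complex), while the paper's is more elementary, using only neighborliness and the pseudomanifold property. Your final step (the isolated vertex cannot be a fixed point, by a cardinality count on unions of five of the six orbits spanning $\partial\Delta^5$) is likewise a valid, slightly longer substitute for the paper's one-line observation that the five fixed vertices are pairwise adjacent and hence lie in a single component.
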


\begin{proof}
 Let $H=\langle g\rangle\cong\rC_{11}$.
 First, we need to exclude the possibility that~$H$ acts on~$V$ with one orbit $\alpha$ of length~$11$ and $16$ fixed points. Assume that this is the case. By Proposition~\ref{propos_odd_fix} we have that $\alpha\in K$. Then the simplicial complex $K^H$ has $17$ vertices, namely, the barycentre~$b(\alpha)$ and the $16$ fixed vertices of~$K$. Since $K$ is $9$-neighborly, any two vertices in~$V\setminus\alpha$ are connected by an edge in~$K$ and hence in~$K^H$. Besides, since $K$ is a $16$-pseudomanifold, the $10$-simplex~$\alpha$ is contained in a $16$-simplex of~$K$. Hence, $b(\alpha)$ is contained in a $6$-simplex of~$K^H$. Consequently, $K^H$ is connected.  However, then from Proposition~\ref{propos_odd_fix} it follows that $K^H$ must have $9$, or~$15$, or~$19$, or~$21$ vertices, so we arrive at a contradiction.

 Thus, $H$ acts on~$V$ with two orbits $\alpha$ and~$\beta$ of length~$11$, and $5$ fixed points. By Proposition~\ref{propos_odd_fix} we have $\alpha\in K$ and~$\beta\in K$. Then the simplicial complex $K^H$ has $7$ vertices, namely, the barycentres~$b(\alpha)$ and~$b(\beta)$ and the $5$ fixed points~$v_1,\ldots,v_5$ in~$V$. From Proposition~\ref{propos_odd_fix} it follows that $K^H\cong\pt\sqcup\partial\Delta^5$. Any two vertices $v_i$ and~$v_j$ are connected by an edge in~$K$ and hence in~$K^H$. So the separate point~$\pt$  is either~$b(\alpha)$ or~$b(\beta)$.
 \end{proof}

 \begin{propos}\label{propos_22}
 Suppose that $H\subset G$ is a subgroup that is isomorphic to one of the groups~$\rC_{22}$, $\rD_{11}$, or~$\rC_{33}$. Then $V$ contains two $H$-orbits of length~$11$.
\end{propos}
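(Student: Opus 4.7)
The plan is to reduce everything to Proposition~\ref{propos_11} applied to the Sylow $11$-subgroup of~$H$. In each of the three cases $H\cong \rC_{22}$, $\rD_{11}$, or~$\rC_{33}$, let $N\subset H$ be the unique subgroup of order~$11$. Then $N$ is normal in~$H$ (it is the Sylow $11$-subgroup and $|H/N|$ is coprime to~$11$), and the quotient $H/N$ has order~$2$ or~$3$. By Proposition~\ref{propos_11}, applied to a generator of~$N$, the group~$N$ acts on~$V$ with exactly two orbits $\alpha,\beta$ of length~$11$ and five fixed points $v_1,\ldots,v_5$, and
$$
K^N\cong \pt\sqcup\partial\Delta^5,
$$
where the isolated point is the barycentre of one of $\alpha,\beta$; say $b(\alpha)$ is the isolated point, while $b(\beta),v_1,\ldots,v_5$ are the vertices of~$\partial\Delta^5$.

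The key step is to analyze the induced action of $H/N$ on $K^N$. Because $N\vartriangleleft H$, the group~$H$ permutes the $N$-orbits in~$V$, and this permutation must preserve orbit lengths. Hence $H$ preserves the two-element set $\{\alpha,\beta\}$ and separately permutes $\{v_1,\ldots,v_5\}$. By Proposition~\ref{propos_KG}, the induced action of~$H/N$ on~$K^N$ is a simplicial action preserving the two-element vertex set $\{b(\alpha),b(\beta)\}$. On the other hand, every simplicial automorphism of $\pt\sqcup\partial\Delta^5$ preserves connected components, so the isolated vertex $b(\alpha)$ must be fixed by~$H/N$. Together with the preservation of $\{b(\alpha),b(\beta)\}$, this forces $b(\beta)$ to be fixed as well.

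Translating back, both $\alpha$ and $\beta$ are $H$-invariant subsets of~$V$. Since $N\subset H$ already acts transitively on each of them, so does~$H$, and thus $\alpha$ and $\beta$ are two $H$-orbits of length~$11$, as claimed. The only mildly delicate point is verifying that the induced $H/N$-action on~$K^N$ is genuinely simplicial and fits the description above; this is routine given Proposition~\ref{propos_KG} and the normality of~$N$ in~$H$. I expect no further obstacle: the remaining checks (for instance, that in the $\rC_{33}$ case this also yields a consistent picture, or rather reveals a conflict with Corollary~\ref{cor_3^k} since the stabilizer of a point in an $H$-orbit of length~$11$ would then be a $3$-subgroup fixing a vertex) do not affect the stated conclusion, which holds in all three cases, and even vacuously so should $H\cong\rC_{33}$ turn out not to embed in~$G$.
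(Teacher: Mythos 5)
Your proof is correct and follows essentially the same route as the paper: pass to the normal Sylow $11$-subgroup $N\vartriangleleft H$, apply Proposition~\ref{propos_11} to get $K^N\cong\pt\sqcup\partial\Delta^5$ with the isolated point equal to the barycentre of one length-$11$ orbit, observe that the induced $H/N$-action must fix that isolated vertex and hence also the other barycentre, and conclude that both length-$11$ sets are $H$-orbits. The extra remarks at the end (on simpliciality of the induced action and on the $\rC_{33}$ case) are harmless and do not change the argument.
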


\begin{proof}
  In each of the three cases, the group~$H$ contains a normal subgroup~$N\cong\rC_{11}$. The quotient group~$H/N$ is isomorphic to either~$\rC_2$ or~$\rC_3$. By Proposition~\ref{propos_11} the group~$N$ acts on~$V$ with two orbits~$\alpha$ and~$\beta$ of length~$11$ and five fixed points, and $K^N\cong\pt\sqcup\partial\Delta^5$, where the separate point~$\pt$ is the barycentre of either~$\alpha$ or~$\beta$. We may assume that $\pt=b(\alpha)$. The action of~$H$ on~$K$ induces an action of~$H/N$ on~$K^N$. Certainly, the separate point~$\pt=b(\alpha)$ is fixed under this action and hence the point~$b(\beta)$ is also fixed. Consequently, $\alpha$ and~$\beta$ are $H$-orbits.
\end{proof}

\begin{cor}\label{cor_22}
 The group~$G$ contains no elements of orders~$22$ or~$33$.
\end{cor}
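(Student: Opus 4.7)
The plan is to derive both cases immediately from Proposition~\ref{propos_22} combined with the earlier results on fixed points of elements of orders~$2$ and~$3$. In each case the subgroup~$H=\langle g\rangle$ is cyclic of order $2p$ or $3p$ with $p=11$, so there is a unique subgroup $N\subset H$ of order $2$ or $3$, respectively, and this $N$ acts on any length-$11$ orbit as the full stabilizer of each point.

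First I would treat the case of order~$22$. Assuming $g\in G$ has order~$22$ and setting $H=\langle g\rangle\cong\rC_{22}$, Proposition~\ref{propos_22} gives two $H$-orbits $\alpha,\beta\subset V$ of length~$11$. The stabilizer in~$H$ of any point of $\alpha$ or~$\beta$ has order $22/11=2$ and hence coincides with the unique subgroup $N\subset H$ of order~$2$. Therefore the nontrivial element of~$N$ fixes every vertex in $\alpha\cup\beta$, so it has at least~$22$ fixed vertices. This contradicts Corollary~\ref{cor_2}, which says that every element of order~$2$ in~$G$ fixes exactly three vertices.

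The case of order~$33$ is identical in spirit. If $g\in G$ has order~$33$, then $H=\langle g\rangle\cong\rC_{33}$ again has two orbits of length~$11$ in~$V$ by Proposition~\ref{propos_22}, and the stabilizer of each point in these orbits is the unique subgroup $N\subset H$ of order~$3$. Hence every nontrivial element of~$N$ has at least~$22$ fixed vertices, contradicting Proposition~\ref{propos_3}, which asserts that every element of order~$3$ in~$G$ acts freely on~$V$.

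There is no real obstacle: Proposition~\ref{propos_22} already packaged the key structural fact (the two length-$11$ orbits persist when passing from $\rC_{11}$ to a larger group containing it as a normal subgroup), and everything else is a one-line stabilizer count clashing with Corollary~\ref{cor_2} and Proposition~\ref{propos_3}.
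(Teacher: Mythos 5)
Your proof is correct and follows essentially the same route as the paper: invoke Proposition~\ref{propos_22} to get the two $H$-orbits of length~$11$, deduce that the element of order~$2$ or~$3$ (namely $g^{11}$) fixes all $22$ points of these orbits, and contradict Corollary~\ref{cor_2} or Proposition~\ref{propos_3}. The only cosmetic difference is that you justify the ``fixes every point'' step via orbit--stabilizer and uniqueness of the subgroup of order~$2$ or~$3$ in a cyclic group, whereas the paper argues that $g^{11}$ has a fixed point in each orbit and uses commutativity; both are valid.
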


\begin{proof}
 Assume that an element $g\in G$ has order~$22$ or~$33$ and let $H=\langle g\rangle$. Then the element~$g^{11}$ has order~$2$ or~$3$. By Proposition~\ref{propos_22} the set~$V$ contains two $H$-orbits~$\alpha$ and~$\beta$ of length~$11$. Since neither~$2$ nor~$3$ divides~$11$, the element~$g^{11}$ fixes at least one point in each of the $H$-orbits~$\alpha$ and~$\beta$. Since $H$ is commutative, it follows that $g^{11}$ fixes every point in~$\alpha$ and every point in~$\beta$, which contradicts either Corollary~\ref{cor_2} or Proposition~\ref{propos_3}.
\end{proof}

\begin{cor}\label{cor_D11}
 Suppose that $H\subset G$ is a subgroup isomorphic to~$\rD_{11}$. Then $H$ acts on~$V$ with two orbits of length~$11$, two orbits of length~$2$, and one fixed point.
\end{cor}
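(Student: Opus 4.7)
By Proposition~\ref{propos_22}, the action of $H$ on $V$ has two orbits $\alpha$ and $\beta$ of length $11$. Setting $W=V\setminus(\alpha\cup\beta)$, we have $|W|=5$, and the remaining task is to determine the $H$-action on~$W$. Since $|W|=5<11$, any $\rC_{11}$-orbit in $W$ must be a singleton, so the unique subgroup $\rC_{11}\subset H$ lies in the kernel of the action on~$W$, and the induced action on~$W$ factors through the quotient $H/\rC_{11}\cong\rC_2$. In particular, all $11$ involutions of $H$ induce the same permutation on~$W$.

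Next I would pin down how an involution of $H$ acts on $\alpha$ and $\beta$. The action of $H$ on $\alpha$ is faithful: any element in the kernel fixes all $11$ vertices of $\alpha$, and for an involution this contradicts Corollary~\ref{cor_2}, which says each element of order $2$ has exactly $3$ fixed vertices. Thus $H\cong\rD_{11}$ acts on $\alpha$ as in the standard faithful transitive action of $\rD_{11}$ on $11$ points (rotations and reflections of an $11$-gon); since $11$ is odd, each of the $11$ involutions fixes exactly one point of $\alpha$. The identical argument applied to $\beta$ shows that each involution of $H$ fixes exactly one point of $\beta$ as well.

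Combining these observations: by Corollary~\ref{cor_2} each involution of $H$ fixes exactly $3$ vertices of $V$, of which one lies in $\alpha$ and one in $\beta$, so exactly one lies in $W$. Since all $11$ involutions induce the same permutation on~$W$, this common permutation is an involution on a $5$-element set with exactly one fixed point, and hence is a product of two disjoint transpositions. Therefore $H$ acts on $W$ with one fixed point and two orbits of length~$2$, which together with the two orbits of length $11$ on $\alpha\cup\beta$ yields the orbit decomposition claimed.

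The argument has no real obstacle; the only point that needs a moment's care is ruling out that $\rC_{11}$ acts nontrivially on $W$, which is immediate from $|W|<11$, and verifying that the standard transitive action of $\rD_{11}$ on $11$ points is the only faithful one (which follows automatically here from faithfulness and transitivity since the point stabilizer has order $2$ and any such subgroup is a reflection subgroup).
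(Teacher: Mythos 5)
Your proof is correct and follows essentially the same route as the paper: Proposition~\ref{propos_22} supplies the two orbits of length~$11$, Corollary~\ref{cor_2} controls the involutions, and the remaining five points are then analysed. The only cosmetic differences are that the paper deduces ``each involution fixes exactly one point of~$\alpha$ and one of~$\beta$'' from the parity of the fixed--point count of an involution on an $11$-element set (rather than from faithfulness of the action on~$\alpha$ and the structure of the standard $\rD_{11}$-action), and it determines the orbits on the remaining five points directly from the divisibility of orbit lengths rather than by factoring the action through $H/\rC_{11}$.
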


\begin{proof}
  By Proposition~\ref{propos_22} the set~$V$ contains two $H$-orbits~$\alpha$ and~$\beta$ of length~$11$. Let $v_1,\ldots,v_5$ be the five points of~$V$ not belonging to~$\alpha\cup\beta$.
  Let $g\in H$ be an element of order~$2$. Then $g$ fixes an odd number of points in~$\alpha$ and an odd number of points in~$\beta$. On the other hand, by Corollary~\ref{cor_2} the element~$g$ fixes exactly three points in~$V$. Therefore $g$ fixes exactly one of the points~$v_1,\ldots,v_5$. We may assume that $g$ fixes~$v_5$ and swaps the points in each of the pairs~$\{v_1,v_2\}$ and~$\{v_3,v_4\}$. Since the length of any $H$-orbit must be a divisor of~$22$, we conclude that $\{v_1,v_2\}$, $\{v_3,v_4\}$, and~$\{v_5\}$ are $H$-orbits.
\end{proof}

 \subsection{Elements of order~$13$}

\begin{propos}\label{propos_13}
  Suppose that an element $g\in G$ has order~$13$. Then $g$ acts on~$V$ with two orbits of length~$13$ and one fixed point, and $K^{\langle g\rangle}$ is the disjoint union of three points.
\end{propos}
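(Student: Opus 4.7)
The plan is to follow the strategy of Propositions~\ref{propos_3} and~\ref{propos_11}: first pin down the orbit structure of $H = \langle g\rangle \cong \rC_{13}$ on $V$, then use Proposition~\ref{propos_odd_fix} to constrain $K^H$, and finally eliminate every configuration except the one asserted. Since $13$ is prime and $27 = 2\cdot 13 + 1$, every $H$-orbit has length $1$ or $13$, so exactly two orbit patterns can occur: either (i) one orbit $\alpha$ of length $13$ and $14$ fixed points, or (ii) two orbits $\alpha,\beta$ of length $13$ and a single fixed point $v$. The whole task is then to rule out~(i) and to identify $K^H$ in~(ii).

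In case~(i), Proposition~\ref{propos_odd_fix} gives $\alpha\in K$ and leaves three possibilities for $K^H$ (now a $15$-vertex complex), which I would eliminate in turn. The option $K^H\cong \CP^2_9$ is ruled out by vertex count. The option $K^H\cong\pt\sqcup\partial\Delta^k$ forces $k = 13$, but then the connected component $\partial\Delta^{13}\sim_{13} S^{12}$ contradicts Theorem~\ref{thm_Bredon}, which, applied to $K\sim_{13}\OP^2=\P^2(8)$, bounds $m_i\le 8$ for every component $F_i\sim_{13}\P^{h_i}(m_i)$. The remaining option, namely $K^H$ being a $15$-vertex connected $\F_{13}$-homology $8$-manifold satisfying complementarity, I would rule out by the counting trick used in Case~2 of Proposition~\ref{propos_5_7}: by Proposition~\ref{propos_comp_neigh} such a $K^H$ is $5$-neighborly, so $\{b(\alpha), v_1, v_2, v_3, v_4\}$ is a $4$-simplex of $K^H$; being an $8$-pseudomanifold, $K^H$ contains some $8$-simplex $\{b(\alpha), v_{i_1}, \ldots, v_{i_8}\}$, whose lift $\alpha\cup\{v_{i_1}, \ldots, v_{i_8}\}$ is a simplex of $K$ with $21$ vertices --- impossible since $\dim K = 16$.

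In case~(ii), Proposition~\ref{propos_odd_fix} guarantees that both $\alpha$ and $\beta$ are simplices of $K$, so $K^H$ has exactly the three vertices $b(\alpha), b(\beta), v$. Running through the three possible classes of Proposition~\ref{propos_odd_fix} once more, $\CP^2_9$ has $9$ vertices and each of the $(d,n)$-homology manifold options has at least $15$ vertices, so the only survivor is $K^H\cong\pt\sqcup\partial\Delta^k$, and the vertex count forces $k = 1$; hence $K^H$ is the disjoint union of three points, as required.

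I expect the only real obstacle to be the $15$-vertex $\F_{13}$-homology $8$-manifold subcase of~(i): the Bredon-type dimension bound alone is too weak there, and one must combine complementarity with the pseudomanifold property to produce the forbidden oversized simplex of $K$. Everything else reduces to orbit arithmetic together with direct applications of Propositions~\ref{propos_odd_fix} and~\ref{propos_comp_neigh} and Theorem~\ref{thm_Bredon}.
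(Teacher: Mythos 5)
Your proof is correct, and its overall skeleton (enumerate the two orbit patterns, feed everything through Proposition~\ref{propos_odd_fix}, kill the one-orbit case) matches the paper's. The difference is in how the one-orbit case is excluded. The paper first proves that $K^{H}$ is \emph{connected} -- any two of the $14$ fixed vertices are joined by an edge by $9$-neighborliness, and $b(\alpha)$ is joined to them because the $12$-simplex $\alpha$ sits inside some $16$-simplex -- which eliminates the $\pt\sqcup\partial\Delta^{13}$ option without ever invoking Theorem~\ref{thm_Bredon}; it then observes that the $14$ fixed vertices span the full $8$-skeleton of a $13$-simplex inside $K^{H}$ (again by $9$-neighborliness of $K$), so some $7$-simplex has far more than two cofacets, violating the pseudomanifold condition. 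You instead dispatch the disconnected option by Bredon's bound $m_i\le 8$ applied to $\partial\Delta^{13}\sim_{13}\P^{1}(12)$, and the connected $(8,15)$ option by lifting an $8$-simplex of $K^{H}$ through $b(\alpha)$ to a $21$-vertex simplex of $K$ -- the same device as Case~2 of Proposition~\ref{propos_5_7}. Both routes are sound; yours reuses machinery already deployed elsewhere in the paper, while the paper's is self-contained within the combinatorics of $K^{H}$ and produces a contradiction of a different nature (too many cofacets rather than an oversized simplex). A small remark on your connected subcase: the $5$-neighborliness step is not actually needed -- the single vertex $b(\alpha)$ already lies in some $8$-simplex of $K^{H}$ by the pseudomanifold property, and its preimage in $K$ has at least $13+8=21$ vertices.
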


\begin{proof}
Let $H=\langle g\rangle\cong\rC_{13}$. We need to exclude the possibility that~$H$ acts on~$V$ with one orbit $\alpha$ of length~$13$ and $14$ fixed points. Assume that this is the case.
By Proposition~\ref{propos_odd_fix} we have that $\alpha\in K$. Then the simplicial complex $K^H$ has $15$ vertices, namely, the barycentre~$b(\alpha)$ and the $14$ fixed points. Since $K$ is $9$-neighborly, any two vertices in~$V\setminus\alpha$ are connected by an edge in~$K$ and hence in~$K^H$. Besides, since $K$ is a $16$-pseudomanifold, the $12$-simplex~$\alpha$ is contained in a $16$-simplex of~$K$. Hence, $b(\alpha)$ is contained in a $4$-simplex of~$K^H$. Consequently, $K^H$ is connected. Then from Proposition~\ref{propos_odd_fix} it follows that $K^H$ is an  $\F_{13}$-homology $8$-manifold and hence an $8$-pseudomanifold. On the other hand, any $9$ vertices in~$V\setminus\alpha$ form a simplex of~$K$ and hence a simplex of~$K^H$. Hence, the complex~$K^H$ contains a subcomplex isomorphic to the $8$-skeleton of a $13$-simplex. Therefore, $K^H$ is not an $8$-pseudomanifold, which gives a contradiction.

 Thus, $H$ acts on~$V$ with two orbits of length~$13$ and one fixed point. From Proposition~\ref{propos_odd_fix} it follows that $K^H$ is the disjoint union of three points.
 \end{proof}

\begin{propos}\label{propos_26}
The group $G$ contains no elements of orders~$26$ or~$39$.
\end{propos}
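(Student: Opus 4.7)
The plan is, in each case, to take an element $g\in G$ of the forbidden order and exploit the fact that the cyclic subgroup $\langle g\rangle$ contains commuting elements of prime orders whose actions on~$V$ have already been pinned down in Propositions~\ref{propos_3} and~\ref{propos_13} and Corollary~\ref{cor_2}. Commutativity will force these prime-order elements to respect one another's fixed-point structure, yielding numerical contradictions.

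First I would dispose of order~$39$, which is essentially immediate. Assume $g\in G$ has order~$39$ and write $h=g^3$ of order~$13$, $t=g^{13}$ of order~$3$. By Proposition~\ref{propos_13} the element~$h$ has a unique fixed vertex $v_0\in V$, so $V^{\langle h\rangle}=\{v_0\}$. Since $t$ commutes with $h$, the singleton $V^{\langle h\rangle}$ is $t$-invariant, hence $tv_0=v_0$. But Proposition~\ref{propos_3} says that elements of order~$3$ act on~$V$ without fixed points, giving the desired contradiction.

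For order~$26$ I would set $h=g^2$ of order~$13$ and $t=g^{13}$ of order~$2$. Proposition~\ref{propos_13} produces two $\langle h\rangle$-orbits $\alpha,\beta\subset V$ of length~$13$ together with a unique $\langle h\rangle$-fixed vertex~$v_0$. The involution~$t$ commutes with~$h$, so it fixes~$v_0$ and permutes the two orbits $\{\alpha,\beta\}$. The key point is that the centralizer of a $13$-cycle in $\Sym(13)$ equals the cyclic subgroup it generates; consequently, whenever $t$ preserves an orbit (say $\alpha$) as a set, the restriction $t|_\alpha$ commutes with the $13$-cycle $h|_\alpha$ and has order dividing~$2$, forcing $t|_\alpha=\mathrm{id}$. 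The plan is then to check the two possibilities for the action of~$t$ on $\{\alpha,\beta\}$: if $t$ swaps $\alpha$ and $\beta$, then $V^{\langle t\rangle}=\{v_0\}$ has cardinality~$1$; if $t$ preserves each of them, then by the centralizer remark $t$ fixes all $27$ vertices, contradicting $t\ne 1$. In the first case we contradict Corollary~\ref{cor_2}, which says an element of order~$2$ in~$G$ has exactly three fixed vertices.

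I do not foresee any genuine obstacle: the argument uses only the detailed fixed-point data already established together with a one-line centralizer computation in~$\Sym(13)$. The only mild care needed is to verify that the two cases for the action of~$t$ on $\{\alpha,\beta\}$ truly exhaust all possibilities and to tally the fixed points correctly.
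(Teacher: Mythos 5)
Your proposal is correct and follows essentially the same route as the paper: decompose $\langle g\rangle$ into its $\rC_{13}$ part and the complementary element of order~$2$ or~$3$, invoke Proposition~\ref{propos_13} for the orbit structure, and use commutativity to force a contradiction with Corollary~\ref{cor_2} or Proposition~\ref{propos_3}. The only cosmetic differences are that you settle the order-$39$ case directly via the unique fixed vertex and replace the paper's ``fixed point exists since $2,3\nmid 13$, then commutativity spreads it over the orbit'' step by the equivalent centralizer computation in~$\Sym(13)$.
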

\begin{proof}
Assume that an element $g\in G$ has order either~$26$ or~$39$, let $H=\langle g\rangle$, and let  $N\subset H$ be the subgroup isomorphic to~$\rC_{13}$. Then the order of the element~$g^{13}$ is either~$2$ or~$3$. By Proposition~\ref{propos_13} the group $N$ acts on~$V$ with two orbits~$\alpha$ and~$\beta$ of length~$13$ and one fixed point~$v$. Hence either both $\alpha$ and~$\beta$ are $H$-orbits or $\alpha\cup\beta$ is an $H$-orbit. In the former case, as in the proof of Corollary~\ref{cor_22} we obtain that $g^{13}$ fixes every point in~$\alpha$ and every point in~$\beta$. In the latter case, the element~$g^{13}$ swaps~$\alpha$ and~$\beta$ and hence fixes exactly one point~$v$ in~$V$. In both cases we obtain a contradiction with either Corollary~\ref{cor_2} or Proposition~\ref{propos_3}.
\end{proof}

\subsection{End of the proof of property~(a)}

\begin{propos}\label{propos_orders}
The order of every element of~$G$ belongs to $\{1,2,3,4,9,11,13\}$.
\end{propos}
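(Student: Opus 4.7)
The plan is to reduce the claim to a short case analysis by using two observations: (i) if $g\in G$ has order~$n$, then $\rC_n\subseteq G$ contains elements of every divisor order of~$n$, so any forbidden divisor of~$n$ already forbids~$n$ itself; and (ii) many orders (all primes $5,7,17,19,23$, and the composite orders $6,22,26,33,39$) have already been eliminated earlier in this section. Combining these, almost every possibility except $n=143$ falls out immediately, and only this final case requires a fresh argument.

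First I would restrict the primes. Since $G\subseteq\rS_{27}$, every prime divisor of an element order lies in $\{2,3,5,7,11,13,17,19,23\}$, and Propositions~\ref{propos_5_7}, \ref{propos_17}, and~\ref{propos_19_23} reduce this to $\{2,3,11,13\}$. Next I would bound the prime-power parts: Proposition~\ref{propos_no_8_42} forbids $\rC_8\subseteq G$, so the $2$-adic valuation of any order is at most~$2$; Proposition~\ref{propos_no_27_93} forbids $\rC_{27}\subseteq G$, so the $3$-adic valuation is at most~$2$; and since $11^2>27$ and $13^2>27$, no element of~$\rS_{27}$ can have order~$121$ or~$169$. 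Hence every element order divides $4\cdot 9\cdot 11\cdot 13=5148$.

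If the order~$n$ involves two or more distinct primes from $\{2,3,11,13\}$, then $n$ is divisible by at least one of the pairwise products $6,22,26,33,39,143$. The first five of these are excluded by Proposition~\ref{propos_6}, Corollary~\ref{cor_22}, and Proposition~\ref{propos_26}, and by observation~(i) any multiple of an excluded order is itself excluded. The only composite order with more than one prime divisor not yet ruled out is therefore $n=143$.

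The hardest (and only nontrivial) step will be the elimination of $n=143=11\cdot 13$. If $g\in G$ has order~$143$, then $h=g^{13}$ and $k=g^{11}$ are commuting elements of orders~$11$ and~$13$ respectively. By Proposition~\ref{propos_11}, $h$ fixes exactly five vertices of~$V$; by Proposition~\ref{propos_13}, $k$ fixes exactly one vertex of~$V$. Since $k$ commutes with~$h$, it permutes the five-element fixed set of~$h$; as the order of~$k$ is $13>5$, this action must be trivial, so $k$ fixes all five of these vertices, contradicting the fact that $k$ has a unique fixed vertex. This rules out $n=143$ and, combined with the previous reductions, gives $n\in\{1,2,3,4,9,11,13\}$.
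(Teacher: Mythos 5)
Your proof is correct and follows essentially the same route as the paper: restrict the possible primes to $\{2,3,11,13\}$ via Propositions~\ref{propos_19_23}, \ref{propos_17}, and~\ref{propos_5_7}, kill the mixed-prime orders $6$, $22$, $26$, $33$, $39$ via Proposition~\ref{propos_6}, Corollary~\ref{cor_22}, and Proposition~\ref{propos_26}, cap the prime powers using Propositions~\ref{propos_no_8_42} and~\ref{propos_no_27_93} together with the absence of elements of order~$121$ or~$169$ in~$\rS_{27}$, and treat $143$ as the one remaining case. The only genuine divergence is in that last case: the paper observes that an element of order~$143$ in~$\rS_{27}$ must be a product of one $11$-cycle and one $13$-cycle, so its $13$th power would act with a single orbit of length~$11$, contradicting Proposition~\ref{propos_11}; you instead let the commuting powers $g^{13}$ and~$g^{11}$ interact, noting that $g^{11}$ (of order~$13$) must fix pointwise the five-element fixed set of~$g^{13}$, contradicting Proposition~\ref{propos_13}. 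Both arguments are valid; the paper's needs only Proposition~\ref{propos_11} and elementary cycle-structure counting, while yours trades that for a clean commuting-action argument using both fixed-point counts.
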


\begin{proof}
From Propositions~\ref{propos_19_23}, \ref{propos_17}, and~\ref{propos_5_7} it follows that the only primes that could occur among the orders of elements of~$G$ are $2$, $3$, $11$, and~$13$. Let us  show that $G$ does not contain elements of orders~$pq$, where $p$ and~$q$ are different primes from~$\{2,3,11,13\}$. For all pairs~$\{p,q\}$, except for~$\{11,13\}$, this follows from Propositions~\ref{propos_6} and~\ref{propos_26} and Corollary~\ref{cor_22}. So we need to show that $G$ contains no elements of order~$143$. If $g$ were such an element, then the permutation~$g$ would be the product of two disjoint cycles of lengths~$11$ and~$13$, respectively. Hence $g^{13}$ would by a cycle of length~$11$, which is impossible by Proposition~\ref{propos_11}. Therefore, the order of every element of~$G$ is a prime power~$p^k$, where $p\in\{2,3,11,13\}$. Further, by Propositions~\ref{propos_no_8_42} and~\ref{propos_no_27_93}, the group~$G$ does not contain elements of orders~$8$ or~$27$. Also, $G$ contains no elements of orders~$121$ or~$169$, since there are no such elements in~$\rS_{27}$. The proposition follows.
\end{proof}

\section{Proof of Proposition~\ref{propos_group}}\label{section_group}
The aim of this section is to prove Proposition~\ref{propos_group}.
This is a purely group theoretic result, so we may completely forget about triangulations.
It is easy to check that all subgroups $G\subset\rS_{27}$ listed in Table~\ref{table_main} and also the group~$\PSU(3,2)=\rC_3^2\rtimes\rQ_8$ acting on~$\{1,\ldots,27\}$ with three orbits of length~$9$ satisfy properties~(a)--(g) from Proposition~\ref{propos_prop}. So we only need to prove that every subgroup~$G$ satisfying these properties is in the list.

\begin{remark}
 The first part of the proof below (up to and including Proposition~\ref{propos_order_list}) follows the approach suggested to the author by Andrey Vasil'ev. A key feature of this approach is the use of the solvability of the group~$G$. The author's original proof was based on counting elements of different orders using the Sylow theorems; it was more cumbersome.
\end{remark}

Throughout this section, we suppose that $G\subset \rS_{27}$ is a non-trivial subgroup that satisfies properties~(a)--(g) from Proposition~\ref{propos_prop}. We put $V=\{1,\ldots,27\}$.

\begin{propos}\label{propos_order_solv}
 The order of~$G$ divides $
 2^3\cdot 3^3\cdot 11\cdot 13
 $
and $G$ is solvable.
\end{propos}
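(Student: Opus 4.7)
The plan is to bound $|G|$ prime-by-prime and then invoke Corollary~\ref{cor_solvable}.

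First, Cauchy's theorem combined with property~(a) of Proposition~\ref{propos_prop} forces every prime divisor of $|G|$ to lie in $\{2,3,11,13\}$, so one may write $|G|=2^a\cdot 3^b\cdot 11^c\cdot 13^d$. The bound $a\le 3$ is exactly property~(c). For $b\le 3$, I would use property~(d): any Sylow $3$-subgroup acts freely on the $27$-element set~$V$, so its order divides~$27$.

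The heart of the argument is showing $c\le 1$ and $d\le 1$. I would proceed by contradiction. Suppose a Sylow $11$-subgroup has order at least~$121$; then it contains a subgroup~$Q$ of order~$121$, and since $Q$ has no element of order~$121$ by property~(a), $Q\cong\rC_{11}^2$. Property~(e) says every non-identity element of~$Q$ fixes exactly five points of~$V$. The Cauchy--Frobenius (Burnside) orbit-counting formula applied to the $Q$-action on~$V$ then gives
\[
|V/Q|\;=\;\frac{1}{|Q|}\sum_{h\in Q}\bigl|V^{\langle h\rangle}\bigr|\;=\;\frac{27+120\cdot 5}{121}\;=\;\frac{627}{121},
\]
which is not an integer---contradiction. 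The same argument with property~(g) in place of~(e) handles the $13$-part: an embedded copy of $\rC_{13}^2$ would give $|V/\rC_{13}^2|=(27+168\cdot 1)/169=195/169\notin\Z$. Hence $c,d\le 1$ and $|G|$ divides $2^3\cdot 3^3\cdot 11\cdot 13$.

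Solvability is then almost automatic. By property~(a) every non-identity element of~$G$ has prime power order, and the divisibility just established shows $|G|$ is coprime to $5\cdot 7\cdot 17$. Corollary~\ref{cor_solvable}, which rests on Suzuki's classification (Theorem~\ref{thm_Suzuki}), immediately yields that $G$ is solvable. The only non-routine step in the whole argument is the mod-$p^2$ orbit count that excludes $\rC_p^2$ for $p\in\{11,13\}$; the tight fixed-point information in properties~(e) and~(g) is precisely what makes this single counting argument sufficient.
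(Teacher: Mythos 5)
Your proposal is correct, and every step except one coincides with the paper's argument: the restriction of the prime divisors to $\{2,3,11,13\}$ via property~(a), the bounds $a\le 3$ from property~(c) and $b\le 3$ from the freeness in property~(d), and the derivation of solvability from Corollary~\ref{cor_solvable} are all exactly as in the paper. Where you genuinely diverge is in ruling out $11^2$ and $13^2$. The paper argues through the ambient symmetric group: the exponent of $p$ in $27!$ is~$2$, so a subgroup of order $p^2$ would be a full Sylow $p$-subgroup of~$\rS_{27}$, hence conjugate to the one generated by two disjoint $p$-cycles, and would therefore contain a single $p$-cycle, whose fixed-point count ($27-p$ fixed points) violates property~(e) or~(g). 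You instead apply the Cauchy--Frobenius orbit count to a hypothetical subgroup of order~$p^2$, using properties~(a), (e), (g) to pin down the fixed-point number of every non-identity element, and observe that $(27+120\cdot 5)/121$ and $(27+168\cdot 1)/169$ are not integers. Both arguments are complete; yours has the mild advantage of not invoking the conjugacy of Sylow subgroups of~$\rS_{27}$ or their explicit cycle structure, relying only on $|V|=27$ and the fixed-point data, while the paper's version avoids any counting and reads off the contradiction directly from the orbit structure of a $p$-cycle. Your arithmetic checks out ($627=121\cdot 5+22$ and $195=169+26$), and the passage from ``Sylow $11$-subgroup of order $\ge 121$'' to ``contains a subgroup of order exactly $121$ of exponent $11$'' is justified by standard $p$-group theory together with property~(a).
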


\begin{proof}
  From property~(a) it follows that $2$, $3$, $11$, and~$13$ are the only primes that may enter the prime factorization of~$|G|$. By property~(c) the exponent of~$2$ in the prime factorization of~$|G|$ does not exceed~$3$. By property~(d) a Sylow $3$-subgroup of~$G$ acts freely on~$27$ points. Hence the exponents of~$3$ in the prime factorization of~$|G|$ also does not exceed~$3$. Now, let $p$ be either~$11$ or~$13$.  The exponent of~$p$ in $|\rS_{27}|=27!$ is equal to~$2$, so any subgroup $H\subset\rS_{27}$ of order~$p^2$ is a Sylow $p$-subgroup of~$\rS_{27}$. By the Sylow theorems all Sylow $p$-subgroups are conjugate to each other. Hence any such subgroup~$H$ is generated by two disjoint $p$-cycles. So if $|G|$ were divisible by~$p^2$, then $G$ would contain a $p$-cycle, that is, a permutation of order~$p$ that acts on~$V$ with one orbit of length~$p$ and~$27-p$ fixed points. Therefore, we would obtain a contradiction with properties~(e) or~(g). Consequently, the exponents of~$11$ and~$13$ in the prime factorization of~$|G|$ do not exceed~$1$. Thus, $|G|$ divides $2^3\cdot 3^3\cdot 11\cdot 13.$

  By property~(a) all elements of~$G$ have prime power order. Since  $|G|$ is not divisible by any of the primes~$5$, $7$, and~$17$, it follows from Corollary~\ref{cor_solvable}  that $G$ is solvable.
\end{proof}

\begin{remark}
 Instead of using Suzuki's Theorem~\ref{thm_Suzuki}, the solvability of~$G$ can be proven in another way. Indeed, the list of all non-cyclic finite simple groups of order $\le 2^3\cdot 3^3\cdot 11\cdot 13=30888$ is well known and consists of only $23$ groups. (Note that, in contrast to the general problem of classifying finite simple groups, listing finite simple groups of such a small order does not rely on any hard results, cf.~\cite{Hal72}.)  It is easy to check that for none of these simple groups its order divides $30888$. Therefore, any group whose order divides $30888$ is solvable.
\end{remark}

\begin{propos}\label{propos_order_list}
 The order of~$G$ is one of the numbers~$2$, $3$, $4$, $6$, $8$, $9$, $11$, $12$, $13$, $18$, $22$, $24$, $26$, $27$, $36$, $39$, $52$, $54$, $72$, or~$351$.
\end{propos}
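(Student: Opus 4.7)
The plan is to apply Higman's Theorem~\ref{thm_Higman} to~$G$. By Proposition~\ref{propos_order_solv}, the group~$G$ is solvable and $|G|$ divides $2^3\cdot 3^3\cdot 11\cdot 13$; by property~(a) of Proposition~\ref{propos_prop}, every element of~$G$ has prime power order. The hypotheses of Higman's theorem are thus in place, so $G$ is either a $p$-group or $|G|=p^mq^n$ subject to the divisibility conditions $q^n\mid p^k-1$ and $p^{m-k}\mid q-1$, where $p$ is the prime admitting a non-trivial normal Sylow subgroup and $p^k$ is its order.

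I would first handle the $p$-group case. For $p=2$, property~(c) forces $|G|\le 8$, giving $|G|\in\{2,4,8\}$; for $p=3$, the bound on the $3$-part of~$|G|$ from Proposition~\ref{propos_order_solv} gives $|G|\in\{3,9,27\}$; for $p\in\{11,13\}$ that same proposition forces $|G|=p$. I would then go through the six unordered pairs $\{p,q\}\subset\{2,3,11,13\}$, in each case considering both choices of which prime plays the distinguished role. The two divisibility conditions, combined with the upper bounds on the exponents of~$p$ and~$q$, make each case a short finite check. The pairs $\{3,11\}$ and $\{11,13\}$ yield no solutions, since none of the divisibilities $11\mid 3^k-1$ for $k\le 3$, $11\mid 12$, or $13\mid 10$ hold. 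The pair $\{2,11\}$ yields only $|G|=22$; the pair $\{2,13\}$ yields $|G|\in\{26,52\}$. The pair $\{3,13\}$ is the most interesting: since the multiplicative order of~$3$ modulo~$13$ equals~$3$, the role $p=3$ forces $k=3$ (and then $m=3$, $n=1$), producing $|G|=351$, while the role $p=13$ produces $|G|=39$.

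The main technical obstacle is the pair $\{2,3\}$, which supports both role assignments and a number of candidate orders that must be sifted. Here the case $p=2$, $q=3$ forces $k\in\{m-1,m\}$ together with $3^n\mid 2^k-1$ (giving $k=2$, $n=1$, $m\in\{2,3\}$), while the case $p=3$, $q=2$ forces $m=k$ together with $2^n\mid 3^k-1$; the latter yields $|G|\in\{6,18,36,72,54\}$. Care is needed to exclude the a priori plausible orders $|G|=108$ and $|G|=216$: these fail because $3^3\nmid 2^k-1$ for $k\le 3$ and $2^3\nmid 3^k-1$ for odd $k\le 3$. Taking the union of all the cases produces exactly the twenty orders listed in the proposition.
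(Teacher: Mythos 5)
Your proposal is correct and follows essentially the same route as the paper: both invoke Higman's theorem via the solvability and order bound from Proposition~\ref{propos_order_solv}, dispose of the $p$-group case directly, and then run the same finite divisibility checks (the paper organizes them by the distinguished prime~$p$ carrying the normal Sylow subgroup, you by unordered pairs $\{p,q\}$ with both role assignments, which is only a bookkeeping difference). The only nitpick is that for the pair $\{3,11\}$ with $11$ in the role of~$p$ you should also record the failing divisibility $3\mid 10$, but this is a trivial omission in an otherwise complete check.
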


\begin{proof}
 By property~(a) and Proposition~\ref{propos_order_solv},  $G$ is a solvable group all of whose elements $\ne 1$ have prime power order, and moreover, $|G|$ divides $2^3\cdot 3^3\cdot 11\cdot 13$. If $G$ is a $p$-group, then its order is either~$2^m$ or~$3^m$ with $1\le m\le 3$, or~$11$, or~$13$. Suppose that $G$ is not a $p$-group. Then from Theorem~\ref{thm_Higman} it follows that
 \begin{enumerate}
 \item $|G|=p^mq^n$, where $p$ and~$q$ are distinct primes from~$\{2,3,11,13\}$, $1\le m\le 3$, $1\le n\le 3$, and moreover $m=1$ whenever $p$ is either~$11$ or~$13$ and $n=1$ whenever $q$ is either~$11$ or~$13$,
 \item $G$ contains a non-trivial normal $p$-subgroup but does not contain a non-trivial normal $q$-subgroup,
 \item if $p^k$ is the order of the  greatest normal $p$-subgroup~$P$ of~$G$, then $q^n$ divides $p^k-1$ and $p^{m-k}$ divides $q-1$.
\end{enumerate}

Consider four cases.
\smallskip

\textsl{Case 1: $p=2$.} Then $2^k-1$ is $1$, $3$, or~$7$. Since $q^n$ divides $2^k-1$, we have that $k=2$, $q=3$, and $n=1$. Hence $|G|$ is either~$12$ or~$24$.
\smallskip

\textsl{Case 2: $p=3$.} Then $3^k-1$ is $2$, $8$, or~$26$. Since $q^n$ divides $3^k-1$, we have the following two subcases.
\smallskip

\textsl{Subcase 2a: $q=2$ and $(k,n)$ is one of the pairs $(1,1)$, $(2,1)$, $(2,2)$, $(2,3)$, or~$(3,1)$.} Then $m=k$, since $2^{m-k}$ must divide $q-1=1$. Hence $|G|$ is~$6$, $18$, $36$, $72$, or~$54$.
\smallskip

\textsl{Subcase 2b: $q=13$, $k=3$, and $n=1$.} Then $|G|=351$.
\smallskip

\textsl{Case 3: $p=11$.} Then $k=m=1$, so $q^n$ divides $p-1=10$. Hence $q=2$ and $n=1$. Therefore $|G|=22$.
\smallskip

\textsl{Case 4: $p=13$.} Then $k=m=1$, so $q^n$ divides $p-1=12$. Hence $q^n$ is $2$, $3$, or~$4$. Therefore $|G|$ is $26$, $39$, or~$52$.
\end{proof}

\begin{propos}\label{propos_2_4}
 Every element~$g\in G$ of order either~$2$ or~$4$ has exactly three fixed points in~$V$.
\end{propos}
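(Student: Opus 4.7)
The plan is to prove the two cases (order~$2$ and order~$4$) by elementary orbit counting, using only property~(c) from Proposition~\ref{propos_prop} together with the fact that $|V|=27$.

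Suppose first that $g\in G$ has order~$2$. Set $H=\langle g\rangle$. By property~(c) the group~$H$ has exactly $15$ orbits on~$V$. Let $f_1$ and $f_2$ denote the numbers of $H$-orbits of lengths $1$ and~$2$, respectively. Then $f_1+f_2=15$ and $f_1+2f_2=27$, which forces $f_1=3$. Hence $g$ fixes exactly three points of~$V$.

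Now suppose that $g\in G$ has order~$4$, and set $H=\langle g\rangle$. By property~(c) the group~$H$ has exactly $9$ orbits on~$V$. Let $f_1$, $f_2$, $f_4$ denote the numbers of $H$-orbits of lengths $1$, $2$, $4$, respectively. Then
\begin{equation*}
 f_1+f_2+f_4=9,\qquad f_1+2f_2+4f_4=27,
\end{equation*}
which yields $f_2+3f_4=18$. In particular $f_2\equiv 0\pmod 3$.

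The element $g^2$ has order~$2$, so by the case already proved it fixes exactly three points of~$V$. The set of fixed points of~$g^2$ coincides with the union of the $H$-orbits on which $g^2$ acts trivially, namely the orbits of length $1$ and $2$. Therefore $f_1+2f_2=3$, which forces $f_2\in\{0,1\}$. Combining with $f_2\equiv 0\pmod 3$, we conclude $f_2=0$, whence $f_1=3$ (and $f_4=6$). Thus $g$ fixes exactly three points of~$V$, completing the proof.

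The argument is essentially two simple systems of linear equations; the only substantive input beyond counting is property~(c) (which controls the total number of $H$-orbits) and the fact that the order-$2$ case is used to handle the order-$4$ case via the element~$g^2$. There is no real obstacle here, and no further group-theoretic facts from Section~\ref{section_finite_groups} are needed.
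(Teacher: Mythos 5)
Your proof is correct and follows essentially the same route as the paper: both use property~(c) to count orbits and then rule out the unwanted orbit structure for an order-$4$ element by observing that $g^2$ would otherwise fix too many points. The only cosmetic difference is that you solve the linear system with a congruence argument where the paper simply enumerates the two possible orbit-length partitions.
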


\begin{proof}
First, assume that $g$ has order~$2$. Then by property~(c) the group~$\langle g\rangle$ acts on~$V$ with exactly $15$ orbits. Hence, $g$ fixes exactly three points.

Second, assume that $g$ has order~$4$. By property~(c) the group~$\langle g\rangle$ acts on~$V$ with exactly $9$ orbits. There are two possibilities:
\begin{itemize}
 \item $\langle g\rangle$ acts on~$V$ with six orbits of length~$4$ and three fixed points,
 \item $\langle g\rangle$ acts on~$V$ with five orbits of length~$4$, three orbits of length~$2$, and one fixed point.
\end{itemize}
However, in the latter case we would obtain that the element~$g^2$ fixes seven points in~$V$, so the latter case is impossible.
\end{proof}

\begin{proof}[Proof of Proposition~\ref{propos_group}]
 Let us consider in turn all the possibilities for~$|G|$ from Proposition~\ref{propos_order_list}.
 \smallskip

\textsl{Case 1: $|G|=2^k$, where $1\le k\le 3$.}
By properties~(a) and~(b) and assertion~(1) of Proposition~\ref{propos_small_groups} we obtain that $G$ is isomorphic to one of the groups~$\rC_2$,  $\rC_2^2$, $\rC_4$, $\rC_2^3$, $\rD_4$, or~$\rQ_8$.

If $G$ is isomorphic to either~$\rC_2$ or~$\rC_4$, then  from Proposition~\ref{propos_2_4} it follows that the group~$G$ fixes exactly $3$ points in~$V$ and acts freely on the other $24$ points. These two actions are present in Table~\ref{table_main}.

Suppose that $G\cong\rC_2^2$. By Proposition~\ref{propos_2_4} every non-trivial element $g$ of~$G$ fixes exactly three points in~$V$. It follows easily that up to isomorphism there are exactly two actions of~$\rC_2^2$ that satisfy this property:
\begin{itemize}
 \item the action with six orbits of length~$4$ and three fixed points,
 \item the action with five orbits of length~$4$, three orbits of length~$2$, and one fixed point such that the stabilizers of points in the three orbits of length~$2$ are pairwise different.
\end{itemize}
Both actions are present in Table~\ref{table_main}.

Finally, suppose that $|G|=8$. Then $G$ is isomorphic to~$\rC_2^3$, or~$\rD_4$, or~$\rQ_8$. By property~(c) the group~$G$ acts on~$V$ with exactly $6$ orbits. Moreover, every $G$-orbit has length~$1$, $2$, $4$, or~$8$. As we have already mentioned in Subsection~\ref{subsection_2k},
there are exactly two ways to decompose~$27$ into the sum of $6$ powers of two that do not exceed~$8$, namely,
\begin{align*}
 27 &= 8 + 8 + 8 + 1+1+1\\
 {}&= 8+8+4+4+2+1.
\end{align*}
The actions of the three possible groups~$G$ with three orbits of length~$8$ and three fixed points are  present in Table~\ref{table_main}.

Now, we consider the case of $G$ acting on~$V$ with two orbits of length~$8$, two orbits~$\alpha$ and~$\beta$ of length~$4$, one orbit~$\gamma$ of length~$2$, and one fixed point~$v_0$.
As in the proof of Propostion~\ref{propos_8}, we see that if $G$ were isomorphic to either~$\rC_2^3$ or~$\rQ_8$, then $G$ would contain a non-trivial element that fixes every point of~$\alpha$, which would contradict Proposition~\ref{propos_2_4}. Therefore  $G\cong\rD_4$. From Proposition~\ref{propos_2_4} it follows that each subgroup $\rC_2\subset\rD_4$ fixes exactly two points in~$\alpha\cup\beta\cup\gamma$. The group~$\rD_4$ has exactly $5$ subgroups isomorphic to~$\rC_2$, one of which is the centre~$Z$. If $Z$ fixed a point in an orbit of length~$4$, then it would fix every point in this orbit, so we would again arrive at a contradiction with Proposition~\ref{propos_2_4}. Hence, $Z$ fixes the two points of~$\gamma$. Then each of the $8$ points in~$\alpha\cup\beta$ must be fixed by some non-central subgroup $\rC_2\subset\rD_4$. Therefore, each of the four non-central subgroups $\rC_2\subset\rD_4$ fixes exactly two points in~$\alpha\cup\beta$. These two points lie in the same $G$-orbit (either~$\alpha$ or~$\beta$), since they must be swapped by the generator of~$Z$. Thus, the stabilizers~$G_u$ and~$G_v$ of any points $u\in\alpha$ and $v\in\beta$ are not conjugate to each other. Moreover, the stabilizer~$G_w$ of any point $w\in \gamma$ is a subgroup of order~$4$ that does not contain a non-central subgroup $\rC_2\subset\rD_4$. Therefore, $G_w=\rC_4$.
The obtained action is present in Table~\ref{table_main}.
\smallskip

\textsl{Case 2: $|G|=3^k$, where $1\le k\le 3$.} By properties~(a) and~(b) and assertion~(3) of Proposition~\ref{propos_small_groups} we obtain that $G$ is isomorphic to one of the groups~$\rC_3$,  $\rC_3^2$, $\rC_9$, $\rC_3^3$, $\He_3$, or~$3_-^{1+2}$. Moreover, by property~(d) the group~$G$ acts freely on~$V$. All these actions are present in Table~\ref{table_main}.
\smallskip

\textsl{Case 3: $|G|=2^k\cdot 3$, where $1\le k\le 3$.} Since $G$ contains no elements of order~$6$, from assertion~(4) of Proposition~\ref{propos_small_groups} it follows that $G$ is isomorphic to one of the three groups~$\rS_3$, $\rA_4$, or~$\rS_4$. By property~(d) the lengths of all $G$-orbits in~$V$ are divisible by~$3$. On the other hand, the lengths of all $G$-orbits must divide~$|G|$. So the only possible orbit lengths are~$3$, $6$, $12$, or~$24$.

If $G\cong \rS_3$, then every $G$-orbit in~$V$ has length either~$3$ or~$6$. Any element of order~$2$ in~$G$ acts without fixed points on every $G$-orbit of length~$6$ and fixes  exactly one point in every $G$-orbit of length~$3$. From Proposition~\ref{propos_2_4} it follows that there are exactly three $G$-orbits of length~$3$ and so exactly three $G$-orbits of length~$6$. This action is present in Table~\ref{table_main}.

If $G\cong\rA_4$, then every $G$-orbit in~$V$ has length $3$, $6$, or~$12$. Since $|V|$ is odd, at least one $G$-orbit has length~$3$. Suppose that $g\in G$ is an element of order~$2$. The action of~$G$ on each $G$-orbit~$\alpha$ of length~$3$ corresponds to the surjective homomorphism $\rA_4\to\rC_3$, so $g$ fixes every point of~$\alpha$.
Moreover, if $V$ contained a $G$-orbit~$\beta$ of length~$6$, then the action on~$\beta$ would be isomorphic to the left action of~$\rA_4$ on left cosets of~$\rC_2$ in~$\rA_4$. Hence, $g$ would fix exactly two points of~$\beta$. By Proposition~\ref{propos_2_4} the element $g$ fixes exactly three points in~$V$. Therefore, $G$ acts on~$V$ with one orbit of length~$3$ and two orbits of length~$12$. This action is present in Table~\ref{table_main}.

Finally, suppose that $G\cong\rS_4$. Since $|V|$ is odd, at least one $G$-orbit~$\alpha$ has length~$3$. The action of~$G$ on~$\alpha$ corresponds to the surjective homomorphism $\rS_4\to\rS_3$, so any element $g\in G$ of order~$4$ swaps a pair of points of~$\alpha$. It follows that the number of fixed points of~$g^2$ in~$V$ is strictly greater that the number of fixed points of~$g$ in~$V$, which contradicts Proposition~\ref{propos_2_4}. So this case is impossible.
\smallskip

\textsl{Case 4: $|G|=2^k\cdot 9$, where $1\le k\le 3$.} Since $G$ contains no elements of orders~$6$ or~$8$,  from assertions~(5)--(7) of Proposition~\ref{propos_small_groups} it follows that $G$ is one of the four groups~$\rD_9$, $\rC_3\rtimes\rS_3$,  $\rC_3^2\rtimes\rC_4$ (with $\rC_4$ acting faithfully on~$\rC_3^2$), or~$\PSU(3,2)$.
From property~(d) it follows that $G$ acts on~$V$ either with three orbits of length~$9$ or with two orbits of lengths~$18$ and~$9$, respectively. Consider the two subcases.
\smallskip

\textsl{Subcase 4a: $G$ acts on~$V$ with three orbits of length~$9$.} Since all Sylow $2$-subgroups of~$G$ are conjugate to each other, $G$ has a unique up to isomorphism transitive action on $9$ points.  The corresponding actions of $\rD_9$, $\rC_3\rtimes\rS_3$, and $\rC_3^2\rtimes\rC_4$ on $27$ points with three orbits of length~$9$ are present in Table~\ref{table_main}, and the action of~$\PSU(3,2)$ is mentioned separately in Proposition~\ref{propos_group}.
\smallskip

\textsl{Subcase 4b: $G$ acts on~$V$ with two orbits~$\alpha$ and~$\beta$ of lengths~$18$ and~$9$, respectively.}
First, assume that $G$ is isomorphic to  either~$\rD_9$ or~$\rC_3\rtimes\rS_3$. It can be checked directly that any element $g\in G$ of order~$2$ fixes exactly one point in~$\beta$. Moreover, $G$ acts regularly transitively on~$\alpha$. Hence, $g$ fixes exactly one point in~$V$, which contradicts Proposition~\ref{propos_2_4}.

Second, assume that $G$ is isomorphic to either~$\rC_3^2\rtimes\rC_4$ or~$\PSU(3,2)$. It is easy to check that
\begin{itemize}
\item all two-element subgroups of~$\rC_3^2\rtimes\rC_4$ are conjugate to each other,
\item four-element subgroups of~$\PSU(3,2)$ form three conjugacy classes, which can be taken to each other by outer automorphisms of~$\PSU(3,2)$.
\end{itemize}
So each of the two groups~$G$ has a unique up to isomorphism transitive action on $18$ points, which is thus realized on~$\alpha$. One can easily check that in both cases the group~$G$ contains  an element~$g$ of order~$4$ that acts on~$\alpha$ with an orbit of length~$2$. Therefore, the number of fixed points of~$g^2$ is strictly greater than the number of fixed points of~$g$, which again contradicts Proposition~\ref{propos_2_4}. So this subcase is impossible.
\smallskip

\textsl{Case 5: $|G|$ is either~$11$ or~$22$.} Since $G$ contains no elements of order~$22$, it follows from Proposition~\ref{propos_2p} that $G$ is isomorphic to either~$\rC_{11}$ or~$\rD_{11}$. If $G\cong\rC_{11}$, then by property~(e) it acts on~$V$ with two orbits of length~$11$ and five fixed points. If $G\cong\rD_{11}$, then by property~(f) it acts on~$V$ with two orbits of length~$11$, two orbits of length~$2$, and one fixed point. Both these actions are present in Table~\ref{table_main}.
\smallskip

\textsl{Case 6: $|G|=2^k\cdot 13$, where $0\le k\le 2$.} Since $G$ contains no elements of order~$26$, it follows from Proposition~\ref{propos_2p} and assertion~(8) of Proposition~\ref{propos_small_groups}  that $G$ is isomorphic to one of the three groups~$\rC_{13}$, $\rD_{13}$, or~$\rC_{13}\rtimes\rC_4$ (with $\rC_4$ acting faithfully on~$\rC_{13}$).  Let $H$ be a Sylow $13$-subgroup of~$G$; then $H\cong\rC_{13}$. By property~(g) the subgroup~$H$ acts on~$V$ with three orbits of lengths~$13$, $13$, and~$1$. Every $G$-orbit is the union of several $H$-orbits, and the length of every $G$-orbit divides~$|G|$. Therefore, the lengths of $G$-orbits in~$V$ are either $13$, $13$, and~$1$ or $26$ and~$1$. Consider the two subcases.
\smallskip

\textsl{Subcase 6a: $G$ acts on~$V$ with three orbits of lengths~$13$, $13$, and~$1$.}  Each of the three possible groups~$G$ has a unique up to isomorphism transitive action on $13$ points. The corresponding three actions  on $27$ points with orbit lengths~$13$, $13$, and~$1$  are present in Table~\ref{table_main}.
\smallskip

\textsl{Subcase 6b: $G$ acts on~$V$ with two orbits of lengths~$26$ and~$1$.} Then $G$ is isomorphic to either~$\rD_{13}$ or~$\rC_{13}\rtimes\rC_4$. If $G\cong\rD_{13}$, then $G$ acts regularly transitively on the orbit of length~$26$, so every element $g\in G$ of order~$2$  fixes only one point in~$V$, which  contradicts Proposition~\ref{propos_2_4}. If $G\cong\rC_{13}\rtimes\rC_4$, then $G$ contains an element~$g$ of order~$4$. The stabilizer~$G_v$ of any point~$v$ in the orbit of length~$26$ has order~$2$; hence, $g\notin G_v$. Therefore, $g$ fixes only one point in~$V$, which again contradicts Proposition~\ref{propos_2_4}. So this subcase is impossible.
\smallskip

\textsl{Case 7: $|G|=39$.}
From property~(d) it follows that the length of every $G$-orbit in~$V$ is divisible by~$3$. Since the length of every $G$-orbit divides~$39$, it follows that all $G$-orbits in~$V$ have length~$3$. We arrive at a contradiction, since $G$ contains an element of order~$13$. So this case is impossible.

\smallskip

\textsl{Case 8: $|G|=54$.} We are going to prove that this case is also impossible. Let $N$ be the number of pairs $(g,v)\in G\times V$ such that $g$ has order~$2$ and $g(v)=v$. To arrive at a contradiction, we compute $N$ in two ways and show that the results are different.

Firstly, from property~(d) it follows that $G$ acts transitively on~$V$. Hence the stabilizer~$G_v$ of every point $v\in V$ has order~$2$. Therefore $N=|V|=27$.

Secondly, from property~(a) it follows that  the order of every element of~$G$ is either~$2$ or a power of~$3$. The Sylow $3$-subgroup~$H$ of~$G$ has index~$2$ and hence is normal. It follows that any $3$-subgroup of~$G$ is contained in~$H$, so $G$ has exactly $26$ elements $g\ne 1$ whose order is a power of~$3$. Therefore, $G$ contains exactly $27$ elements $g$ that have order~$2$. By Proposition~\ref{propos_2_4} each such element fixes exactly three points in~$V$. Thus $N=3\cdot 27=81$. A contradiction.
\smallskip

\textsl{Case 9: $|G|=351$.}
Since $G$ contains no elements of order~$39$, from assertion~(9) of Proposition~\ref{propos_small_groups} it follows that $G$ is isomorphic to the semi-direct product~$\rC_3^3\rtimes\rC_{13}$. It is easy to check that all subgroups of order~$13$ of~$\rC_3^3\rtimes\rC_{13}$ are conjugate to each other, so this group admits exactly one (up to isomorphism) action on $27$~points. This action is in Table~\ref{table_main}.
\smallskip

We have looked through all the cases. The proposition follows.
\end{proof}

\section{Absence of triangulations with symmetry group~$\PSU(3,2)$}\label{section_no_PSU}

In this section we prove Proposition~\ref{propos_no_PSU}. We assume that $K$ is a  $27$-vertex $\Z$-homology $16$-manifold such that $H_*(K;\Z)\not\cong H_*(S^{16};\Z)$ and $\Sym(K)\cong\PSU(3,2)$, and our goal is to arrive at a contradiction. We denote the group~$\Sym(K)$ by~$G$.

Throughout this section, a special role will be played by subgroups $H\subset G$ that are isomorphic to~$\rC_4$. We denote the set of all such subgroups~$H$ by~$\CH$. By Remark~\ref{remark_PSU} a Sylow $2$-subgroup of~$G$ is isomorphic to~$\rQ_8$, so $\CH$ is non-empty. In fact, it is easy to check that the set~$\CH$ consists of $3$ conjugacy classes of $9$ subgroups each, but we will not need this fact.

Recall that from Theorems~\ref{thm_Novik} and~\ref{thm_ArMa} it follows that $K$ satisifies complementarity. Then by Proposition~\ref{propos_comp_neigh} we obtain that $K$ is $9$-neighborly.

Let $V$ denote the vertex set of~$K$. By Propositions~\ref{propos_prop} and~\ref{propos_group} the group~$G$ acts on~$V$ with three orbits of length~$9$. We denote these orbits by~$\alpha_0$, $\alpha_1$, and~$\alpha_2$, where we conveniently consider the indices~$0$, $1$, and~$2$ as elements of the field~$\F_3$. Since $K$ is $9$-neighborly, we have that $\alpha_0$, $\alpha_1$, and~$\alpha_2$ are simplices of~$K$. By Proposition~\ref{propos_2_4} each subgroup $H\in\CH$  acts on~$V$ with six orbits of length~$4$ and three fixed points. Then every $G$-orbit~$\alpha_t$ contains one $H$-fixed point and two $H$-orbits of length~$4$.

We identify the set of vertices of~$\CP^2_9$ with an affine plane~$\mathcal{P}$ over~$\F_3$ as in Subsection~\ref{subsection_K}.

\begin{propos}\label{propos_PSU_aux}
The following assertions hold (possibly, after reversing the cyclic order of the orbits~$\alpha_0$, $\alpha_1$, and~$\alpha_2$):
\begin{enumerate}
 \item
 $
 \link(\alpha_t,K)=\partial\alpha_{t+1}
 $ for all $t\in\F_3$.
 \item For each subgroup $H\in\CH$, we have an isomorphism $$\varphi_H\colon K^H\to\CP^2_9$$ and we can choose affine coordinates~$(x,y)$ in the vertex set~$\mathcal{P}$ of~$\CP^2_9$ so that
 \begin{itemize}
  \item the special lines in~$\mathcal{P}$ are $\{y=0\}$, $\{y=1\}$, and $\{y=2\}$ in this cyclic order,
  \item the bijection~$V/H\to\mathcal{P}$ induced by~$\varphi_H$ is given by Table~\ref{table_bijection_id}, where $u_t\in\alpha_t$ is an $H$-fixed point and~$\beta_t$ and~$\gamma_t$ are two $H$-orbits of length~$4$ contained in~$\alpha_t$.
 \end{itemize}
 \item  Moreover, for all $t\in\F_3$,
 \begin{gather}\label{eq_cond1}
  \{u_t\}\cup\bigl(\alpha_{t+1}\setminus\{u_{t+1}\}\bigr)\cup\bigl(\alpha_{t+2}\setminus\{u_{t+2}\}\bigr)\in K,\\
  \label{eq_cond2}
  \{u_0,u_1,u_2\}\cup\beta_{t+1}\cup\beta_{t+2}\notin K,\\
  \label{eq_cond3}
  \{u_0,u_1,u_2\}\cup\gamma_{t+1}\cup\gamma_{t+2}\notin K.
 \end{gather}
\end{enumerate}
\end{propos}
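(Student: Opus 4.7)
The proof proceeds through four interlocking steps, matching the three parts of the proposition.

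\smallskip
\noindent\textbf{Step 1 (vertex set of $L_t = \link(\alpha_t, K)$).} Since $K$ is $9$-neighborly (by Proposition~\ref{propos_comp_neigh}), each $\alpha_t$ is an $8$-simplex of~$K$, and, as $K$ is a $\Z$-homology $16$-manifold, $L_t$ is a $\Z$-homology $7$-sphere. Its vertex set is $G$-invariant and contained in $\alpha_{t+1}\cup\alpha_{t+2}$; by $G$-transitivity on each orbit, it must be one of $\alpha_{t+1}$, $\alpha_{t+2}$, or $\alpha_{t+1}\cup\alpha_{t+2}$ (the empty case is ruled out since $L_t$ is a homology sphere). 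A standard swap argument shows that a $7$-dimensional pseudomanifold on $9$ vertices must be $\partial\Delta^8$: for any top simplex $\sigma$ and the missing ninth vertex $w$, each $6$-face $\sigma\setminus\{v\}$ lies in exactly one other top simplex, forced to be $(\sigma\setminus\{v\})\cup\{w\}$. So in the $9$-vertex cases we obtain $L_t=\partial\alpha_{t+1}$ or $L_t=\partial\alpha_{t+2}$, respectively.

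\smallskip
\noindent\textbf{Step 2 (reduction to $\CP^2_9$).} Fix $H\in\CH$; by Proposition~\ref{propos_2}, $K^H\cong\CP^2_9$. Each orbit $\alpha_t$ is a union of three $H$-orbits $\{u_t\},\beta_t,\gamma_t$, which contribute a $2$-simplex $\ell_t:=\{u_t,b(\beta_t),b(\gamma_t)\}$ of $K^H$; the three $\ell_t$ partition the $9$ vertices of $K^H$. The natural bijection between $H$-invariant simplices of $K$ containing $\alpha_t$ and simplices of $K^H$ containing $\ell_t$ identifies $L_t^H$ with $\link(\ell_t,K^H)$, so $|V(L_t^H)|=3$ if $|V(L_t)|=9$ and $|V(L_t^H)|=6$ if $|V(L_t)|=18$. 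Using the explicit list of $4$-simplices of $\CP^2_9$ from Subsection~\ref{subsection_K}, one verifies that the link of a $2$-simplex in $\CP^2_9$ has $3$ vertices when it is a special line, $6$ vertices when it is a non-special line (directly from Proposition~\ref{propos_3subset}), and either $4$ or $5$ vertices for each of the three $\Sym(\CP^2_9)$-orbit representatives of non-line $3$-subsets listed in the proof of Proposition~\ref{propos_3subset}. Hence each $\ell_t$ is a line of~$\mathcal{P}$.

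\smallskip
\noindent\textbf{Step 3 (special vs.\ non-special parallel class).} Three pairwise disjoint lines of $\mathcal{P}$ form a parallel class, so either (a) $\{\ell_0,\ell_1,\ell_2\}$ is the special parallel class, giving every $|V(L_t)|=9$, or (b) it is a non-special parallel class, giving every $|V(L_t)|=18$. Case (b) is the principal obstacle. I would exclude it by exploiting the richness of $\CH$: $\PSU(3,2)$ contains several non-conjugate subgroups in $\CH$ (it has three conjugacy classes of $\rC_4$'s, cf.\ Remark~\ref{remark_PSU}), each supplying its own isomorphism $K^{H'}\cong\CP^2_9$ in which the same three $G$-orbits $\alpha_t$ must form a parallel class; the $G$-equivariance of the whole structure, combined with the action of elements of $G$ conjugating $H$ to $H'$, constrains these parallel classes simultaneously and rules out the non-special option. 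An alternative route is a direct Smith-theoretic analysis of an $18$-vertex $\Z$-homology $7$-sphere carrying a $\PSU(3,2)$-action with two orbits of length~$9$: the induced vertex links must be $\Z$-homology $6$-spheres consistent with the action of a Sylow-$2$-subgroup on each, and a combinatorial count rules this out. Once case (a) is established, the cyclic consistency of $L_t = \partial \alpha_{t+1}$ across all $t \in \F_3$ (possibly after reversing the cyclic order of $\alpha_0,\alpha_1,\alpha_2$) is inherited from the fixed cyclic order of the special lines in $\CP^2_9$.

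\smallskip
\noindent\textbf{Step 4 (Parts~(2) and~(3)).} With case (a) in hand, choose affine coordinates $(x,y)$ on $\mathcal{P}$ so that the special lines are $\{y=0\},\{y=1\},\{y=2\}$ in the matching cyclic order; within each $\alpha_t$, the $H$-fixed point must go to the unique $H$-fixed point of the corresponding special line, so set $u_t\leftrightarrow(0,t)$, and normalize the labels so that $\beta_t\leftrightarrow(1,t)$ and $\gamma_t\leftrightarrow(2,t)$; this produces Table~\ref{table_bijection_id}. For Part~(3), each of the subsets in~\eqref{eq_cond1}--\eqref{eq_cond3} is a union of $H$-orbits, hence $H$-invariant, so its membership in $K$ is equivalent to the membership of the corresponding vertex-subset in $K^H=\CP^2_9$. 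The $17$-element set in~\eqref{eq_cond1} corresponds to $\{(0,t),(1,t+1),(2,t+1),(1,t+2),(2,t+2)\}$, which is the union of the intersecting non-special lines $\{y=x+t\}$ and $\{y=2x+t\}$, hence a Type~$1$ $4$-simplex of $\CP^2_9$; the $11$-element sets in~\eqref{eq_cond2} and~\eqref{eq_cond3} correspond to $\{(0,0),(0,1),(0,2),(1,t+1),(1,t+2)\}$ and $\{(0,0),(0,1),(0,2),(2,t+1),(2,t+2)\}$, each containing points on all three special lines (ruling out Type~$2$) and two points of a vertical non-special line disjoint from $\{x=0\}$ (ruling out Type~$1$), so neither is a $4$-simplex of $\CP^2_9$.
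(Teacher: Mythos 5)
Your Steps 1, 2 and 4 contain usable ideas (the identification $(L_t)^H=\link(\ell_t,K^H)$ and the verification of the simplices and non-simplices in Part~(3) are essentially sound), but the proposal has a genuine gap at its central point. First, the inference in Step~2 that $|V(L_t^H)|=6$ whenever $|V(L_t)|=18$ is not justified: by Proposition~\ref{propos_KG} the vertices of $(L_t)^H$ are the $H$-orbits $\sigma$ with $\alpha_t\cup\sigma\in K$, and knowing that every single vertex of $\beta_{t+1}$ lies in $\link(\alpha_t,K)$ does not imply $\alpha_t\cup\beta_{t+1}\in K$ (this is not covered by $9$-neighborliness, since $|\alpha_t\cup\beta_{t+1}|=13$). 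So you cannot conclude that $|V(L_t^H)|\in\{3,6\}$, and hence cannot rule out that $\varphi_H(\ell_t)$ is a non-line $3$-subset whose link has $4$ or $5$ vertices; the clean dichotomy ``special parallel class vs.\ non-special parallel class'' is therefore not established. Second, and more seriously, Step~3 --- which you yourself call ``the principal obstacle'' --- is not a proof but a declaration of two possible strategies, neither of which is carried out. This is exactly where the real work lies. In the paper the same difficulty appears as the possibility that the permutation $\nu$ matching the orbits $\alpha_t$ to the special lines is non-trivial, and it is eliminated by a concrete computation: the paper first uses the $\rQ_8$-stabilizer of $u_0$ and the induced involution $s(x,y)=(-x,y)$ on $K^H\cong\CP^2_9$ to show that $\varphi_H(\beta_t)$ and $\varphi_H(\gamma_t)$ share a $y$-coordinate $\nu(t)$, and then, for each non-trivial $\nu$, exhibits explicit $4$-simplices of $\CP^2_9$ whose preimages force $\link(\alpha_1,K)$ to contain $\partial\alpha_0$ and $\partial\alpha_2$ simultaneously (or a simplex not contained in $\alpha_0$), contradicting the fact that this link is a connected $\Z$-homology $7$-sphere. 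Your proposal contains no analogue of this case analysis, and without it neither Part~(1) nor Table~\ref{table_bijection_id} is established.

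There is also a smaller omission in Step~4: to place $u_t$ at $(0,t)$ you need the three $H$-fixed points to form a (necessarily non-special) line of $\mathcal{P}$, which does not follow from each $\ell_t$ being a special line --- a transversal of a parallel class need not be a line. The fact is true and follows from $9$-neighborliness together with Proposition~\ref{propos_3subset} (every $4$-subset of $\mathcal{P}$ containing the three fixed points corresponds to a $7$-element subset of $V$), which is precisely how the paper's proof begins; as written, your argument skips it.
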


\begin{table}
 \caption{The bijection~$\varphi_H$ between points $(x,y)\in\mathcal{P}$ and $H$-orbits in~$V$}\label{table_bijection_id}
 \begin{tabular}{|c|ccc|}
 \hline
 \diaghead(1,-1){abc}{$y$}{$x$} & $0$ & $1$ & $2$\\
 \hline
 $0$ & $u_0\vphantom{\Bigl(}$ & $\beta_0$  & $\gamma_0$\\
 $1$ & $u_1\vphantom{\Bigl(}$ & $\beta_1$  & $\gamma_1$\\
 $2$ & $u_2\vphantom{\Bigl(}$ & $\beta_2$  & $\gamma_2$\\
 \hline
 \end{tabular}
\end{table}

\begin{remark}
 We conveniently think that the isomorphisms~$\varphi_H$ for different~$H$ lead to different copies of~$\CP^2_9$, so the coordinates $(x,y)$ for different~$H$ are not related with each other. Certainly, the fixed vertices~$u_t$ and the orbits~$\beta_t$ and~$\gamma_t$ depend on the choice of~$H$. For the sake of simplicity, we do not reflect this dependence in the notation.
\end{remark}

\begin{proof}[Proof of Proposition~\ref{propos_PSU_aux}]
 Suppose that $H\in\CH$. For each $t\in\F_3$, the $G$-orbit~$\alpha_t$ consists of the $H$-fixed point~$u_t$ and two $H$-orbits~$\beta_t$ and~$\gamma_t$ of length~$4$:
 $$
 \alpha_t=\{u_t\}\cup\beta_t\cup\gamma_t.
 $$
Note that we have the freedom to choose which of the two orbits is~$\beta_t$ and which is~$\gamma_t$. We will take the opportunity to swap these orbits later.

 By Proposition~\ref{propos_2} we have an isomorphism  $\varphi_H\colon K^H\to\CP^2_9$.  The isomorphism~$\varphi_H$ provides a bijection $V/H\to\mathcal{P}$, which we also denote by~$\varphi_H$. Under this bijection some $6$ points in~$\mathcal{P}$ correspond to $H$-orbits of length~$4$, and the other $3$ points in~$\mathcal{P}$ correspond to the $H$-fixed vertices of~$K$. We denote by $m\subset  \mathcal{P}$ the $3$-element subset consisting of the points corresponding to the $H$-fixed vertices of~$K$. Then every $4$-element subset~$\sigma\subset \mathcal{P}$ that contains~$m$ corresponds to a $7$-element subset of~$V$, which is a simplex of~$K$, since $K$ is $9$-neighborly. Hence $\sigma$ is a simplex of~$\CP^2_9$. From Proposition~\ref{propos_3subset} it follows that $m$ is a non-special line. Hence, we can choose affine coordinates~$(x,y)$ in~$\mathcal{P}$ so that
\begin{itemize}
 \item $\varphi_H(u_0)=(0,0)$.
 \item $m$ coincides with the line $\{x=0\}$,
 \item the special lines in~$\mathcal{P}$ are $\{y=0\}$, $\{y=1\}$, and $\{y=2\}$ in this cyclic order.
\end{itemize}
Swapping if necessary the orbits~$\alpha_1$ and~$\alpha_2$, we can achieve that
$$
\varphi_H(u_t)=(0,t),\qquad t\in\F_3.
$$

We denote the stabilizer~$G_{u_0}$ by~$S$; then $H\subset S$. By Remark~\ref{remark_PSU}, we have $S\cong\rQ_8$. By the Sylow theorems all subgroups of~$G$ of order~$8$ are conjugate to each other. So every $G$-orbit~$\alpha_t$ contains a point with stabilizer~$S$. This point is~$u_t$, since it is the only point that is fixed by~$H$. So we have that $G_{u_t}=S$ for all $t\in\F_3$. Since  $S\cong\rQ_8$, we see that $H$ is a normal subgroup of~$S$.
The action of~$S$ on~$K$ induces an action of the quotient group~$S/H\cong\rC_2$ on~$K^H$, which we identify with~$\CP^2_9$ along~$\varphi_H$. We denote by~$s$ the involutive automorphism of~$\CP^2_9$ given by the generator of~$S/H$. From Proposition~\ref{propos_8} it follows that the group~$S$ acts on~$V$ with three fixed points~$u_t$ and three orbits~$\beta_t\cup\gamma_t$ of length~$8$. Hence, for each $t\in\F_3$, the automorphism~$s$ fixes the  point~$\varphi_H(u_t)=(0,t)$ and swaps the points~$\varphi_H(\beta_t)$ and~$\varphi_H(\gamma_t)$. On the other hand, since $s\in\Sym(\CP^2_9)$, we see that $s$ is an affine transformation of~$\mathcal{P}$ that  takes special lines to special lines preserving their cyclic order.
It follows easily that
$$
s(x,y)=(-x,y)
$$
for all $(x,y)\in\mathcal{P}$.
Therefore, for each $t\in\F_3$, the points~$\varphi_H(\beta_t)$ and~$\varphi_H(\gamma_t)$ have the same $y$-coordinate~$\nu(t)$. Then $\nu$ is a permutation of elements of~$\F_3$. Besides, swapping the orbits in pairs~$\{\beta_t,\gamma_t\}$, we can achieve that
\begin{align*}
\varphi_H(\beta_t)=\bigl(1,\nu(t)\bigr), \qquad \varphi_H(\gamma_t)=\bigl(2,\nu(t)\bigr)
\end{align*}
for all $t\in\F_3$

We would like to prove that the permutation~$\nu$ is in fact trivial. To do this, we enumerate all non-trivial permutations and arrive at a contradiction in each case. There are three cases:

\smallskip

\textsl{Case 1: $\nu$ is a transposition.} We may assume that $\nu=(1\ 2)$, since the cases of the two other transpositions are similar. Then the bijection $\varphi_H\colon V/H\to\mathcal{P}$ is as in Table~\ref{table_bijection_wrong}(a). By the construction  of~$\CP^2_9$ we have that
the following two sets are simplices of~$\CP^2_9$:
\begin{equation}\label{eq_2simplices}
\begin{aligned}
 &\bigl\{(0,1),(1,0),(2,0),(1,2),(2,2)\bigr\},\\
 &\bigl\{(0,1),(1,1),(2,1),(1,2),(2,2)\bigr\}.
\end{aligned}
\end{equation}
Hence, the corresponding unions of $H$-orbits are simplices of~$K$. These unions of $H$-orbits are
\begin{align*}
 \{u_1\}\cup\beta_0\cup\gamma_0\cup\beta_1\cup\gamma_1&=\alpha_1\cup\bigl(\alpha_0\setminus\{u_0\}\bigr),\\
 \{u_1\}\cup\beta_2\cup\gamma_2\cup\beta_1\cup\gamma_1&=\alpha_1\cup\bigl(\alpha_2\setminus\{u_2\}\bigr),
\end{align*}
respectively. Therefore, $\alpha_0\setminus\{u_0\}$ and $\alpha_2\setminus\{u_2\}$ are simplices of~$\link(\alpha_1,K)$. But the simplex~$\alpha_1$ is $G$-invariant, so the simplicial complex~$\link(\alpha_1,K)$ is also $G$-invariant. Since $G$ acts transitively on each of the sets~$\alpha_0$ and~$\alpha_2$, it follows that $\link(\alpha_1,K)$ contains the simplices~$\alpha_0\setminus\{v\}$ for all~$v\in\alpha_0$ and the simplices~$\alpha_2\setminus\{v\}$ for all~$v\in\alpha_2$. In other words, $\link(\alpha_1,K)$ contains simultaneously the boundaries of the $8$-simplices~$\alpha_0$ and~$\alpha_2$. This yields a contradiction, since $K$ is a $\Z$-homology $16$-manifold and hence $\link(\alpha_1,K)$ must be a connected $\Z$-homology $7$-manifold with the homology of a $7$-sphere.

\begin{table}
 \caption{Wrong bijections~$\varphi_H$}\label{table_bijection_wrong}
 \begin{tabular}{|c|ccc|c|c|ccc|c|c|ccc|}
 \multicolumn{4}{c}{(a) Case~$1\vphantom{\Bigl(}$} & \multicolumn{1}{c}{\hphantom{abcde}} &
 \multicolumn{4}{c}{(b) Case~$2\vphantom{\Bigl(}$} & \multicolumn{1}{c}{\hphantom{abcde}} &
 \multicolumn{4}{c}{(c) Case~$3\vphantom{\Bigl(}$}\\
 \cline{1-4}\cline{6-9}\cline{11-14}
 \diaghead(1,-1){abc}{$y$}{$x$} & $0$ & $1$ & $2$
 & &  \diaghead(1,-1){abc}{$y$}{$x$} & $0$ & $1$ & $2$
 & &  \diaghead(1,-1){abc}{$y$}{$x$} & $0$ & $1$ & $2$
 \\
 \cline{1-4}\cline{6-9}\cline{11-14}
 $0$ & $u_0\vphantom{\Bigl(}$ & $\beta_0$  & $\gamma_0$ &&
 $0$ & $u_0\vphantom{\Bigl(}$ & $\beta_2$  & $\gamma_2$ &&
 $0$ & $u_0\vphantom{\Bigl(}$ & $\beta_1$  & $\gamma_1$
 \\
 $1$ & $u_1\vphantom{\Bigl(}$ & $\beta_2$  & $\gamma_2$ &&
 $1$ & $u_1\vphantom{\Bigl(}$ & $\beta_0$  & $\gamma_0$ &&
 $1$ & $u_1\vphantom{\Bigl(}$ & $\beta_2$  & $\gamma_2$
 \\
 $2$ & $u_2\vphantom{\Bigl(}$ & $\beta_1$  & $\gamma_1$ &&
 $2$ & $u_2\vphantom{\Bigl(}$ & $\beta_1$  & $\gamma_1$ &&
 $2$ & $u_2\vphantom{\Bigl(}$ & $\beta_0$  & $\gamma_0$\\
 \cline{1-4}\cline{6-9}\cline{11-14}
 \end{tabular}
\end{table}

\smallskip

\textsl{Case 2: $\nu(t)=t+1$ for all $t\in\F_3$.} Then the bijection $\varphi_H\colon V/H\to\mathcal{P}$ is as in Table~\ref{table_bijection_wrong}(b). We arrive at a contradiction literally as in the previous case. Indeed, again sets~\eqref{eq_2simplices} are simplices of~$\CP^2_9$. Therefore, $\alpha_0\setminus\{u_0\}$ and~$\alpha_2\setminus\{u_2\}$ are simplices of~$\link(\alpha_1,K)$ and hence $\partial\alpha_0$ and~$\partial\alpha_2$ are subcomplexes of~$\link(\alpha_1,K)$, which is impossible.

\smallskip

\textsl{Case 3: $\nu(t)=t-1$ for all $t\in\F_3$.} Then the bijection $\varphi_H\colon V/H\to\mathcal{P}$ is as in Table~\ref{table_bijection_wrong}(c). This case is a little different. We have that $$\bigl\{(0,1),(1,0),(2,0),(1,2),(2,2)\bigr\}$$
is a simplex of~$\CP^2_9$. Hence,
$$
\{u_1\}\cup\beta_1\cup\gamma_1\cup\beta_0\cup\gamma_0=\alpha_1\cup\bigl(\alpha_0\setminus\{u_0\}\bigr)
$$
is a simplex of~$K$. As in Case~1, this implies that $\link(\alpha_1,K)$ contains a subcomplex~$\partial\alpha_0$. Since $\link(\alpha_1,K)$ must be a connected $\Z$-homology $7$-manifold, it follows that in fact $\link(\alpha_1,K)=\partial\alpha_0$. On the other hand, we have that
$$
\bigl\{(0,1),(1,0),(1,1),(1,2),(2,0)\bigr\}
$$
is  a simplex of~$\CP^2_9$ and hence
$$
\{u_1\}\cup\beta_0\cup\beta_1\cup\beta_2\cup\gamma_1=\alpha_1\cup\bigl(\beta_0\cup\beta_2\bigr)
$$
is a simplex of~$K$. Therefore, $\beta_0\cup\beta_2$ is a simplex of~$\link(\alpha_1,K)$, and we arrive at a contradiction, since $\beta_0\cup\beta_2$ is not contained in~$\alpha_0$.

\smallskip

Thus,  $\nu$ is a trivial permutation. Hence, the bijection $\varphi_H\colon V/H\to\mathcal{P}$ is as in Table~\ref{table_bijection_id}, and so we obtain assertion~(2).

For each~$t\in\F_3$, we have that
\begin{align}\label{eq_simp1}
&\bigl\{(0,t),(1,t),(2,t),(1,t+1),(2,t+1)\bigr\},\\
\label{eq_simp2}
&\bigl\{(0,t),(1,t+1),(2,t+1),(1,t+2),(2,t+2)\bigr\}
\end{align}
are simplices of~$\CP^2_9$ and
\begin{align}
\label{eq_nonsimp1}
&\bigl\{(0,0),(0,1),(0,2),(1,t+1),(1,t+2)\bigr\},\\
\label{eq_nonsimp2}
&\bigl\{(0,0),(0,1),(0,2),(2,t+1),(2,t+2)\bigr\}
\end{align}
are not simplices of~$\CP^2_9$. So the unions of the $H$-orbits corresponding to the elements of sets~\eqref{eq_simp1} and~\eqref{eq_simp2} are simplices of~$K$ and the unions of the $H$-orbits corresponding to the elements of sets~\eqref{eq_nonsimp1} and~\eqref{eq_nonsimp2} are not simplices of~$K$. For~\eqref{eq_simp2}, \eqref{eq_nonsimp1}, and~\eqref{eq_nonsimp2}, we obtain exactly the statements~\eqref{eq_cond1}, \eqref{eq_cond2}, and~\eqref{eq_cond3} from assertion~(3), respectively. For~\eqref{eq_simp1}, we obtain that
\begin{equation*}
\alpha_t\cup(\alpha_{t+1}\setminus\{u_{t+1}\})\in K.
\end{equation*}
As above, since $G$ acts transitively on~$\alpha_{t+1}$, it follows that $\link(\alpha_t,K)\supseteq \partial\alpha_{t+1}$. Since $\link(\alpha_1,K)$ is a connected $\Z$-homology $7$-manifold, we obtain that $\link(\alpha_t,K)=\partial\alpha_{t+1}$, which is assertion~(1).
\end{proof}

\begin{remark}
 Though to prove assertion~(1) we have considered a subgroup $H\in\CH$, the resulting cyclic order of the orbits~$\alpha_0$, $\alpha_1$, and~$\alpha_2$ is certainly independent of~$H$, since the assertion cannot be true simultaneously for the two opposite orders. Further we always number the orbits~$\alpha_0$, $\alpha_1$, and~$\alpha_2$ so that the assertions of Proposition~\ref{propos_PSU_aux} are true for all~$H$.
\end{remark}

The group~$G$ has $9$ Sylow $2$-subgroups, each isomorphic to~$\rQ_8$, see Remark~\ref{remark_PSU}. By the Sylow theorems all of them are conjugate to each other. Hence, for each of the three orbits~$\alpha_0$, $\alpha_1$, and~$\alpha_2$, the stabilizers of the $9$ vertices in this orbit are exactly the $9$ different subgroups of~$G$ isomorphic to~$\rQ_8$. Therefore, there is a unique permutation~$\theta$ of the set~$V$ such that:
\begin{itemize}
 \item $\theta(\alpha_t)=\alpha_{t+1}$ for all $t\in\F_3$,
 \item $G_v=G_{\theta(v)}$ for all~$v\in~V$.
\end{itemize}
 Obviously, $\theta^3=1$. It follows immediately from the construction that the permutation~$\theta$ commutes with the action of~$G$ on~$V$. So, for each subgroup $H\in\CH$, the permutation~$\theta$ takes $H$-orbits to $H$-orbits. Let us use the notation for the $H$-orbits from Proposition~\ref{propos_PSU_aux}. Then
 $$
 \theta(u_t)=u_{t+1},\qquad t\in\F_3.
 $$

 \begin{propos}\label{propos_cor_PSU_aux}
For each $t\in\F_3$ and each vertex $v\in \alpha_t$,  the set
 \begin{equation*}
  \{v\}\cup\bigl(\alpha_{t+1}\setminus\{\theta(v)\}\bigr)\cup\bigl(\alpha_{t+2}\setminus\{\theta^2(v)\}\bigr)
 \end{equation*}
 is a simplex of~$K$.
\end{propos}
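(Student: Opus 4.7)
The plan is to reduce the statement directly to equation~\eqref{eq_cond1} of Proposition~\ref{propos_PSU_aux}, which already asserts exactly the desired membership in~$K$ when $v$ is taken to be an $H$-fixed vertex for some $H\in\CH$. The whole task amounts to showing that \emph{every} vertex $v\in\alpha_t$ arises as an $H$-fixed vertex for a suitable $H\in\CH$, and that for this $H$ one has $u_{t+1}=\theta(v)$ and $u_{t+2}=\theta^2(v)$.

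First, let $v\in\alpha_t$ be arbitrary. Since $G$ acts on~$\alpha_t$ with orbit of length~$9$, the stabilizer~$G_v$ has order~$8$ and, by Sylow's theorem together with Remark~\ref{remark_PSU}, is isomorphic to~$\rQ_8$. The group~$\rQ_8$ contains subgroups isomorphic to~$\rC_4$, so we may choose $H\in\CH$ with $H\subset G_v$; then $H$ fixes~$v$. By assertion~(2) of Proposition~\ref{propos_PSU_aux}, the subgroup~$H$ has exactly one fixed point in each orbit~$\alpha_s$, so $v$ is the unique $H$-fixed point in~$\alpha_t$; in the notation of that proposition, $v=u_t$.

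Second, by the definition of~$\theta$ we have $G_{\theta(v)}=G_{\theta^2(v)}=G_v$, and since $H\subset G_v$, the vertices $\theta(v)\in\alpha_{t+1}$ and $\theta^2(v)\in\alpha_{t+2}$ are both fixed by~$H$. Uniqueness of the $H$-fixed point in each orbit now forces $\theta(v)=u_{t+1}$ and $\theta^2(v)=u_{t+2}$.

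Finally, applying~\eqref{eq_cond1} of Proposition~\ref{propos_PSU_aux} for this~$H$ and substituting $u_t=v$, $u_{t+1}=\theta(v)$, $u_{t+2}=\theta^2(v)$ yields the claim. There is essentially no hard step here: the only point that requires care is verifying that the triple $(u_t,u_{t+1},u_{t+2})$ produced by the chosen $H$ matches the $\theta$-orbit of~$v$, and this is immediate from the defining property $G_{\theta(v)}=G_v$ combined with the uniqueness of $H$-fixed points in each $\alpha_s$.
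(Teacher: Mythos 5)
Your proof is correct and follows essentially the same route as the paper: choose $H\in\CH$ inside the stabilizer $G_v\cong\rQ_8$, identify $v$ with the $H$-fixed point $u_t$, and invoke~\eqref{eq_cond1} of Proposition~\ref{propos_PSU_aux}. The only difference is that you spell out explicitly (via $G_{\theta(v)}=G_v$ and uniqueness of $H$-fixed points) why $\theta(v)=u_{t+1}$ and $\theta^2(v)=u_{t+2}$, which the paper leaves implicit.
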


\begin{proof}
 The stabilizer~$G_v$ of~$v$ is isomorphic to~$\rQ_8$ and hence contains a subgroup $H\in\CH$ (in fact, exactly three such subgroups). Then $v$ is the $H$-fixed point in~$\alpha_t$, that is, the point~$u_t$ for the subgroup~$H$. The required assertion now follows from assertion~(3) of Proposition~\ref{propos_PSU_aux}.
\end{proof}

For each subgroup $H\in\CH$ and each $t\in\F_3$, the permutation~$\theta$ takes the pair of $H$-orbits~$\{\beta_{t+1},\gamma_{t+1}\}$ to the pair of $H$-orbits~$\{\beta_{t+2},\gamma_{t+2}\}$, where we use the notation for $H$-orbits from Proposition~\ref{propos_PSU_aux}. Nevertheless, it is by no means true that $\theta$ always takes~$\beta_{t+1}$ to~$\beta_{t+2}$ and $\gamma_{t+1}$ to~$\gamma_{t+2}$. We conveniently introduce signs $\eta_{H,t}=\pm 1$ indexed by pairs $(H,t)\in\CH\times\F_3$ in the following way:
 $$
   \eta_{H,t}=
   \left\{\begin{aligned}
   &1&&\text{if}\ \theta(\beta_{t+1})=\beta_{t+2}\ \text{and}\ \theta(\gamma_{t+1})=\gamma_{t+2},\\
   -&1&&\text{if}\ \theta(\beta_{t+1})=\gamma_{t+2}\ \text{and}\ \theta(\gamma_{t+1})=\beta_{t+2}.
   \end{aligned}\right.
 $$
Since $\theta^3=1$, we obtain the following statement.

 \begin{propos}\label{propos_eta}
  $\eta_{H,0}\eta_{H,1}\eta_{H,2}=1$ for all $H\in\CH$.
 \end{propos}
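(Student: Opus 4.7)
The claim is essentially a direct consequence of the relation $\theta^3=\mathrm{id}$, read as a statement about signs. My plan is as follows. Fix $H\in\CH$ together with the labelling $(\beta_t,\gamma_t)$ of the two $H$-orbits of length $4$ inside each~$\alpha_t$. By construction, $\theta$ sends the pair $\{\beta_{t+1},\gamma_{t+1}\}$ bijectively onto the pair $\{\beta_{t+2},\gamma_{t+2}\}$, and the sign $\eta_{H,t}$ is precisely the signature of this bijection of two-element ordered sets, with the convention that $+1$ corresponds to the order-preserving map and $-1$ to the swap. Under this convention, composition of such bijections of ordered pairs corresponds to multiplication of signs.

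Now I would track the pair $(\beta_0,\gamma_0)$ under the three successive applications of $\theta$. The three maps
\[
\theta\colon(\beta_0,\gamma_0)\to(\beta_1,\gamma_1),\qquad
\theta\colon(\beta_1,\gamma_1)\to(\beta_2,\gamma_2),\qquad
\theta\colon(\beta_2,\gamma_2)\to(\beta_0,\gamma_0)
\]
have signs $\eta_{H,2}$, $\eta_{H,0}$, and $\eta_{H,1}$ respectively (note the shift: by definition, $\eta_{H,t}$ controls the transition from the $H$-orbits in $\alpha_{t+1}$ to those in $\alpha_{t+2}$). Their composition equals $\theta^3$ restricted to $\{\beta_0,\gamma_0\}$, which is the identity on this set and therefore has sign~$+1$. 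Equating the two expressions for this sign yields $\eta_{H,0}\eta_{H,1}\eta_{H,2}=1$.

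There is no real obstacle here; the only thing to pay attention to is the cyclic index shift in the definition of $\eta_{H,t}$, which makes sure that each of the three signs appears exactly once in the product coming from the identity $\theta^3=\mathrm{id}$. If preferred, one could alternatively enumerate the eight possible triples $(\eta_{H,0},\eta_{H,1},\eta_{H,2})\in\{\pm1\}^3$ and check directly that the condition $\theta^3(\beta_0)=\beta_0$ selects exactly the four triples with product~$+1$, but the sign-composition argument above makes this bookkeeping unnecessary.
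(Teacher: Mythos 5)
Your proof is correct and is essentially the paper's own argument: the paper derives the identity in one line from $\theta^3=1$, and your sign-composition bookkeeping (including the correct cyclic index shift assigning $\eta_{H,2},\eta_{H,0},\eta_{H,1}$ to the three successive transitions) is exactly the justification left implicit there.
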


 \begin{remark}\label{remark_beta_gamma}
  Actually, the orbits~$\beta_t$ and~$\gamma_t$ are not determined uniquely by the conditions from assertion~(2) of Propostion~\ref{propos_PSU_aux}. Namely, we have the freedom to change the sign of the coordinate~$x$ on the plane~$\mathcal{P}$ and swap simultaneously the orbits in every pair~$\{\beta_t,\gamma_t\}$ with $t\in\F_3$. Nevertheless, this operation does not change the signs~$\eta_{H,t}$, so they are well defined.
 \end{remark}

 Furher, if a group~$H$ acts on a set~$Z$ and $z_1,z_2\in Z$, we put
 $$
\varepsilon_{H}(z_1,z_2)=\left\{
\begin{aligned}
 &1&&\text{if $z_1$ and $z_2$ lie in the same $H$-orbit,}\\
 -&1&&\text{if $z_1$ and $z_2$ lie in different $H$-orbits.}
\end{aligned}
\right.
$$
The definitions of $\eta_{H,t}$ and~$\varepsilon_H(z_1,z_2)$ immediately imply the following assertion.

\begin{propos}\label{propos_bb_gg}
Suppose that $H\in\CH$. Denote the $H$-orbits in~$V$ as in Proposition~\ref{propos_PSU_aux}. Suppose that $t\in\F_3$, $v\in\alpha_{t+1}\setminus\{u_{t+1}\}$ and $w\in\alpha_{t+2}\setminus\{u_{t+2}\}$. Then
\begin{itemize}
 \item if $\varepsilon_H\bigl(\theta(v),w\bigr)=\eta_{H,t}$, then either $v\in\beta_{t+1}$ and~$w\in\beta_{t+2}$ or $v\in\gamma_{t+1}$ and $w\in\gamma_{t+2}$,
 \item if $\varepsilon_H\bigl(\theta(v),w\bigr)=-\eta_{H,t}$, then either $v\in\beta_{t+1}$ and~$w\in\gamma_{t+2}$ or $v\in\gamma_{t+1}$ and $w\in\beta_{t+2}$.
\end{itemize}
\end{propos}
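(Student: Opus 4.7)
The plan is to prove this by a direct case analysis that simply unpacks the definitions of $\eta_{H,t}$ and $\varepsilon_H$. Since $v\in \alpha_{t+1}\setminus\{u_{t+1}\}$, by the decomposition $\alpha_{t+1}=\{u_{t+1}\}\cup\beta_{t+1}\cup\gamma_{t+1}$ from Proposition~\ref{propos_PSU_aux}, the vertex $v$ lies in exactly one of the two $H$-orbits $\beta_{t+1}$, $\gamma_{t+1}$. Similarly $w$ lies in exactly one of $\beta_{t+2}$, $\gamma_{t+2}$, and $\theta(v)\in\alpha_{t+2}\setminus\{u_{t+2}\}$ lies in exactly one of $\beta_{t+2}$, $\gamma_{t+2}$ (using that $\theta(u_{t+1})=u_{t+2}$). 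Thus each of the three vertices is labelled by a $\beta$-or-$\gamma$ choice.

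Next I would split the argument into the two cases $\eta_{H,t}=1$ and $\eta_{H,t}=-1$. By the very definition of $\eta_{H,t}$, when $\eta_{H,t}=1$ the permutation $\theta$ sends $\beta_{t+1}$ to $\beta_{t+2}$ and $\gamma_{t+1}$ to $\gamma_{t+2}$, so the $\beta/\gamma$ label of $\theta(v)$ equals that of $v$; when $\eta_{H,t}=-1$, the label of $\theta(v)$ is the opposite of that of $v$. Combined with the definition of $\varepsilon_H$, which records whether $\theta(v)$ and $w$ lie in the same $H$-orbit, this determines $\varepsilon_H(\theta(v),w)$ in terms of the four possible $\beta/\gamma$ labellings of the pair $(v,w)$.

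Running through the four combinations of labels for $(v,w)\in\{\beta_{t+1},\gamma_{t+1}\}\times\{\beta_{t+2},\gamma_{t+2}\}$ and both values of $\eta_{H,t}$ yields a $4\times 2$ table whose entries for $\varepsilon_H(\theta(v),w)$ equal $\eta_{H,t}$ precisely when the labels of $v$ and $w$ agree ($v\in\beta_{t+1},w\in\beta_{t+2}$ or $v\in\gamma_{t+1},w\in\gamma_{t+2}$) and equal $-\eta_{H,t}$ precisely when they disagree. This is exactly the content of the two bullet points in the statement, so the proposition follows.

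There is really no obstacle here: the argument is a mechanical bookkeeping, and the only thing one must check carefully is that the sign conventions adopted in the definitions of $\eta_{H,t}$ and $\varepsilon_H$ line up, which they manifestly do. I would therefore present the proof as one short paragraph observing that both cases of the statement amount to checking the identity $\varepsilon_H(\theta(v),w)=\eta_{H,t}\cdot\varepsilon_H(v,w')$, where $w'$ is the unique vertex in $\alpha_{t+1}\setminus\{u_{t+1}\}$ with the same $\beta/\gamma$ label as $w$, and noting that this identity is immediate from the definitions.
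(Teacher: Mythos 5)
Your proposal is correct and matches the paper, which gives no separate argument at all but simply states that the definitions of $\eta_{H,t}$ and $\varepsilon_H$ immediately imply the proposition; your case-by-case unpacking is exactly that verification. (One cosmetic slip: the vertex $w'$ with the same $\beta/\gamma$ label as $w$ is not unique, but $\varepsilon_H(v,w')$ depends only on its orbit, so nothing is affected.)
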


We will also need one easy observation concerning actions of the quaternion group~$\rQ_8$.

\begin{lem}\label{lem_eps}
Suppose that the group $\rQ_8$ acts transitively and freely on a $8$-element set~$Z$. Let~$H_1$, $H_2$, and~$H_3$ be the three subgroups of~$\rQ_8$ that are isomorphic to~$\rC_4$. Then
$$\varepsilon_{H_1}(z_1,z_2)\varepsilon_{H_2}(z_1,z_2)\varepsilon_{H_3}(z_1,z_2)=1$$ for all $z_1,z_2\in Z$.
\end{lem}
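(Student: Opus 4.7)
The plan is to reduce the claim to a purely group-theoretic statement about $\rQ_8$ and then verify it by exploiting the well-known structure of the quaternion group. Since $\rQ_8$ acts freely and transitively on $Z$, fixing a base point $z_0\in Z$ gives a bijection $\phi\colon\rQ_8\to Z$, $\phi(g)=g\cdot z_0$. Under $\phi$, the orbit of a subgroup $H\subseteq\rQ_8$ through $\phi(g)$ is $\phi(Hg)$, so two points $z_1=\phi(g_1)$ and $z_2=\phi(g_2)$ lie in the same $H$-orbit if and only if $g_1g_2^{-1}\in H$ (this condition is well defined and symmetric in $g_1,g_2$ because every subgroup of $\rQ_8$ is normal). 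Thus, writing $g=g_1g_2^{-1}$, the claim reduces to showing that
\[
\varepsilon_1(g)\,\varepsilon_2(g)\,\varepsilon_3(g)=1\qquad\text{for every }g\in\rQ_8,
\]
where $\varepsilon_a(g)=1$ if $g\in H_a$ and $\varepsilon_a(g)=-1$ otherwise.

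Next I would observe that $H_1,H_2,H_3$ are precisely the three cyclic subgroups of order~$4$ in $\rQ_8$, and recall two elementary facts about their incidence with the elements of $\rQ_8$: first, $H_1\cup H_2\cup H_3=\rQ_8$, since every element of $\rQ_8$ has order dividing~$4$; second, $H_1\cap H_2=H_1\cap H_3=H_2\cap H_3=Z(\rQ_8)=\{1,-1\}$. Consequently, every $g\in\rQ_8$ lies either in all three subgroups (when $g$ is central) or in exactly one of them (when $g$ has order~$4$, since $\langle g\rangle$ is then the unique cyclic subgroup of order~$4$ containing~$g$).

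From this dichotomy the conclusion is immediate. If $g\in Z(\rQ_8)$, then $\varepsilon_1(g)=\varepsilon_2(g)=\varepsilon_3(g)=1$, and the product is~$1$. If $g$ has order~$4$, then exactly one of the $\varepsilon_a(g)$ equals $+1$ and the other two equal $-1$, so again the product equals $(+1)\cdot(-1)\cdot(-1)=1$. There is really no obstacle here: the whole argument amounts to the observation that each element of $\rQ_8$ is covered by an \emph{odd} number of the subgroups $H_1,H_2,H_3$, which is a direct consequence of the structure of $\rQ_8$.
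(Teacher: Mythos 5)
Your proof is correct and follows essentially the same route as the paper: the paper also passes to the unique $g\in\rQ_8$ carrying $z_1$ to $z_2$ and invokes the fact that $g$ lies either in all three of $H_1,H_2,H_3$ or in exactly one of them. You simply spell out the reduction and the covering argument in more detail (note that the symmetry of the condition $g_1g_2^{-1}\in H$ needs only that $H$ is a subgroup, not normality), but the substance is identical.
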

\begin{proof}
 Let $g\in \rQ_8$ be the element that takes~$z_1$ to~$z_2$. The assertion of the lemma follows immediately from the fact that $g$ belongs either to all the three subgroups~$H_1$, $H_2$, and~$H_3$ or to  exactly one of them.
\end{proof}

 Now, suppose that $t\in\F_3$, $v\in\alpha_{t+1}$ and~$w\in\alpha_{t+2}$. By assertion~(1) of Proposition~\ref{propos_PSU_aux} we have that $\sigma=\alpha_{t+1}\cup(\alpha_{t+2}\setminus\{w\})$ is a $16$-simplex of~$K$. Then
 $$
 \rho=\sigma\setminus\{v\}=(\alpha_{t+1}\setminus\{v\})\cup(\alpha_{t+2}\setminus\{w\})
 $$
 is a $15$-simplex of~$K$. Since $K$ is a $16$-pseudomanifold, we obtain that the simplex $\rho$ is contained in exactly one $16$-simplex $\tau\in K$ that is different from~$\sigma$. Since $\link(\alpha_{t+2},K)=\partial\alpha_t$, we see that $\tau\ne (\alpha_{t+1}\setminus\{v\})\cup\alpha_{t+2}$. So there is exactly one vertex $u\in\alpha_t$ such that
 $$
 \tau=(\alpha_{t+1}\setminus\{v\})\cup(\alpha_{t+2}\setminus\{w\})\cup\{u\}
 $$
 is a simplex of~$K$. We obtain a well-defined map
 \begin{align*}
 f_t\colon \alpha_{t+1}\times\alpha_{t+2}&\to\alpha_t,\\
 (v,w)&\mapsto u.
 \end{align*}
 Certainly, the map~$f_t$ is $G$-equivariant with respect to the diagonal action of~$G$ on the set~$\alpha_{t+1}\times\alpha_{t+2}$. Since $G$ acts transitively on~$\alpha_t$, we obtain that all pre-images $f_t^{-1}(u)$, where $u\in\alpha_t$, consist of the same number of elements. Hence $| f_t^{-1}(u)|=9$ for all~$u$.

 \begin{lem}\label{lem_pairs}
  \begin{enumerate}
   \item If $w=\theta(v)$, then $f_t(v,w)=\theta^2(v)$.
   \item If $w\ne\theta(v)$, then $f_t(v,w)$ is neither~$\theta^2(v)$ nor~$\theta(w)$.
  \end{enumerate}
 \end{lem}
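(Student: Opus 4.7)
Assertion (1) will follow immediately from Proposition~\ref{propos_cor_PSU_aux}: applied to the vertex $\theta^2(v)\in\alpha_t$ (using $\theta(\theta^2(v))=v$ and $\theta^2(\theta^2(v))=\theta(v)$) it asserts that $\{\theta^2(v)\}\cup(\alpha_{t+1}\setminus\{v\})\cup(\alpha_{t+2}\setminus\{\theta(v)\})\in K$, which by the defining property of~$f_t$ means exactly $f_t(v,\theta(v))=\theta^2(v)$.

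For assertion~(2) the plan is to argue by contradiction in each of the two sub-cases and derive a contradiction via the link of a carefully chosen $8$-simplex. I will first need the auxiliary observation that for every $v\in\alpha_{t+1}$ the stabilizer $G_v\cong\rQ_8$ acts transitively (hence freely, since $|G_v|=8$) on the $8$-element set $\alpha_{t+2}\setminus\{\theta(v)\}$: every non-trivial subgroup of $\rQ_8$ contains the central involution $g$, and by Proposition~\ref{propos_2_4} this $g$ fixes exactly three vertices of~$V$, which are forced to be $v$, $\theta(v)$ and $\theta^2(v)$ since $G_v=G_{\theta(v)}=G_{\theta^2(v)}$. The analogous statement holds for $G_w$ acting on $\alpha_{t+1}\setminus\{\theta^2(w)\}$.

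Now suppose $f_t(v,w)=\theta^2(v)$ with $w\ne\theta(v)$, so that $A=\{\theta^2(v)\}\cup(\alpha_{t+1}\setminus\{v\})\cup(\alpha_{t+2}\setminus\{w\})\in K$. Applying $G_v$ to~$A$ and combining with the $w=\theta(v)$ case from Proposition~\ref{propos_cor_PSU_aux}, I will obtain $\{\theta^2(v)\}\cup(\alpha_{t+1}\setminus\{v\})\cup(\alpha_{t+2}\setminus\{w''\})\in K$ for every $w''\in\alpha_{t+2}$. Setting $\lambda=\{\theta^2(v)\}\cup(\alpha_{t+1}\setminus\{v\})$, these nine $16$-simplices show that $\link(\lambda,K)\supseteq\partial\alpha_{t+2}$. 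Since $\link(\lambda,K)$ is a (strongly connected) $7$-dimensional $\Z$-homology $7$-sphere and $\partial\alpha_{t+2}$ is a strongly connected $7$-pseudomanifold of the same dimension, the standard fact that a sub-$d$-pseudomanifold of full dimension inside a $d$-pseudomanifold must equal the ambient complex forces $\link(\lambda,K)=\partial\alpha_{t+2}$. However, for any $u\in\alpha_t\setminus\{\theta^2(v)\}$, the set $\lambda\cup\{u\}$ is a face of $\alpha_t\cup(\alpha_{t+1}\setminus\{v\})\in K$ by $\link(\alpha_t,K)=\partial\alpha_{t+1}$ (Proposition~\ref{propos_PSU_aux}(1)), so $u$ is a vertex of $\link(\lambda,K)$ not lying in $\partial\alpha_{t+2}$, a contradiction. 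The sub-case $f_t(v,w)=\theta(w)$ is strictly parallel: the action of~$G_w$ together with Proposition~\ref{propos_cor_PSU_aux} applied to $\theta(w)\in\alpha_t$ forces $\{\theta(w)\}\cup(\alpha_{t+1}\setminus\{v'\})\cup(\alpha_{t+2}\setminus\{w\})\in K$ for every $v'\in\alpha_{t+1}$, whence $\link(\lambda',K)=\partial\alpha_{t+1}$ for $\lambda'=\{\theta(w)\}\cup(\alpha_{t+2}\setminus\{w\})$; but $\lambda'\cup\{w\}=\{\theta(w)\}\cup\alpha_{t+2}\in K$ by $\link(\alpha_{t+2},K)=\partial\alpha_t$, so $w$ is a vertex of $\link(\lambda',K)$ outside $\partial\alpha_{t+1}$, contradiction.

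The main obstacle I expect is engineering the right $8$-simplex whose link yields the contradiction: the simplex must be chosen so that the hypothesized $16$-simplex, its $G_v$- (respectively $G_w$-) translates, and the simplex supplied by Proposition~\ref{propos_cor_PSU_aux} together assemble into a complete copy of $\partial\alpha_{t+j}$ in the link, while Proposition~\ref{propos_PSU_aux}(1) simultaneously guarantees a vertex of the link outside that $\partial\alpha_{t+j}$. The technical linchpin is establishing that $G_v$ (respectively $G_w$) acts \emph{freely} on the complement of the fixed point in the relevant orbit, which is precisely what makes the family of translates large enough to fill in a $7$-sphere rather than leaving a gap.
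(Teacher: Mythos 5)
Your proof is correct, but for the substantive part, assertion~(2), it takes a genuinely different route from the paper. Assertion~(1) is handled the same way in both (it is a direct reformulation of Proposition~\ref{propos_cor_PSU_aux}). For assertion~(2), the paper's argument is a two-line application of complementarity: assuming $f_t(v,w)=\theta^2(v)$, the hypothesized $16$-simplex $\tau=(\alpha_{t+1}\setminus\{v\})\cup(\alpha_{t+2}\setminus\{w\})\cup\{\theta^2(v)\}$ and the simplex $\kappa=\{v\}\cup(\alpha_{t+2}\setminus\{\theta(v)\})\cup(\alpha_t\setminus\{\theta^2(v)\})$ supplied by Proposition~\ref{propos_cor_PSU_aux} applied to~$v$ satisfy $\tau\cup\kappa=V$ (here $w\ne\theta(v)$ is used), so $V\setminus\tau$ is a face of~$\kappa$ and hence a simplex, contradicting complementarity; the case $f_t(v,w)=\theta(w)$ is symmetric. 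Your argument instead orbits the hypothesized $16$-simplex under the stabilizer $G_v\cong\rQ_8$ (whose free transitive action on $\alpha_{t+2}\setminus\{\theta(v)\}$ you correctly establish via the central involution and Proposition~\ref{propos_2_4}; it also follows from Proposition~\ref{propos_8}), saturates $\link(\lambda,K)$ with all of $\partial\alpha_{t+2}$, invokes pseudomanifold rigidity to force equality, and then exhibits an extra vertex of the link via Proposition~\ref{propos_PSU_aux}(1). Every step checks out, and the link-saturation technique is one the paper itself uses in Cases 1--3 of the proof of Proposition~\ref{propos_PSU_aux}, so nothing foreign is being imported; but it costs you the auxiliary transitivity lemma and the rigidity fact, whereas the paper's complementarity argument needs neither and makes clearer why the hypothesis $w\ne\theta(v)$ is exactly what is needed.
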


 \begin{proof}
  Assertion~(1) is a reformulation of Proposition~\ref{propos_cor_PSU_aux}. Let us prove assertion~(2). Assume that $f_t(v,w)=\theta^2(v)$.  Then
   $$
    \tau=(\alpha_{t+1}\setminus\{v\})\cup(\alpha_{t+2}\setminus\{w\})\cup\{\theta^2(v)\}
   $$
  is a simplex of~$K$.
  Applying Proposition~\ref{propos_cor_PSU_aux} to the vertex~$v$, we obtain that the set
 $$
 \kappa=\{v\}\cup\bigl(\alpha_{t+2}\setminus\{\theta(v)\}\bigr)\cup\bigl(\alpha_{t}\setminus\{\theta^2(v)\}\bigr)
 $$
is a simplex of~$K$. Since $\tau\cup\kappa=V$, we arrive at a contradiction with complimentarity. Similarly, if $f_t(v,w)=\theta(w)$, then we arrive at a contradiciton by applying Proposition~\ref{propos_cor_PSU_aux} to the vertex~$w$.
 \end{proof}

\begin{propos}\label{propos_choose_H}
 There exists a subgroup $H\in\CH$, an element $t\in\F_3$, and vertices $v\in\alpha_{t+1}\setminus\{u_{t+1}\}$ and $w\in\alpha_{t+2}\setminus\{u_{t+2}\}$ such that
 \begin{enumerate}
  \item $f_t(v,w)=u_t$,
  \item $\varepsilon_{H}\bigl(\theta(v),w\bigr)=\eta_{H,t}$.
 \end{enumerate}
 (Here, as in Proposition~\ref{propos_PSU_aux}, we denote by $u_0$, $u_1$, and~$u_2$ the $H$-fixed points in~$\alpha_0$, $\alpha_1$, and~$\alpha_2$, respectively.)
\end{propos}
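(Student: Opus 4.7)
The plan is to reduce to a fixed Sylow $2$-subgroup, describe $f_t^{-1}(u_t)$ explicitly as a union of one fixed pair and one $Q$-orbit, and then derive a parity contradiction by playing Lemma~\ref{lem_eps} off against Proposition~\ref{propos_eta}.

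First, I would fix an arbitrary $H_0\in\CH$ and set $Q=G_{u_0(H_0)}\cong\rQ_8$. Since $\theta$ commutes with the action of~$G$, we have $G_{u_{t+1}(H_0)}=G_{\theta(u_t(H_0))}=G_{u_t(H_0)}=Q$ for every~$t\in\F_3$. The group $Q$ contains exactly three subgroups of order~$4$, which I call $H_1,H_2,H_3$; one of them is~$H_0$. Each $H_i\subset Q$ fixes all three points $u_0(H_0),u_1(H_0),u_2(H_0)$, and since by Proposition~\ref{propos_PSU_aux}(2) each $H_i$ has exactly one fixed vertex in each~$\alpha_t$, all three subgroups $H_1,H_2,H_3$ share the common fixed vertices, which I denote simply $u_0,u_1,u_2$. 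This lets me treat the three subgroups uniformly in the subsequent argument.

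Next, for each $t\in\F_3$ I would prove that $f_t^{-1}(u_t)=\{(u_{t+1},u_{t+2})\}\cup O_t$, where $O_t$ is a single $Q$-orbit of size~$8$ lying in $(\alpha_{t+1}\setminus\{u_{t+1}\})\times(\alpha_{t+2}\setminus\{u_{t+2}\})$. Indeed, $(u_{t+1},u_{t+2})$ is in the fiber by Lemma~\ref{lem_pairs}(1), and Lemma~\ref{lem_pairs}(2) excludes any other preimage having $v=u_{t+1}$ or $w=u_{t+2}$. Using the semidirect product description $\PSU(3,2)\cong\rC_3^2\rtimes\rQ_8$ one checks that $Q$ acts freely on each of $\alpha_{t+1}\setminus\{u_{t+1}\}$ and $\alpha_{t+2}\setminus\{u_{t+2}\}$, hence freely on their product, so every $Q$-orbit there has size~$8$, which leaves room for exactly one such orbit. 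Choosing a representative $(v_0,w_0)\in O_t$ and letting $g_t\in Q$ be the unique element with $g_t\theta(v_0)=w_0$, for any other $(v,w)=(qv_0,qw_0)\in O_t$ the corresponding element is $qg_tq^{-1}$; since every subgroup of~$\rQ_8$ is normal, this lies in~$H_i$ if and only if $g_t$ does. Consequently the sign $\varepsilon(i,t):=\varepsilon_{H_i}(\theta(v),w)$ is well defined on~$O_t$, independent of the chosen representative. Verifying this invariance is the main technical point I expect to work out carefully.

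Finally, I would argue by contradiction. Suppose no triple $(H_i,t,(v,w))$ with $(v,w)\in O_t$ satisfies both conditions of the proposition; equivalently, $\varepsilon(i,t)=-\eta_{H_i,t}$ for all $i\in\{1,2,3\}$ and all $t\in\F_3$. On the one hand, Lemma~\ref{lem_eps} applied to the $Q$-torsor $\alpha_{t+2}\setminus\{u_{t+2}\}$ with the three subgroups $H_1,H_2,H_3$ yields $\prod_{i=1}^{3}\varepsilon(i,t)=1$ for each~$t$, and hence $\prod_{i,t}\varepsilon(i,t)=1$. On the other hand, Proposition~\ref{propos_eta} gives $\prod_{t\in\F_3}\eta_{H_i,t}=1$ for each~$i$, so
\[
\prod_{i,t}\varepsilon(i,t)=\prod_{i,t}(-\eta_{H_i,t})=(-1)^{9}\prod_{i,t}\eta_{H_i,t}=-1,
\]
a contradiction. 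Once the $O_t$-invariance of $\varepsilon_{H_i}(\theta(v),w)$ is secured, this final parity count is mechanical, and the proposition follows.
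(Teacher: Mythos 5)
Your proposal is correct and follows essentially the same route as the paper: fix one Sylow $2$-subgroup~$Q$ with its three $\rC_4$-subgroups sharing the fixed points $u_0,u_1,u_2$, pick a non-trivial element of each fibre $f_t^{-1}(u_t)$, and combine Lemma~\ref{lem_eps} with Proposition~\ref{propos_eta} to force the product of the nine signs $\eta_{H_i,t}\varepsilon_{H_i}(\theta(v),w)$ to equal~$1$, so at least one is $+1$. The only difference is that you additionally verify that the eight non-trivial fibre elements form a single $Q$-orbit on which $\varepsilon_{H_i}(\theta(v),w)$ is constant; this is true but unnecessary, since the argument only needs one representative per~$t$, which is what the paper uses.
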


\begin{proof}
 Fix an arbitrary vertex~$u_0\in \alpha_0$ and put $u_1=\theta(u_0)$ and $u_2=\theta^2(u_0)$. By the definition of~$\theta$, we have $G_{u_0}=G_{u_1}=G_{u_2}$; we denote this stabilizer by~$S$. Then $S\cong\rQ_8$. Hence, $S$ contains exactly three subgroups that belong to~$\CH$. We denote these three subgroups by~$H_1$, $H_2$, and~$H_3$.

As was mentioned above, each pre-image~$f_t^{-1}(u_t)$ consists of $9$ elements. By Lemma~\ref{lem_pairs} one of these elements is  $(u_{t+1},u_{t+2})$, and the other $8$ elements $(v,w)\in f_t^{-1}(u_t)$ satisfy $v\ne u_{t+1}$ and $w\ne u_{t+2}$. Choose one of those $8$ pre-images and denote it by~$(v_{t+1},w_{t+2})$. Thus, we obtain $6$ vertices $v_0$, $v_1$, $v_2$, $w_0$, $w_1$, and~$w_2$ that have the following properties:
\begin{itemize}
 \item $v_t,w_t\in\alpha_t\setminus\{u_t\}$,
 \item $f_t(v_{t+1},w_{t+2})=u_t$.
\end{itemize}

For each $t\in\F_3$, the group~$S$ acts transitively and freely on the eight-element set $\alpha_{t+2}\setminus\{u_{t+2}\}$. By Lemma~\ref{lem_eps} we see that
$$
\varepsilon_{H_1}\bigl(\theta(v_{t+1}),w_{t+2}\bigr)\varepsilon_{H_2}\bigl(\theta(v_{t+1}),w_{t+2}\bigr)\varepsilon_{H_3}\bigl(\theta(v_{t+1}),w_{t+2}\bigr)=1,\qquad t\in\F_3.
$$
Taking the product of these three equations and the equations from Proposition~\ref{propos_eta} for the three subgroups~$H_1$, $H_2$, and~$H_3$, we obtain that
\begin{equation}\label{eq_prod}
\prod_{i=1}^3\prod_{t\in\F_3}\eta_{H_i,t}\varepsilon_{H_i}\bigl(\theta(v_{t+1}),w_{t+2}\bigr)=1.
\end{equation}
Therefore, at least one  multiplier $\eta_{H_i,t}\varepsilon_{H_i}\bigl(\theta(v_{t+1}),w_{t+2}\bigr)$ is equal to~$1$. Then the required assertions hold for the $4$-tuple $(H_i,t,v_{t+1},w_{t+2})$.
\end{proof}

\begin{proof}[End of the proof of Proposition~\ref{propos_no_PSU}]
 Let us show how to arrive at a contradiction using Propositions~\ref{propos_PSU_aux}, \ref{propos_bb_gg}, and~\ref{propos_choose_H}. Let $(H, t, v, w)$ be the $4$-tuple from Proposition~\ref{propos_choose_H}.  Denote the $H$-fixed points and the $H$-orbits in~$V$ as in Proposition~\ref{propos_PSU_aux}.

By assertion~(1) of Proposition~\ref{propos_choose_H}, we have that $f_t(v,w)=u_t$. Hence,
$$
\tau=\{u_t\}\cup\bigl(\alpha_{t+1}\setminus\{v\}\bigr)\cup\bigl(\alpha_{t+2}\setminus\{w\}\bigr)
$$
is a simplex of~$K$.

 From assertion~(2) of Proposition~\ref{propos_choose_H} and Proposition~\ref{propos_bb_gg} it follows that either $v\in\beta_{t+1}$ and~$w\in\beta_{t+2}$ or $v\in\gamma_{t+1}$ and~$w\in\gamma_{t+2}$. In the former case, $\tau$ contains the subset
 $$
 \mu=\{u_0,u_1,u_2\}\cup\gamma_{t+1}\cup\gamma_{t+2},
 $$
 and in the latter case, $\tau$ contains the subset
 $$
 \lambda=\{u_0,u_1,u_2\}\cup\beta_{t+1}\cup\beta_{t+2}.
 $$
 We arrive at a contradiction, since by assertion~(3) of Proposition~\ref{propos_PSU_aux} neither~$\lambda$ nor~$\mu$ is a simplex of~$K$. This completes the proof of Proposition~\ref{propos_no_PSU}.
\end{proof}


\begin{thebibliography}{99}

\bibitem{Ada60} J. F. Adams, ``On the non-existence of elements of Hopf invariant one'', \textit{Ann. Math.}, \textbf{72}:1 (1960),  20--104.

\bibitem{Ale74} A. V. Alekseevskii, ``Finite commutative Jordan subgroups of complex simple Lie groups'', \textit{Funkt. Analiz i Prilozh.}, \textbf{8}:4 (1974), 1--4 (in Russian); English transl.: \textit{Funct. Anal. Appl.}, \textbf{8}:4 (1974), 277--279.

\bibitem{Ale35} P. Alexandroff, ``On local properties of closed sets'', \textit{Ann. Math.}, \textbf{36}:1 (1935), 1--35.

\bibitem{ArMa91} P. Arnoux, A. Marin, ``The K\"uhnel triangulation of the complex projective plane from the view point of complex crystallography (part~II)'', \textit{Mem. Fac. Sci. Kyushu Univ. Ser. A}, \textbf{45}:2 (1991),  167--244. 

\bibitem{BaDa94} B. Bagchi, B. Datta, ``On K\"uhnel's 9-vertex complex projective plane'', \textit{Geom. Dedicata}, \textbf{50} (1994), 1--13.


\bibitem{BaDa04} B. Bagchi, B. Datta, ``Non-existence of $6$-dimensional pseudomanifolds with
complementarity'', \textit{Adv. Geom.}, \textbf{4} (2004), 537--550; arXiv:math/0404225.

\bibitem{BEOB01} H. U. Besche, B. Eick, E. A. O'Brien, ``The groups of order at most $2000$'', \textit{Electron. Res. Announc. Amer. Math. Soc.}, \textbf{7} (2001), 1--4.

\bibitem{BEOB22} H. U. Besche, B. Eick, E. O'Brien, \textit{GAP package SmallGrp}, Version~1.5.1 (2022), GAP---Groups, Algorithms, and Programming.

\bibitem{Bor60} A. Borel, \textit{Seminar on transformation groups}, Princeton University Press, Princeton, NJ, 1960, 245~pp.

\bibitem{Bre60} G. E. Bredon, ``Orientation in generalized manifolds and applications to the theory of transformation groups'', \textit{Michigan Math. J.}, \textbf{7}:1 (1960), 35--64.

\bibitem{Bre64} G. E. Bredon, ``The cohomology ring structure of a fixed point set'', \textit{Ann. Math.}, \textbf{80}:3 (1964), 524--537.

\bibitem{Bre68} G. E. Bredon, ``Cohomological aspects of transformation groups'', in \textit{Proceedings of the conference on transformation groups} (ed. P.~S.~Mostert), Springer-Verlag, 1968, 245--280.

\bibitem{Bre72} G. E. Bredon, \textit{Introduction to compact transformation groups}, Pure and Applied Math., \textbf{46}, Academic Press, 1972, xiii+459~pp.



\bibitem{BrKu87}  U. Brehm, W. K\"uhnel, ``Combinatorial manifolds with few vertices'', \textit{Topology}, \textbf{26}:4 (1987), 465--473.

\bibitem{BrKu92} U. Brehm, W. K\"uhnel, ``15-vertex triangulations of an 8-manifold'', \textit{Math. Ann.}, \textbf{294} (1992), 167--193.

\bibitem{CBFS06} J. Cannon, W. Bosma, C. Fieker, A. Steel (eds.), \textit{Handbook of Magma Functions. Volume~5: Finite Groups}, Version~2.25, Sydney, 2019.

\bibitem{Cec34} E. \v{C}ech, ``Sur les nombres de Betti locaux'', \textit{Ann. Math.}, \textbf{35}:3 (1934), 678--701.

\bibitem{ChMa13} F. Chapoton, L. Manivel, ``Triangulations and Severi varieties'', \textit{Exp. Math.}, \textbf{22}:1, 60--73; arXiv:1109.6490.

\bibitem{CoWa97} A. M. Cohen, D. B. Wales, ``Finite subgroups of~$F_4(\mathbb{C})$ and $E_6(\mathbb{C})$'', \textit{Proc. London Math. Soc.}, \textbf{74}:1 (1997), 105--150.

\bibitem{Dat98} B. Datta, ``Pseudomanifolds with complementarity'', \textit{Geom. Dedicata}, \textbf{73} (1998), 143--155.

\bibitem{Dok} T. Dokchitser, \texttt{https:/\!/people.maths.bris.ac.uk/$\sim$matyd/GroupNames/} (under construction).

\bibitem{EeKu62} J.~Eells, N. H. Kuiper,  ``Manifolds which are like projective planes'', \textit{Publ. Math. Inst. Hautes Etud. Sci.}, \textbf{14} (1962), 181--222.



\bibitem{Gai22} A. A. Gaifullin, ``$634$ vertex-transitive and more than $10^{103}$ non-vertex-transitive $27$-vertex triangulations of manifolds like the octonionic projective plane'', \textit{Izv. RAN. Ser. Matem.}, \textbf{88}:3 (to appear); arXiv:2207.08507.

\bibitem{Gai23} A. A. Gaifullin, ``New examples and partial classification of 15-vertex triangulations of the quaternionic projective plane'', arXiv:2311.11309.

\bibitem{Gai-prog} A. A. Gaifullin, \texttt{https:/\!/github.com/agaif/Triangulations-like-OP2}\,, version~2, 2023.


\bibitem{Gor16} D. A. Gorodkov, ``A minimal triangulation of the quaternionic projective plane'', \textit{Uspekhi Matem. Nauk} \textbf{71}:6(432) (2016), 159--160 (in Russian); English transl.: \textit{Russian Math. Surveys}, \textbf{71}:6 (2016), 1140--1142.

\bibitem{Gor19} D. Gorodkov, ``A 15-vertex triangulation of the quaternionic projective plane'', \textit{Discrete Comput. Geom.}, \textbf{62}:2 (2019), 348--373; arXiv:1603.05541.

\bibitem{Hal72} M. Hall, Jr., ``Simple groups of order less than one million'', \textit{J. Algebra}, \textbf{20}:1 (1972), 98--102.

\bibitem{Hig57} G. Higman, ``Finite groups in which every element has prime power order'', \textit{J. London Math. Soc.}, \textbf{32}:3 (1957), 335--342.


\bibitem{Kle64} V. Klee, ``A combinatorial analogue of Poincar\'e's duality theorem'', \textit{Canad. J. Math.}, \textbf{16} (1964), 517--531.

\bibitem{Kra03} L. Kramer, ``Projective planes and their look-alikes'', \textit{J. Differential Geometry}, \textbf{64} (2003), 1--55; arXiv:math/0206145.


\bibitem{KuBa83}  W. K\"uhnel,  T. F. Banchoff, ``The 9-vertex complex projective plane'', \textit{Math. Intelligencer}, \textbf{5}:3 (1983), 11--22.

\bibitem{KuLa83} W. K\"uhnel, G. Lassmann, ``The unique 3-neighborly 4-manifold with few vertices'', \textit{J. Comb. Theory, Ser. A}, \textbf{35} (1983), 173--184.



\bibitem{Mun84} J. R. Munkres, \textit{Elements of algebraic topology}, The Benjamin/Cummings Publishing Company, Inc., 1984, ix+454~pp.

\bibitem{Nov98} I. Novik, ``Upper bound theorems for homology manifolds'', \textit{Isr. J. Math.}, \textbf{108}:1 (1998), 45--82.


\bibitem{Smi39} P. A. Smith, ``Transformations of finite period. II'', \textit{Ann. Math.}, \textbf{40}:3 (1939), 690--711.

\bibitem{Suz62} M. Suzuki, ``On a class of doubly transitive groups'', \textit{Ann. Math.}, \textbf{75}:1 (1962), 105--145.

\end{thebibliography}
\end{document}